\documentclass[11pt]{amsart}
\usepackage{etex}
\usepackage{bbm}
\usepackage{xypic}
\usepackage{tikz}
\usepackage{tikz-cd}
\usepackage{todonotes}
\usetikzlibrary{matrix,arrows,decorations.pathmorphing}
\usepackage{MnSymbol}
\usepackage{latexsym}
\usepackage{pgf}
\usepackage{mathtools}
\usepackage{stmaryrd}
\usepackage{atbegshi}

\usepackage[margin=1in]{geometry}
\usepackage{pictex}
\usepackage{amsmath,amscd}
\usepackage{halloweenmath,enumerate}
\usepackage{afterpage}
\usepackage{youngtab}
\usepackage{psfrag}
\usepackage[boxsize=.25 em]{ytableau}
\usepackage{verbatim}
\usepackage{graphics,graphicx}
\usepackage{changepage}
\usepackage{hyperref}
\usepackage[capitalize, nameinlink, noabbrev, nosort]{cleveref}
\crefformat{section}{\S#2#1#3}

\newtheorem{Thm}{Theorem}[section]
\newtheorem{theorem}[Thm]{Theorem}
\newtheorem{lemma}[Thm]{Lemma}
\newtheorem{corollary}[Thm]{Corollary}
\newtheorem{proposition}[Thm]{Proposition}
\newtheorem{conjecture}[Thm]{Conjecture}
\newtheorem{mainthm}{Theorem}

\theoremstyle{definition}
\newtheorem{example}[Thm]{Example}
\newtheorem{definition}[Thm]{Definition}
\newtheorem{remark}[Thm]{Remark}

\usepackage{pict2e}

\definecolor{babyblue}{rgb}{0.54, 0.81, 0.94}
\definecolor{darkgreen}{rgb}{0.0, 0.3, 0.1 }

\newcommand{\MSB}[1]{\textcolor{violet}{[MSB: #1]}}
\newcommand{\AK}[1]{\textcolor{darkgreen}{[AK: #1]}}

\DeclareMathOperator{\wt}{wt}
\DeclareMathOperator{\supR}{supR}
\DeclareMathOperator{\Grid}{Grid}

\DeclareMathOperator{\Proj}{Proj}
\DeclareMathOperator{\Spec}{Spec}

\DeclareMathOperator{\Perm}{Perm}
\DeclareMathOperator{\Hom}{Hom}
\DeclareMathOperator{\Fl}{Fl}
\newcommand{\RR}{\mathbb{R}}
\newcommand{\CC}{\mathbb{C}}
\newcommand{\ZZ}{\mathbb{Z}}
\newcommand{\PP}{\mathbb{P}}
\newcommand{\NN}{\mathbb{N}}

\newcommand{\mcc}{\mathcal{C}}

\newcommand{\bip}[1]{\mathcal{P}_{#1}}
\newcommand{\momentbp}[1]{\tilde{\mathcal{P}}_{#1}}

\newcommand\bOm{{\bf \Omega}}

\newcommand\defn[1]{{\bf #1}}

\def\wo{w_\circ}
\def\lin{\lambda}
\def\hgt{h}

\def\pbolic^#1{^{<#1>}}
\def\bw{\mathbf{w}}
\def\bc{\mathbf{c}}

\def\bp{\mathbf{p}}

\def\bv{\mathbf{v}}
\def\vl#1{v_{(#1)}}
\def\supp_#1^#2{\supR(#1,#2)}
\def\hc{\hgt^c}
\def\rib_#1{\mathrm{rib}_{#1}}
\def\grid_#1{\Grid(#1)}
\def\path{L}
\def\skew{\nu}
\def\orb_#1^#2{\momentbp{#1}(#2)}
\def\pt{\beta}

\def\hd{\hgt^{\bar{c}}}
\def\AA{{\mathbb A }}
\def\BB{{(\lie{t}_\RR^*)_+}}
\def\calH{{\mathcal H}}
\def\calP{\mathcal{P}}
\def\calX{\mathcal{X}}
\def\mob{\tilde{\mu}}
\def\steep{{\mathrm {inc}}}

\newcommand\Oplus\bigoplus
\newcommand\wh\widehat
\newcommand\Union\bigcup
\newcommand\lie[1]{{\mathfrak #1}}
\newcommand\FF{{\mathbb F}}
\newcommand\QQ{{\mathbb Q}}
\newcommand\tensor\otimes

\newcommand\calY{{\mathcal Y}}
\newcommand\calO{{\mathcal O}}
\newcommand\calF{{\mathcal F}}
\newcommand\calE{{\mathcal E}}
\newcommand\calL{{\mathcal L}}
\newcommand\Gm{{\mathbb G}_m}
\renewcommand\AA{{\mathbb A}}
\newcommand\into\hookrightarrow
\newcommand\otni\hookleftarrow
\newcommand\onto\twoheadrightarrow
\newcommand\iso\cong
\newcommand\junk[1]{}
\newcommand\actson\circlearrowright

\renewcommand{\colon}{\mathrel{:}}
\renewcommand{\colon}{:\mkern\thinmuskip}

\DeclareMathOperator{\conv}{conv}
\title
[Permutahedra, Lusztig varieties, degenerations, and subdivisions]
{Permutahedra, Lusztig varieties, degenerations, \\ and subdivisions}

\author{Allen Knutson}
\author{Mario Sanchez}
\author{Melissa Sherman-Bennett}
\date{\today}

\begin{document}

\begin{abstract}
  We present an embedded (in $G/B$) degeneration of Lusztig varieties
  (which generalize type $A$ Hessenberg varieties) to certain unions of
  Richardson varieties, giving a simultaneous reproof (and extension)
  of results of Anderson--Tymoczko, Harada--Horiguchi--Masuda--Park, and Kim.
  Although torus-equivariant, the degeneration is not Gr\"obner. 
  In the case that the Lusztig variety is the permutahedral toric variety,
  this degeneration provides a dissection of the permutahedron into
  Bruhat interval polytopes, and we prove a more general result showing
  equivariant degenerations of projective toric varieties produce subdivisions
  (as was shown in the Gr\"obner case by Sturmfels) not just dissections.
  A Gr\"obner degeneration would result in a {\em regular} subdivision,
  and despite our degeneration not being Gr\"obner we show in types $A,B,C$
  that our subdivisions of the permutahedron are indeed regular.
\end{abstract}
\maketitle
\setcounter{tocdepth}{2}
	\tableofcontents


\section{Introduction}

\subsection{Overview}

The $(n-1)$-dimensional \defn{permutahedron} $\Pi$ is the convex hull of the
$n!$ permutations, thought of as vectors in $\RR^n$.
The \defn{permutahedral variety} $\Perm$ is the toric variety associated with
$\Pi$; one construction of it is as a generic torus orbit
closure in the type A flag variety $\Fl_n$. 
It is a central object in the
intersection of algebraic geometry and combinatorics, appearing for
instance in the Hodge theory of matroids \cite{huh2014rota}, the study
of moduli spaces \cite{losev2000new},
Chow quotients of the Grassmannian \cite[4.3.10]{kapranov1992chow},
and Hessenberg varieties \cite{lin2024geometry}. 
This paper is inspired by a formula of Anderson--Tymoczko \cite{AT10}
for the cohomology class\footnote{The paper \cite{AT10} contains a more
  general formula for classes of regular semisimple Hessenberg
  varieties.} $[\Perm] \in H^*(\Fl_n)$, and a parallel result of
Harada--Horiguchi--Masuda--Park \cite{HHMP19} on a decomposition of 
$\Pi$. These results involve \emph{Richardson
  varieties} $X_u^v$, which are intersections of Schubert and opposite
Schubert varieties, and their {\em moment polytopes} $\Phi(X_u^v) =: \Pi_u^v$
which are \emph{Bruhat interval polytopes} \cite{KW15}. We summarize
these results in the following theorem.
 
 \begin{theorem}[{\cite[Eq. (14)]{AT10},\cite[Theorem 5.4]{HHMP19}}]\label{thm:prior-work}
 	Let $c=s_{n-1}s_{n-2} \dots s_1 \in S_n$. Then 
 	\[[\Perm] = \sum \{[X_u^{uc}]: \ell(uc) = \ell(u) + \ell(c)\} \]
 	and 
 	\[\Pi = \bigcup \{\Pi_u^{uc} :\ell(uc) = \ell(u) + \ell(c) \}\]
 	is a dissection of the permutahedron, meaning that the polytopes $\Pi_u^{uc}$ have pairwise disjoint interiors.
 \end{theorem}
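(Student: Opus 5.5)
The plan is to deduce both statements from one flat, $T$-equivariant degeneration of $\Perm$ inside $\Fl_n$. We realize $\Perm$ as the closure $\overline{T\cdot gB}$ of a generic torus orbit. Equivalently, via the Lusztig-variety description, it is $\{hB : h^{-1} x h \in \overline{B c B}\}$ for a regular semisimple $x\in T$. Here the Coxeter element $c=s_{n-1}\cdots s_1$ enters because the Lusztig variety for $c$ with regular semisimple $x$ is the permutahedral variety.

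\textbf{Step 1: the family.} Let $x$ degenerate along a one-parameter family $x_t \to x_0$, with $x_0$ regular unipotent or nilpotent-like. A better choice is to conjugate $x$ by a one-parameter subgroup so that it tends to the identity. Then $\{hB : h^{-1}x_t h\in \overline{BcB}\}$ has special fiber supported on $\bigcup \{ X_u^{uc}\}$. Concretely, the condition $h^{-1}x_0h \in \overline{BcB}$ for $hB \in X^u_\circ \cap X_{u'}$ forces the pair of cells to lie in the relative position $c$. The length-additive condition $\ell(uc)=\ell(u)+\ell(c)$ appears because only then does $X_u^{uc}$ have the expected dimension $n-1=\ell(c)$. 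Flatness follows because the total space is the closure of the generic part, and dimension counts show that every component of the special fiber has dimension $n-1$.

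\textbf{Step 2: the class formula.} Flatness gives $[\Perm] = [\text{special fiber}] = \sum m_u [X_u^{uc}]$. We must show each multiplicity $m_u$ equals $1$. Since Richardson varieties are reduced and the family is $T$-equivariant, it suffices to check generic reducedness at a single $T$-fixed point $u$ of each component. There we compute the tangent space in local coordinates of the opposite cell, where the equations are explicit and linear to leading order.

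\textbf{Step 3: the dissection.} Take moment polytopes. The degree of $\calL_\lambda$ on $\Perm$ equals $(n-1)!\,\mathrm{vol}(\Pi)$, and it equals $\sum (n-1)!\,\mathrm{vol}(\Pi_u^{uc})$. Each $\Pi_u^{uc}=\Phi(X_u^{uc})\subseteq \Phi(\Perm)=\Pi$, since the special fiber is a limit of $T$-translates and moment images of limits stay inside $\Pi$. Hence $\sum \mathrm{vol}(\Pi_u^{uc}) = \mathrm{vol}(\Pi)$. To conclude pairwise disjoint interiors we need a covering statement: every generic point of $\Pi$ lies in some $\Pi_u^{uc}$. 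This holds because the moment map of the special fiber is surjective onto the limit polytope $\Pi$, since moment images vary continuously in flat $T$-families. Volume equality together with covering then forces overlaps to have measure zero.

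The main obstacle is Step 1 together with the multiplicity computation, namely proving that the special fiber is exactly the reduced union of the $X_u^{uc}$ with $\ell(uc)=\ell(u)+\ell(c)$. I would first try this via $T$-fixed points and the Bruhat-cell stratification, reducing to a statement about which $B$-double cosets meet $\overline{BcB}$ under conjugation.
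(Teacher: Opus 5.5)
\textbf{The gap is Step 1, and Step 2 inherits it.} Neither degeneration you propose produces $\bigcup X_u^{uc}$.
\begin{itemize}
\item The conjugacy class of a regular semisimple $x$ is closed and does not contain $1$. So no family of conjugates of $x$ tends to the identity, and conjugation only translates the variety anyway, since $\calY_c(gxg^{-1})=g\,\calY_c(x)$.
\item If instead $x_t\to 1$ along some other path, the naive fiber is $\calY_c(1)=G/B$, which tells you nothing about the flat limit.
\item Taking $x_0$ regular unipotent gives, in type $A$, the Peterson variety. It is irreducible and not $T$-stable, so it is not a union of Richardson varieties.
\end{itemize}
The length-additivity heuristic only explains why those particular $X_u^{uc}$ have dimension $n-1$. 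It does not show that the flat limit is supported on their union, that every one of them occurs, or that each occurs with multiplicity one. Step 2 has nothing to compute with: without the right family there are no equations to linearize. Moreover, fixed points of a component often also lie on other components, where the special fiber is singular, so a tangent-space count there does not certify reducedness.

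\textbf{The missing idea.} Work in $(G/B)^2$, where $\calY_w(x)\cong (x,1)\bOm_1\cap\bOm_w$ is the graph of $x$ meeting a double Schubert variety. Degenerate $\bOm_w$ rather than $x$, by flowing the second factor with a regular dominant coweight $\sigma$. By Proposition~\ref{prop: degeneration of double Schubert}, the limit is the reduced Cohen--Macaulay union $\bigcup X_u\times X^{uw^{-1}}$ over length-additive $uw^{-1}$. In the intersected family, the fiber over $z\in\Gm$ is exactly $\calY_w(\sigma(z)t)$. So $x$ does move, but it runs off to infinity in $T$ in a regular dominant direction, not toward $1$ or a unipotent element. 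Since $X_u$ is $T$-stable, the special fiber is $\bigcup X_u^{uw^{-1}}$, and taking $w=c^{-1}$ gives your union.

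Flatness and multiplicity one then come from $K$-theory, not from dimension counts or local computations. Transversality at $z=1$, plus multiplicativity of $K$-classes for proper intersections of Cohen--Macaulay subschemes, shows the naive special fiber has the same $K$-class as the general fiber. Proposition~\ref{prop:flatness} then forces it to equal the flat limit.

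\textbf{On Step 3.} Once Steps 1--2 are repaired, your degree/volume plus covering argument is a legitimate, more elementary substitute for the paper's Duistermaat--Heckman plus GIT/Zariski-main-theorem argument. The paper's route yields the stronger subdivision statement. You must still justify $\deg X_u^{uc}=(n-1)!\,\mathrm{vol}(\Pi_u^{uc})$, i.e.\ that each $X_u^{uc}$ is toric with the same generic $T$-stabilizer as $\Perm$. A component with an extra finite stabilizer of order $d$ contributes only $\mathrm{vol}/d$, which would leave room for overlaps; compare the double-line example in \cref{sec:subdiv-from-degen}.
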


These results look like they could be a shadow of a single geometric result: an (embedded) torus-equivariant degeneration of $\Perm \subset \Fl_n$ to the union of Richardson varieties
\[\bigcup \{X_u^{uc}: \ell(uc) = \ell(u) + \ell(c)\}.\]
Indeed, such degenerations preserve cohomology classes, and when
torus-equivariant, preserve {\em Duistermaat-Heckman measures} and thereby
moment polytopes. 
So one might expect\footnote{and indeed, we prove in \cref{sec:subdiv-from-degen}} that if $Y$ degenerates to a union $\bigcup X_i$ of toric varieties, then the moment polytopes of the $X_i$ form a  subdivision of the moment polytope of $Y$. 
Lian \cite{Lian24} provided such a degeneration, but did not explain why the moment polytopes $\Pi_u^{uc}$ give a dissection
of the permutahedron.

In this paper, we show that the aforementioned formula for $[\Perm]$,
decomposition of the permutahedron, and degeneration of the
permutahedral variety are in fact part of a larger collection of
results that extend beyond type $A$ and to a more general class of
varieties.

\subsection{Degenerations of Lusztig varieties}

  Let $G$ be a linear algebraic group with Weyl group $W$ and
$\mathbbm{k}$ be an algebraically closed field. Given $x \in G$ and
$w \in W$, the \defn{Lusztig variety} $\mathcal{Y}_w(x)$ is the
variety
 \[\calY_w(x) := \{gB/B \in G/B \; \lvert \; g^{-1} x g \in \overline{BwB}\}.\]
We will focus on the regular semisimple Lusztig varieties,
meaning those where $x$ is a regular semisimple element.

In type $A_n$, all regular semisimple Hessenberg varieties are Lusztig
varieties, and in other types it was recently shown that there are
degenerations of Lusztig varieties to Hessenberg varieties
\cite{BHL25-geomLusztig}. When $w \in W$ is a Coxeter
element\footnote{that is, $w$ has a reduced expression in which each
  simple reflection appears exactly once} and $s$ is regular
semisimple, $\calY_w(s)$ is a permutahedral variety, meaning it is
toric with moment polytope the $W$-permutahedron.

Our main theorem on Lusztig varieties is the following. For $u,w \in W$, we call $uw$ \defn{length-additive} if $\ell(uw)= \ell(u) + \ell(w)$.

\begin{mainthm}\label{mainthm: Degeneration of Lusztig Varieties}
	\begin{enumerate}
		\item For $t \in T$ general enough, we have an embedded degeneration of
$\calY_w(t)$ to the reduced union 
\[\calX_w:=\bigcup\, \{  X_{u}^{uw^{-1}} \ \colon\
u \in W, \  uw^{-1} ~\text{length-additive} \}     \]
in $G/B$.

\item  For $s \in G$ regular semisimple, we have the equality $[\calY_w(s)] = [\calX_w]$ in $K_0(G/B)$ and
\[[\calY_w(s)] = \sum\{[X_{u}^{uw^{-1}}] \ \colon\
u \in W, \  uw^{-1} ~\text{length-additive} \}\]
in $H^*(G/B)$. If $s\in T$, the equalities also hold in $K^T_0(G/B),H^*_T(G/B)$
respectively. 
\end{enumerate}
\end{mainthm}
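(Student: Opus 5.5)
A natural route is to realize $\calY_w(t)$ for $t\in T$ as a fiber of a family and to degenerate $t$ toward the boundary of the torus. Consider the family over $T$ (or over a one-parameter subgroup)
\[\mathcal{F} = \{(gB, t) : g^{-1} t g \in \overline{BwB}\} \subset G/B \times T.\]
Choose a dominant regular cocharacter $\lambda$ and set $t = \lambda(z)\,t_0$. Conjugation by $\lambda(z)$ acts on $G/B$, so the fiber over $\lambda(z)t_0$ is not just a translate of a fixed variety, and a direct limit is not available. Instead I would take the flat closure of $\{(gB,z): g^{-1}\lambda(z)t_0 g\in\overline{BwB}\}$ over $\mathbb{A}^1\setminus 0$ inside $G/B\times\mathbb{A}^1$ and compute the fiber at $z=0$ (or $z=\infty$). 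Flatness over a one-dimensional base is automatic for the closure. This gives part (1) once the special fiber is identified with $\calX_w$ scheme-theoretically, and part (2) follows formally, since flat families preserve $K_0$-classes and hence cohomology classes. For $s\in T$ the whole family is $T$-stable, because $T$ acts on $G/B$ and commutes with $\lambda(z)t_0$, so the equalities lift to $K_0^T$ and $H^*_T$. For general regular semisimple $s$, conjugate $s$ into $T$; $G$ is connected, so translation acts trivially on classes. The "general enough" hypothesis enters only in choosing $t_0$ so that the family is nice.

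To identify the special fiber I would work on the Bruhat cells $BuB/B$ and write $g=u\,n$ with $n\in U$. Then $g^{-1}tg = n^{-1}(u^{-1}tu)\,n$, and as $z\to 0$ the torus element $u^{-1}\lambda(z)u$ dominates. The condition $g^{-1}tg\in\overline{BwB}$ should degenerate to a condition comparing $gB$ with $t$-translates. Geometrically, $g^{-1}tg\in \overline{BwB}$ says that the pair $(gB, tgB)$ is in relative position $\le w$. As $t=\lambda(z)t_0\to$ the boundary, $tgB$ flows to the $\lambda$-attracting fixed point of the cell containing $gB$, and the relative position condition becomes a condition on the pair $(gB, u B)$ and on the opposite cell. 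This should yield exactly the points of $X_u^{uw^{-1}}$ with $uw^{-1}$ length-additive: the $\le w$ relative-position condition between a point in the $u$-cell and its limit forces $gB\in \overline{B_- uw^{-1}B}$-type containment, and length-additivity is what makes the Richardson variety $X_u^{uw^{-1}}$ have the correct dimension $\ell(w)$. So I expect the set-theoretic inclusion of the limit into $\calX_w$ to follow from this Bruhat-cell analysis, using the closure order and the Deodhar / Bruhat-product properties (for instance $\overline{BuB}\,\overline{BvB}=\overline{B(u*v)B}$ with the Demazure product).

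The main obstacle is showing that the special fiber is exactly the \emph{reduced} union $\calX_w$: both the reverse containment and reducedness. For the components, the fiber is equidimensional of dimension $\ell(w)$ by flatness, and each $X_u^{uw^{-1}}$ with $uw^{-1}$ length-additive has that dimension. To show each one appears, I would exhibit curves in the general fiber limiting to a generic point of $X_u^{uw^{-1}}$. Reducedness is harder. I would compare degrees or classes: compute $[\calY_w(t)]$ independently, or, better, pair both sides against Schubert classes to show that the multiplicities of the components are all $1$. The cleanest way I would try first is a local computation at a generic point of each Richardson component, namely the $T$-fixed point $u$ side. There the family is cut out in the Kazhdan--Lusztig chart by equations whose $z\to0$ limits are the defining equations of $X_u^{uw^{-1}}$ with nonzero Jacobian, giving generic reducedness. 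Unions of Richardson varieties are Cohen--Macaulay and reduced (Frobenius split), and the flat limit is $S_2$-like in this situation, so generic reducedness would suffice to conclude that the limit scheme equals $\calX_w$.
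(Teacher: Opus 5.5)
Your family is in fact the paper's. Under $gB\mapsto(t_0gB,gB)$, the variety $\calY_w(\lambda(z)t_0)$ is identified with $(t_0,1)\bOm_e\cap(1,\lambda(z))\bOm_w\subseteq(G/B)^2$, and this is exactly the fiber of the paper's intersected family with $\sigma=\lambda$. (You should take the closure of the generic fiber only, since fibers jump at the finitely many $z$ where $\lambda(z)t_0$ is not regular.)

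The gap is that all the content lies in identifying the special fiber, and your plan does not do it. The Bruhat-cell heuristic follows $\lambda(z)t_0gB$ for a \emph{fixed} $g$. Points of a flat limit, however, are limits of points $g(z)B$ that move with $z$. So the heuristic proves neither that the limit lies in $\calX_w$ nor that only length-additive $uw^{-1}$ occur, and the curves into each $X_u^{uw^{-1}}$ are never produced.

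Your chart computation, granted, gives at most multiplicity one along each component. It cannot even be done at the fixed point $u$ itself, since that point typically lies on several components: for $w^{-1}=s_1s_2$ in $S_3$, the point $s_2$ lies in $X_e^{s_1s_2}\cap X_{s_2}^{w_0}$. In any case, multiplicity one does not rule out embedded components of the limit. No $S_2$ property of flat limits is available here. Frobenius splitting gives reducedness of $\calX_w$ but says nothing about a nonreduced scheme whose reduction is $\calX_w$. Chow and cohomology classes cannot detect embedded components. What would close this is a Hilbert-polynomial comparison, i.e.\ an independent computation of $[\calO_{\calY_w(t_0)}]\in K_0(G/B)$, and your outline has no way to produce one.

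The missing idea is to never take the limit of $\calY_w$ directly, only of the moving factor $(1,\lambda(z))\bOm_w$. Its flat limit is known (Proposition~\ref{prop: degeneration of double Schubert}) to be the reduced Cohen--Macaulay union $\bigcup X_u\times X^{uw^{-1}}$ over length-additive $uw^{-1}$.

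The limit of the intersected family is then a closed subscheme of $(t_0,1)\bOm_e\cap\bigcup X_u\times X^{uw^{-1}}$. Since $X_u$ is $T$-stable, this intersection projects onto $\bigcup X_u\cap X^{uw^{-1}}=\calX_w$. That gives the containment and the length-additivity for free.

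Equality of schemes then follows from $K$-classes. For general $t_0$ the intersection at $z=1$ is transverse (Kleiman; this is where generality of $t_0$ is actually used). So its class is $[(t_0,1)\bOm_e]\cdot[\bOm_w]=[(t_0,1)\bOm_e]\cdot[\bigcup X_u\times X^{uw^{-1}}]$. The right side is the class of the intersection, because it is a proper intersection of Cohen--Macaulay schemes. A closed subscheme with the same Hilbert polynomial is the whole scheme, as in Proposition~\ref{prop:flatness}. This one computation settles support, multiplicities, and embedded components at once.

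Finally, for (2), conjugating $s$ into $T$ gives \emph{some} regular element of $T$, not a general one. You still need that $[\calY_w(s)]$ is independent of the regular semisimple $s$. This follows from Kim's result, or from flatness of $\{(gB,s)\colon g^{-1}sg\in\overline{BwB}\}$ over the regular locus of $T$, and your argument does not supply it.
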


We draw attention to recent work \cite{bergeron2026coxeterflagvariety}
introducing a space related to $\calX_w$, the union in (1), when $w$ is a Coxeter element $c$:
\[\bigcup\, \{  u^{-1} \cdot X_{u}^{uc^{-1}} \ \colon\
u \in W, \  uc^{-1} ~\text{length-additive} \}.     \]
This union is called the {\em $c$-Coxeter
	flag variety} in \cite{bergeron2026coxeterflagvariety}. The authors of that work use it to give geometric interpretations of
quasisymmetric functions.

We obtain from \cref{mainthm: Degeneration of Lusztig Varieties} many expressions for the class $[\Perm]$, by setting $w^{-1}$ to be a Coxeter element. We also obtain somewhat sharper results for the $K$-class. Below, we use $u* w$ to denote the Demazure product of $u$ and $w$. 

\begin{mainthm}\label{mainthm:permutahedral-class}
   	    Let $c$ be a Coxeter element in $W$. Then in $H^*(G/B)$, 
   \[ [\Perm] = \sum \{ [X_{u}^{uc}] \colon u\in W, uc\text{ length-additive} \}. \]
   In $K_0(G/B)$, 
   \[[\Perm] = \sum (-1)^{\ell(c)-\ell(z) + \ell(y)+1} [X_y^z]\]
   where the sum is over all $y \leq z$ such that $z \leq y *c$ and $[y,z]$ is not contained in a single coset $W_Jw$ of a maximal parabolic $W_J$. 
\end{mainthm}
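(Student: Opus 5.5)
The cohomology statement will follow from Theorem~\ref{mainthm: Degeneration of Lusztig Varieties}(2) applied with $w=c^{-1}$. Since $c$ is a Coxeter element, $c^{-1}$ is one as well, so $\calY_{c^{-1}}(s)$ is a permutahedral variety. Using that all generic torus orbit closures in $G/B$ have the same class, we get $[\Perm]=[\calY_{c^{-1}}(s)]$. Theorem~\ref{mainthm: Degeneration of Lusztig Varieties}(2) then gives $\sum\{[X_u^{uc}] : uc \text{ length-additive}\}$ in $H^*(G/B)$ directly.

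For the $K$-theory statement, $[\Perm]=[\calX_{c^{-1}}]$ in $K_0(G/B)$, where $\calX_{c^{-1}}=\bigcup_{u} X_u^{uc}$ is a reduced union. I will compute the class of this union by inclusion–exclusion over intersections, in the form of a Möbius-type formula.

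The first step is to understand the intersections. An intersection of Richardson varieties $X_{u_i}^{u_ic}$ is again a Richardson variety $X_y^z$, with $y$ the join of the $u_i$ and $z$ the meet of the $u_ic$ when these exist. Richardson varieties form a stratification in which all intersections are reduced and Richardson. I therefore plan to use the standard fact that for a union of closed subvarieties of a stratified space, where every intersection is a union of strata and the whole configuration is Frobenius split (as holds for Richardson varieties in $G/B$, compatibly split), $K$-classes satisfy inclusion–exclusion:
\[[\calX]=\sum_{X_y^z\subseteq\calX}\mu(X_y^z)\,[X_y^z].\]
Here $\mu$ is computed from the poset $P$ of intersections of the components, ordered by inclusion, with $\mu(X_y^z)=-\mu_P(X_y^z,\hat 1)$. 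The Richardsons that appear are exactly those $X_y^z$ contained in some $X_u^{uc}$, that is, $y\ge u$ and $z\le uc$ for some $u$. I expect this to be equivalent to $z\le y*c$, using monotonicity of the Demazure product and the fact that $u*c=uc$ in the length-additive case.

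The main obstacle is computing this Möbius function. I expect it to equal $(-1)^{\ell(c)-\ell(z)+\ell(y)+1}$ when $[y,z]$ is not contained in a coset of a maximal parabolic, and $0$ otherwise. The sign is codimension parity, since $\dim X_y^z=\ell(z)-\ell(y)$ and the top components have dimension $\ell(c)$.

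My first approach is to pass to moment polytopes. The face structure of the dissection of the $W$-permutahedron into the Bruhat interval polytopes $\Pi_u^{uc}$ should make the poset $P$ the face poset of a polyhedral complex subdividing the permutahedron; by Section~\ref{sec:subdiv-from-degen} this is a genuine subdivision. Under this identification, $X_y^z$ corresponds to the cell $\Pi_y^z$, and $\mu$ becomes the Euler-characteristic contribution of that cell. For an interior cell of a subdivision of a ball, this contribution is $\pm1$ by the sign rule. For a cell lying on the boundary of the permutahedron, the contribution is $0$, because the link is a ball rather than a sphere.

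Finally, I will identify the boundary cells. $\Pi_y^z$ lies in a facet of the permutahedron exactly when all elements of $[y,z]$ lie in a single coset $W_Jw$ of a maximal parabolic subgroup, since facets of the permutahedron correspond to such cosets. Checking that this boundary criterion matches the coset condition precisely, and that the local link computation is valid for a subdivision that is not necessarily regular, is where I expect the real work to be.
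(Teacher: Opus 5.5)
Your outline is essentially the paper's proof. Cohomology comes from Theorem~\ref{thm:Hclass}(2) with $w=c^{-1}$ together with Proposition~\ref{prop:lusztig-permutahedral}. For $K$-theory the paper applies Knutson's Frobenius-splitting M\"obius inversion to the poset $\calP(c^{-1})=\{[y,z]\colon z\le y*c\}$ of \emph{all} Richardsons contained in $\calX_{c^{-1}}$, which avoids any need for joins or meets in Bruhat order. It identifies that poset with the face poset of the subdivision in Theorem~\ref{thm:subdivision-Coxeter}, and then invokes Stanley's formula for the M\"obius function of a regular cell complex that is a manifold with boundary. Your worry about the subdivision not being regular does not arise: every polyhedral subdivision is a regular CW complex, and that is all the link computation uses.

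However, your own normalization does not produce the sign you wrote down. Take $\mu(X_y^z)=-\mu_{\hat P}(X_y^z,\hat 1)$. The link of an interior cell of codimension $d$ is $S^{d-1}$, so Hall's theorem gives $\mu_{\hat P}(X_y^z,\hat 1)=(-1)^{d-1}$. The coefficient is therefore $(-1)^{d}=(-1)^{\ell(c)-\ell(z)+\ell(y)}$, with no $+1$ in the exponent. Facets must receive coefficient $+1$ by inclusion--exclusion. Concretely, in type $A_1$, where $\Perm=X_e^s=G/B$, the formula as stated would read $[\calO_{G/B}]=-[\calO_{G/B}]$. The paper's proof makes the same slip: a footnote identifies $\mob$ with $\mu_{\hat\calP}(\cdot,\hat 1)$ rather than its negative. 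So your argument proves the formula with exponent $\ell(c)-\ell(z)+\ell(y)$.

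There is a similar issue with the cosets. Since $\Pi_y^z$ is the convex hull of the points $x\cdot\pt$, a facet of $\Pi$ has vertex set $\{x\cdot\pt\colon x\in wW_J\}$, which is a \emph{left} coset. For example, in type $A_2$ the boundary edge $\Pi_{s_2}^{s_2s_1}$ lies in $s_2W_{\{1\}}$ but in no right coset $W_Jw$ of a maximal parabolic. The boundary criterion you need to verify is therefore containment of $[y,z]$ in a single coset $wW_J$, and this follows directly from convexity.
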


\subsection{Subdivisions of Coxeter permutahedra}

When $c$ is a Coxeter element, one can show that the union $\calX_c$
from \cref{mainthm: Degeneration of Lusztig Varieties} consists of
toric Richardson varieties. That is, the degeneration
$\calY_c(s) \to \calX_c$ is a semitoric degeneration of a toric
variety. It is well-known that any \emph{Gr\"obner} degeneration of a
projective toric
variety\footnote{such degenerations are always semitoric} gives rise to a
regular subdivision of the moment polytope of the general fiber into
the moment polytopes of the components of the special fiber
\cite{sturmfels1991grobner,zhu2012degenerations}. 
The degeneration of \cref{mainthm: Degeneration of Lusztig Varieties} is
not a Gr\"obner degeneration (specifically, there is no larger torus
acting on the ambient space $G/B$ than already acts on the general
fiber). However, the dissection from \cref{thm:prior-work} indicates
that it still may give rise to a polyhedral subdivision. We show that
this is the case: any semitoric degeneration $Y \to \cup X_i$ of a
toric variety $Y$ gives rise to a subdivision of the moment polytope
of $Y$ into the moment polytopes of the $X_i$ (see
\cref{thm:subdivision-general}). We then use this to obtain the
following combinatorial result.

\begin{mainthm}\label{mainthm: Permutahedron Subdivisions}
    Let $c \in W$ be a Coxeter element. Then the Bruhat interval polytopes
    \[\{\Pi_u^{uc}: u \in W, uc\text{ length-additive}\}\]
    give a subdivision of the $W$-permutahedron $\Pi$. The faces appearing in this subdivision are the Bruhat interval polytopes $\Pi_y^z$ where $z \leq y * c$.
\end{mainthm}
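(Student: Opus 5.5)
We will deduce the statement from \cref{mainthm: Degeneration of Lusztig Varieties} together with the general principle, stated in the introduction and proved as \cref{thm:subdivision-general}, that a torus-equivariant semitoric degeneration of a projective toric variety yields a polyhedral subdivision of its moment polytope. That subdivision has as maximal cells the moment polytopes of the components of the special fiber.

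\emph{Step 1: the degeneration.} Apply \cref{mainthm: Degeneration of Lusztig Varieties}(1) with $w=c^{-1}$, so that $w^{-1}=c$; since $c^{-1}$ is also a Coxeter element, $\calY_{c^{-1}}(t)$ is a permutahedral variety. This gives a $T$-equivariant embedded degeneration of $\calY_{c^{-1}}(t)$ to $\calX_{c^{-1}}=\bigcup X_u^{uc}$, the union over $u$ with $uc$ length-additive. The general fiber is a generic $T$-orbit closure in $G/B$, so its moment polytope is the $W$-permutahedron $\Pi$.

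\emph{Step 2: the special fiber is semitoric.} Each $X_u^{uc}$ is a toric Richardson variety. The interval $[u,uc]$ has length $\ell(c)=\operatorname{rank}$, and its atoms are the elements $u t$ for reflections $t$, which are linearly independent, giving $\dim \Pi_u^{uc}=\ell(c)=\dim X_u^{uc}$. By the standard criterion (Tsukerman--Williams / Lee--Masuda), a Richardson variety is toric exactly when its Bruhat interval polytope has full dimension $\ell(v)-\ell(u)$. We also need the hypotheses of \cref{thm:subdivision-general} on the special fiber, namely reducedness, which \cref{mainthm: Degeneration of Lusztig Varieties} provides, and the stated equivariance. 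Applying \cref{thm:subdivision-general} then shows that the $\Pi_u^{uc}$ form a polyhedral subdivision of $\Pi$.

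\emph{Step 3: identifying the faces.} The faces of the cells of the subdivision are the moment polytopes of the $T$-stable toric subvarieties (torus orbit closures) of the components. Faces of a Bruhat interval polytope $\Pi_u^{v}$ are again Bruhat interval polytopes $\Pi_y^z$ with $u\le y\le z\le v$ (Tsukerman--Williams). So every face is some $\Pi_y^z$ with $[y,z]\subseteq[u,uc]$ for some length-additive $uc$.

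It remains to show that the set of such $\Pi_y^z$ equals $\{\Pi_y^z : z\le y*c\}$. If $y\ge u$ then $y*c\ge u*c=uc$ by monotonicity of the Demazure product, so any $z\le uc$ satisfies $z\le y*c$. Conversely, suppose $z\le y*c$. Write $y*c=yc'$ with $c'$ a subword of $c$ and $yc'$ length-additive. I would try to produce $u\le y$ with $uc$ length-additive and $uc\ge y*c$. The natural attempt is to peel off from $y$ the letters of $c$ that are absorbed into $y$, using a right-descent argument and induction on $\ell(c)-\ell(c')$.

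One must also check that the identification uses $\Pi_y^z$ as a genuine face, and not merely as a Bruhat interval polytope contained in a cell. The subdivision theorem handles this once we know that $T$-orbit closures in $X_u^{uc}$ correspond to subintervals $[y,z]$ whose polytope is a face. This correspondence holds for toric Richardsons because each face $\Pi_y^z$ of $\Pi_u^{uc}$ has vertex set exactly $[y,z]$.

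I expect the main obstacle to be this combinatorial converse: every $[y,z]$ with $z\le y*c$ lies inside some maximal interval $[u,uc]$. The first approach I would try is to choose $u$ minimal in the coset-like sense: take $u=y\cdot(\text{inverse of absorbed letters})$, proceeding by induction on the rank with $c=s\,c''$ and splitting into cases according to whether $s$ is a left descent of $y$.
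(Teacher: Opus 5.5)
You follow the paper's route for the subdivision itself: the degeneration of \cref{thm:Hclass} with $w=c^{-1}$, fed into \cref{thm:subdivision-general}. Your monotonicity argument for $[y,z]\subseteq[u,uc]\Rightarrow z\le y*c$ is also correct. \textbf{The gap is the converse}, which you leave unproved: every $[y,z]$ with $z\le y*c$ must lie in some $[u,uc]$ with $uc$ length-additive. Without it the description of the faces is only half established.

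The missing idea is to absorb letters of $y$ into $c$, not letters of $c$ into $y$. Since $y*c$ is the Bruhat-maximal element of $\{y'c : y'\le y\}$, compute it from a reduced word $\mathbf{y}$ as follows. Multiply the letters of $\mathbf{y}$ onto $c$ from the left, going right to left, and keep a letter only when it increases length. The kept letters form a subword of $\mathbf{y}$, so this yields $y'\le y$ with $\ell(y'c)=\ell(y')+\ell(c)$ and $y'c=y*c$. Then $u:=y'$ satisfies $u\le y\le z\le uc$; this is \cref{lem:interval-poset-Demazure-prod}.

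Your decomposition $y*c=yc'$, with $c'$ a subword of $c$, is the left-to-right computation and peels off the wrong factor. The $u$ you need is $(y*c)c^{-1}=yc'c^{-1}$. This is not in general $y$ times the inverses of the absorbed letters, because absorbed and kept letters of $c$ interleave. Proving $u\le y$ and length-additivity of $uc$ from that description would need the right-to-left fact anyway.

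There are three smaller points.
\begin{enumerate}
\item In Step~2, the claim that the atoms of $[u,uc]$ give linearly independent directions is false. In $S_4$ take $c=s_2s_1s_3$ and $u=s_1s_3$. Then $uc=s_1s_2s_3s_2s_1$ is length-additive, yet $[u,uc]$ has the four atoms $s_1s_2s_3$, $s_3s_2s_1$, $s_1s_3s_2$, $s_2s_1s_3$. That is four edges at a vertex of a $3$-dimensional polytope. The step is unnecessary: \cref{thm:subdivision-general} already gives each $X_u^{uc}$ a dense $T$-orbit, and Richardson varieties are normal. If you want a direct dimension count, use the weak-order chain $u\lessdot us_{i_1}\lessdot\cdots\lessdot uc$. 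After applying $u^{-1}$, its edge directions $s_{i_1}\cdots s_{i_k}\alpha_{i_{k+1}}$ are unitriangular in the simple roots.
\item In Step~3, the reason you give (each face $\Pi_y^z$ has vertex set $[y,z]$) does not show that every subinterval of $[u,uc]$ yields a face. Instead, note that $X_y^z$ is an irreducible $T$-stable closed subvariety of the toric variety $X_u^{uc}$, hence an orbit closure. Alternatively, use the counting argument of \cref{lem:toric-BIP-facts}(2).
\item The moment-map argument only covers integral $\pt$ (and, after rescaling, rational $\pt$). The paper reaches arbitrary $\pt\in\BB$ via \cite[Theorem 7.6]{BDLSB}.
\end{enumerate}
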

When $c$ is the Coxeter element $s_{n-1} \cdots s_2s_1$ in type $A_n$,
this recovers the HHMP decomposition.

We also show that in type $A$ and types $B/C$, the subdivision in \cref{mainthm: Permutahedron Subdivisions} is \emph{regular}, by exhibiting an explicit height vector which induces the subdivision. The definition of this height vector is inspired by total positivity. Indeed, in type $A_n$, the subdivisions of \cref{mainthm: Permutahedron Subdivisions} correspond to $2^{n-1}$ maximal cones in the positive tropical flag variety, which coincides with the positive flag Dressian (see \cref{ssec:rel-to-matroid-polytopes} for more detail).

The paper is structured as follows. In \cref{sec:degen-and-class-formulas}, we discuss Lusztig varieties and each one's degeneration to a union of Richardson varieties. We conclude formulas for cohomology and $K$-classes, and conjecture an even simpler $K$-class formula. In \cref{sec:subdiv-from-degen}, we show that degenerations of toric varieties give rise to subdivisions of moment polytopes. In \cref{sec:subdiv-permutahedron}, we use the results of \cref{sec:degen-and-class-formulas} and \cref{sec:subdiv-from-degen} to obtain many subdivisions of the permutahedron into Bruhat interval polytopes, one for each Coxeter element $c$. In \cref{sec:regularity-type-A} and \cref{sec:regularity-BC}, we show in types $ABC$ that these subdivisions are regular. \cref{sec:regularity-type-A} also discusses the relation between the subdivisions obtained here and the positive tropical flag variety/positive flag Dressian, and the number of cells in the subdivisions, which depends on the choice of $c$.

  \subsection{Acknowledgements} MSB would like to thank Alejandro Morales for helpful discussions.
  AK was partially supported by the National Science Foundation under
  Award No.~2246959.
  MSB was partially supported by the National Science Foundation under
  Award No.~2444020.

\section{Degenerations and class formulas} \label{sec:degen-and-class-formulas}

In this section, we obtain embedded degenerations of \emph{Lusztig varieties}
in $G/B$, and use this to give formulas for the cohomology and
$K$-classes of Lusztig varieties. As a corollary, we obtain many
semitoric degenerations of the permutahedral variety in $G/B$, and thereby
obtain many formulas for the cohomology and $K$-class of the
permutahedral variety, one for every Coxeter element $c \in W$.

For our purposes, a \textbf{degeneration} of $Z_1$ to $Z_0$ will be a flat
(or more precisely, free\footnote{%
  It is very strange that the adjective ``flat'' is standardly chosen,
  as it allows for disturbing families such as $\Gm \into \AA^1$.
  Really, one wants $\calO_\calF$ to be a free module over the
  pullback of $\calO_{\AA^1}$. This stronger condition is automatic for
  flat {\em projective} families over $\AA^1$, tracing eventually to the fact that
  finitely generated torsion-free modules over a PID are free.})
family $\calF \to \mathbb{A}^1$ where the fiber
$\calF_1$ is isomorphic to $Z_1$ and the ``special fiber''
$\calF_0$ is isomorphic to $Z_0$. (Sometimes one wants all
fibers $\calF_{t\neq 0}$ to be isomorphic, typically in order to state
an inequality relating some level of singularity of $Z_1$ vs. $Z_0$,
but our results are not of that sort and we won't ask this.)
If a group $G$ acts on $\calF$, and the map $\calF\to\AA^1$ is $G$-invariant,
call the family \defn{$G$-invariant}.
If $Z_1$ and $Z_0$ are closed subschemes of $X$,
and the degeneration $\calF$ is a closed subscheme inside the trivial family
$X \times \AA^1 \to \AA^1$, we call $\calF$ an \defn{embedded degeneration}. It will be clear from context what the larger scheme $X$ is.
Most commonly, one obtains embedded degenerations $\calF$ using\footnote{%
  Note that the map $\calF \to \AA^1$ is $\Gm$-{\em equivariant} not
  $\Gm$-{\em invariant}. 
  These degenerations really deserve to be called ``Gr\"obner degenerations'',
  though that traditionally covers only the case where $X = \AA^n$
  and $\Gm$ acts by diagonal matrices, corresponding to a term order
  determined by an integer weighting on the variables.}
a $\Gm$-action on $X$ (hence diagonally on $X\times \AA^1$),
taking $\calF := \overline{\Gm\cdot (Z_1 \times \{1\})}$;
however our later embedded degenerations will {\em not} be of this sort.

Our interest in embedded degenerations stems from the following result.

\junk{
\begin{proposition}\label{prop:sameclass}
  Let $X$ be a projective scheme with two closed subschemes
  $Z_1$ and $Z_0$. If there is an embedded degeneration from $Z_1$ to $Z_0$,
  then
  \[[Z_1] = [Z_0] \quad \text{in $A_*(X)$ or $H_*(X)$}, \]
  and
  \[[\mathcal{O}_{Z_1}] = [\mathcal{O}_{Z_0}] \quad \text{in $K_0(X)$}.
    \qquad\qquad\quad
  \]
  If $\calF$ is a $G$-invariant degeneration, then the equalities hold also for
  the equivariant classes in $K^G_0(X),A_*^G(X),H_*^G(X)$.
\end{proposition}
}

\begin{proposition}\label{prop:flatness}
  Let $\calF \subseteq X \times \AA^1$ be closed, and $X$ be projective.
  Then $\calF$ is flat over $\AA^1$ if and only if each fiber defines
  the same class in $K_0(X)$. In this case the fibers also define the same
  class in $A_*(X)$ and (in the case $\FF=\CC$) in $H_*(X)$.  
  If $\calF$ is a $G$-invariant degeneration, then all fibers define the same equivariant classes in $K^G_0(X),A_*^G(X),H_*^G(X)$.

  If the fibers are equidimensional, reduced, and have the same $A_*(X)$-class, then the family is flat. The same holds for families $\calF$ with a {\em finite map} to
  $X\times\AA^1$, not necessarily an inclusion
  (a.k.a. a family of ``branchvarieties'' \cite{alexeev2010complete}
  rather than subschemes). 
  
\end{proposition}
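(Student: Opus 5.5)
The plan is to reduce everything to the behavior of Hilbert polynomials and pushforwards along the projective family $\pi\colon \calF \to \AA^1$. Fix an ample line bundle $\calO_X(1)$ on $X$. Since $\AA^1$ is integral and $\calF\to\AA^1$ is projective, the standard criterion (Hartshorne III.9.9) says $\calF$ is flat over $\AA^1$ if and only if the Hilbert polynomials $P_{\calF_t}(m)=\chi(\calO_{\calF_t}(m))$ are independent of $t$. To connect this with $K_0(X)$, consider the pushforward class of $\calO_{\calF_t}$ and the pairing $K_0(X)\times K^0(X)\to\ZZ$ given by $\chi(\,\cdot\otimes\cdot\,)$. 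Pairing $[\calO_{\calF_t}]$ with $\calO(m)$ recovers $P_{\calF_t}$. Hence equality of $K_0$-classes implies equality of Hilbert polynomials, and so flatness.

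For the converse, and for the Chow, homology and equivariant statements, assume $\calF$ is flat. I would use that for flat $\calF$ over the smooth curve $\AA^1$, the fiber $\calF_t$ is the pullback of $\calF$ along the regular embedding $\{t\}\into\AA^1$, and there is no Tor. Thus $[\calO_{\calF_t}]=i_t^*[\calO_\calF]$ in $K_0(X\times\AA^1)\to K_0(X)$, where $i_t\colon X\to X\times\AA^1$ is the inclusion. Homotopy invariance $K_0(X)\cong K_0(X\times\AA^1)$ (via flat pullback $p^*$) then shows that $i_t^*$ is independent of $t$. Indeed $i_t^*\circ p^*=\mathrm{id}$, so $i_t^*=(p^*)^{-1}$ for every $t$.

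The same argument gives the $A_*(X)$ statement, using Fulton's specialization (Gysin maps for the divisor $X\times\{t\}$, Fulton Prop.~10.1) together with $A_*(X\times\AA^1)\cong A_{*-1}(X)$. Over $\CC$ it gives the $H_*$ statement via the cycle class map, or via the Chow result. Equivariantly everything is $G$-compatible because the family is $G$-invariant, so $i_t$ and $p$ are $G$-maps. The equivariant homotopy invariance holds in $K^G_0$, $A^G_*$ (Edidin--Graham, via approximations $X\times_G U$), and $H^G_*$. I expect this equivariant $K$-theory homotopy invariance (Thomason) to be the main point requiring citation rather than proof.

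For the final claim, suppose the fibers are reduced and equidimensional of dimension $d$ with equal $A_*$-classes. Let $\calF'$ be the closure of $\pi^{-1}(\AA^1\setminus\{0\})$ near a given point, or more simply of $\calF$ minus any embedded components over special points. Then $\calF'$ is flat, being torsion-free over the PID, and its fiber $\calF'_t\subseteq\calF_t$ has the same class by the above. The cycle $[\calF_t]-[\calF'_t]$ is then an effective $d$-cycle with class zero in $A_d(X)$. Because $X$ is projective, such a cycle has degree $0$ against $\calO(1)^d$ and so vanishes. Hence $\calF'_t$ and $\calF_t$ have the same underlying cycle. Since $\calF_t$ is reduced and equidimensional, it has no embedded components, so $\calF'_t=\calF_t$ and $\calF=\calF'$ is flat.

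This step, upgrading the cycle-level equality to a scheme-level one, is the delicate part. It is exactly where reducedness is essential, and I would carry it out carefully. For a finite map $\calF\to X\times\AA^1$, I would run the identical arguments using pushforward of $\calO_\calF$. Finiteness makes pushforward exact and preserves both the Hilbert polynomial with respect to the pulled-back ample bundle and flatness over $\AA^1$.
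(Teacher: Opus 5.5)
Your proof is correct and has the same skeleton as the paper's: pullback of $[\calO_\calF]$ along $X\times\{t\}\into X\times\AA^1$ for ``flat $\Rightarrow$ equal classes'', Hilbert polynomials for the converse, and the scheme-theoretic closure $\calF'$ of the generic fiber for the last criterion. Where you differ is in what you cite versus what you prove.

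For ``equal $K_0$-classes $\Rightarrow$ flat'' you pair with $\calO(m)$ and cite Hartshorne III.9.9. The paper instead reproves the half of that theorem it needs: if $\calF$ is not flat then $\calF'$ is a proper subscheme of $\calF$, so some fiber $\calF'_t$ is properly contained in $\calF_t$, and the Hilbert polynomials in $K_0(\PP^N)$ jump. If you cite III.9.9, note that it also requires constancy at the generic point; generic flatness supplies this.

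For the Chow statement you use Fulton's specialization together with $A_*(X\times\AA^1)\cong A_{*-1}(X)$, whereas the paper passes to the associated graded of $K_0$. Your route is the more robust one, since $A_d(X)\to\mathrm{gr}_d K_0(X)$ can fail to be injective in general (its kernel is torsion); this is harmless for $G/B$.

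You also make explicit two points the paper leaves tacit. One is the homotopy invariance behind its implicit $i_0^*=i_1^*$. The other is the positivity argument (an effective $d$-cycle with zero class on projective $X$ vanishes) behind its ``must be of lower dimension''.

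Two small things to tidy up:
\begin{enumerate}
\item $\calF'$ must kill all $\calO_{\AA^1}$-torsion, i.e.\ every primary component lying over a closed point, not only the embedded ones. An isolated component can sit inside a fiber.
\item Passing from $\calF'_t=\calF_t$ for all $t$ to $\calF'=\calF$ needs one line. With $\mathcal I$ the ideal of $\calF'$ in $\calF$, flatness of $\calF'$ gives $\mathcal I/(z-t)\mathcal I\into\calO_\calF/(z-t)$. Hence $\mathcal I/(z-t)\mathcal I=0$ for every $t$, and Nakayama gives $\mathcal I=0$.
\end{enumerate}
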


We will not make direct use of the latter paragraph (which is closely related to \cite[Lemma 1.7.5]{knutson2005grobner}),
but it was easy enough to include. We note that in the latter paragraph, all fibers having the same $A_*(X)$-class is \emph{not} enough to imply the family is flat.

\begin{proof}
  Assume first that the family is flat.
  Flatness at $t \in \AA^1$ implies that the pullback of
  $[\calF] \in K_0(X \times \AA^1)$ along the inclusion
  $X\times \{t\} \into X\times \AA^1$ gives $[Z_t] \in K_0(X\times \{t\})
  \iso K_0(X)$.
  Since $[\calF]$ pulls back both to $[Z_1]$ and $[Z_0]$, they are equal.

  The $A_*$ equality follows from the $K_0$ equality by taking associated
  graded, and then follows in homology by applying the cycle map
  $A_*(X) \to H_*(X)$. For details of the equivariant cycle map we refer
  to \cite{edidin1998equivariant}.

  Now assume the family is not flat; we want to show that the classes
  in $K_0(X)$ are not the same for all fibers. Since $X$ is projective,
  we can push forward those classes along $X\into \PP^N$, and it
  will suffice to show the classes differ in $K_0(\PP^N)$.
  Note that the map $[\calE] \mapsto (d \mapsto \chi(\PP^n;\, \calE(d)))$
  taking a coherent $\calO_{\PP^n}$-module $\calE$ to its
  Hilbert polynomial defines a linear isomorphism
  $K_0(\PP^N) \to \{\ZZ$-valued polynomials of degree $\leq N\}$.

  Define the subfamily $\calF' \subseteq \calF$ as the closure of the
  ``generic fiber'' (the preimage of the generic point of $\AA^1$),
  automatically flat, and let $s\in\AA^1$ be
  a general enough point that $\calF'_s = \calF_s$. 
  Since $\calF$ was assumed not flat,
  it properly contains $\calF'$. Hence it has some primary component
  whose image in $\AA^1$ misses the generic point, i.e. maps to a some
  closed point $t\in \AA^1$.  Then $\calF'_t$ properly contains
  $\calF_t$, so their Hilbert polynomials differ. But now
  $\calF'_t \neq \calF_t = \calF_s = \calF'_s$, i.e. the Hilbert
  polynomials differ, as was to be shown.

  If $\calF$ is a constant family of plane curves, but with an embedded
  point added to one fiber, then the fibers have the same $A_*(\PP^2)$-class
  despite the family not being flat.

  Finally, assume that the fibers of $\calF$ are reduced and equidimensional
  and define the same class in $A_*(X)$. Assume for contradiction that
  $\calF$ is not flat, and construct $\calF'_t \subseteq \calF_t$
  as above. Again let $\calF_s$ be a general enough fiber that
  $\calF'_s = \calF_s$. Then we have equalities of Chow classes
  $[\calF_t] \stackrel{(1)}= [\calF_s] \stackrel{(2)}= [\calF'_s]
  \stackrel{(3)}= [\calF'_t]$, based on (1) the assumption (2) the
  generality of $s$ (3) the flatness of $\calF'$. Consequently any
  primary component $C$ of $\calF'_t$ not in $\calF_t$ must be of lower
  dimension. By the equidimensionality assumption, $C$ must be embedded.
  By the reducedness assumption, $C$ cannot exist, contradiction.

  To extend the result to families of branchvarieties, we point out
  that a branchvariety of $\PP^n$ has an associated $K$-class (or Hilbert
  polynomial), and a proper inclusion of branchvarieties gives an
  inequality on their Hilbert polynomials. Then give the same proof as before.
\end{proof}

We include another standard result about families, that is not relevant
for our class formulas but will show up when we discuss subdivisions.

\begin{theorem}[Zariski's Main Theorem, usual formulation]
  Let $F' \to C$ be a birational map from a variety $F'$ to a normal
  variety $C$, with finite fibers. Then it is an isomorphism to an open
  subset of $C$.
\end{theorem}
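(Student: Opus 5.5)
The plan is to reduce to the standard commutative-algebra form of the statement. Since the question is local on $C$, I would restrict to an affine open $\Spec A \subseteq C$ with $A$ a normal domain, and replace $F'$ by its preimage. Quasi-finiteness (finite fibers) and birationality are preserved under this restriction. The goal is then to show that $f\colon F' \to \Spec A$ is an open immersion.

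The key input is Grothendieck's form of Zariski's Main Theorem: a separated, quasi-finite morphism of finite type $f\colon F'\to C$ factors as an open immersion $F' \into \bar F$ followed by a finite morphism $\bar F \to C$. Here $F'$ is a variety, so it is separated and of finite type over $\mathbbm{k}$, and $f$ inherits these properties. I would then replace $\bar F$ by the scheme-theoretic closure of $F'$ in it. This keeps $\bar F$ integral and finite over $C$, and keeps $F'$ open and dense in $\bar F$.

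Next I would show that the finite birational map $\bar F \to C$ is an isomorphism. Over $\Spec A$ the finite map has the form $\Spec B \to \Spec A$ with $B$ a domain finite over $A$. Birationality gives $\mathrm{Frac}(B)=\mathrm{Frac}(A)$, and for $B$ to be a domain, $A \to B$ must be injective: the generic point of $F'$ maps to the generic point of $C$. So $A\subseteq B \subseteq \mathrm{Frac}(A)$ with $B$ integral over $A$. Normality of $A$ then forces $B=A$. Hence $\bar F \iso C$, and $F' \to C$ is an open immersion, i.e.\ an isomorphism onto an open subset.

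The main obstacle is the factorization step. Since it is the deep part of Zariski's Main Theorem, I would simply cite it, e.g.\ EGA IV 8.12.6 or the Stacks Project, rather than reprove it. I would also check that ``birational'' is meant in the sense of dominant with an isomorphism of function fields, which is what the argument uses. If a self-contained alternative were wanted, I would try the local form instead: at each point $p \in F'$, the local ring $\calO_{F',p}$ is quasi-finite over $\calO_{C,f(p)}$, hence a localization of an integral extension. That extension lies inside the common function field, so it equals $\calO_{C,f(p)}$ by normality, and $f$ is a local isomorphism. Injectivity of $f$ then follows from separatedness and birationality, since two points over the same point of $C$ would both have local rings equal to that normal local ring, which is impossible inside an integral separated scheme.
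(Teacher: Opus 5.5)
The paper gives no proof of this statement. It quotes the classical Zariski Main Theorem and uses it as a black box in the proof of \cref{cor:ZMT}, so there is no argument to compare yours against.

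Your proposal is a correct, standard derivation of this ``usual formulation'' from Grothendieck's factorization form (EGA IV 8.12.6). The steps go through as you describe:
\begin{itemize}
\item You pass to the scheme-theoretic closure of $F'$ in $\bar F$. This keeps $F'$ open, since $F'\into\bar F$ is quasi-compact.
\item The finite part is then locally $\Spec B\to\Spec A$ with $A\subseteq B\subseteq\mathrm{Frac}(A)$.
\item Normality of $A$ forces $B=A$.
\end{itemize}
One wording point: injectivity of $A\to B$ comes from dominance (the generic point maps to the generic point), not from $B$ being a domain. The reason you actually give is the right one.

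Your alternative route is also correct. It uses the local algebraic form of the theorem, and gets injectivity from the fact that distinct points of an integral separated scheme cannot have the same local ring inside the function field.

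A remark comparing this with how the paper uses the result: in the proof of \cref{cor:ZMT}, the map $\tau'$ from the Stein factorization is already finite, because $\tau_*\calO_F$ is coherent for proper $\tau$. So there only your third paragraph is needed: a finite birational morphism from a variety to a normal variety is an isomorphism. The deep factorization step can be bypassed entirely in the paper's application.
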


\begin{corollary}\label{cor:ZMT}
  Let $F$ be a variety over an algebraically closed field.
  Let $\tau\colon F \to \AA^1$ (or any other normal variety)
  be a proper map whose general fibers are
  connected (in particular, nonempty). Then {\em every} fiber is connected.
\end{corollary}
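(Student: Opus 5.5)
The plan is to use the Stein factorization of $\tau$ together with the version of Zariski's Main Theorem just stated. Since $\tau\colon F\to C$ is proper (here $C=\AA^1$ or any normal variety), it factors as
\[ F \xrightarrow{\ \tau'\ } C' \xrightarrow{\ \pi\ } C, \qquad C' := \mathbf{Spec}_C\, \tau_*\calO_F .\]
In this factorization $\tau'$ is proper with connected fibers and $\tau'_*\calO_F=\calO_{C'}$, while $\pi$ is finite. Because $F$ is a variety, $\tau_*\calO_F$ is a sheaf of domains, so $C'$ is integral, i.e.\ a variety. Each fiber $\tau^{-1}(c)$ is the disjoint union of the connected sets $\tau'^{-1}(c')$ over the finitely many points $c'\in\pi^{-1}(c)$. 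It therefore suffices to show that $\pi$ is an isomorphism onto its image; then every nonempty fiber of $\tau$ is a single connected fiber of $\tau'$.

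To show this, I would first argue that $\pi$ is dominant. The general fibers of $\tau$ are nonempty, so $\tau$ is dominant, and being proper it is surjective; hence $\pi$ is surjective as well. Next I would show $\pi$ is birational. Over a dense open $U\subseteq C$ the fibers of $\tau$ are connected, so for $c\in U$ the preimage $\pi^{-1}(c)$ consists of exactly one point. Since $\mathbbm{k}$ is algebraically closed, the generic degree of the finite surjective map $\pi$ between varieties equals the number of points in a general fiber only after accounting for inseparability. Thus one geometric point per general fiber shows that $\pi$ is generically purely inseparable, not yet birational.

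This is the main obstacle. The easy way out is to work in characteristic $0$, or to note that the intended application has $C=\AA^1$ and $\tau$ a flat family; a cleaner route is the following. The field $K(C')$ is the algebraic closure of $K(C)$ inside $K(F)$, and the statement is about connectedness, which is insensitive to purely inseparable base change. So I would replace $C$ by its normalization $\tilde C$ in $K(C')$ and use the fact that $\tilde C\to C$ is a finite universal homeomorphism. Connectedness of fibers over $\tilde C$ then transfers to connectedness over $C$, because preimages of points under a universal homeomorphism are single points. In the separable or birational case, Zariski's Main Theorem (the version stated above) applies to the birational, quasi-finite map $\pi\colon C'\to C$ with $C$ normal, and says that $\pi$ is an open immersion. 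Since $\pi$ is also finite and surjective, it is then an isomorphism.

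Putting the pieces together: $\pi$ is an isomorphism (or a universal homeomorphism), so for every closed point $c\in C$ the set $\pi^{-1}(c)$ is a single point $c'$. Hence $\tau^{-1}(c)=\tau'^{-1}(c')$, which is connected, and nonempty by surjectivity. This proves the corollary.
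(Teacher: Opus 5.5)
Your main line is the paper's proof: Stein factorize $\tau$ as $F\xrightarrow{\tau'}C'\xrightarrow{\pi}C$, note that $\pi$ is finite with one-point fibers over a dense open set, and use Zariski's Main Theorem to make $\pi$ an isomorphism. You are right that the paper's step ``bijective over an open set, hence birational because the field is algebraically closed'' is only valid when $K(C')/K(C)$ is separable. Take $F=\AA^1$ and $\tau$ the Frobenius map $\AA^1\to\AA^1$: this breaks that step, though not the corollary. In characteristic $0$ your argument is complete and agrees with the paper's.

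Your patch for the purely inseparable case, however, has a hole. There is no map from $F$ to $\tilde C$. Because $C'$ is finite over $C$ with function field $K(C')$, $\tilde C$ is the normalization of $C'$, so the arrow is $\tilde C\to C'$. Thus ``connectedness of fibers over $\tilde C$'' has nothing to refer to. If you meant the base change $F\times_C\tilde C\to\tilde C$, you would have to rerun the whole argument for it, and nothing you wrote shows that its Stein factorization is now birational. (Your claim that $K(C')$ is the algebraic closure of $K(C)$ in $K(F)$ also needs $F$ normal.)

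What is actually needed is that $\pi$ itself is injective. This follows directly from normality of $C$, with no universal-homeomorphism detour. Let $U=\Spec A\subseteq C$ be affine with $\pi^{-1}(U)=\Spec B$. Each $b\in B$ has $b^{p^e}\in K(C)$ for some $e$, and $b^{p^e}$ is integral over $A$, hence lies in $A$. So two primes $\mathfrak q_1,\mathfrak q_2$ of $B$ over the same $\mathfrak p$ satisfy $b\in\mathfrak q_1\iff b^{p^e}\in\mathfrak p\iff b\in\mathfrak q_2$, i.e.\ they coincide. Thus $\pi^{-1}(c)$ is one point and $\tau^{-1}(c)=\tau'^{-1}(c')$ is connected.

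With $e=0$ the same argument covers the separable case, where it gives $B=A$. So Zariski's Main Theorem is only being used in its finite form, ``normal means integrally closed.'' Normality of $C$ is essential here: the normalization of a nodal curve has connected general fibers but a two-point fiber over the node.
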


\begin{proof}
  We Stein factorize $\tau\colon F\to \AA^1$ as $F \to F' \to \AA^1$, i.e.
  $F'$ is the global spectrum ${\bf Spec}\, \tau_* \calO_F$ of the
  pushforward of the structure sheaf. This replaces each fiber by
  its affinization, not changing the number of connected components.

  Since $F\to \AA^1$ is proper,
  the fibers of $\tau'\colon F' \to \AA^1$ are now proper and affine,
  hence finite. Since the general fibers of $\tau$ were connected,
  the general fibers of $\tau'$ are too. Being finite and connected,
  the general fibers are points. So over an open set, our map $\tau'$
  is bijective; since our base field is algebraically closed,
  our map $\tau'$ is birational.

  We can finally apply Zariski's Main Theorem to say that $\tau'$ is
  an isomorphism to an open set. But now since $\tau$ is proper, so is
  $\tau'$, so that open set is all of $\AA^1$. Now that every fiber of
  $\tau'$ is a point, every fiber of $\tau$ was connected.
\end{proof}

\subsection{Single and double Schubert varieties} 

Let $G$ be a semisimple algebraic group, with Borel subgroup $B$, opposite Borel $B_-$, maximal torus $T =B \cap B_-$ and Weyl group $W=N(T)/T$. The Weyl group has $r$ simple transpositions, indexed by the vertex set $I$ of the corresponding Dynkin diagram. We write $G_{\Delta}$ and $T_{\Delta}$ for the diagonal embedding of $G \hookrightarrow G \times G$ and $T \hookrightarrow T \times T$ respectively.
 Recall that for $ w \in W$, the \defn{Schubert variety} and \defn{opposite Schubert variety} in $G/B$ are, respectively, 
\[X^w := \overline{BwB}/B \quad \text{and} \quad X_w := \overline{B_- w B}/B.\]
They have dimension $\ell(w)$ and $\ell(w_0)-\ell(w)$, respectively. For $u\leq  v \in W$, the \defn{Richardson variety} is 
\[X^v_u:= X_u \cap X^v\]
and has dimension $\ell(v)- \ell(u)$. If we shrink the Schubert varieties in the above definition to Schubert cells, we obtain the \defn{open Richardson variety} $(X^v_u)^\circ$.

Define the \textbf{double Schubert variety} $\bOm_w \subseteq G/B \times G/B$ by
\begin{align*}
    \bOm_w &:= \overline{G_\Delta \cdot (wB/B,B/B)} 
    = \{(gB/B,hB/B)\colon h^{-1}g \in \overline{BwB} \}
\end{align*}

These are $G_{\Delta}$-varieties hence $T_{\Delta}$-varieties, inheriting the
$T_{\Delta}$-action from $(G/B)^2$. For use later, we describe an
embedded degeneration of $\bOm_w$, a strengthening of
the homology statement \cite[Proposition 3.8]{K20-deligne-lusztig}.

\begin{proposition}\label{prop: degeneration of double Schubert}
  \cite[Theorem 5.7, Lemma 5.10]{GabeRaj}
  Let $\sigma\colon \Gm \to T$ be a regular dominant coweight,
  and define the $T_\Delta$-invariant flat family
  \[ \calF := \overline{ \{(E,\sigma(z)\cdot F,z)\colon (E,F)\in \bOm_w,
    z \in \Gm\}} \ \subseteq (G/B)^2 \times \AA^1 \]
  Then the zero fiber is the reduced union
  $\bigcup \{X_u \times X^v \; \lvert \; v = uw^{-1} \text{ length-additive}\}$,
  which is Cohen-Macaulay.
\end{proposition}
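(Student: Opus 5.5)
The plan is to identify the special fiber first set-theoretically, by its invariance, then through its cohomology class, and finally scheme-theoretically, by comparing $K$-classes. Flatness is automatic. $\calF$ is defined as the closure of its restriction over $\Gm$, so it has no components supported over $0$. Hence $\calO_\calF$ is torsion-free over $\mathbbm{k}[z]$, which is a PID, and the family is flat (indeed free, since it is projective). It is $T_\Delta$-invariant because $T$ is commutative and $\bOm_w$ is $G_\Delta$-stable.

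\emph{Step 1: the zero fiber $\calF_0$ is $B_-\times B$-stable.} Let $b\in B$. Since $\bOm_w$ is $G_\Delta$-stable, $(bE,\sigma(z)bF)=(bE,\ (\sigma(z)b\sigma(z)^{-1})\,\sigma(z)F)$ lies in $\calF_z$. Because $\sigma$ is regular dominant, $\sigma(z)b\sigma(z)^{-1}\to t(b)\in T$ as $z\to 0$. Hence $\calF_0$ is stable under the subgroup $\{(b,t(b))\}$. Using $T_\Delta$-invariance, it is stable under $\{1\}\times U$ and under $T\times T$.

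Symmetrically, $T_\Delta$-invariance lets us rewrite the generic fiber, up to the diagonal action of $\sigma(z)^{-1}$, as $\{(\sigma(z)^{-1}E,F)\}$. Running the same argument with $b_-\in B_-$ gives stability under $U_-\times\{1\}$. Therefore $\calF_0$ is a union of $B_-\times B$-orbit closures, i.e.\ of products $X_u\times X^v$.

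\emph{Step 2: identify the components.} $\calF_0$ is equidimensional of dimension $\dim\bOm_w=\dim G/B+\ell(w)$. By \cref{prop:flatness}, $[\calF_0]=[\bOm_w]$ in $H^*((G/B)^2)$. I would then expand $[\bOm_w]$ in the basis $\{[X_u]\otimes[X^v]\}$. One route starts from the Künneth decomposition of the diagonal, $[\bOm_e]=\sum_u [X_u]\otimes[X^u]$. Then $\bOm_w$ is obtained from $\bOm_e$ by successive $\PP^1$-bundle pushforwards along a reduced word, i.e.\ Demazure/BGG operators acting on the second factor. The target is
\[[\bOm_w]=\sum\{[X_u]\otimes[X^{uw^{-1}}] : uw^{-1}\ \text{length-additive}\},\]
with every coefficient equal to $1$. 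This identifies the components of $\calF_0$ as exactly the $X_u\times X^{uw^{-1}}$ with $uw^{-1}$ length-additive, each with multiplicity one, so $\calF_0$ is generically reduced.

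\emph{Step 3: reducedness and Cohen--Macaulayness (the main obstacle).} Let $\calX$ denote the reduced union. It remains to exclude embedded components, i.e.\ to show that the inclusion $\calX\subseteq\calF_0$ is an equality. My first attempt is to compare $K$-classes. The union of the $X_u\times X^v$ is compatibly Frobenius split (a Richardson-type stratification), so it is reduced, and its intersections are again unions of such products. The $K$-class of $\calX$ can then be computed by Möbius inversion over the poset of intersections. I would check that this agrees with $[\calO_{\bOm_w}]$, which can be computed by $K$-theoretic Demazure operators. A subscheme containing $\calX$ with the same Hilbert polynomial must equal $\calX$, so $\calF_0=\calX$.

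For Cohen--Macaulayness I would use a shelling-type induction: add components one at a time in a linear extension of Bruhat order. At each stage the new component meets the previous union in a codimension-one union of the same type, and the usual gluing lemma (two CM schemes of the same dimension meeting in a CM scheme of codimension one) propagates the property. Alternatively, $\bOm_w$ is CM (it is a bundle over $G/B$ with Schubert variety fibers), and CM-ness passes to special fibers of flat families when the total space is CM. I expect the main difficulty to be the explicit $K$-theoretic identity.
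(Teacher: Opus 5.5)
\textbf{Step 1 swaps the factors, and the swap is load-bearing.} Your flatness argument and the cohomological bookkeeping in Step 2 are fine. The problem is in Step 1.

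Stability of $\calF_0$ under $\{(b,t(b))\colon b\in B\}$ gives, for $b\in U$, stability under $U\times\{1\}$, not $\{1\}\times U$. Running the same argument on $\{(\sigma(z)^{-1}E,F)\}$ gives stability under $\{(t(b_-),b_-)\}$, i.e.\ under $\{1\}\times U_-$, not $U_-\times\{1\}$. Since $U$-orbits (resp.\ $U_-$-orbits) on $G/B$ are already the $B$-orbits (resp.\ $B_-$-orbits), your computation actually shows that $\calF_0$ is a union of products $X^x\times X_y$. So your $T\times T$ remark is unnecessary, but the conclusion is the opposite of the one you need.

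This is not a harmless relabeling. Take $G=SL_2$ with $B$ upper triangular, so $G/B=\PP^1$, and take $w=e$ and $\sigma(z)=\mathrm{diag}(z,z^{-1})$. The family is the graph of $[x_0:x_1]\mapsto[z^2x_0:x_1]$, cut out by $y_0x_1=z^2x_0y_1$. Its zero fiber is $\{eB\}\times\PP^1\cup\PP^1\times\{sB\}=X^e\times X_e\cup X^s\times X_s$, not $X_e\times X^e\cup X_s\times X^s$.

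So if ``dominant'' means positive on the roots of $B$, the displayed union is the zero fiber only for $\sigma$ antidominant, or equivalently for the limit $z\to\infty$. With that convention your Step 1 works after exchanging the roles of $B$ and $B_-$. With $\sigma$ dominant, corrected Steps 1--2 give $\bigcup\{X^{yw}\times X_y\colon yw\text{ length-additive}\}$ instead. Because $[X_u]=[X^{w_0u}]$ in $H^*(G/B)$, Step 2 cannot see this distinction. Step 1 is the only place the orbit type is determined, so it must be done with the correct signs.

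\textbf{Step 3 is the main gap.} It carries all of the content, and the paper does not prove it either; it imports it from \cite[Theorem 5.7, Lemma 5.10]{GabeRaj}.

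Reducing $\calF_0=\calX$ to the identity $[\calO_{\bOm_w}]=[\calO_\calX]$ in $K_0((G/B)^2)$ is logically sound, since a closed subscheme of $\calF_0$ with the same Hilbert polynomial is all of $\calF_0$. But you never establish the identity. It needs three things:
\begin{enumerate}
\item the M\"obius-inversion formula for the compatibly split union;
\item a $K$-theoretic Demazure computation of $[\calO_{\bOm_w}]$ starting from the $K$-class of the diagonal;
\item a genuine matching of M\"obius functions, which already for $w=e$ rests on Bruhat intervals being Eulerian.
\end{enumerate}

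Your ``alternative'' route to Cohen--Macaulayness is circular. Since $z$ is a nonzerodivisor on $\calO_\calF$, $\calF$ is CM along $\calF_0$ exactly when $\calF_0$ is. CM-ness of the general fiber $\bOm_w$ therefore says nothing at $z=0$: the CM locus is open, so CM-ness spreads from the special fiber to nearby fibers, not conversely.

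The shelling route is the right kind of argument. However, you state rather than prove that each new component meets the earlier union in a codimension-one union which is itself CM; this is a nested shellability statement. Even granted, it only shows that $\calX$ is CM, so $\calF_0=\calX$ still rests on the unproven $K$-theoretic identity.
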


\subsection{Lusztig varieties}

Given an element $x \in G$ and $w \in W$, the \textbf{Lusztig variety}
\cite{K20-deligne-lusztig} is the variety
 \[\calY_w(x) := \{gB/B \in G/B \; \lvert \; g^{-1} x g \in \overline{BwB}\}.\]
 Shrinking $\overline{BwB}$ to $BwB$ in the above definition yields the
 \textbf{open Lusztig variety} $\calY_w^\circ(x)$; we note that $\overline{\calY_w^\circ(x)}= \calY_w(x)$, as follows from \cite[Lemma 1.1]{Lus79} (see also \cite[Proposition 2.10]{BHL25-geomLusztig}).
 \junk{\AK{It is in no way a cell. Also, do we use it for anything?} 
 	\MSB{I was borrowing the terminology from \cite{BHL25-geomLusztig}, happy with your version instead. We only need it because Kim works with it and its closure instead of Lusztig varieties as we've defined them, so I wanted to justify that his results apply to us. I've streamlined.}}

 We will study $\calY_w(x)$ by relating it to double Schubert varieties. 

\begin{lemma}\label{prop:transverse-intersections-of-double-Schuberts}
The Lusztig variety $\calY_w(x)$ is the projection of 
\[(x, 1)\bOm_{e} \cap \bOm_w \subseteq G/B \times G/B\] to the second
factor. If our base field is characteristic $0$ then
for general enough $x\in G$, or general enough $x\in T$, the
intersection above is transverse.  \junk{Allen, should ``generic" be
  something else here?  Also it would be a lot nicer to say
  ``including for all regular semisimple $x$" but I do not know how to
  get this from Kleiman transversality directly.}
\end{lemma}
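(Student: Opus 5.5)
Write $\bOm_e=\{(gB/B,gB/B)\}$ for the diagonal; then $(x,1)\bOm_e=\{(xgB/B,\,gB/B)\colon g\in G\}$. The plan is first to verify the set-theoretic (indeed scheme-theoretic) description directly, then to obtain transversality from Kleiman's theorem, and finally to transfer genericity from $G$ to $T$ using conjugation invariance.

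For the first claim, take a point $(xgB/B,gB/B)$ of $(x,1)\bOm_e$. By the description of $\bOm_w$ (with $E=xgB/B$ and $F=gB/B$), this point lies in $\bOm_w$ iff $g^{-1}xg\in\overline{BwB}$. The condition is well defined because $\overline{BwB}$ is $B$-bi-invariant, so replacing $g$ by $gb$ does not change it. The second projection restricted to $(x,1)\bOm_e$ is an isomorphism onto $G/B$, with inverse $F\mapsto(xF,F)$. Hence it carries the intersection isomorphically onto $\{gB/B\colon g^{-1}xg\in\overline{BwB}\}=\calY_w(x)$.

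For transversality in characteristic $0$, I would apply Kleiman's transversality theorem to the transitive action of $G\times G$ on $(G/B)^2$. Here transversality is understood stratum-wise: the smooth loci meet transversally, and the intersection has the expected dimension along the singular strata, which can be obtained by stratifying $\bOm_w$ by $G_\Delta$-orbits. Kleiman gives a dense open set of $(x,y)\in G\times G$ for which $(x,y)\bOm_e$ meets $\bOm_w$ transversally. Since $(x,y)\bOm_e=(y,y)(y^{-1}x,1)\bOm_e$ and $\bOm_w$ is $G_\Delta$-invariant, translating by $(y,y)^{-1}$ reduces the pair $(x,y)$ to $(y^{-1}x,1)$. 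Thus the set $U$ of $x\in G$ for which $(x,1)\bOm_e\pitchfork\bOm_w$ contains a dense open subset of $G$.

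For general $x\in T$, I would observe that $U$ is conjugation invariant. Indeed,
\[(gxg^{-1},1)\bOm_e=(g,g)(x,1)(g^{-1},g^{-1})\bOm_e=(g,g)(x,1)\bOm_e,\]
and $(g,g)$ preserves $\bOm_w$, so the translate by $(g,g)$ gives an isomorphic intersection. Now consider the dominant map $G\times T\to G$, $(g,t)\mapsto gtg^{-1}$, which is dominant because regular semisimple elements are dense and conjugate into $T$. The preimage of a dense open subset of $U$ is then dense open in $G\times T$. Its projection to $T$ therefore contains a dense open set, and by conjugation invariance every such $t$ lies in $U$.

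The main obstacle is making Kleiman's theorem apply cleanly to the singular variety $\bOm_w$, which requires the orbit stratification above. Upgrading from "general" to "all regular semisimple" $x$ is not claimed, and this argument does not give it.
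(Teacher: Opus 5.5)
Your proposal is correct and follows essentially the same route as the paper. Both identify the intersection with $\calY_w(x)$ via the second projection, apply Kleiman transversality on $G\times G$ reduced to pairs $(y^{-1}x,1)$ using $G_\Delta$-invariance of $\bOm_w$, and use conjugation invariance to pass to general $x\in T$. Your dominance argument with $(g,t)\mapsto gtg^{-1}$ just makes explicit the paper's one-line remark that general elements of $G$ are regular semisimple and hence conjugate to general torus elements.
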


\begin{proof}
  We have 
  \begin{eqnarray*}
    (x, 1) \bOm_e \cap \bOm_w
    &=& \{(gB/B, hB/B): gB = xhB,\ h^{-1}g \in \overline{BwB} \}  \\
    &=& \{(gB/B, hB/B): gB = xhB,\ h^{-1}xh \in \overline{BwB} \}  
  \end{eqnarray*}
  Since $gB$ can be determined from $hB$, under the projection map
  $(G/B)^2\to G/B$ this variety embeds into the second factor, and its
  image is clearly the Lusztig variety.

  By Kleiman transversality (which requires characteristic $0$,
  but perhaps the techniques from \cite{SchubertInduction} could be
  used instead), the
  intersection $(a,b)\bOm_e \cap \bOm_w$ is transverse so long as
  $(a,b)$ is a general enough element of $G\times G = (G\times 1)G_\Delta$.
  Applying $(b^{-1},b^{-1})\in G_\Delta$ to both intersectands, the latter
  of which doesn't move, we find $(b^{-1}a,1)\bOm_e \cap \bOm_w$ is transverse.
  Taking $x = b^{-1}a$, we learn that for general enough $x\in G$
  the intersection is transverse.

  Now assume $x,y \in G$ are conjugate, i.e. $x = g y g^{-1}$.
  Then $(x,1) = (g,g)(y,1)(g,g)^{-1}$, so
  \[ (x,1)\bOm_e \cap \bOm_w
  = (g,g)(y,1)(g,g)^{-1}\bOm_e \cap \bOm_w
  = (g,g)(y,1)\bOm_e \cap \bOm_w
  = (g,g)\left( (y,1)\bOm_e \cap \bOm_w\right)
  \]
  so $(x,1)\bOm_e \cap \bOm_w$ is transverse iff
  $(y,1)\bOm_e \cap \bOm_w$ is transverse. General enough elements of
  $G$ are regular semisimple, hence conjugate to general enough torus elements,
  proving the final claim.
\end{proof}

For the remainder of the section, we will focus on the varieties
$\calY_w(s)$ where $s$ is a regular semisimple element. 
Conjugating $s$ has the effect of translating $\calY_w(s)$, as noted
in the proof above, so we often assume $s \in T$, in which case $\calY_w(s)$ is invariant under the left action of $T$ on $G/B$.

The next result collects some facts on $\calY_w(s)$ where $s$ is regular semisimple. The first is \cite[Theorem 1.1]{BHL25-geomLusztig}, and the second and third are \cite[Theorem 4.6, Lemma 4.8]{K20-deligne-lusztig}.

\begin{proposition}\label{thm: Lusztig facts}
    Let $s \in G$ be a regular semisimple element and let $w \in W$. Let $W'$ be the subgroup of $W$ generated by the set of simple reflections in any reduced expression for $w$. Then

    \begin{enumerate}
    \item $\calY_w(s)$ is normal, Cohen--Macaulay, of pure dimension
      $\ell(w)$ and has at worst rational singularities.
    \item The number of connected components of $\calY_w(s)$ is $|W/W'|$.
    \item The class $[\calY_w(s)] \in H^*(G/B)$ does not depend on the
      regular semisimple element $s$.
    \end{enumerate}
\end{proposition}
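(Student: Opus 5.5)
The statement collects three facts that the paper quotes from the literature. My plan is to derive them from a single family over the regular torus, using the double Schubert picture of \cref{prop:transverse-intersections-of-double-Schuberts}. Let $T^{reg}\subseteq T$ be the regular elements, and consider
\[\calZ := \{(t,gB/B)\in T^{reg}\times G/B \colon g^{-1}tg\in \overline{BwB}\}\xrightarrow{\ \pi\ } T^{reg},\]
whose fibers are the Lusztig varieties $\calY_w(t)$. Every regular semisimple $s$ is conjugate into $T^{reg}$, and conjugation translates $\calY_w(s)$ by an element of $G$. So it suffices to prove (1)--(3) for the fibers of $\pi$.

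\emph{Singularities of the total space.} Consider $\widetilde\calZ:=\{(x,gB/B)\colon g^{-1}xg\in\overline{BwB}\}\subseteq G\times G/B$. The map $(x,gB)\mapsto (g^{-1}xg,\,gB)$ identifies $\widetilde\calZ$ with a locally trivial bundle over $G/B$ whose fiber is $\overline{BwB}$. Since $\overline{BwB}$ is a $B$-bundle over the Schubert variety $X^w$, the total space $\widetilde\calZ$ is normal, Cohen--Macaulay, and has rational singularities. Now pull $\widetilde\calZ$ back along the smooth surjection
\[G\times T^{reg}\to G^{rs},\qquad (h,t)\mapsto hth^{-1}.\]
The substitution $gB\mapsto h^{-1}gB$ identifies this pullback with $G\times\calZ$. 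Smooth base change preserves these properties, so $\calZ$ is also normal, Cohen--Macaulay, and has rational singularities.

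\emph{Flatness and properties of the fibers.} Each fiber has dimension $\ell(w)$. To see this, stratify $\calY_w^\circ(t)$ by Bruhat cells $B vB/B$. For $g\in BvB$ the condition on $g^{-1}tg$ is a condition in the cell $BwB$, and a count shows each stratum has dimension at most $\ell(w)$. Density of $\calY^\circ_w$ in $\calY_w$ then gives equidimensionality of dimension $\ell(w)$. Since $\calZ$ is Cohen--Macaulay, $T^{reg}$ is smooth, and all fibers have the same dimension, miracle flatness shows $\pi$ is flat and every fiber is Cohen--Macaulay of pure dimension $\ell(w)$.

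\emph{Part (3).} $T^{reg}$ is connected, so \cref{prop:flatness} gives that all fibers have the same class, in $H^*$ and in $T$-equivariant cohomology. Part (3) follows after conjugating.

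\emph{Normality and rational singularities of every fiber (main obstacle).} Generic smoothness, or Kleiman transversality in \cref{prop:transverse-intersections-of-double-Schuberts}, only gives these properties for general $t$. Since $\calZ$ is Cohen--Macaulay and flat over $T^{reg}$, each fiber is $S_2$. What remains is to show that each fiber is regular in codimension one, i.e.\ $R_1$. My first attempt would be to bound the singular locus of $\calY_w(t)$ stratum by stratum. On $\calY_w^\circ(t)$ the fiber is smooth for regular $t$: the map $g\mapsto g^{-1}tg$ is transverse to $BwB$ there, because its differential hits $\mathfrak g/\mathfrak b$-directions determined by the regularity of $t$. The boundary $\calY_w\setminus\calY_w^\circ$ is a union of pieces $\calY_{v}(t)$ with $v<w$. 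Those with $\ell(v)\le \ell(w)-2$ already have codimension at least $2$. For the codimension-one boundary pieces $\calY_v$ with $\ell(v)=\ell(w)-1$, one needs transversality of the conjugation map to $\overline{BwB}$ at generic points of $BvB$, which should hold because $\overline{BwB}$ is smooth along codimension-one cells. Rational singularities for every fiber are harder still. If the direct argument fails, I would fall back on the inversion-of-adjunction / deformation result for rational singularities in a flat family with rational total space and Cohen--Macaulay normal fibers, as in \cite{BHL25-geomLusztig}.

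\emph{Part (2).} Fix a coset representative $u$ of $W/W'$, and let $P'\supseteq B$ be the parabolic subgroup with Weyl group $W'$. For $g\in uP'$, the element $g^{-1}tg$ lies in $P'$, since $u^{-1}tu\in T$; and $\overline{BwB}\subseteq P'$ because $w\in W'$. The plan is to show that the condition $g^{-1}tg\in\overline{BwB}$ forces $gB$ to lie in one of the $|W/W'|$ pairwise disjoint closed pieces $uP'/B$ containing the torus fixed points $u W'$. Each piece is isomorphic to the Lusztig variety of the Levi subgroup $L'$ at the regular element $u^{-1}tu$. In $L'$ the element $w$ involves all simple reflections, so that Lusztig variety is connected. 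I would prove this connectivity by \cref{cor:ZMT} applied to $\pi$, together with irreducibility for general $t$, which follows from the transversality statement in \cref{prop:transverse-intersections-of-double-Schuberts}.
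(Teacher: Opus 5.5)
The paper does not prove this proposition; it quotes \cite[Theorem 1.1]{BHL25-geomLusztig} and \cite[Theorem 4.6, Lemma 4.8]{K20-deligne-lusztig}. Your family over $T^{\mathrm{reg}}$ does give (3). Miracle flatness also gives Cohen--Macaulayness and pure dimension $\ell(w)$ for every fiber. Normality via $S_2+R_1$ works once the transversality you assert is actually proved (see below).

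\textbf{First gap: rational singularities of every fiber.} Your fallback cannot work. Rational singularities pass only from a special fiber to the total space (Elkik), never conversely. For example, $x^3+y^3+z^3=t$ has total space $\AA^3$, but its zero fiber is a normal Gorenstein cone over an elliptic curve, which is not rational. So no argument from the family alone can give this; you need local information about each fiber. That information is the precise form of your ``differential'' remark. For $y$ regular semisimple, $\mathfrak{b}+(1-\mathrm{Ad}(y))\mathfrak{g}=\mathfrak{g}$, because its Killing-orthogonal is $\mathfrak{n}\cap\mathfrak{z}_{\mathfrak{g}}(y)=0$, and $\mathfrak{z}_{\mathfrak{g}}(y)$ is a Cartan subalgebra. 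Hence the conjugacy class $C_t$ is transverse to left $B$-cosets, i.e.\ the map $B\times C_t\to G$, $(b,c)\mapsto bc$, is smooth. By base change, $B\times(C_t\cap\overline{BwB})\to\overline{BwB}$ is smooth. Since $g\mapsto g^{-1}tg$ is a smooth map $G\to C_t$ and $G\to G/B$ is a $B$-torsor, $\calY_w(t)$ is smoothly equivalent to $\overline{BwB}$. So it is normal, Cohen--Macaulay and has rational singularities, and a dimension count gives pure dimension $\ell(w)$. This proves all of (1) for every regular $t$, with no family, miracle flatness, or stratum-by-stratum $R_1$ analysis needed.

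\textbf{Second gap: connectedness in (2).} Your splitting into the closed pieces $\calY_w(t)\cap uP'/B\cong\calY^{L'}_w(u^{-1}tu)$ is correct, since for regular $t$ the $t$-fixed points of $G/P'$ are the $T$-fixed points. But irreducibility for general $t$ does not follow from transversality. The intersection in \cref{prop:transverse-intersections-of-double-Schuberts} is transverse for every $w$, including $w=e$, where $\calY_e(t)=(G/B)^T$ is $|W|$ points. Transversality never detects that $w$ has full support in $W'$. Moreover, \cref{cor:ZMT} only carries connectedness from general fibers to special ones, so your argument has no input.

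Here is a direct fix. Every connected component of $\calY_w(t)$ is $T$-stable and projective, so it contains some fixed point $vB$ with $v\in W$. For a simple reflection $s_i\in W'$, put $\gamma=-v\alpha_i$. For $g=x_\gamma(a)\dot v$ one computes $g^{-1}tg\in T\,U_{-\alpha_i}\subseteq\overline{Bs_iB}\subseteq\overline{BwB}$, using $s_i\le w$. So the $T$-curve $\overline{U_\gamma\dot vB}$ joining $vB$ and $vs_iB$ lies in $\calY_w(t)$. Hence all fixed points of $uW'$ lie in one component, and there are exactly $|W/W'|$ components, for every regular $t$ at once.
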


\begin{remark}\label{rem:Hessdegen}
	In type $A$, every regular semisimple Hessenberg variety is a regular
	semisimple Lusztig variety; see \cite{BHL25-geomLusztig} for more
	details. Outside of type $A$, \cite[Theorem 5.12]{BHL25-geomLusztig}
	shows that regular semisimple Hessenberg varieties are degenerations
	of regular semisimple Lusztig varieties. 
\end{remark}

Lusztig varieties include certain toric varieties, which we now recall.

\begin{definition}
	Let $gB/B \in G/B$. The $T$-orbit closure $\overline{T \cdot gB/B}$ is
	called a \defn{permutahedral variety} if its moment polytope is the
	$W$-permutahedron; equivalently, if $gB/B \in \bigcap_{w\in W} wB_-B/B$.
	The cohomology class
	$[\overline{T \cdot gB/B}] \in H^*(G/B)$ is called the
	\defn{permutahedral class}, which we also denote by $[\Perm]$.
\end{definition}

The permutahedral class is well-defined; that is, if
$\overline{T \cdot gB/B}$ and $\overline{T \cdot hB/B}$ are permutahedral
varieties, then $[\overline{T \cdot gB}] = [\overline{T \cdot hB}]$
(as is most easily checked in equivariant cohomology).

In type $A$, 
$\calY_{c}(t)$ for $c = s_{n-1}s_{n-2}\cdots s_1$
is well-known to be a permutahedral variety. The next proposition characterizes the Lusztig varieties which are permutahedral varieties. Recall that a \defn{(standard) Coxeter element} of $W$ is an element with a reduced word using each simple reflection exactly once.

\begin{proposition} \label{prop:lusztig-permutahedral}
	Let $w \in W$ and let $t\in T$ be a regular semisimple element. Then
	$\calY_w(t)$ is a permutahedral variety if and only if $w$ is a
	Coxeter element. 
	
	For $s\in G$ regular semisimple and $\calY_w(s)$ irreducible, $[\calY_w(s)]= [\Perm]$ if and only if $w$ is a Coxeter element.
\end{proposition}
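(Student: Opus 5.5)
The plan is to compare three invariants of $\calY_w(t)$ with those of a permutahedral variety: dimension, number of components, and moment polytope. A permutahedral variety $\overline{T\cdot gB/B}$ is irreducible. Its moment polytope is the $W$-permutahedron, which is full-dimensional in $\lie{t}^*_\RR$. Hence its generic $T$-orbit has dimension $r=|I|$, and so the variety itself has dimension exactly $r$.

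For the direction ``permutahedral $\Rightarrow$ Coxeter'', suppose $\calY_w(t)$ is permutahedral.
\begin{itemize}
\item By \cref{thm: Lusztig facts}(1), $\ell(w)=\dim \calY_w(t)=r$.
\item Since $\calY_w(t)$ is irreducible, it is connected. So \cref{thm: Lusztig facts}(2) gives $|W/W'|=1$, i.e.\ $W'=W$.
\item $W'=W$ means every simple reflection occurs in a reduced word for $w$. That word has length $r$, so each simple reflection occurs exactly once, and $w$ is Coxeter.
\end{itemize}
The same argument proves the converse in the second paragraph of the statement. If $[\calY_w(s)]=[\Perm]$, then the two classes lie in the same degree, so $\ell(w)=r$. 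The irreducibility hypothesis gives $W'=W$ as above, so $w$ is Coxeter.

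For the direction ``Coxeter $\Rightarrow$ permutahedral'', let $w=c$ be Coxeter, so $\ell(c)=r$ and $W'=W$.
\begin{itemize}
\item By \cref{thm: Lusztig facts}(2), $\calY_c(t)$ is connected.
\item By \cref{thm: Lusztig facts}(1) it is normal. A connected normal variety is irreducible, since its local rings are domains and so no two components can meet. Hence $\calY_c(t)$ is irreducible of dimension $r$.
\item Since $t\in T$, $\calY_c(t)$ is $T$-invariant.
\item Every $T$-fixed point $uB/B$ lies in it: $u^{-1}tu\in T\subseteq B\subseteq\overline{BcB}$.
\end{itemize}
Now use the standard fact that for an irreducible $T$-invariant projective subvariety $Z\subseteq G/B$, the orbit closure of a general point of $Z$ has moment polytope equal to the convex hull of the images of the $T$-fixed points in $Z$. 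This follows from semicontinuity of the set of Plücker coordinates that do not vanish, or from the convexity theorem applied to $Z$. So the orbit closure of a general point of $\calY_c(t)$ has moment polytope the full permutahedron $\Pi$. Hence that orbit has dimension $r=\dim\calY_c(t)$. By irreducibility, $\calY_c(t)$ is the closure of this single orbit, and it has moment polytope $\Pi$, so it is permutahedral.

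For the class statement in the Coxeter case, pick $s$ regular semisimple. It is conjugate to some $t\in T$, and conjugation translates $\calY_c(s)$ by an element of the connected group $G$, which does not change the cohomology class; this is also \cref{thm: Lusztig facts}(3). So $[\calY_c(s)]=[\calY_c(t)]=[\Perm]$ by the first part.

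The main obstacle is the moment-polytope step: showing that the generic orbit in $\calY_c(t)$ really is $r$-dimensional, i.e.\ that the generic point has all Plücker coordinates nonzero. The concern is that containing all fixed points might not obviously force a general point into $\bigcap_{u}uB_-B/B$. The argument goes as follows. Each condition ``the $uB_-B/B$-chart coordinate is nonzero'' is open. It is nonempty on $\calY_c(t)$ because $uB/B$ lies in the open set $uB_-B/B$. Irreducibility then makes the finite intersection of these nonempty open sets nonempty and dense. This gives the required point directly via the equivalent characterization in the definition of permutahedral variety, and it bypasses the convexity theorem altogether.
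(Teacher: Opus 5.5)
Your proposal is correct and follows essentially the same route as the paper. Dimension and connectedness from \cref{thm: Lusztig facts} force $w$ to be Coxeter. Conversely, since $\calY_c(t)$ contains every $T$-fixed point, its moment polytope is the full permutahedron, so it contains a dense $r$-dimensional orbit. Your added details (connected plus normal implies irreducible, and intersecting the open cells $uB_-B/B$ to find a point whose orbit closure has the full moment polytope) spell out steps the paper leaves implicit.
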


\begin{proof}
	It follows from the definition that for any $w$, $\calY_w(t)$ is
	$T$-invariant. Further, $\calY_w(t)$ contains all $T$-fixed points
	of $G/B$. Indeed, the $T$-fixed points of $G/B$ have the form
	$\dot u B$ where $\dot u \in N(T)$. By the definition of normalizer,
	$\dot u^{-1} t \dot u =t $. As $t \in B \subset \overline{B w B}$,
	we have $\dot u \in \calY_w(t)$ for all $u \in W$.
	
	By definition, permutahedral varieties are irreducible and have
	dimension $r$ (the rank of $G$). Using \cref{thm: Lusztig facts} (2) and (3),
	$\calY_w(t)$ has these properties if and only if a reduced word for
	$w$ uses each simple transposition exactly once, i.e. if and only if
	$w$ is a Coxeter element. So if $w$ is not a Coxeter element,
	$\calY_w(t)$ is not a permutahedral variety.
	
	Conversely, if $w$ is a Coxeter element, $\calY_w(t)$ is an irreducible $T$-invariant variety of dimension $r$. Its moment polytope is the convex hull of the images of its $T$-fixed points, which are exactly $\{\dot w B\}_{w \in W}$. So its moment polytope is the $W$-permutahedron. As the $W$-permutahedron is $r$ dimensional, $\calY_w(t)$ must contain an $r$ dimensional torus orbit, which is dense. This shows $\calY_w(t) = \overline{T \cdot X}$ for some $X$ and its moment polytope is the permutahedron, as desired.
	
	For the cohomological statement, the $\impliedby$ direction is implied by the first part of the proposition and \cref{thm: Lusztig facts} (3). The $\implies$ direction follows from the aforementioned fact that $\calY_w(s)$ is irreducible and dimension $r$ if and only if $w$ is a Coxeter element.
\end{proof}

\junk{\MSB{Is it true that $[\calY_w(s)] = [\Perm]$ is the permutahedral
		class if and only if $w=c$? For all other $w$, either
		$\dim \calY_w(s)> n-1$ or $\calY_w(s)$ is reducible. I think the
		former clearly rules out that $[\calY_w(s)] = [\Perm]$, but not as
		clear about the latter.}
	\AK{I'm not seeing a trivial way to rule it out, what with not having a
		particularly computable formula for the class as a combination
		of Schubert classes. Trivially, the two are different in {\em $T$-equivariant}
		cohomology. I think one could maybe show that $[Perm]$ times a
		certain product of $c_1$s = $[\calY_w(t)]$ times a different product
		of $c_1$s. Anyway, the basic issue is that for $w$ not using all
		the generators, $\calY_w(t)$ is disconnected, i.e. not a ``variety'',
		so shouldn't we just say those are dumb and disallow them?}
	\MSB{Yeah, seems good.}}

\subsection{The class of a Lusztig variety}\label{ssec:class-of-lusztig-var}

In this section, we give a degeneration of $\calY_w(t)$ to a union of
Richardson varieties, and use this to deduce formulas for the
cohomology class, $T$-equivariant cohomology class, and $K$-class of $\calY_w(s)$ as a
sum of Richardson classes. We note that the expression for the ordinary
cohomology class $[\calY_w(s)] \in H^*(G/B)$ was previously found by
Kim \cite[Theorem 4.12]{K20-deligne-lusztig}.

For $u, v \in W$, we say that $uv$ is \defn{length-additive} if $\ell(uv)=\ell(u) + \ell(v)$. 
For fixed $w \in W$, we let 
\[
  \calP(w):=\{[y, z]: [y,z] \subset [u, uw^{-1}] \text{ for some $u$ with $uw^{-1}$ length-additive}\},
\]
which we regard as a poset with respect to containment\footnote{One
  can give an alternate definition of this poset using the Demazure
  product, as we do in the next subsection}. For a poset $\calP$, let
$\mob_{\calP}: \calP \to \ZZ$ be the unique function\footnote{We note
  that if $\hat{\calP}$ is the poset obtained from $\calP$ by adding
  minimal and maximal elements $\hat{0}$ and $\hat{1}$ and
  $\mu_{\hat{\calP}}$ is the M\"obius function of $\hat{\calP}$, then
  $\mob_{\calP}(b) = \mu_{\hat{\calP}}(b, \hat{1})$.} satisfying
\[\sum_{b \geq a} \mob_{\calP}(b)=1\]
for all $a \in \calP$.

\begin{theorem}\label{thm:Hclass}
  \begin{enumerate}
  \item   For $t \in T$ general enough, we have an embedded degeneration of
    $\calY_w(t)$ to the reduced union 
    \[\calX_w:=\bigcup\, \{  X_{u}^{uw^{-1}} \ \colon\
      u \in W, \  uw^{-1} ~\text{length-additive} \}     \]
    in $G/B$.
  \item  For $s \in G$ regular semisimple, we have the equality
    \[[\calY_w(s)] = \sum\{[X_{u}^{uw^{-1}}] \ \colon\
      u \in W, \  uw^{-1} ~\text{length-additive} \} \]
    in $H^*(G/B)$. If $s\in T$, the equality holds also for
    the $H^*_T(G/B)$-classes.
  \item Let $\mob:= \mob_{\calP(w)}$, as was just introduced. For $t \in T$ general enough, 
we have the equalities 
\begin{align*}[\calY_w(t)] &= [\bigcup \{X_{u}^{uw^{-1}}\ \colon 	u \in W, \  uw^{-1} ~\text{length-additive}\}]\\
		  &= \sum_{[y,z] \in \calP(w)} \mob([y,z]) [X_y^z]\end{align*}
		in $K_0(G/B)$. 
              \end{enumerate}
\end{theorem}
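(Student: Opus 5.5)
The plan is to intersect the degeneration of $\bOm_w$ from \cref{prop: degeneration of double Schubert} with the fixed subvariety $(t,1)\bOm_e$, and then push everything down to the second factor using \cref{prop:transverse-intersections-of-double-Schuberts}.

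\textbf{Setting up the family.} Let $\calF\subseteq (G/B)^2\times\AA^1$ be the family of \cref{prop: degeneration of double Schubert}. Its general fiber is $(1,\sigma(z))\bOm_w$ and its zero fiber is $\bigcup X_u\times X^{uw^{-1}}$. Translating the first factor by $t$ does not change $\bOm_w$, since $(t,1)\bOm_w = (t,t)(1,t^{-1})\bOm_w$ and $\bOm_w$ is $G_\Delta$-stable. Hence, after translating, I may instead intersect $\calF$ with the constant family $(t,1)\bOm_e\times\AA^1$ and set
\[\calF_t := \calF\cap\bigl((t,1)\bOm_e\times \AA^1\bigr).\]

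\textbf{Identifying the fibers.} At $z=1$ the fiber of $\calF_t$ is $(t,1)\bOm_e\cap\bOm_w$, which projects isomorphically to $\calY_w(t)$ by \cref{prop:transverse-intersections-of-double-Schuberts}. At general $z$ the fiber is $(t,1)\bOm_e\cap(1,\sigma(z))\bOm_w$. Using the $G_\Delta$-action, this is a translate of $(\sigma(z)^{-1}t,1)\bOm_e\cap\bOm_w$, so it projects to a translate of $\calY_w(\sigma(z)^{-1}t)$.

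At $z=0$ the fiber is $(t,1)\bOm_e\cap\bigcup X_u\times X^{uw^{-1}}$. A point $(tgB,gB)$ of $(t,1)\bOm_e$ lies in $X_u\times X^{v}$ exactly when $gB\in t^{-1}X_u\cap X^{v}=X_u\cap X^v$, because $X_u$ is $T$-stable. So the projection of this fiber is set-theoretically $\calX_w$.

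\textbf{Flatness and reducedness.} I will deal with this in two steps.

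\begin{enumerate}
\item The image of $\calF_t$ under projection to the second factor of $G/B$ together with $\AA^1$ is a closed subfamily of $G/B\times\AA^1$. Since $(t,1)\bOm_e$ is the graph of left multiplication by $t$, the projection is an isomorphism onto its image, so I can work with $\calF_t$ directly.
\item I must show that the intersection $\calF\cap((t,1)\bOm_e\times\AA^1)$ is flat with reduced special fiber. I would first check that, for general $t\in T$, the zero fiber is the scheme-theoretic intersection $\Delta_t\cap\bigcup X_u\times X^{uw^{-1}}$, where $\Delta_t=(t,1)\bOm_e$ is the graph of $t$. This intersection is the union of the Richardson varieties $X_u\cap X^{uw^{-1}}$, which is reduced.
\end{enumerate}

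The real issue is that the intersection scheme could be nonreduced even though each $X_u\cap X^v$ is reduced. Here Cohen–Macaulayness of the special fiber (\cref{prop: degeneration of double Schubert}) helps. Intersecting a Cohen–Macaulay scheme with $\Delta_t$ in the expected codimension, which is $\dim G/B$, gives a Cohen–Macaulay scheme. If it is also generically reduced, it is reduced. Generic reducedness should reduce to transversality of $X_u$ and $t^{-1}X_u$ type conditions at general points of each Richardson, which is the standard Kleiman/Richardson transversality argument; this is where ``$t$ general enough'' enters.

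Given reducedness and the dimension count, flatness follows from the last paragraph of \cref{prop:flatness}. It requires equidimensional reduced fibers with equal $A_*$-classes. For those classes:
\begin{itemize}
\item The cohomology class of each fiber is $[\Delta_t]\cdot[\calF_z]$, computed by the proper intersection with $\Delta_t$.
\item $[\calF_z]$ is constant in $z$, because $\calF$ is flat.
\item Properness of the intersection ensures the intersection product equals the fundamental class.
\end{itemize}
Alternatively, and more cleanly, I can take the closure of the generic part of $\calF_t$ and show that its zero fiber contains each $X_u^{uw^{-1}}$ by comparing dimensions and classes. This is the step I expect to be the main obstacle.

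\textbf{Parts (2) and (3).} Part (2) follows from (1) via \cref{prop:flatness}, together with the fact that the components of $\calX_w$ all have dimension $\ell(w)$. So its class is the sum of the Richardson classes, and the family is $T$-invariant, giving the equivariant statement. For $s$ not in $T$, or $t$ not general, I use \cref{thm: Lusztig facts}(3) and conjugation invariance.

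For part (3), flatness gives $[\calY_w(t)]=[\calO_{\calX_w}]$ in $K_0$. I expand the structure sheaf of the union by inclusion–exclusion. Intersections of Richardson varieties are reduced unions of Richardsons, namely the intervals $[y,z]\in\calP(w)$; I would cite the known fact that unions of Richardsons are reduced and compatibly Frobenius split. Frobenius splitting makes the Mayer–Vietoris sequences exact on structure sheaves. The resulting coefficients are the Möbius-type function $\mob$ on the poset of intersections, which is the defining identity $\sum_{b\ge a}\mob(b)=1$.
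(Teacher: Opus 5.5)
Your proposal is correct and follows the paper's overall architecture. Both intersect the double Schubert degeneration of \cref{prop: degeneration of double Schubert} fiberwise with $(t,1)\bOm_e$, project isomorphically to the second factor, and read off classes. Your part (3), via compatible Frobenius splitting and inclusion--exclusion, is exactly the paper's appeal to \cite{K09}. Where you differ is in how flatness is obtained.

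\textbf{The paper's route.} It never argues reducedness of the intersected fibers. It shows $[\calF'_0]=[\calF'_1]$ in $K_0((G/B)^2)$ using three facts: transversality at $z=1$, the $K$-class equality $[\bOm_w]=[\bigcup X_u\times X^{uw^{-1}}]$ coming from flatness of the double Schubert family, and multiplicativity of $K$-classes for proper intersections of Cohen--Macaulay schemes. It then invokes the $K$-theoretic ``iff'' criterion of \cref{prop:flatness}, and identifies the zero fiber with $\calX_w$ only set-theoretically.

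\textbf{Your route.} You first prove the zero fiber is reduced. It is Cohen--Macaulay, being a proper intersection of the Cohen--Macaulay union with the graph $\Delta_t$. It is generically reduced, since near a general point of $X_u^{uw^{-1}}$ only the component $X_u\times X^{uw^{-1}}$ passes and $\Delta_t\cap(X_u\times X^{v})\cong X_u\cap X^{v}$. You then compare Chow classes via intersection products and apply the last paragraph of \cref{prop:flatness}, which the paper states but explicitly does not use.

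\textbf{Trade-offs.} Your route actually proves the ``reduced union'' claim that the paper leaves implicit. Its cost is that you need reduced, equidimensional fibers over a whole open subset of the base. The paper only needs the fibers at $z=0$ and $z=1$, and it gets the $K$-class equality used in (3) in the same stroke.

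\textbf{Slips to fix (none load-bearing).}
\begin{enumerate}
\item $(t,1)\bOm_w\neq\bOm_w$ in general: for $w=e$ one is the graph of $t$ and the other is the diagonal. You don't need this remark, since your fiber at $z=1$ is $(t,1)\bOm_e\cap\bOm_w$ by definition.
\item The fiber at $z\neq 0$ is a translate of $(t\sigma(z),1)\bOm_e\cap\bOm_w$, not of $(\sigma(z)^{-1}t,1)\bOm_e\cap\bOm_w$.
\item Generic reducedness of the zero fiber holds for every $t\in T$, because $X_u$ is $T$-stable. Genericity of $t$ is only needed for the fibers near $z=1$.
\item In your Chow computation, $[\Delta_t]\cdot[\calF_z]=[\Delta_t\cap\calF_z]$ follows from properness \emph{together with} reducedness of the intersection scheme, which gives length, hence intersection multiplicity, one. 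Properness alone does not suffice.
\item To apply the reduced/equidimensional criterion, first restrict to an open $U\subseteq\AA^1$ containing $0$ and $1$ on which $t\sigma(z)$ stays in the transverse locus of \cref{prop:transverse-intersections-of-double-Schuberts}. Alternatively, use your closure-of-the-generic-part variant, which is the paper's $\calF'$.
\end{enumerate}
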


\begin{proof}
  We first prove (1). We have seen that
  \[ \calY_w(t) = \pi_2((t , 1) \bOm_1 \cap \bOm_w)
  \]
  where $\pi_2$ is the projection onto the second component and the
  intersection above is transverse.

  Take the family from Proposition \ref{prop: degeneration of double Schubert}
  and intersect fiberwise with $(t,1)\bOm_1$. We want to know the resulting
  family $\calF$ is flat at $z=1$ and $z=0$. If $Q \subset \AA^1$ is
  the set of points over which $\calF$ has torsion, define
  $\calF' := \overline{ \{(x,z) \in \calF\colon z \notin Q\setminus\{0,1\} \}}$
  which is automatically flat except possibly at $0,1$.
 
  We know $\calF'$ is flat at $z=1$ because $t$ was chosen to make the
  intersection transverse. 
  We can use Proposition \ref{prop:flatness} to check flatness,
  once we establish that $[\calF'_0] = [\calF'_1] \in K_0((G/B)^2)$. In $K_0((G/B)^2)$, we have
  \[
    [\calF'_1]
    = [\calF_1] 
    = [(t,1)\bOm_1 \cap \bOm_w] 
    = [(t,1)\bOm_1] \, [\bOm_w] 
    = [(t,1)\bOm_1] \, \left[
      \Union \left\{ X_{u}^v \colon v= uw^{-1}\text{ length-additive} \right\} \right]
  \]
  The first equality holds because $\calF,\calF'$ agree at $1$
  (also at $0$ and the generic point).
  The second is from the definition of $\calF$.
  The third is from the transversality, i.e. the choice of $t$. 
  The last is from Proposition \ref{prop: degeneration of double Schubert},
  which also tells us that union is Cohen--Macaulay. 

  To reassemble the final product as an intersection, we use the fact
  from \cite[proof of Proposition 7.1]{fulton2013intersection}
  that when taking the proper intersection 
  of two Cohen--Macaulay subschemes of a smooth scheme, the $K$-classes multiply.
  We continue:
  \[ = \left[ (t,1)\bOm_1 \cap
      \bigcup \{ X_{u}^v \colon v= uw^{-1}\text{ length-additive} \}
    \right]
    = [\calF_0] = [\calF'_0]
  \]
  Now Proposition \ref{prop:flatness} applies to $\calF'$, making it a
  degeneration (embedded in $(G/B)^2$) of $(s,1)\bOm_1 \cap \bOm_w$ to
  \begin{align*}
    (t , 1) \bOm_1 \cap \left ( \bigcup X_u \times X^v \; \lvert \; v = uw^{-1} \text{ length-additive} \right ). 
  \end{align*}
  
  The points in this intersection are $(tg,g)$ where $tg \in X_u$ and
  $g \in X^v$. As $X_u$ is closed under the action of $T$, 
  together these imply that $g \in X_u^v$. Conversely, if
  $g \in X_u^v$ for $v=uw^{-1}$ length-additive, then $(tg,g)$ is in
  the intersection above. Therefore, if we project this flat family,
  remembering that the projection is an isomorphism, we obtain a flat
  degeneration of $\calY_w(t)$ to
  \[\Union \{ X_{u}^v\colon u,v\text{ such that }v= uw^{-1}\text{ is length-additive} \} \]
  embedded in $G/B$.
  This shows (1).
  
  For (2), by \cref{thm: Lusztig facts}, the class of $[\calY_w(s)]$ does not
  depend on the choice of regular semisimple element, so (2) follows from \cref{prop:flatness} and (1) (as we may choose $t$ to be some
  regular semisimple element).
  
  For (3), the first equality follows from (1) and \cref{prop:flatness}.
  The second equality follows from applying \cite[Theorem 1]{K09} to
  the union $\calX$, which is compatibly split w.r.t. the standard Frobenius
  splitting on $G/B$ (that splits exactly the unions of Richardson varieties).
\end{proof}

\begin{corollary}[{\cite[Equation (14)]{AT10}}]
 Let $w\in S_n$ be a dominant permutation,
    so $\calY_w(t)$ is a regular semisimple Hessenberg variety.
    Then
    \[ [\calY_w(t)] =
      \sum \{ [X_{u}^{uw^{-1}}] \colon u\text{ with }uw^{-1}\text{ length-additive} \} \]
\end{corollary}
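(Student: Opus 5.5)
This corollary should follow directly from \cref{thm:Hclass}(2). The only real work is to see that, for a dominant permutation $w$, the Lusztig variety $\calY_w(t)$ is a regular semisimple Hessenberg variety, so that the formula of \cref{thm:Hclass} is literally \cite[Eq. (14)]{AT10}.

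First, I would take $t\in T$ regular semisimple in $GL_n$ and write down the Hessenberg function $h$ attached to the dominant permutation $w$. Dominant ($132$-avoiding) permutations are exactly those for which $\overline{BwB}$ is cut out by rank conditions on southwest corners that define a Hessenberg space $H$. Concretely, I would check the equality
\[
\overline{BwB}=\{x\in GL_n \colon x_{ij}=0 \text{ for } i>h(j)\}\cap GL_n ,
\]
where $h(j)$ is read off from the diagram of $w$ (or of $w^{-1}$, depending on conventions). The reason this should hold is that the diagram of a dominant permutation is a partition shape in the northwest corner. Its essential set then imposes only rank-zero conditions, which are linear vanishing conditions on entries, so the matrix Schubert variety is a linear space intersected with $GL_n$. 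This linear space is $B$-stable on both sides and contains $B$, hence is a Hessenberg space $H$.

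With this in hand, the condition $g^{-1}tg\in\overline{BwB}$ becomes $g^{-1}tg\in H$, which says exactly that $gB\in\mathrm{Hess}(t,h)$. So $\calY_w(t)$ is the regular semisimple Hessenberg variety. This is the identification recalled in \cref{rem:Hessdegen}, which I would cite from \cite{BHL25-geomLusztig} rather than reprove in detail.

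Finally, I would apply \cref{thm:Hclass}(2) with $s=t$ regular semisimple, which gives the displayed equality both in $H^*(G/B)$ and in $H^*_T(G/B)$. It remains to match this with the sum in \cite{AT10} over $u$ with $uw^{-1}$ length-additive. The indexing conventions for $h$ versus $w$, that is, whether one should use $w$ or $w^{-1}$ and rows versus columns, are the main point to be careful about. I would fix them by checking the case $h(j)=j+1$, where $w=s_{n-1}\cdots s_1$, against \cref{thm:prior-work}.
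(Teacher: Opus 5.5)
Your proposal is correct and follows the paper's route. The corollary is just \cref{thm:Hclass}(2) with $s=t$, with the identification of $\calY_w(t)$ as a regular semisimple Hessenberg variety taken as known (cf.\ \cref{rem:Hessdegen}); in fact the displayed identity holds for every $w\in S_n$, dominant or not.

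The convention bookkeeping matters only for matching with \cite{AT10}, and there you have it inverted. With the paper's conventions ($B$ upper triangular, $w\cdot e_i=e_{w(i)}$), the Hessenberg function $h(j)=j+1$ corresponds to $w=s_1s_2\cdots s_{n-1}$, so that $w^{-1}=s_{n-1}\cdots s_1=c$ exactly as in \cref{thm:prior-work}. By contrast, $\overline{B\,s_{n-1}\cdots s_1\,B}$ is not a linear space for $n\ge 3$. Relatedly, ``$132$-avoiding'' is not literally the right pattern condition on $w$ here: for example, $w=312$ avoids $132$, but $\overline{BwB}$ is cut out by a $2\times 2$ minor.
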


In the special case when $w$ is a Coxeter element, we obtain results on the cohomology and $K$-class of the permutahedral variety. 

\begin{theorem}\label{thm:k-cohom-formulas-for-perm}
	    Let $c$ be a Coxeter element in $W$. Then in $H^*(G/B)$, 
	\[ [\Perm] = \sum \{ [X_{u}^{uc}] \colon u\text{ with }uc\text{ length-additive} \}. \]
	In $K_0(G/B)$, 
	\[[\Perm] = \sum (-1)^{\ell(c)-\ell(z) + \ell(y)+1} [X_y^z]\]
	where the sum is over all $[y,z] \in \calP(c)$ such that $[y,z]$ is not contained in a single coset of $W_J \backslash W$ for any maximal parabolic subgroup $W_J$.
\end{theorem}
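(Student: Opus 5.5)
For the cohomology statement, apply \cref{thm:Hclass}(2) with $w := c^{-1}$. The inverse of a Coxeter element is again a Coxeter element, since reversing a reduced word uses each simple reflection once. So by \cref{prop:lusztig-permutahedral}, $\calY_{c^{-1}}(t)$ is a permutahedral variety for $t\in T$ regular semisimple, and $[\calY_{c^{-1}}(t)] = [\Perm]$. Substituting $uw^{-1} = uc$ into \cref{thm:Hclass}(2) gives the stated sum over $u$ with $uc$ length-additive.

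For the $K$-theory statement, start from \cref{thm:Hclass}(3) with $w=c^{-1}$:
\[ [\Perm] = \sum_{[y,z]\in\calP(c^{-1})} \mob([y,z])\,[X_y^z], \]
where $\calP(c^{-1})$ consists of the intervals $[y,z]$ contained in some $[u,uc]$ with $uc$ length-additive. It then remains to compute $\mob$. The claim to prove is that $\mob([y,z])$ is $\pm(-1)^{\ell(c)-(\ell(z)-\ell(y))}$ when $[y,z]$ is not contained in a coset $W_Jw$ of a maximal parabolic, and $0$ otherwise. The global sign must be fixed so that the maximal cells $[u,uc]$ get coefficient $+1$, which is forced since $\mob = 1$ on maximal elements; I would reconcile this with the exponent convention as written. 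To compute $\mob$, I would use geometry. Each $X_y^z$ with $[y,z]\subseteq[u,uc]$ is a toric Richardson variety whose moment polytope $\Pi_y^z$ has dimension $\ell(z)-\ell(y)$. By \cref{mainthm: Permutahedron Subdivisions} (proved later, from the general subdivision result of \cref{sec:subdiv-from-degen} applied to the degeneration of \cref{thm:Hclass}(1)), the map $[y,z]\mapsto \Pi_y^z$ identifies $\calP(c^{-1})$ with the face poset of a polyhedral subdivision of the $W$-permutahedron $\Pi$. This requires checking that distinct intervals give distinct polytopes, which holds because a Bruhat interval polytope determines its vertex set $[y,z]$.

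Then $\mob$ is the function on the face poset of a subdivided $r$-ball characterized by $\sum_{G\supseteq F}\mob(G) = 1$ for every cell $F$. I would verify directly that $\mob(G) = (-1)^{r-\dim G}$ for interior cells and $\mob(G)=0$ for boundary cells satisfies this. For a cell $F$, the cells $G\supseteq F$ correspond to the cells of the link of $F$, plus $F$ itself. The alternating count of interior cells containing $F$ is therefore $1$ minus a reduced Euler characteristic of the link. The link is a sphere of dimension $r-\dim F-1$ if $F$ is interior, and a ball if $F$ is on the boundary. The interior cells in the star of a boundary $F$ are those not lying in the boundary part of the link. In both cases a standard Euler characteristic computation gives total $1$, and uniqueness of $\mob$ then pins it down.

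Finally, identify the boundary cells. A cell $\Pi_y^z$ lies in $\partial\Pi$ iff it lies in a facet of $\Pi$. The facets of the $W$-permutahedron are the convex hulls of the orbit points indexed by cosets $W_Jw$ of maximal parabolic subgroups, and such a facet contains exactly those vertices. So $\Pi_y^z\subseteq\partial\Pi$ iff $[y,z]\subseteq W_Jw$ for some maximal $J$, which gives the condition in the statement.

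The main obstacle is the dependence on the later subdivision theorem: the argument needs $\calP(c^{-1})$ to be exactly the face poset of an honest polyhedral subdivision, with no extra intervals or missing faces. I would first try to cite \cref{mainthm: Permutahedron Subdivisions} directly, noting that its description of the faces as $\Pi_y^z$ with $z\le y*c$ matches $\calP(c^{-1})$. Failing that, I would show directly that every interval $[y,z]\subseteq[u,uc]$ gives a face of $\Pi_u^{uc}$, and that intersections of the cells are again such faces.
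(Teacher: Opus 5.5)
Your route is the paper's: \cref{thm:Hclass} with $w=c^{-1}$, the identification of $\calP(c^{-1})$ with the face poset of the subdivision in \cref{thm:subdivision-Coxeter}, the computation of $\mob$ on a subdivided ball, and ``boundary cell $=$ cell lying in a facet of $\Pi$''. The one methodological difference is that you verify the formula for $\mob$ via Euler characteristics of links instead of quoting \cref{prop:mob-fcn-for-face-poset}. Your verification is correct: it gives $\mob([y,z])=(-1)^{\ell(c)-\ell(z)+\ell(y)}$ on interior intervals, so maximal cells get $+1$.

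Do not try to ``reconcile'' this with the displayed exponent. The extra $+1$ there is an error in the statement: for $W$ of type $A_1$ the displayed formula reads $[\PP^1]=-[X_e^{s}]$. The paper's proof acquires the $+1$ because the value $(-1)^{d+1}$ in \cref{prop:mob-fcn-for-face-poset} is $\mu_{\hat\calP}(\sigma,\hat 1)$. Since $\sum_{b\ge a}\mu_{\hat\calP}(b,\hat 1)=-1$, this equals $-\mob(\sigma)$, not $\mob(\sigma)$ as the footnote defining $\mob$ asserts. The $\mob$-formula in \cref{conj:mobius-fcn-and-k-thry-formula} agrees with yours. So what your argument proves is the formula with exponent $\ell(c)-\ell(z)+\ell(y)$.

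Your facet step is false as written, again mirroring the statement (and the paper's proof). With $\Pi_y^z=\conv(v\cdot\beta : v\in[y,z])$ and $W$ acting linearly, the faces of $\Pi$ are the translates $w\cdot\conv(W_J\cdot\beta)=\conv(wW_J\cdot\beta)$. Hence facets are indexed by left cosets $wW_J$, not $W_Jw$.

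Concretely, take type $A_2$ and $c=s_2s_1$. The cells are $\Pi_e^{s_2s_1}$ and $\Pi_{s_1}^{s_1s_2s_1}$, and their common ridge $[s_1,s_2s_1]$ is the interior diagonal of the hexagon. Yet $\{s_1,s_2s_1\}=\langle s_2\rangle s_1$ is a right coset. Meanwhile the boundary edges $[s_2,s_2s_1]$ and $[s_1,s_1s_2]$ lie in no right coset. The right-coset version therefore outputs $[X_e^{s_2s_1}]+[X_{s_1}^{w_0}]-[X_{s_2}^{s_2s_1}]-[X_{s_1}^{s_1s_2}]$, whose holomorphic Euler characteristic is $0$. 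The correct class $[X_e^{s_2s_1}]+[X_{s_1}^{w_0}]-[X_{s_1}^{s_2s_1}]$ has Euler characteristic $1$.

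With ``not contained in any $wW_J$'' your last step goes through: a convex cell lies in $\partial\Pi$ iff it lies in a facet, and $\conv(wW_J\cdot\beta)$ has vertex set exactly $wW_J\cdot\beta$. Finally, your indexing by $\calP(c^{-1})$ (intervals inside some $[u,uc]$, equivalently $z\le y*c$) is the one matching the cells. The statement's $\calP(c)$ is the same result applied to the Coxeter element $c^{-1}$.
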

The cohomological result here is immediate from \cref{thm:Hclass}. We delay the proof of the K-theoretic statement to the next section.

\subsection{Conjectural $K$-class formulas for Lusztig varieties}
Here, we give a conjectural sharpening of \cref{thm:Hclass} (3), and in particular a conjectural formula for $\mob([a,b])$. We can prove this conjecture when $w$ is a Coxeter element, which is the content of \cref{thm:k-cohom-formulas-for-perm}.

 For the remainder of the subsection, we fix $w \in W$, the poset $\calP(w)$ and function $\mob: \calP(w) \to \ZZ$ as \cref{ssec:class-of-lusztig-var}. We note that $\calP(w)$ is also the poset of Richardson varieties appearing in the union $\calX_w$ in \cref{thm:Hclass} (1), ordered by containment. It is a graded poset of rank $\ell(w^{-1})= \ell(w)$.
 
 We have an explicit conjecture for $\mob([y,z])$, which requires some terminology. We call the elements $[y,z]$ of $\calP(w)$ the \defn{faces} of $\calP(w)$. A codimension 0 face is a \defn{facet} and a codimension 1 face is a \defn{ridge}.
 
 \begin{lemma}\label{lem:ridge-in-one-or-two-facets}
 	Every ridge of $\calP(w)$ is contained in either one or two facets.
 \end{lemma}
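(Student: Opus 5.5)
The plan is a direct counting argument. Containment of intervals pins down a facet from one endpoint, and the ridge has only two endpoints to choose from.

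First I will fix the setup. A facet of $\calP(w)$ is an interval $[u,uw^{-1}]$ with $uw^{-1}$ length-additive, so $\ell(uw^{-1})-\ell(u)=\ell(w)$. The poset is graded by $\ell(z)-\ell(y)$, so a ridge is an interval $[y,z]\in\calP(w)$ with $\ell(z)-\ell(y)=\ell(w)-1$. For intervals with $y\le z$, the containment $[y,z]\subseteq[u,v]$ holds if and only if $u\le y$ and $z\le v$: the endpoints $y,z$ must lie in $[u,v]$, and conversely transitivity of Bruhat order gives the reverse implication.

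Next, the lower bound. By the definition of $\calP(w)$, every element, and in particular every ridge, lies in at least one facet $[u,uw^{-1}]$. So each ridge is contained in at least one facet.

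For the upper bound, suppose $[y,z]\subseteq[u,v]$ with $v=uw^{-1}$ length-additive. Then $u\le y\le z\le v$ gives
\[\bigl(\ell(y)-\ell(u)\bigr)+\bigl(\ell(v)-\ell(z)\bigr)=\ell(w)-\bigl(\ell(w)-1\bigr)=1,\]
and both summands are nonnegative. Since $\ell(y)=\ell(u)$ together with $u\le y$ forces $u=y$, there are exactly two cases. In the first, $\ell(y)=\ell(u)$, so $u=y$; then the facet is forced to be $[y,yw^{-1}]$. In the second, $\ell(v)=\ell(z)$, so $v=z$; since $v=uw^{-1}$ we get $u=zw$, and the facet is forced to be $[zw,z]$. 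Every facet containing the ridge is therefore one of $[y,yw^{-1}]$ or $[zw,z]$, so there are at most two.

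Combining the two bounds, each ridge lies in either one or two facets. I do not expect a real obstacle here. The only point needing care is the translation of interval containment into Bruhat inequalities on the endpoints, plus the standard fact that comparable elements of equal length coincide. I will not need to decide which of the two candidate facets actually occur, since the statement only asks for the count to be one or two.
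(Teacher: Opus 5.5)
Your proof is correct and takes the same approach as the paper. The paper simply observes that the only facets that can contain a ridge $[y,z]$ are $[y,yw^{-1}]$ and $[zw,z]$. You have filled in the length count that forces $u=y$ or $v=z$, and the lower bound that comes directly from the definition of $\calP(w)$.
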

 \begin{proof}
 	If $[y,z]$ is a ridge, then the only possibilities for facets containing it are $[y, yw^{-1}]$ and $[zw, z]$.
 \end{proof}
 
 A ridge is \textbf{exterior} if it is contained in exactly one facet, and is \textbf{interior} if it is contained in exactly two facets; by the above lemma, all ridges are either interior or exterior. A face of codimension at least 2 is \textbf{exterior} if it is contained in some exterior ridge and is otherwise \textbf{interior}.
 
 \begin{conjecture}\label{conj:mobius-fcn-and-k-thry-formula}
 	We have 
 	\[\mob([y,z]) = \begin{cases}
 		0 & \text{if $[y,z] \in \calP(w)$ exterior}\\
 		(-1)^{\ell(w)-\ell(z)+ \ell(y)} & \text{if $[y,z] \in \calP(w)$ interior}
 	\end{cases}\]
 	and so in $K$-theory,
 	\[[\calY_{w}(s)] = \sum_{\substack{[y,z]\in \calP(w) \\ \text{ interior }}} (-1)^{\ell(w)-\ell(z)+ \ell(y) +1} [X_y^z].\]
 \end{conjecture}
 
 We prove \cref{conj:mobius-fcn-and-k-thry-formula} in the case that $w$ is a Coxeter element, using the following proposition. The \defn{face poset} of a finite cell complex is a poset whose elements are the cells $\sigma$, with order relation given by containment of closures.
 
 \begin{proposition}[{\cite[Proposition 3.8.9]{Stanley-EC1}}]\label{prop:mob-fcn-for-face-poset}
 	Suppose $\calP$ is the face poset of a finite regular cell complex whose underlying topological space is a manifold $\Gamma$, possibly with boundary. Then for a cell $\sigma$ of codimension $d$,
 	\[\mob_{\calP}(\sigma) = \begin{cases}
 		0 & \text{if $\sigma$ lies on the boundary of }\Gamma\\
 		(-1)^{d+1} & \text{otherwise.}
 	\end{cases}\]
 \end{proposition}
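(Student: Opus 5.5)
We read $\mob_{\calP}(\sigma)$ as in the footnote to the definition of $\mob$, namely $\mob_{\calP}(\sigma)=\mu_{\hat{\calP}}(\sigma,\hat 1)$. The plan is to prove the stated formula for this M\"obius value topologically. We compute $\mu_{\hat\calP}(\sigma,\hat 1)$ with Philip Hall's theorem, identify the resulting order complex with the link of $\sigma$ in $\Gamma$, and read off its Euler characteristic from local homology of the manifold. Throughout, let $n=\dim\Gamma$, let $\sigma$ have dimension $k=n-d$, and write $\tilde\chi$ for the reduced Euler characteristic.

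\emph{Step 1 (Hall's theorem).} We have $\mu_{\hat\calP}(\sigma,\hat 1)=\tilde\chi(\Delta(\sigma,\hat 1))$. Here $\Delta(\sigma,\hat 1)$ is the order complex of the open interval, whose elements are the cells $\tau\supsetneq\sigma$. This is standard (EC1, Prop.~3.8.5). It reduces the claim to showing $\tilde\chi(\Delta(\sigma,\hat1))=(-1)^{d+1}$ for interior cells and $0$ for boundary cells.

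\emph{Step 2 (identify the interval with the link).} Since the complex is regular, $\Delta(\calP)$ is the barycentric subdivision of $\Gamma$. The order complex $\Delta_{\le\sigma}$ is a triangulated $k$-ball whose boundary is $\Delta_{<\sigma}\cong\partial\sigma\cong S^{k-1}$. The open star of the barycenter $\hat\sigma$ of $\sigma$ in $\Delta(\calP)$ is the join $\Delta_{<\sigma}*\{\hat\sigma\}*\Delta(\sigma,\hat1)$, with the outer faces removed. Hence the link of $\hat\sigma$ in $\Gamma$ is $S^{k-1}*\Delta(\sigma,\hat1)$, the $k$-fold suspension of $L:=\Delta(\sigma,\hat1)$.

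\emph{Step 3 (local homology).} For the point $x=\hat\sigma$ we have $H_i(\Gamma,\Gamma\setminus x)\cong\tilde H_{i-1}(\mathrm{lk}\,x)\cong\tilde H_{i-1-k}(L)$, using Step 2 and the suspension isomorphism.
\begin{itemize}
\item If $x$ is an interior point of the manifold, the local homology is $\ZZ$ in degree $n$ and zero otherwise. Then $L$ has the homology of $S^{n-k-1}=S^{d-1}$, so $\tilde\chi(L)=(-1)^{d-1}=(-1)^{d+1}$.
\item If $x\in\partial\Gamma$, the local homology vanishes, so $L$ is acyclic and $\tilde\chi(L)=0$.
\end{itemize}
Every point of the relative interior of $\sigma$ behaves the same way, so this dichotomy is exactly "$\sigma\subset\partial\Gamma$ or not". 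Combining with Step 1 gives the stated formula.

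The main obstacle is Step 2: checking carefully that the link of the barycenter of $\sigma$ in the barycentric subdivision really is the join $\partial\sigma * \Delta(\sigma,\hat 1)$. This is where regularity of the cell complex is used, since it guarantees that closed cells are balls with $\Delta_{<\sigma}\cong\partial\sigma$. The remaining steps are routine Euler characteristic bookkeeping.
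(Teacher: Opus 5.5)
Your proof is correct, and it is the standard argument behind the cited result (Stanley, EC1, Prop.~3.8.9), which the paper quotes rather than proves. The ingredients are Hall's theorem, the fact that the link of the vertex $\sigma$ in $\Delta(\calP)$ is $\Delta(\calP_{<\sigma})*\Delta(\calP_{>\sigma})\cong S^{k-1}*L$, and local homology of $\Gamma$; your claim that all points of the open cell behave alike is fine, since the cone point of each cell can be placed at any interior point.

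One caveat, which you inherited from the paper rather than created: the defining relation $\sum_{b\ge a}\mob_{\calP}(b)=1$ forces $\mob_{\calP}(b)=-\mu_{\hat\calP}(b,\hat 1)$ (for instance a facet has $\mob_{\calP}=1$), so the footnote is off by a sign. What you have proved is therefore the formula for $\mu_{\hat\calP}(\sigma,\hat 1)$. For $\mob_{\calP}$ as actually defined, the value on interior cells is $(-1)^{d}$.
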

 
 \begin{proof}[Proof of \cref{thm:k-cohom-formulas-for-perm}]
 	\cref{thm:subdivision-Coxeter} below shows that there is a polyhedral subdivision of the $W$-permutahedron $\Pi_e^{w_0}(\pt)$ whose face poset is isomorphic to $\calP(c)$. The cell (closure) corresponding to $[y,z] \in \calP(c)$ is the Bruhat interval polytope $\Pi_y^z(\pt)$. Polyhedral subdivisions are regular finite cell complexes, and the $W$-permutahedron is a manifold with boundary. We apply \cref{prop:mob-fcn-for-face-poset} to determine $\mob_{\calP(c)}$. 
 	
 	An interval $[y,z] \in \calP(c)$ is exterior exactly when the corresponding cell closure $\Pi_y^z(\pt)$ is contained in the boundary of the $W$-permutahedron, and is interior exactly when $\Pi_y^z(\pt)$ intersects the interior. As $\Pi_e^{w_0}(\pt)$ is a polytope, the former situation occurs if and only if $\Pi_y^{z}(\pt)$ is contained in a single facet. This in turn occurs if and only if the interval $[y,z]$ is contained in a single coset of $W_J\backslash W$ for some maximal parabolic subgroup $W_J$. 
 	
 	The dimension of $\Pi_e^{w_0}(\pt)$ is $\ell(c)$, and the dimension of the cell closure $\Pi_y^z(\pt)$ is $\ell(z) - \ell(y)$. Combining this with the above paragraph yields the theorem statement.
 \end{proof}

 \begin{remark} As is clear from the proof of \cref{thm:k-cohom-formulas-for-perm}, \cref{conj:mobius-fcn-and-k-thry-formula} holds if one can apply \cref{prop:mob-fcn-for-face-poset}.
For arbitrary $w$, the poset $\calP(w)$ is the face poset of a finite regular CW complex. Namely, the totally nonnegative flag variety 
 	\[(G/B)^{\ge 0} = \bigsqcup_{p\leq q}(X_p^q)^{>0} =\bigcup_{p\leq q}(X_p^q)^{\ge 0} \]
 	is a regular cell complex \cite{GKL22} with face poset $\{[y, z]: y \leq z\}$ ordered by inclusion. The sub-complex
 	\[\bigcup_{\substack{u: ~uw^{-1}\\\text{length-additive}}} (X_u^{uw^{-1}})^{ \ge 0}= \bigsqcup_{[y,z] \in \calP} (X_y^z)^{>0}\]
 	is also regular and has face poset $\calP(w)$. To apply \cref{prop:mob-fcn-for-face-poset}, one would need to show this sub-complex is a manifold with boundary. Another strategy to show \cref{conj:mobius-fcn-and-k-thry-formula} would be to show that the poset obtained from $\calP(w)$ by adding a minimal and maximal element is shellable.
 \end{remark}
 
If \cref{conj:mobius-fcn-and-k-thry-formula} is true, it would be useful to have an explicit characterization of the interior and exterior elements of $\calP(w)$. We give this below, though we first need an alternate description of $\calP(w)$.
 
 We use $v*w$ to denote the \defn{Demazure product} of $v$ and $w$, which is the unique Bruhat-maximal element of $\{v'w: v' \leq v\}$. The Demazure product is associative and can also be computed recursively by the formula
 \[v * s_i = \begin{cases}
 		v s_i & \text{if}~ vs_i > v\\
 		v & \text{else}.
 	\end{cases}\]

\begin{lemma}\label{lem:interval-poset-Demazure-prod}
For $w \in W$, $\calP(w)= \{[y,z]: z \leq y * w^{-1}\}.$
\end{lemma}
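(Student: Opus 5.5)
The plan is to prove the two inclusions separately, using only standard properties of the Demazure product. Recall that $y * x$ is the Bruhat-maximal element of $\{y'x : y' \leq y\}$; in particular $y*x$ is monotone in $y$, since $y \leq u$ makes $\{y'x : y'\le y\}$ a subset of $\{u'x : u' \le u\}$. The other ingredient is the following observation.

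\emph{Observation.} If $ux$ is length-additive, then $u * x = ux$. Indeed, any $u' \le u$ satisfies $u'x \le u*x$ with $\ell(u'x) \le \ell(u') + \ell(x) \le \ell(u)+\ell(x) = \ell(ux)$. So the maximum over $\{u'x : u'\le u\}$ is attained at $u'=u$. This can also be seen from the recursive formula, by peeling simple reflections off $x$ one at a time.

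\emph{Inclusion $\subseteq$.} Suppose $[y,z] \subseteq [u, uw^{-1}]$ with $uw^{-1}$ length-additive. Then $u \le y$ and $z \le uw^{-1}$. By the observation, $uw^{-1} = u * w^{-1}$, and by monotonicity $u * w^{-1} \le y * w^{-1}$. Hence $z \le y * w^{-1}$.

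\emph{Inclusion $\supseteq$.} This is the main step. I will first prove the following claim: for any $y, x \in W$ there is $u \le y$ with $ux$ length-additive and $y * x = ux$. The proof is by induction on $\ell(y)$; the case $y=e$ is trivial.

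Write $y = y_1 s$ with $\ell(y) = \ell(y_1)+1$. By associativity, $y * x = y_1 * (s * x)$. There are two cases.
\begin{itemize}
\item If $sx < x$, then $s * x = x$. Induction gives $u \le y_1 \le y$ with $y*x = ux$ length-additive.
\item If $sx > x$, then $s*x = sx$. Induction gives $u_1 \le y_1$ with $y_1 * (sx) = u_1 s x$ and $\ell(u_1 s x) = \ell(u_1) + \ell(x) + 1$. This forces $\ell(u_1 s) = \ell(u_1)+1$ and makes $(u_1 s)x$ length-additive. Since $u_1 \le y_1$, $u_1 s > u_1$ and $y_1 s > y_1$, the lifting property gives $u := u_1 s \le y_1 s = y$.
\end{itemize}

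Now take $[y,z]$ with $y \le z \le y * w^{-1}$ and apply the claim with $x = w^{-1}$. This gives $u \le y$ with $uw^{-1}$ length-additive and $uw^{-1} = y*w^{-1} \ge z$. Therefore $[y,z] \subseteq [u, uw^{-1}]$, i.e.\ $[y,z] \in \calP(w)$.

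I expect the only delicate point to be the case $sx > x$ in the induction. There I need both the length bookkeeping, showing that $u_1 s$ is length-additive with $x$, and the lifting property to conclude $u_1 s \le y$. Everything else is formal from the definition of the Demazure product.
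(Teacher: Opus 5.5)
Your proposal is correct and follows the paper's proof. The paper proves the same two inclusions. For $\subseteq$ it uses that $uw^{-1}$ lies in $\{y'w^{-1} : y'\le y\}$, so $uw^{-1}\le y*w^{-1}$; your detour through $u*w^{-1}=uw^{-1}$ and monotonicity is valid but not needed. For $\supseteq$ it simply asserts that $y*w^{-1}=y'w^{-1}$ for some $y'\le y$ with $y'w^{-1}$ length-additive ("by computing the Demazure product from right to left"). Your induction on $\ell(y)$, using the lifting property, supplies the details the paper omits.
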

\begin{proof} Denote the right-hand side by $\calP$.
For the containment $\supset$, consider $[y,z] \in \calP$. We have that $y * w= y' w^{-1}$ for some $y' \leq y$ with $y' w^{-1}$ length-additive. This can be seen by computing the Demazure product $y*w^{-1}$ from right-to-left. We have that $y' \leq y \leq z \leq y'w^{-1}$, so $[y,z] \subset [y', y'w^{-1}]$ and thus $[y,z] \in \calP(w)$ as desired. For the containment $\subset$, if $[a,b] \subset [u, uw^{-1}]$ where $uw^{-1}$ is length-additive, then $a*w^{-1} \geq uw^{-1}$. This is because $a*w^{-1}$ is the Bruhat-maximal element of $\{a'w^{-1}: a' \leq a\}$, and this set contains $uw^{-1}$. So we have $b \leq uw^{-1} \leq a*w^{-1}$ as desired.
\end{proof}

The interior and exterior faces have the following explicit characterization.

\begin{proposition}
	Consider $[y,z] \in \calP(w^{-1})$, and say $y * w = vw$ where $vw$ is length-additive. Then $[y,z]$ is exterior if and only if either of the following hold
	\begin{enumerate}
		\item $yw$ is not length-additive 
		\item the leftmost subexpression (equivalently, some subexpression) for $z$ in $\bv \bw$ does not use every letter of $w$.
	\end{enumerate}
	
	Equivalently, $X_y^z$ is interior if and only if $yw$ is length-additive, and the leftmost expression (equivalently, every subexpression) for $z$ in $\mathbf{y} \bw$ uses every letter of $\bw$.
\end{proposition}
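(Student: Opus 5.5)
The plan is to work directly with the description $\calP(w^{-1})=\{[y,z]\colon z\le y*w\}$ from \cref{lem:interval-poset-Demazure-prod}, using three tools: the subword property for Bruhat order, monotonicity of the Demazure product in each argument, and the proof of \cref{lem:ridge-in-one-or-two-facets}.

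\emph{Setup.} Facets are the intervals $[u,uw]$ with $uw$ length-additive. A ridge $[a,b]$ satisfies $\ell(b)-\ell(a)=\ell(w)-1$. By the proof of \cref{lem:ridge-in-one-or-two-facets}, the only candidate facets containing it are $[a,aw]$ and $[bw^{-1},b]$. The first is a facet containing $[a,b]$ exactly when $aw$ is length-additive and $b\le aw$. The second is one exactly when $bw^{-1}\cdot w$ is length-additive and $bw^{-1}\le a$. The ridges containing $[y,z]$ are the $[a,b]$ with $a\le y$, $z\le b$ and $b\le a*w$. So $[y,z]$ is exterior exactly when one of these ridges fails one of the two facet conditions.

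\emph{Direction ($\Leftarrow$), case (1).} Suppose $yw$ is not length-additive, and write $y*w=vw$ with $v<y$. I would choose a saturated chain from $v$ up to $y$ and let $a$ be its element covering $v$, so $\ell(a)=\ell(v)+1$. Set $b=vw$, so $[a,vw]$ has length $\ell(w)-1$. Since $z\le y*w=vw$, we get $[y,z]\subseteq[a,vw]$. By monotonicity, $v\le a\le y$ forces $a*w=vw$, so $[a,vw]$ is a ridge of $\calP(w^{-1})$. This ridge lies in the facet $[v,vw]$. It does not lie in $[a,aw]$: that would need $aw$ length-additive, which would give $a*w=aw\neq vw$. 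Hence the ridge is exterior, and so is $[y,z]$.

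\emph{Direction ($\Leftarrow$), case (2).} Suppose instead that the leftmost subexpression for $z$ in $\mathbf v\mathbf w$ omits some letter of $\mathbf w$. I would delete the rightmost such omitted letter to get a word $\mathbf w'$, and put $b=\mathrm{Dem}(\mathbf v\mathbf w')$, so that $z\le b<vw$. The goal is a ridge $[a,b]\supseteq[y,z]$ with $a\le y$ that fails to lie in $[bw^{-1},b]$. The reason this should work is that $bw^{-1}\cdot w$ length-additive would produce a subexpression for $b$ that uses every letter of $\mathbf w$. That would contradict the choice of a leftmost subexpression, since leftmost subexpressions are characterized greedily. This is also where I would prove the parenthetical claim that ``leftmost'' can be replaced by ``some'' (respectively ``every''): I would compare an arbitrary subexpression with the leftmost one by the usual exchange argument.

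\emph{Direction ($\Rightarrow$), the main obstacle.} Assume $yw$ is length-additive and that every subexpression for $z$ in $\mathbf y\mathbf w$ uses all letters of $\mathbf w$. Let $[a,b]\supseteq[y,z]$ be any ridge; I must show both facet conditions hold. For $[bw^{-1},b]$: since $z\le b$ and $b$ is a length-$(\ell(w)-1)$ extension, I would show $b$ has a reduced subexpression in $\mathbf a\mathbf w$ using all of $\mathbf w$. That exhibits $b=b'w$ length-additive with $b'\le a$. For $[a,aw]$: I would show $aw$ is length-additive by arguing that otherwise $a*w=a'w$ with $a'<a$. Combined with $b\le a*w$ and the length count, this would force a subexpression of $z$ avoiding a letter of $\mathbf w$, contradicting the hypothesis. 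I expect this step to be the hardest. It requires controlling subexpressions under passing from $y$ to the smaller element $a$ and from $z$ to the larger element $b$. I would attempt it by induction on $\ell(y)-\ell(a)$, stripping one letter at a time and applying the lifting property.
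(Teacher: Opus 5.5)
\paragraph{Case (2) of the forward direction: two steps are missing or misattributed.}
Your case (1) is essentially the paper's argument. In case (2) your $b$ is the same element as the paper's $q\lessdot vw$, the last step of its saturated chain. However, the two steps that carry case (2) are missing or misattributed.

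First, you never specify $a$. To get a ridge you need $\ell(b)=\ell(vw)-1$, i.e.\ that $\mathbf v\mathbf w'$ is reduced, and then $a=v$ works. This is where leftmost-ness is actually used.
\begin{itemize}
\item Write $\mathbf v\mathbf w=s_1\cdots s_q$, and let $z^{(m)}$ be the greedy remainder after reading $s_1,\dots,s_m$.
\item Prove by induction on $m$ that $s_1\cdots s_m\cdot z^{(m)}$ is length-additive. The inductive step uses the lemma that $Ps>P$, $sR>R$ and $PR$ length-additive imply $PsR$ length-additive; this follows because inversion sets and their complements are closed under positive root combinations.
\item At $m=p-1$, where $p$ is the deleted position, this is exactly the reducedness of $\mathbf v\mathbf w'$.
\end{itemize}
The claim fails for non-leftmost subexpressions. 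Take $\mathbf v=s_1$, $\mathbf w=s_2s_1$, and $z=s_1$ read off the last letter: deleting the rightmost omitted letter leaves $s_1s_1$.

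Second, the reason $[v,b]$ misses the facet $[bw^{-1},b]$ is uniqueness, not greediness. If deleting positions $i<j$ of a reduced word gives the same element, then $s_i\cdots s_j$ is not reduced. So $b$ has a unique reduced subexpression in $\mathbf v\mathbf w$, and it omits a letter of $\mathbf w$. On the other hand, $bw^{-1}\le v$ with $bw^{-1}\cdot w$ length-additive would give a second one, using a reduced subword of $\mathbf v$ followed by all of $\mathbf w$.

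\paragraph{Converse: your direct route is correct, different from the paper's, and easier than you expect.}
No induction or lifting property is needed. The key move is to choose the reduced word $\mathbf a$ to be a subword of $\mathbf y$, which the subword property allows since $a\le y$. Then any subword of $\mathbf a\mathbf w$ missing a letter of $\mathbf w$ is also a subword of $\mathbf y\mathbf w$ missing that letter.
\begin{itemize}
\item Suppose $aw$ is not length-additive. Then $\ell(a*w)\le\ell(b)$ and $b\le a*w$ force $b=a*w$.
\item Computing $a*w$ left to right therefore skips exactly one letter of $\mathbf w$. So $b$ has a reduced word $\mathbf a\tilde{\mathbf w}$ with a letter of $\mathbf w$ missing.
\item Since $z\le b$, the subword property gives a reduced subexpression for $z$ inside $\mathbf a\tilde{\mathbf w}\subseteq\mathbf y\mathbf w$ that misses a letter of $\mathbf w$. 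This contradicts the hypothesis, so $aw$ is length-additive.
\item Now $b\lessdot aw$, so $b$ is $\mathbf a\mathbf w$ with one letter deleted. The same contradiction forces that letter to lie in $\mathbf a$. Hence $b=a'w$, length-additive, with $a'\lessdot a$, and $[a',b]$ is the second facet.
\end{itemize}

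\paragraph{Comparison with the paper.}
The paper instead inducts on codimension.
\begin{itemize}
\item Type II ridges are handled by the converse of the (2)-characterization, which is essentially your check of $[bw^{-1},b]$.
\item For a Type I exterior ridge $[p,vw]$, it first uses the inductive hypothesis to force $z=vw$, and only then makes the Demazure-product observation.
\end{itemize}
Your direct argument shows this detour is unnecessary, since $z\le b$ already suffices by the subword property. What the paper's organization buys is that both directions run through a single Type I/Type II classification of ridges.

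Both proofs also need ``leftmost'' and ``every'' to be interchangeable in the hypothesis. The quickest proof is that $x\mapsto\min(x,sx)$ is Bruhat-monotone. Hence the greedy remainders of the leftmost subexpression lie below those of any other reduced subexpression, in particular at the junction of $\mathbf y$ and $\mathbf w$. So if some reduced subexpression of $z$ misses a letter of $\mathbf w$, so does the leftmost one.
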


\begin{proof}
There are two kinds of ridges $X_p^q$: 
\begin{itemize}
	\item (Type I) $q=uw$ where $uw$ is length-additive and $u \lessdot p$
	\item(Type II) $pw$ is length-additive and $q \lessdot pw$.
\end{itemize} 
	We first show that condition (2) characterizes when $X^y_z$ is contained in a Type II exterior ridge.

	A Type II ridge is exterior if and only if $q \neq xw$ where $xw$ is length-additive. We claim that if the leftmost subexpression for $z$ in $\bv \bw$ does not use every letter of $\bw$, then $X_y^z$ is in a Type II ridge of $X_v^{vw}$. Indeed, consider the sequence of words obtained by taking the leftmost subexpression for $z$ in $\bv \bw$ and, from left to right, inserting the ``missing" letters from $\bv \bw$ in the appropriate places. By \cite[Lemma 2.2.1]{BB05}, each word in this sequence is reduced and, by construction, the corresponding Weyl group elements form a saturated chain from $z$ to $vw$. Say $q$ is the element of this chain covered by $vw$. By construction, the leftmost subexpression for $q$ in $\bv \bw$ does not use all letters of $\bw$. It is not hard to show that the leftmost subexpression for $q$ in $\bv \bw$ is the only subexpression for $q$ in $\bv\bw$ because $\bv \bw$ is reduced. Thus, $X_v^q$ is a Type II ridge and contains $X_y^z$ by construction.
	
	For the converse, suppose $X_y^z$ contained in a Type II exterior ridge $X_p^q$. Then we have $z \leq q \lessdot pw$ and the subexpression for $q$ in $\bp \bw$ does not use all letters of $\bw$. Now, recall that $v$ is the Bruhat maximal element of $\{p: pw~\text{length-add. and }X_y^z \subset X_p^{pw}\}$. So since $p \leq v$, there is a subexpression for $q$ in $\bv \bw$ that does not use all letters of $\bw$, and thus also such a subexpression for $z$.

	Next, we show that condition (1) is sufficient for $X^y_z$ to be contained in a Type I exterior ridge, completing the ``if" direction of the proposition. A Type I ridge is exterior if and only if $pw$ is not length-additive. We claim that if $yw$ is not length-additive, then $X_y^z$ is in an exterior Type I ridge. Indeed, for any $y' \in [v,y]$, we have $v * w \leq  y' * w \leq y*w = vw = v*w$, so $y'*w = v*w$. In particular, $\ell(y' * w) < \ell(y') + \ell(w)$. Since $y' w \leq y' * w$, we have that $y' w$ is not length-additive for all $y' \in [v, y]$. The interval $[v,y]$ must contain some $y'$ which covers $v$, and for this $y'$, $X_{y'}^{vw}$ is an exterior Type I ridge and contains $X_y^z$ by construction.
	
	Finally, for the ``only if" direction, we argue that if (1) and (2) both fail, then $X^y_z$ is interior. We argue by induction on codimension. Suppose that $yw$ is length-additive and every subexpression for $z$ in $\mathbf{y}\bw$ uses every letter of $\bw$. The base case is when $X_y^z$ is a ridge, in which case it is interior as it it contained in faces $X_y^{yw}$ and $X_{zw^{-1}}^z$. 
	
	Now suppose $X_y^z$ has codimension greater than one. Any type II ridge containing this face must be interior since the face fails (2). Suppose the face is contained in the exterior type I ridge $X_p^{vw} \subset X_v^{vw}$; we will arrive at a contradiction. We claim that $vw=z$. Indeed, note that since $y * w = yw$, $yw$ is the unique Bruhat maximal element of $\{vw: v \leq y\}$, so $z \leq vw \leq yw$.
	But all $z' \in [z, yw]$ also satisfy that every subexpression for $z'$ in $\mathbf{y}\bw$ uses every letter of $\bw$ (otherwise this property would not hold for $z$). So the faces $X_y^{z'}$ for $z' \in (z, vw]$ are, by induction, not contained in any exterior ridge but on the other hand are contained in $X_p^{vw}$. This forces $(z, vw]$ to be empty, that is, $z=vw$.
	
	Now, $p$ satisfies $v \lessdot p \leq y$. If $p*w \neq pw$, then $p*w=vw$. Also, by doing the Demazure product left to right, we see that $\mathbf{p} \bw$ has a subexpression for $vw=z$ which uses all letters of $\mathbf{p}$ and not use all letters of $\bw$. But this implies that $\mathbf{y} \bw$ also has a subexpression for $z$ which does not use all letter of $\bw$, contradicting our assumptions.	
\end{proof}

\section{Polytopal subdivisions from degenerations} \label{sec:subdiv-from-degen}

Let $T \iso {\Gm}^n$ be a torus. We distinguish four increasingly
special kinds of $T$-varieties $X$ (where ``variety'' includes ``irreducible''):
\begin{itemize}
\item those with finitely many $T$-orbits, in particular one open dense orbit,
\item those with an open dense $T$-orbit, with $X$ normal, which we
  call \defn{toric varieties},
\item normal varieties with an open dense $T$-orbit, where $T$'s generic
  stabilizer $Stab_T(X)$ is connected, and
\item normal varieties with an open dense $T$-orbit, where $T$'s generic
  stabilizer is trivial, which we call \defn{toric for $T$}.
\end{itemize}
Note that if $X$ is toric but not toric for $T$, it will anyway be toric%
\footnote{An algebraic group that is affine, connected, abelian, and
  reductive must be a torus. These properties all descend to
  quotients, so quotient groups of tori are tori.}
for $T/Stab_T(X)$.

Recall that a \defn{semitoric degeneration} $Z_1 \rightsquigarrow Z_0$
is one to a (usually reducible) $T$-scheme $Z_0$ with finitely many $T$-orbits.
In particular, the normalization of each component $X_i \subseteq Z_0$
is a toric variety.
Let $T^* := \Hom(T,\Gm)$ denote the \defn{weight lattice} of $T$,
and $\lie t_\RR^* := \RR \tensor_\ZZ T^*$ its realification.
The one-dimensional representation of $T$ with weight $\lambda \in T^*$
will be denoted $\FF_\lambda$ (where $\FF$ is our algebraically closed
base field), and we will write $\wt(T\actson \FF_\lambda) = \lambda$.
Given a projective scheme $X$ and a $T$-equivariant ample line bundle
$\calL$ on $X$, we {\em define} the \defn{moment polytope}\footnote{%
  One might be momentarily worried that when $X^T$ is infinite, this appears to be
  the convex hull of an infinite set, hence not necessarily a polytope.
  However, the function $X^T \to T^*$, $p \mapsto \wt(T \actson \calL_p)$
  is constant on connected components, of which there are finitely many.}
$\Phi(X,\calL) \subseteq \lie t_\RR^*$ as the convex hull of the $T$-weights
$\{\wt(T \actson \calL_p) \colon p\in X^T\}$;
sometimes we write $\Phi_T(X,\calL)$ if there are multiple tori in play.

When $X$ is toric with connected generic stabilizer, the moment polytope
determines $(X,\calL)$ up to $T$-equivariant isomorphism. More specifically,
if $P \subseteq \lie t_\RR^*$ is a lattice polytope, and $X$ is defined
over a field $\FF$, then $X$ can be reconstructed as
${\mathrm {Proj}}\
\FF\left[(T^* \times \ZZ) \cap \overline{\RR_+\cdot (P \times \{1\})}\right]$,
where the monoid algebra is graded (for taking $\mathrm{Proj}$) by the last
coordinate. Our definition of $\Phi(X,\calL)$ is a little silly if $X$ is
reducible -- it would be more natural to take the union of the moment polytopes
of the components -- but this will not be an important detail for us.

This section is devoted to the following theorem:

\begin{theorem}\label{thm:subdivision-general}
  Let $T$ be a torus, let $Y$ be a projective $T$-variety (probably not toric),
  and let $X \subseteq Y$ be $T$-invariant, irreducible, and with
  finitely many $T$-orbits.  Let $\calF$ be a $T$-invariant
  embedded (inside $Y$) degeneration $X \rightsquigarrow \Union X_i$,
  where the $X_i$ are the irreducible components of the special fiber,
  possibly with multiplicities $(m_i)$. 
  
  Then each $X_i$ has an open dense $T$-orbit, so the degeneration is semitoric.
  The open $T$-orbit on $X_i$ may have a larger pointwise stabilizer
  $Stab_T(X_i)$ than $X$ does, but $Stab_T(X_i)/Stab_T(X)$ is
  finite of order $m_i$.

  Let $\calL$ be an ample line bundle on $Y$, hence on $X$ and each $X_i$.
  Then the polytopes $\Phi(X_i,\calL)$ are the maximal cells of a
  subdivision of $\Phi(X,\calL)$. The map $Z \mapsto \Phi(Z,\calL)$
  is a poset isomorphism from the poset of $T$-orbit closures to
  the face poset of the subdivision.
\end{theorem}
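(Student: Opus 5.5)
The plan is to pass from the degeneration to Duistermaat--Heckman measures (equivalently, equivariant Hilbert functions), read off the covering and the volumes from these, and then get the polyhedral face structure from a connectedness argument using \cref{cor:ZMT}. Reduce first to the case that $T$ acts with trivial generic stabilizer on $X$, replacing $T$ by $T/Stab_T(X)$ (which acts on $\calF$, since $Stab_T(X)$ fixes the general fiber and hence its closure). Since $X$ has finitely many $T$-orbits, $\dim X = \dim T$. Each $X_i$ is $T$-invariant of dimension $\dim X$ by flatness. The fixed points $X_i^T$ lie in the finite set $Y^T$ intersected with a limit of $X^T$; more to the point, $T$-stable subvarieties of $X_i$ of dimension $\dim T$ must be orbit closures.

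\textbf{Step 1: each $X_i$ has a dense orbit, and the stabilizer count.} Take the $T$-equivariant Hilbert function $\lambda\mapsto \dim H^0(Z,\calL^k)_\lambda$ for $k\gg0$. By flatness (\cref{prop:flatness} with $G=T$) it is the same for $X$ and for $\bigcup m_iX_i$ counted with multiplicity. For $X$, each weight space $H^0(X,\calL^k)_\lambda$ has dimension $\le 1$, because $X$ has a dense orbit with trivial stabilizer. Hence every weight multiplicity of each $X_i$ is $\le 1$. A $T$-variety of dimension $\dim T$ with multiplicity-free coordinate ring has a dense orbit: the generic stabilizer is finite, and a positive-dimensional family of generic orbits would force growing weight multiplicities.

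Next compare Duistermaat--Heckman measures, i.e.\ the leading asymptotics of these functions. For $X_i$ with generic stabilizer $S_i$, the weights occurring lie in the sublattice annihilating $S_i$, of index $|S_i|$. Thus the DH measure of $X_i$ is $(1/|S_i|)\cdot$Lebesgue on $\Phi(X_i,\calL)$. The DH measure of $X$ is Lebesgue on $\Phi(X,\calL)$, and flatness gives
\[\textstyle\mathbf 1_{\Phi(X)} = \sum_i (m_i/|S_i|)\,\mathbf 1_{\Phi(X_i)}\]
almost everywhere. Also, $\Phi(X_i)\subseteq\Phi(X)$, since the weights of $X_i$ occur for $X$ by the equality of Hilbert functions. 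Multiplicity-freeness for the special fiber gives $m_i\cdot(\text{lattice points})\le 1$, so the sum of the $m_i\mathbf 1_{\Phi(X_i)\cap\text{lattice}_i}$ is at most $1$ pointwise at each dilation. Comparing these two facts should force $m_i=|S_i|$ and force the interiors of the $\Phi(X_i)$ to be disjoint and to cover $\Phi(X)$; this is the dissection statement.

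\textbf{Step 2: dissection to subdivision, and the face poset.} This is the main obstacle: two maximal cells could meet in a set that is not a common face. Consider, for each face $F$ of a would-be cell, the corresponding $T$-orbit closures. I would use $\dim H^0(\cdot)_\lambda\le1$ on the special fiber (the ring of $\bigcup X_i$, not just its components), together with reducedness at multiplicity-one points, to identify the weight support of $\calO(\bigcup X_i)$ with $\bigcup\Phi(X_i)$. Then the intersection $X_i\cap X_j$ is a union of orbit closures whose polytopes are faces of both $\Phi(X_i)$ and $\Phi(X_j)$.

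To show $\Phi(X_i)\cap\Phi(X_j)$ is a single common face, I would restrict the family to the preimage of a generic one-parameter subgroup's attracting locus. Alternatively, for a point $p$ in the relative interior of the intersection, I would take the subfamily of $T$-fixed-point-free "slices" over the face of $\Phi(X)$ containing $p$, i.e.\ the closure of $\{x\in\calF_z : \Phi(\overline{Tx})\ni p\}$. Its general fiber is connected (a single orbit closure in $X$), so \cref{cor:ZMT} makes the special fiber connected. That connectedness rules out $\Phi(X_i)\cap\Phi(X_j)$ being a non-face or a disconnected union of faces. I expect this local-connectedness argument, and making the slice family proper, to be the hard step.

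\textbf{Step 3: the poset isomorphism.} Finally, $Z\mapsto\Phi(Z,\calL)$ is order-preserving on orbit closures. It is injective because each orbit closure in a toric $X_i$ is determined by its face of $\Phi(X_i)$, and distinct components give distinct cells by the disjoint interiors. It is surjective onto the faces of the subdivision by the orbit--face correspondence for each normalized $X_i$. Moment polytopes are unchanged by normalization since the torus-fixed points and their weights are the same.
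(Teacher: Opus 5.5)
\textbf{Step 1: the multiplicity bound is false.} Your multiplicity-freeness idea is a genuinely different route from the paper, which gets overlaps, intersections and injectivity from GIT quotients and \cref{cor:ZMT}. The dense-orbit argument and the measure identity in Step 1 are fine. The bound you then invoke, $\sum_i m_i\,\mathbf 1[\mu\in k\Phi(X_i)\cap\Lambda_i]\le 1$ with $\Lambda_i$ the weights annihilating $S_i$, is false whenever a nonreduced component occurs. Taken literally it proves $m_i\le 1$, contradicting the conclusion $m_i=|S_i|$.

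The paper's example $b^2=zac$ shows this. The special fiber is a double line with $m=2$ and $S=\ZZ/2\ZZ$. Yet $H^0(\calF_0,\calO(k))$ has basis $a^ic^j$ $(i+j=k)$ and $a^ibc^j$ $(i+j=k-1)$, so each weight $0,1,\dots,2k$ occurs exactly once. The nilpotent direction contributes weights off $\Lambda=2\ZZ$, not extra multiplicity on it.

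What multiplicity-freeness actually controls is pairs of \emph{distinct} components, with no factor $m_i$, and even that needs an argument. For $k\gg0$ the map $H^0(\calF_0,\calL^k)\to H^0(X_i\cup X_j,\calL^k)$ is onto. The sequence $0\to\calO_{X_i\cup X_j}\to\calO_{X_i}\oplus\calO_{X_j}\to\calO_{X_i\cap X_j}\to 0$ then shows that a common weight $\mu$ of $X_i$ and $X_j$ has multiplicity $2$ unless it is a weight of the scheme $X_i\cap X_j$. The weights of $X_i\cap X_j$ stay within bounded distance of $k\Phi(W)$ for the lower-dimensional orbit closures $W$ supporting it. You must also choose levels $k$ at which the weight cosets of $X_i$ and $X_j$ meet. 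Only then do you get disjoint interiors, and $m_i=|S_i|$ from the measure identity.

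\textbf{Step 2: the slice family does not localize.} The set $\{x\in\calF_z: p\in\Phi(\overline{T\cdot x})\}$ is the semistable locus, which is open. For $p$ interior to $\Phi(X)$ its trace on $X$ is the dense orbit, so its closure is all of $\calF$. \cref{cor:ZMT} then only says $\calF_0$ is connected, which cannot rule out a configuration like the left picture in \cref{fig:dissections}.

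To make connectedness local at $p$ you must pass to a quotient. The paper uses the family $\calF//_\lambda S$, a relative $\Proj$ and hence proper over $\AA^1$, with fibers $X//_\lambda S$ and $\bigcup_i X_i//_\lambda S$. It chooses $\lambda$ off the x-ray walls so that \cref{prop:brionprocesi} identifies quotient points with orbits. It then reduces to codimension-one intersections via \cite{GR-facet-to-facet} and a codimension-one subtorus $S$. In the rank-one quotient, \cref{prop:DH}(6) plus connectedness forces abutting components to meet.

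Your Step 3 has the same hole. Disjoint interiors only separate maximal cells; two lower-dimensional orbit closures in different $X_i$ with the same polytope still have to be shown equal, and the paper slices again for this.

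Your own tool could close both holes without GIT. Run the corrected argument above on any two distinct $T$-orbit closures $Y_1\neq Y_2\subseteq\calF_0$ rather than on components. This gives that $\Phi(Y_1)$ and $\Phi(Y_2)$ have disjoint relative interiors. The faces of each $\Phi(X_i)$ are polytopes of orbit closures, via the toric normalization. So every $\Phi(X_i)\cap\Phi(X_j)$ is a convex union of common faces, hence a single common face, and injectivity follows at once.
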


Many of these results are well-known \cite{sturmfels1991grobner}
in the case that the degeneration is Gr\"obner, but the degenerations
in Theorem \ref{thm:Hclass} are not of this type so we needed to prove
the result more generally.

\subsection{Lemmata on moment polytopes}

We begin with a couple of properties of moment polytopes, before we get
to a crucial tool, Duistermaat--Heckman measures.
Since these properties are usually proven in the symplectic setting
(under a very different-looking definition of the measure), we include
complete proofs. This is the setting also of \cite{brion1990action}
and \cite{dolgachev1998variation}, which we will make use of,
but their results are more focused on GIT quotients and less on properties of
the polytopes and measures. In particular we claim no originality of
the remaining results in this section, just their combination in
the theorem above.

\begin{lemma}\label{lem:eqvtLineBundles}
  Let $\Gm$ act on $\PP^1$ such that the tangent space $T_0\PP^1$ has
  $\Gm$-weight $w$.
  Let $\calL$ be a $\Gm$-equivariant line bundle on $\PP^1$, of degree $d$.
  Identifying $\Gm^* \iso \ZZ$ as usual, we have
  $\wt(\Gm\actson \calL_\infty)-\wt(\Gm\actson \calL_0) = d\cdot w$.
  In particular, if $\calL$ is ample and the action is standard $(w=1)$,
  then $\wt(\Gm\actson \calL_\infty) > \wt(\Gm\actson \calL_0)$.

  More generally, let $T$ act on $\PP^1$ fixing $0,\infty$ and
  on a $T$-equivariant line bundle $\calL$ of degree $d$.
  Then 
  $\wt(T\actson \calL_\infty)
  = \wt(T\actson \calL_0) + d\cdot \wt(T\actson T_0 \PP^1)$, and for $d\geq 0$
  the $T$-weights occurring in $\Gamma(\PP^1, \calL)$ are
  $\wt(T\actson \calL_0) + i\cdot \wt(T\actson T_0 \PP^1)$, $i \in [0,d]$.
\end{lemma}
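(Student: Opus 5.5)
The plan is to reduce everything to an explicit computation with $\calO(d)$ on $\PP^1=\PP(\FF^2)$. First I will put the $T$-action in normal form. Since $T$ fixes $0=[1:0]$ and $\infty=[0:1]$ and is connected, it acts through a homomorphism $T\to\Gm$ given by a character $\alpha\in T^*$. I will lift this action to $\FF^2$ by $t\cdot(x,y)=(x,\alpha(t)y)$.

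Second, I will classify the $T$-equivariant line bundles. Because $\PP^1$ is proper and connected, $\Gamma(\PP^1,\calO^*)=\FF^*$. Hence any two $T$-linearizations of the same line bundle differ by tensoring with a character $\FF_\lambda$. Since $\mathrm{Pic}(\PP^1)=\ZZ$, we get $\calL\iso\calO(d)\tensor\FF_\lambda$, where $\calO(d)=\calO(-1)^{\tensor(-d)}$ carries the linearization induced from the lifted action on $\FF^2$. Twisting by $\FF_\lambda$ adds $\lambda$ to both fiber weights and to every weight of $\Gamma(\PP^1,\calL)$. So both claimed formulas are invariant under the twist, and it suffices to treat $\calO(d)$ with this one linearization.

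Third, I will compute directly. The fiber of $\calO(-1)$ at $0$ is the line $\FF(1,0)$, of weight $0$, and at $\infty$ it is $\FF(0,1)$, of weight $\alpha$. These weights are additive under tensor powers and change sign under duals, which gives the fiber weights of $\calO(d)$ at $0$ and $\infty$. For $d\ge 0$, $\Gamma(\PP^1,\calO(d))$ has the monomial basis $x^{d-i}y^i$, $i\in[0,d]$. These are $T$-eigenvectors whose weights form an arithmetic progression with common difference $\pm\alpha$. The endpoints $x^d$ and $y^d$ are the sections not vanishing at $0$ and at $\infty$ respectively, so they recover the fiber weights at $0$ and $\infty$. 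Finally, the tangent space $T_0\PP^1$ is identified with $\Hom(\FF(1,0),\FF^2/\FF(1,0))$, whose weight is again $\pm\alpha$. Comparing all of these expresses every weight in terms of $\wt(T\actson\calL_0)$ and $\wt(T\actson T_0\PP^1)$, giving both displayed formulas. The $\Gm$ statement is the special case $T=\Gm$; in it, $d>0$ and $w=1$ give $\wt(\Gm\actson\calL_\infty)>\wt(\Gm\actson\calL_0)$.

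The only real obstacle is the sign bookkeeping. One must be consistent about whether $T$ acts on functions and sections by $f\mapsto f\circ t^{-1}$ or by $f\mapsto f\circ t$, and about the identification of the weight of the tangent line. Carried out naively, these conventions can produce $-d\cdot w$ instead of $d\cdot w$. I will fix conventions with the sanity check $\calL=T\PP^1=\calO(2)$. There the fiber at $0$ is $T_0\PP^1$ itself, and the fiber at $\infty$ is $T_\infty\PP^1$, which has the opposite weight. I will then choose the convention for section weights, namely the weights of the dual action, that matches the paper's usage, so that ample bundles with the standard action satisfy $\wt(\Gm\actson\calL_\infty)>\wt(\Gm\actson\calL_0)$. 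Once the convention is pinned down this way, all the remaining identities follow mechanically from the monomial computation above.
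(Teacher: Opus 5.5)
Your route is the paper's: put the action in the normal form $\PP(\FF_0\oplus\FF_\alpha)$, note that every equivariant line bundle is $\calO(d)\tensor\FF_\lambda$, and read off the weights. You also supply two things the paper leaves inside ``the same identification,'' and both are fine. The first is the classification of linearizations via $\Gamma(\PP^1,\calO^*)=\FF^*$. The second is the monomial basis $x^{d-i}y^i$, which handles the claim about the weights of $\Gamma(\PP^1,\calL)$.

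The step that would fail as written is your sign fix.

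\emph{Your sanity check does not calibrate toward the stated sign.} Apply one convention to both fibers and tangent lines. The fiber $T_0\PP^1$ of $T\PP^1$ then has weight $w$, and the fiber $T_\infty\PP^1$ has weight $-w$. So for $\calL=T\PP^1$ you get $\wt(\Gm\actson\calL_\infty)-\wt(\Gm\actson\calL_0)=-2w=-d\cdot w$. Likewise, your own computation with the natural action gives $\calO(d)$ fiber weights $0$ and $-d\alpha$ at $0$ and $\infty$, while $T_0\PP^1$ has weight $\alpha$. So the check shows that no convention applied uniformly to fibers and tangent lines can produce $+d\cdot w$.

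\emph{Switching only the action on sections does not repair this.} The first displayed identity involves only fiber weights and $\wt(T\actson T_0\PP^1)$, so the section convention never enters it. The switch would also break your correct observation that $x^d$ and $y^d$ recover the fiber weights, since that needs evaluation at $0,\infty$ to be equivariant.

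\emph{The fix.} The statement presupposes that fiber \emph{and} section weights are both taken for the dual (pullback) action, while $T_0\PP^1$ is weighted by the differential. The paper's proof encodes exactly this when it asserts that $\calO(1)$ on $\PP(\FF_0\oplus\FF_w)$ has weights $0,w$ at $0,\infty$. State that convention explicitly at the start. Then for $\calO(d)$:
\begin{itemize}
\item the section $x^{d-i}y^i$ has weight $i\alpha$;
\item $\wt(T\actson\calL_0)=0$ and $\wt(T\actson\calL_\infty)=d\alpha$;
\item $\wt(T\actson T_0\PP^1)=\alpha$.
\end{itemize}
Both displayed formulas follow immediately. This is also the convention the paper needs later; for instance, Lemma \ref{lem:sink} requires sinks to minimize the weight, which holds only under this convention.
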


\begin{proof}
  Our $\PP^1$ is equivariantly isomorphic to $\PP(\FF_0\oplus \FF_w)$,
  and the weights $\Gm \actson \calO(1)_{0,\infty}$ are $0,w$ respectively.
  Every equivariant line bundle on $\PP^1$ is of the form
  $\calO(d) \tensor \FF_j$ where $\FF_j$ is the $\Gm$-representation
  of weight $j$ (giving a trivial, but equivariantly nontrivial, line bundle);
  on such an $\calL$ the weights $\Gm \actson \calL_{0,\infty}$
  are $j,j+d\cdot w$ respectively.

  The $T$-version follows from the same identification of the line bundle.
\end{proof}

\begin{lemma}\label{lem:fixedPoints}
  \begin{enumerate}
  \item On each component $Z \subseteq X^T$, the weight
    $\wt(T\actson \calL_y)$ is constant in $y$.
  \item The fixed-point components are all connected to one another
    by $T$-invariant rational curves. More specifically: given two $T$-fixed
    points, one can get from one to the other by moving within a component
    of $X^T$, or following a $T$-fixed rational curve, and repeating
    these moves.
  \end{enumerate}
\end{lemma}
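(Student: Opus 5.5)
For part (1), I will first check that $\wt(T\actson \calL_y)$ is locally constant on $X^T$. Since $X^T$ is connected on each component $Z$, local constancy gives constancy. The weight of $T$ on the fiber $\calL_y$ is a character $T\to\Gm$, i.e.\ an element of the discrete lattice $T^*$. Over $Z$, the restriction $\calL|_Z$ is a $T$-equivariant line bundle on a variety with trivial $T$-action, so $T$ acts fiberwise by characters. This decomposes $\calL|_Z$ as a direct sum of weight subsheaves $\bigoplus_\lambda (\calL|_Z)_\lambda$, by diagonalizability of $T$ acting on the coherent sheaf $\calL|_Z$ (the coaction $\calL|_Z\to \calL|_Z\otimes \FF[T^*]$). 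A direct sum decomposition of a line bundle on a connected scheme has exactly one nonzero summand, so the weight is constant. If $Z$ is nonreduced, I will apply the same argument to $Z_{red}$, or simply use connectedness of its underlying space.

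For part (2), the approach is the standard Białynicki-Birula / limit argument using a generic one-parameter subgroup $\sigma\colon\Gm\to T$ with $X^{\sigma(\Gm)}=X^T$. Such a $\sigma$ exists since only finitely many characters occur in a $T$-equivariant projective embedding $X\subseteq\PP(V)$ given by a power of $\calL$. I then order the fixed components $Z_1,\dots,Z_k$ by the value of $\langle \wt(T\actson \calL|_{Z_j}),\sigma\rangle$, or alternatively use the BB decomposition $X=\bigsqcup_j X_j^+$ with $X_j^+=\{x:\lim_{z\to0}\sigma(z)x\in Z_j\}$.

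The key step is to show that the union of fixed components reachable from a given one by chains of $T$-invariant curves is all of $X^T$. Suppose it were a proper subset $S$. I will use irreducibility of $X$ to get a $T$-invariant irreducible curve joining $S$ to its complement. For this I take the unique component $Z_{top}$ whose BB cell $X_{top}^+$ is open dense. Given any other component $Z_j$, I pick a point $x$ in the closure of $X^+_{top}$ near $Z_j$; concretely, a point $x\in X_j^-$, the minus cell, that is not fixed. Its $T$-orbit closure $\overline{Tx}$ is a projective toric-type variety containing a fixed point in $Z_j$ and another fixed point of strictly larger $\sigma$-weight, namely $\lim_{z\to\infty}\sigma(z)x$. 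Then I reduce to curves: in the projective $T$-orbit closure $\overline{Tx}$, whose fixed points are the vertices of its moment polytope, two fixed points at the ends of an edge are joined by the $T$-invariant curve corresponding to that edge. The edge graph of a polytope is connected, so $\lim_{0}$ and $\lim_\infty$ are connected within $\overline{Tx}$ by $T$-invariant rational curves. Induction on the $\sigma$-weight then connects every $Z_j$ upward to $Z_{top}$.

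The main obstacle is making the orbit-closure step rigorous in this general (possibly singular, infinitely-many-orbits) setting. That requires the fact that $T$-stable curves in $\overline{Tx}$ correspond to polytope edges, and that they are rational curves $\PP^1$ (after normalization). I would handle this by normalizing $\overline{Tx}$ to a genuine toric variety for $T/\mathrm{Stab}_T(x)$, invoking the standard orbit–face correspondence there, and pushing the invariant curves forward. Images of invariant $\PP^1$'s remain invariant rational curves, and the finite normalization map is bijective on fixed points up to the finitely many preimages, all of which map to $T$-fixed points of $X$.
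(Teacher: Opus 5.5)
Your part (1) is correct, and more careful than the paper's one-line argument that the weight varies continuously inside the discrete set $T^*$.

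Part (2) takes a different route from the paper. The paper connects $p,q\in X^T$ by an arbitrary curve $\gamma$ and observes that every $t\cdot\gamma$ still passes through $p$ and $q$. It then takes a $T$-fixed point $\gamma'\in\overline{T\cdot\gamma}$ in the Hilbert scheme. This $\gamma'$ is a connected $T$-invariant curve through $p$ and $q$ whose components are either pointwise fixed or rational. Your route has a genuine gap, and it is not at the step you flagged.

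Your induction needs the following: for every component $Z_j$ of $X^T$ other than the sink $Z_{top}$, there is a non-fixed $x$ with $\lim_{z\to\infty}\sigma(z)x\in Z_j$, i.e. $X_j^-\neq Z_j$. Nothing in the proposal proves this.

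The sentence ``pick $x$ in the closure of $X^+_{top}$ near $Z_j$'' does not supply such a point. That closure is all of $X$, and a point near $Z_j$ need not have either of its limits in $Z_j$. For smooth $X$ the claim does follow from the Bia\l ynicki-Birula decomposition: if all normal weights along $Z_j$ are positive, then $X_j^+$ is open, so $Z_j$ is the sink. But the lemma concerns arbitrary (typically singular) projective varieties, and there it is a genuine theorem. The paper obtains exactly this statement, in the proof of \cref{lem:sink}, from Konarski's inequality $\dim X_j^+ + \dim X_j^- \ge \dim Z_j + \dim X$. Without such an input, nothing you wrote rules out a non-sink component with $X_j^- = Z_j$, and your induction would stall there.

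You can repair this in either of two ways:
\begin{enumerate}
\item cite Konarski at this step;
\item replace the BB set-up entirely by the Hilbert-scheme limit above, which needs no generic coweight.
\end{enumerate}
By contrast, the orbit-closure step you identified as the main obstacle is routine. Your treatment of it is correct: normalize $\overline{T\cdot x}$, use connectivity of the vertex--edge graph of its polytope, and push the invariant curves forward.

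There is also a sign slip. If $x\in X_j^-$, then $\lim_{z\to\infty}\sigma(z)x$ is the endpoint lying in $Z_j$. The new endpoint is $\lim_{z\to 0}\sigma(z)x$, which by \cref{lem:eqvtLineBundles} has strictly \emph{smaller} weight. This is consistent with the sink carrying the minimal weight (\cref{lem:sink}).
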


\begin{proof}
  (1) The weight varies continuously, inside a discrete set $T^*$.

  (2) Bertini's theorem implies that any two points $p,q$ on $X$ can
  be connected by {\em some} curve $\gamma$. Since $X$ is projective,
  $\gamma$ defines a point in some (also projective)
  Hilbert scheme, and (now taking $p,q\in X^T$) each $t\cdot \gamma$ also
  connects $p,q$. Taking a limit (i.e. picking
  $\gamma' \in \overline{T\cdot\gamma}^T$)
  we get a sequence of $T$-invariant curves connecting $p,q$, some of which are
  $T$-fixed pointwise (hence lie within components of $X^T$) and some of
  which aren't (in which case they must be rational curves, as higher genus
  curves don't admit nontrivial torus actions). 
\end{proof}

\begin{proposition}\label{prop:spanOfPhi}
  Let $T \actson (X,\calL)$ with $X$ a projective variety.
  Let $S \leq T$ be the kernel of the action on $X$, i.e. the generic
  stabilizer. Then $\Phi_T(X,\calL)$ is contained in a translate of
  the subspace $S^\perp := \RR \tensor \{\lambda \in T^*\colon \lambda(S)=1\}
  \leq \lie t^*_\RR$, and (for $\calL$ ample) in no smaller affine subspace.
\end{proposition}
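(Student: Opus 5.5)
We prove the two halves separately: first that $\Phi_T(X,\calL)$ lies in a translate of $S^\perp$, then that for $\calL$ ample it spans that translate.

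\emph{Containment.} Since $S$ acts trivially on $X$, it fixes every point, so every $T$-fixed point $p$ is in particular $S$-fixed. The group $S$ then acts on each fiber $\calL_p$ by a character. Because $S$ acts trivially on $X$, it acts on $\calL$ by fiberwise scalars, that is, by a $\Gm$-valued function on $X$. Since $X$ is a projective variety, in particular connected and proper, this function is constant, giving a single character $\chi\colon S \to \Gm$. Hence $\wt(T\actson\calL_p)|_S = \chi$ for every $p \in X^T$. Consequently any two vertex weights differ by a character trivial on $S$, so all of them lie in $\lambda_0 + S^\perp$ for any fixed $\lambda_0$ among them, and so does their convex hull.

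\emph{Spanning.} Suppose $\Phi$ lies in $\lambda_0 + V$ for some proper subspace $V \subsetneq S^\perp$. Then there is a nonzero rational cocharacter direction, which we clear to an honest cocharacter $\sigma\colon\Gm\to T$, pairing trivially with $V$ but not lying in $\lie{s}\otimes\RR$ (the span of cocharacters into $S$). Replacing $\sigma$ by a multiple, we may assume its image is not contained in $S$ up to finite index, so $\sigma(\Gm)$ acts nontrivially on $X$. Because $\sigma$ is orthogonal to $V$, the pairing $\langle \wt(T\actson\calL_p),\sigma\rangle$ is the same for all $p\in X^T$.

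Now pick a point $x$ not fixed by $\sigma(\Gm)$, and let $x_0=\lim_{z\to0}\sigma(z)x$ and $x_\infty=\lim_{z\to\infty}\sigma(z)x$. These limits exist by projectivity. The orbit closure $C=\overline{\sigma(\Gm)x}$ is a rational curve whose normalization is $\PP^1$ with fixed points mapping to $x_0$ and $x_\infty$. Pulling $\calL$ back gives an ample, hence positive-degree, equivariant line bundle, so \cref{lem:eqvtLineBundles} shows that the $\sigma$-weights at $x_0$ and $x_\infty$ differ.

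These two points are only $\sigma(\Gm)$-fixed, so we must relate them to $T$-fixed points. We do this by a $T$-weight averaging argument. The points $x_0$ and $x_\infty$ lie in distinct components $F_0,F_\infty$ of $X^{\sigma(\Gm)}$, since the $\sigma$-weight is constant on components by \cref{lem:fixedPoints}(1). Each $F_j$ is projective and $T$-stable, so by Borel's fixed point theorem it contains a $T$-fixed point $p_j$. The $\sigma$-weight of $\calL$ at $p_j$ equals that at $x_j$, again by \cref{lem:fixedPoints}(1) applied to $\sigma(\Gm)$. Therefore $\langle\wt(T\actson\calL_{p_0}),\sigma\rangle\neq\langle\wt(T\actson\calL_{p_\infty}),\sigma\rangle$, which contradicts the constancy established above.

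The main obstacle is the choice of $\sigma$. It must be orthogonal to $V$, act nontrivially, and be integral. We handle this by working with $T/S$, which acts effectively on $X$. The orthogonal complement of $V$ inside the cocharacter space of $T/S$ is a nonzero rational subspace, so it contains a nonzero integral cocharacter of $T/S$. We lift a multiple of it to $T$, and since $T/S$ acts effectively, the lift acts nontrivially on $X$.
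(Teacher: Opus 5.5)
Your proof is correct and follows essentially the same route as the paper. Containment comes from the $S$-character being constant on fibers of $\calL$. For spanning, you take a one-parameter subgroup that acts nontrivially on $X$ but has constant weight over $X^T$; its orbit closure gives an equivariant $\PP^1$ of positive degree, so the weights at its two ends must differ. Your Borel-fixed-point step, which passes from the $\sigma$-fixed endpoints to $T$-fixed points in the same components of $X^{\sigma(\Gm)}$, makes explicit a step the paper's proof leaves implicit. One small point: take $V$ to be the span of the lattice-point differences of $\Phi$, so that it is rational, as your last paragraph tacitly assumes.
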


\begin{proof}
  The reason we might face a translate,
  rather than seeing $\Phi_T(X,\calL)$ live directly in $S^\perp$,  
  is that $S$ may act nontrivially on $\calL$. (However, $S$ at least
  must act with the same weight $\wt(S \actson \calL_p) \in S^*$ for
  every $p\in X$, as in Lemma \ref{lem:fixedPoints}(1).)
  ``Fix'' this by tensoring $\calL$ with $\FF_{-\lambda}$,
  where $\lambda$ is the weight $\wt(T\actson \calL_p)$ for $p$ an
  arbitrarily chosen $T$-fixed point.  It is trivial to check that
  $\Phi_T(X,\calL) = \Phi_T(X,\calL\tensor \FF_{-\lambda}) + \lambda$.
  Having reduced to the case of $S$ acting trivially on each fiber of $\calL$,
  we have the claimed containment in $S^\perp$.

  For the second claim, start by translating $\Phi_T(X,\calL)$ by $-\lambda$
  as above, so the statement is that the linear span of $\Phi_T(X,\calL)$
  is all of $S^\perp$. If not, that span is contained in a proper
  subspace $R^\perp$ for a larger subgroup $R > S$; such an $R$ acts on
  each $\calL_p, p\in X^T$ trivially but doesn't fix $\calL$ pointwise.

  Since $R$ properly contains $X$'s generic stabilizer $S$, there is
  some one-parameter group $Q \into R$ and some point $x\in X$ such
  that $x$ is not invariant under $Q$. Then the map
  $\Gm \to X$, $z\mapsto Q(z)\cdot x$ extends to a $Q$-equivariant
  {\em nonconstant} map
  $\gamma\colon \PP^1 \to X$, so $\gamma^*(\calL)$ is therefore
  of positive degree (this is where we use $\calL$ ample).
  Now apply Lemma \ref{lem:fixedPoints} to show that the $Q$-weights on
  the $0,\infty$ fibers of $\gamma^*(\calL)$ are different, contradicting
  the condition that they had the trivial $R$-action.
\end{proof}

One nice aspect of the polytope picture of projective toric varieties,
compared to the fan picture, is that the correspondence between faces and
$T$-invariant subvarieties is inclusion-preserving rather than
inclusion-reversing. This has a partial extension (well-known to 
Hamiltonian geometers) to general actions
$T \actson (X,\calL)$ for $X$ a projective variety.
Call a subvariety $Y\subseteq X$ \defn{attractive}
if $Y$ is the \defn{(Bia\l ynicki-Birula) sink} for some $S\colon \Gm\to T$,
i.e. if $Y$ is a component of $X^S$ and the set
$ \{x\in X\colon \lim_{z\to 0} S(z)\cdot x \in Y\} $ is open dense in $X$.
(In particular $Y$ is necessarily $T$-invariant.)
Equivalently, $Y$ is a component of $X^S$ for some coweight
$S\colon \Gm\to T$, such that the $S$-weights
on each Zariski normal space $T_p X/T_pY$ are nonnegative.
In the case that $X$ is toric, every $T$-orbit closure is attractive.

\begin{proposition}\label{prop:faceCorrespondence}
  If $Y \subseteq X$ is attractive in the above sense, then $\Phi(Y,\calL)$
  is a face of the polytope $\Phi(X,\calL)$, and this gives a
  correspondence between the poset of attractive subvarieties
  and the face poset of $\Phi(X,\calL)$.
  The inverse map, taking a face $F$ of $\Phi_T(X,\calL)$ to an attractive
  subvariety $Y$, can be described in three equivalent ways:
  \begin{enumerate}
  \item Pick a coweight $S$ defining a linear functional on $\Phi_T(X,\calL)$
    minimized on exactly $F$. Then let $Y$ be the sink for $S$.
  \item Let $Y = \{x\in X\colon \Phi \left(\overline{T\cdot x},\calL \right)
    \subseteq F\}$.
  \item Define a homogeneous ideal
    $I := \Oplus_n \Oplus_{\lambda \in \Phi_T(X,\calL)\setminus F}
    \Gamma(X;\, \calL^{\tensor n})^{n\lambda\text{ weight space}}$
    inside the \defn{section ring} $\Oplus_n \Gamma(X;\, \calL^{\tensor n})$
    for $X$. Then $Y = V(I)$. 
  \end{enumerate}
  \junk{In the toric case $\dim Y = \dim_\RR \Phi(Y,\calL)$, but in general
    we only have $\geq$.}
\end{proposition}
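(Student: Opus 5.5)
I will first show that for any coweight $S\colon\Gm\to T$ the sink $Y$ has moment polytope equal to the face of $\Phi(X,\calL)$ on which the linear functional $\langle\cdot,S\rangle$ is minimized. Then I will show every face arises this way, with a unique attractive subvariety, and finally identify the sink with descriptions (2) and (3). I will work throughout with the fixed-point data $\{\wt(T\actson\calL_p)\colon p\in X^T\}$, which is finite by Lemma~\ref{lem:fixedPoints}(1).

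The key step is the minimization claim. Let $Y$ be the sink for $S$ and $p\in X^T$. Since the attracting set of $Y$ is open dense and $X$ is projective, I would pick a general $x$ in it and degenerate. The curve $\overline{S\cdot x}$ runs from a point of $Y$ at $z\to 0$ to a point of $X^S$ at $z\to\infty$. It is a $\PP^1$ (after normalization) with nonconstant map, so $\calL$ has positive degree on it. By Lemma~\ref{lem:eqvtLineBundles}, the $S$-weight at the $\infty$ end is strictly larger than at the $Y$ end, since the tangent weight at the sink end is positive.

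To compare $Y$ with an arbitrary fixed point $p$, I would use Lemma~\ref{lem:fixedPoints}(2) for the $S$-action and run a Bia\l ynicki-Birula argument. The plus-cells of the components of $X^S$ stratify $X$, and $Y$'s plus-cell is the dense one. Any fixed component $Z\neq Y$ has a plus-cell in the closure of the dense cell. Chaining $S$-invariant curves through boundary strata, with each step increasing the $S$-weight by Lemma~\ref{lem:eqvtLineBundles}, shows that $\langle\wt(\calL_p),S\rangle\ge\langle\wt(\calL_y),S\rangle$, with equality iff $p\in Y$. Here $\langle\wt(\calL_y),S\rangle$ is constant on $Y$ by Lemma~\ref{lem:fixedPoints}(1). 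Hence $\Phi(Y,\calL)$ is exactly the face $F_S$ minimizing $S$, noting $Y^T=Y\cap X^T$.

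This is the step I expect to be the main obstacle: showing strictness, namely that every fixed point outside $Y$ has strictly larger $S$-weight. My first try is the semicontinuity/valuation argument. Take a section $\sigma\in\Gamma(X,\calL^{\otimes n})$ of minimal $S$-weight not vanishing at the general point $x$. Its $S$-weight must equal the weight at the limit point in $Y$, since $\lim_{z\to 0}S(z)\cdot x\in Y$ and $\sigma$ is nonvanishing there by equivariance and leading-term considerations. All weights in $\Gamma$ are at least the minimum over $X^T$ by the same degeneration applied to points limiting into other components.

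For the correspondence, I would take any face $F$. Choosing a rational $S$ minimized exactly on $F$ gives a sink $Y_S$ with polytope $F$, which proves description (1). For independence of the choice of $S$, I would use (2). The set $\{x\colon\Phi(\overline{T\cdot x})\subseteq F\}$ is closed, being determined by the vanishing of weight-space sections outside $F$, which is (3). Applying the minimization claim to each orbit closure $\overline{T\cdot x}$, $x\in Y_S$, shows $Y_S$ lies in this set. Conversely, points whose orbit polytope lies in $F$ have $S$-limits in $F$-weight fixed points, hence lie in $Y_S$. Thus (1), (2) and (3) agree, which gives well-definedness and injectivity. Inclusion-preservation is immediate from (2).
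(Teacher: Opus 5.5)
Your plan (show that the sink of $S$ realizes the $S$-minimal face, then identify (1), (2), (3)) has a genuine gap at exactly the step you flagged, and everything downstream depends on it. Lemma~\ref{lem:fixedPoints}(2) gives \emph{some} chain of invariant curves from $p$ to $Y$, but nothing makes the $S$-weights monotone along it. Also, ``$Z$ has a plus-cell in the closure of the dense cell'' is vacuous, since that closure is all of $X$. The missing input is that every component $Z\neq Y$ of $X^S$ contains the $z\to\infty$ limit of some non-fixed point. Only then can you leave $Z$ downward and descend with strictly decreasing weight at each step (Lemma~\ref{lem:eqvtLineBundles}), necessarily stopping at $Y$. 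That yields the inequality and strictness at once; the paper gets this input from Konarski's dimension inequality in the proof of Lemma~\ref{lem:sink}.

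Your section-ring ``first try'' can also be made to work, but as written it only addresses the inequality. What is needed is a connectedness argument. Embed by a very ample $\calL^{\tensor n}$, and let $U\subseteq X$ be the open, hence irreducible, set where the weight space of $\Gamma(X;\calL^{\tensor n})$ of the extremal $S$-weight dominating as $z\to 0$ does not vanish identically. On $U$, $x\mapsto\lim_{z\to 0}S(z)\cdot x$ is the linear projection onto that weight space, so it is a morphism. Its image is therefore irreducible and lies in a single component of $X^S$, which must be $Y$. An $S$-fixed point of that extremal weight has, by ampleness, a nonvanishing section of that weight, so it lies in $U$ and is its own limit, hence lies in $Y$. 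Without strictness you only get $\Phi(Y,\calL)\subseteq F_S$, not that it is a face. For comparison, the paper proves only that $V(I)$ in (3) realizes $F$. It does so via the elementary observation that at a fixed point of weight $\lambda$ every section of weight $\neq n\lambda$ vanishes, which avoids sinks entirely.

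The second gap is in the converse inclusion for (2). ``Points whose orbit polytope lies in $F$ have $S$-limits in \ldots\ $Y_S$, hence lie in $Y_S$'' is a non sequitur: having its $S$-limit in $Y_S$ only places $x$ in the open dense attracting set. You need $x$ to be $S$-fixed. Since $\langle\cdot,S\rangle$ is constant on $F\supseteq\Phi(\overline{T\cdot x},\calL)$, Proposition~\ref{prop:spanOfPhi} (using ampleness) forces the image of $S$ into the kernel of the $T$-action on $\overline{T\cdot x}$. Then $\overline{T\cdot x}$ is a connected subset of $X^S$ whose $T$-fixed points have the minimal $S$-weight, and strictness puts $x$ in $Y_S$. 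Independence of the choice of $S$ and injectivity of the correspondence both rest on this inclusion, so they are not yet established either.

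The identification of (2) with (3) is fine once spelled out. One direction is Lemma~\ref{lem:multDiagram} applied to $\overline{T\cdot x}$. For the other, a $T$-fixed point of $V(I)$ with weight $\mu\notin F$ would carry a nonvanishing section of weight $n\mu$, and such a section lies in $I$.
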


In a rather failed attempt to avoid including an entire treatise on algebraic
Hamiltonian geometry we give the proof only of the part we need later:
that given a face $F$, there {\em exists} a subvariety $Y$ with
$\Phi(Y,\calL) = F$.

\begin{proof}[Proof that (3) defines such a $Y$\!]
  \junk{
  First we need to show that $\Phi(Y,\calL)$ is a face $F$ in the usual sense,
  i.e. there is a linear functional on $\Phi(X, \calL)$ minimized
  on exactly $\Phi(Y, \calL)$. Pick a coweight $S$ defining $Y$, and
  let $\sigma$ be the corresponding functional on $\lie t^*_\RR$
  Then since $Y$ is $S$-fixed pointwise, the functional $\sigma$ is
  constant on $\Phi(Y,\calL)$ by Proposition \ref{prop:spanOfPhi}.
  By the analysis in the proof of Lemma \ref{lem:fixedPoints}(2),
  a component $Z$ of $X^S$ is the sink iff the $S$-weight $\wt(S\actson\calL_p)$,
  $p\in X^S$ is minimized on $Z$. 

  Given a face $F \subseteq \Phi_T(X,\calL)$, there is a rational functional
  $\lie t^*_\RR$ minimized exactly on $F$, i.e. a rational coweight.
  Some positive multiple of it defines an actual coweight $S$.
  }

  First we prove that $I$ is a prime ideal. Since it is multigraded
  (by degree and $T$-weight), we need to prove that if $i\in I, r\in R$ are
  multihomogeneous, then $ri \in I$. Let $F \subseteq \Phi(X,\calL)$ be
  defined by the linear inequality $f\geq c$, and consider the linear
  functional $(-c,f)$ on the space of pairs (degree,weight).
  This functional is nonnegative on the pairs (degree,weight) in $R$
  and is strictly positive on those in $I$, hence positive on
  the (degree,weight) of $ri$. Therefore $ri \in I$.

  Second we prove that $Y := V(I)$ contains enough fixed points to
  hit the vertices of $F$. Let $\lambda$ be a vertex of $F$; since it
  is itself a face we can construct $Z := V(J)$ just as we did $Y$,
  and obviously $J\geq I$ so $V(J) \subseteq V(I)$. 
  In this way we reduce to the case that $F$ is a vertex.

  Pick a point $p\in X^T$ with $\wt(T\actson \calL_p) = \lambda$,
  and let $K\leq R$ be the radical ideal defining $p$.
  To show $p \in V(J)$, it suffices to show $K \geq J$,
  i.e. that every section $\sigma \in \Gamma(X;\, \calL^{\tensor n})$
  with a well-defined weight $\mu \neq n\lambda$ vanishes at $p$.
  But this is trivial; $\sigma|_p$ is a weight $\mu$ element of
  $\calL^{\tensor n}_p$, a one-dimensional representation with weight $n\lambda$,
  hence $\sigma|_p$ must be zero.
\end{proof}

\begin{lemma}\label{lem:sink}
  Let $S\colon \Gm\to Aut(X,\calL)$ be a one-dimensional group acting on a
  projective variety and ample line bundle, so $\Phi_S(X,\calL) \subseteq \RR$.
  For $z$ in the Bia\l ynicki-Birula sink, 
  we have $wt(S\actson \calL_z) = \min\Phi_S(X,\calL)$.
\end{lemma}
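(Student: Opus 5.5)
The strategy is to show that the sink $Z$ has the smallest $S$-weight among all fixed points. Since $\Phi_S(X,\calL)$ is the convex hull in $\RR$ of the weights $\wt(S\actson\calL_p)$ for $p\in X^S$, we have $\min\Phi_S(X,\calL)=\min_{p\in X^S}\wt(S\actson\calL_p)$. By Lemma~\ref{lem:fixedPoints}(1) the weight is constant on the component $Z$. So it suffices to prove $\wt(S\actson\calL_z)\le\wt(S\actson\calL_q)$ for every $q\in X^S$.

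Fix such a $q$. By definition of the sink, the set $U=\{x\in X\colon \lim_{t\to 0}S(t)\cdot x\in Z\}$ is open dense. Dually, the set of points flowing to $q$ or to its component as $t\to\infty$ is nonempty. I will connect $Z$ to $q$ by a chain of $S$-invariant rational curves along which the weight increases.

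The cleanest route is a degeneration argument in the spirit of Lemma~\ref{lem:fixedPoints}(2).
\begin{enumerate}
\item Choose a curve $\gamma$ through $q$ meeting $U$; this is possible by Bertini, as in the proof of Lemma~\ref{lem:fixedPoints}(2).
\item Take the limit $\lim_{t\to0}S(t)\cdot\gamma$ in the Hilbert scheme. It is a connected $S$-invariant curve containing $q$ and meeting $Z$, because a generic point of $\gamma$ lies in $U$ and flows into $Z$.
\item This limit is a chain of $S$-fixed curves and non-fixed $S$-invariant rational curves. On each non-fixed rational curve $C\cong\PP^1$, the points $0$ and $\infty$ are its sink and source. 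Lemma~\ref{lem:eqvtLineBundles} with $d=\deg\calL|_C>0$ (using ampleness) gives $\wt(\calL_\infty)-\wt(\calL_0)=d\cdot w$, where $w$ is the tangent weight at $0$. Orienting $C$ so that $S(t)\to0$ pushes points toward $0$ makes $w>0$, so the weight strictly increases from the $0$ end to the $\infty$ end.
\end{enumerate}

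The main obstacle is ensuring the chain is monotone from $Z$ to $q$, rather than going up and down. I would handle this by an alternative argument that avoids chains altogether, and would present it as the actual proof, falling back to the chain picture only if needed. Let $m=\min\Phi_S$, attained at some fixed point $p$. By the vertex case of the proof of Proposition~\ref{prop:faceCorrespondence}(3), there is a section $\sigma\in\Gamma(X;\calL^{\otimes n})$ of weight $nm$ not vanishing at $p$. Indeed, since $X$ is projective and $\calL$ ample, some section of some weight is nonzero at $p$, and at $p$ that weight must equal $n\cdot\wt(\calL_p)=nm$. So $\sigma$ is an $S$-weight vector whose nonvanishing locus is a nonempty open set, hence meets $U$ in some point $x$.

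Now trivialize along the orbit: $\sigma(S(t)x)=t^{nm}\,S(t)\cdot\sigma(x)$, up to sign conventions. Taking $t\to0$, the limit point $z=\lim S(t)x$ lies in $Z$. The fiber weight there is $\mu=\wt(\calL_z)$, so $S(t)\cdot\sigma(x)$ behaves like $t^{n\mu}$ times a nonzero vector in the limit. Hence $\sigma$ has order $n(m-\mu)$ at $z$ along the orbit closure. Since $\sigma$ is a regular section, this order is $\ge0$, giving $\mu\le m$. Combined with $\mu\ge m$ (as $\mu$ is one of the weights whose convex hull is $\Phi_S$), we get $\mu=m$.

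The delicate step is getting the sign conventions right in this local computation on the $\PP^1$ orbit closure of $x$. I would do it by pulling back to that $\PP^1$ and applying Lemma~\ref{lem:eqvtLineBundles}: the weights of $\Gamma(\PP^1,\calL)$ lie between the two endpoint weights, with the minimum at the sink end.
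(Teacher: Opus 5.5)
Your argument is correct, but it takes a different route from the paper's.

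\textbf{The paper's route (descent).} For a fixed component $Y$ other than the sink, Konarski's inequality $\dim X_Y^\circ+\dim X^Y_\circ\ge\dim Y+\dim X$ produces a non-fixed point $x$ with $\lim_{t\to\infty}S(t)\cdot x\in Y$. Applying \cref{lem:eqvtLineBundles} to the orbit closure of $x$ gives a component of strictly smaller weight. Since $X^S$ has finitely many components, this descent must terminate, and it can only terminate at the sink.

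\textbf{Your route (upward from the minimum).} You start from a fixed point $p$ realizing $m=\min\Phi_S(X,\calL)$. Global generation of $\calL^{\otimes n}$, together with equivariance of evaluation at $p$, gives a weight-$nm$ section $\sigma$ with $\sigma(p)\neq 0$. Irreducibility lets you pick $x$ in the dense attracting set of the sink with $\sigma(x)\neq 0$. Then $\gamma^*\sigma$ is a nonzero weight-$nm$ section of $\gamma^*\calL^{\otimes n}$, so \cref{lem:eqvtLineBundles} forces $nm\ge n\,\wt(S\actson\calL_{\gamma(0)})$. This is the same mechanism as in \cref{lem:multDiagram}.

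\textbf{Comparison.} Your proof uses only ampleness, irreducibility and the $\PP^1$ computation. It needs no Bia\l ynicki-Birula dimension estimate (the only external input in the paper's proof) and no iteration. In exchange, the paper's descent proves slightly more. Every fixed component other than the sink has strictly larger weight, so the sink is the unique minimizing component. Your argument does not give this, though the lemma does not need it.

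\textbf{Write-up points.}
\begin{itemize}
\item Present only the pullback argument; drop the chain sketch and the ``order of vanishing'' heuristic.
\item Handle separately the trivial case $x\in X^S$. There $\gamma$ is constant, but $x\in Z$ and $\sigma(x)\neq 0$ give $\wt(S\actson\calL_x)=m$ directly.
\item Your worry about signs is settled by deferring to \cref{lem:eqvtLineBundles}. You use the same orientation as the paper (smallest section weight at the $t\to 0$ end), so the two proofs rest on exactly the same convention.
\end{itemize}
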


\begin{proof}
  Let $Y$ be the component of $X^S$ containing $y$. Then by \cite{Konarski},
  we have $\dim X_Y^\circ + \dim X^Y_\circ \geq \dim Y + \dim X$.
  (Warning: the $X^\pm$ loci defined in \cite{Konarski} are not quite
  $X_Y^\circ,X^Y_\circ$, so his inequality looks a little different.)
  Consequently, if $Y$ is not the sink (with $\dim X_Y^\circ = \dim X$),
  then $X^Y_\circ$ properly contains $Y$.

  By its definition, $\Phi_S(X,\calL) \ni wt(S\actson \calL_z)$,
  so we need to show for any $y\in X^S$,
  that $wt(S\actson \calL_x) \geq wt(S\actson \calL_z)$.
  If $Y$ is the sink, we are done. Otherwise by the above,
  we can pick $x \in X^Y_\circ \setminus Y$. Use this point to define a
  {\em nonconstant} $S$-equivariant map
  $\gamma\colon \PP^1 \to X$, $t \mapsto S(t)\cdot x$
  for $t\in\Gm$ (using $X$ projective); the nonconstancy implies that
  the degree of $\gamma^*(\calL)$ is strictly positive.
  Using Lemma \ref{lem:eqvtLineBundles}
  we learn $wt(S\actson \calL_{\gamma(0)}) < wt(S\actson \calL_{\gamma(\infty)})$,
  and in particular, that $\gamma(0)$ is in a different component of $X^S$.
  Chain these inequalities together finitely many times (as there are
  only finitely many components of $X^S$), getting stuck at the sink.
\end{proof}

\begin{proposition}\label{prop:TorbitClosure}
  If $X$ is irreducible, and $x\in X$ is general, then
  $\Phi\left(\overline{T\cdot x}, \calL\right) = \Phi(X, \calL)$.
  If $T$ acts faithfully on $X$, and $x$ is even more general,
  then the normalization of $\overline{T\cdot x}$ is the
  toric variety associated to the polytope $\Phi(X, \calL)$.
\end{proposition}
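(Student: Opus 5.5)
We prove the two statements of Proposition~\ref{prop:TorbitClosure} in turn. The main tool for the first is the ideal description of moment polytopes via weights of sections, in the spirit of Proposition~\ref{prop:faceCorrespondence}(3).

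For the first statement, the containment $\Phi(\overline{T\cdot x},\calL) \subseteq \Phi(X,\calL)$ is immediate. Fixed points of $\overline{T\cdot x}$ are fixed points of $X$, so the weights $\wt(T\actson\calL_p)$ at them lie among those defining $\Phi(X,\calL)$.

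For the reverse containment it suffices to hit every vertex $\lambda$ of $\Phi(X,\calL)$. Fix such a vertex and choose a coweight $S$ whose functional is minimized on $\Phi(X,\calL)$ exactly at $\lambda$. Let $Y$ be the Bia\l ynicki-Birula sink of $S$ on $X$; it exists because $X$ is irreducible, so exactly one component of $X^S$ has an open dense attracting set. By Lemma~\ref{lem:sink}, $\wt(S\actson\calL_y)=\min\Phi_S(X,\calL)$ for $y \in Y$. We then argue that $Y$ contains a $T$-fixed point of weight $\lambda$:
\begin{itemize}
\item $Y$ is $T$-stable and projective, so it has $T$-fixed points;
\item each such fixed point has $T$-weight in $\Phi(X,\calL)$ whose $S$-pairing equals the minimum;
\item that minimum is attained only at $\lambda$.
\end{itemize}

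The sink's basin is open dense, so for $x$ in the intersection, over the finitely many vertices, of these open dense sets, $\lim_{z\to0}S(z)\cdot x\in Y$. This limit point $y$ lies in $\overline{T\cdot x}$. Now $\overline{T\cdot y}\subseteq \overline{T\cdot x}\cap Y$ is projective and $T$-stable, so it contains a $T$-fixed point $p \in Y$, and $\wt(T\actson\calL_p)=\lambda$ by the previous paragraph. Hence $\lambda\in\Phi(\overline{T\cdot x},\calL)$.

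The one subtlety is that $\Phi_S$ is the projection of $\Phi_T$: fixed points of $S$ that are not $T$-fixed still have $S$-weights which are projections of $T$-weights of nearby $T$-fixed points, by Lemma~\ref{lem:fixedPoints}(1) applied to $T$ acting on the component of $X^S$. This gives $\min\Phi_S = \langle S,\lambda\rangle$ as needed.

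For the second statement, assume $T$ acts faithfully on $X$. The generic stabilizer is then trivial, so for $x$ in a dense open set $\mathrm{Stab}_T(x)$ is trivial; this follows from upper semicontinuity of stabilizer dimension together with finiteness of the possible finite stabilizers on an open set. Then $\overline{T\cdot x}$ is a projective variety with a dense orbit on which $T$ acts freely. Its normalization is a toric variety for $T$ with ample pullback of $\calL$, and by the reconstruction ${\mathrm{Proj}}$ recalled earlier it is determined by its moment polytope. That polytope is $\Phi(\overline{T\cdot x},\calL)=\Phi(X,\calL)$ by the first part, since normalization is finite and preserves the fixed-point weights.

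The main obstacle is the genericity bookkeeping. We need a single $x$ lying simultaneously in every sink basin and in the free locus. Both are finite intersections of nonempty opens in the irreducible $X$, so this should be routine.
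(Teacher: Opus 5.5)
Your proposal is correct and takes essentially the paper's approach. For each vertex you choose a coweight minimized only there, take $x$ in the intersection of the finitely many open Bia\l ynicki-Birula sink basins (and in the free locus), read off the vertex weights via Lemma~\ref{lem:sink}, and identify the normalization of $\overline{T\cdot x}$ as the toric variety of the polytope. Your extra step also fills in a detail the paper's proof leaves implicit: the limit point $y$ is only $S$-fixed, so you pass to a $T$-fixed point of $\overline{T\cdot y}\subseteq Y$ and then use that $\lambda$ is the unique minimizer, together with $\Phi_S$ being the projection of $\Phi_T$.
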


\begin{proof}
  For each vertex $v$ of $\Phi(X, \calL)$, pick a coweight $S_v$
  whose minimum value on $\Phi(X, \calL)$ is achieved on $v$ (and only there).
  Each of these circle actions defines an open Bia\l ynicki-Birula
  stratum in $X$, with sink $Z_v$; by $X$'s irreducibility these
  finitely many open sets have nonempty intersection. Take $x$ in this
  intersection. By definition, $\lim_{t\to 0} S_v(t)\cdot x \in Z_v$, and
  $\overline{T\cdot x} \supseteq \overline{S_v\cdot x}$,
  so $\overline{T\cdot x}$ intersects $Z_v$ in some point $p_v$.
  By Lemma \ref{lem:sink} we have $\wt(T\actson \calL_{p_v}) = v$,
  so $\Phi\left(\overline{T\cdot x},\calL\right) \supseteq \{v\}$.

  For the second statement, we need $x$ also inside the open set where
  $T$ acts with trivial stabilizer. Normalization is functorial,
  so $T$ acts on the normalization of $\overline{T \cdot x}$.
  The normalization map
  is an isomorphism over an open set, necessarily $T$-invariant,
  and the unique smallest $T$-invariant open set in $\overline{T\cdot x}$
  is the open orbit. Hence $T$ acts with a free open orbit on the
  normalization as well, making it a toric variety. It is then
  straightforward to check that it is the toric variety associated
  to the polytope $\Phi(X,\calL)$.
\end{proof}

\subsection{Dissections and subdivisions}

In the literature, one can find many references to a philosophy that
semitoric degenerations of toric varieties should
correspond to {\em regular subdivisions} of their moment polytopes, where each cell in the subdivision is the moment polytope of a component in the special fiber.
It is true that regular subdivisions give rise to semitoric degenerations
\cite{sturmfels1991grobner}. One has to be more careful
with the converse: Gr\"obner degenerations (which, if $X$ is toric,
are automatically semitoric) do give rise to regular subdivisions \cite{sturmfels1991grobner}. If the semitoric degeneration is not necessarily Gr\"obner, then, as we will show, we still obtain a subdivision but it is not \emph{a priori} regular.
\junk{
\AK{Should we attempt to demonstrate a degeneration to a nonregular
  subdivision? There's $\PP^2$ inside $\PP(Sym^4\CC^3)$ to a union of $7$
  triangles. (I'm fine with creating new vertices.) Is there a simpler
  example? The pieces don't need to be triangles.} \MSB{I think that would be nice, if it's not too terrible. The 7-triangles example is the smallest/simplest nonregular subdivision I know of.}
}

\begin{definition}
  Let $P$ be a $d$-dimensional polytope. A collection $\{P_i\}$ of
  $d$-dimensional polytopes forms a \defn{dissection} of $P$ if
  $\bigcup P_i = P$ and the polytopes $P_i$ have pairwise disjoint
  interiors. (Equivalently, Lebesgue measure on $P$ is the sum of
  the Lebesgue measures on the $P_i$.) We call the polytopes $P_i$ the \defn{cells} of the subdivision. A dissection is a \defn{subdivision} 
  if any intersection $P_i \cap P_j$ is a face of each (or empty).
  It is a \defn{regular subdivision} if it is induced from a
  piecewise-linear continuous convex function on $P$,
  where the $P_i$ are the domains of linearity.
\end{definition}

\begin{figure}[h]
  \centering
\begin{tikzpicture}
    \draw [thick] (0,0) -- (4,0) -- (4,4) -- (0,4) -- cycle;
    
    \draw (2,0) -- (2,4);
    
    \draw (2,2) -- (4,2);
  \end{tikzpicture}
  \qquad
\begin{tikzpicture}
    \coordinate (A) at (0,0);
    \coordinate (B) at (5,0);
    \coordinate (C) at (2.5,4);

    \draw [thick] (A) -- (B) -- (C) -- cycle;

    \coordinate (D) at (1.5,1);
    \coordinate (E) at (3.5,1);
    \coordinate (F) at (2.5,2.5);

    \draw (D) -- (E) -- (F) -- cycle;
    \draw (D) -- (B) -- (E);
    \draw (E) -- (C) -- (F);
    \draw (F) -- (A) -- (D);
\end{tikzpicture}
  \caption{A dissection that isn't a subdivision, and a subdivision that isn't regular.}
  \label{fig:dissections}
\end{figure}

\subsection{Duistermaat--Heckman measures}

Given a $T$-equivariant ample line bundle $\calL$ on a projective $T$-scheme $X$
over a field $\FF$, following \cite{brion1990action,guillemin1996symplectic}
define\footnote{%
  Duistermaat and Heckman defined these in the symplectic manifold setting rather than
  algebraic variety setting, which is less general (as it disallows singularities
  or base fields other than $\CC$) and more general (as some symplectic manifolds
  can't be made algebraic). We attempted to write proofs in the symplectic
  language but getting it to deal with the potential singularities that arise
  under degeneration was extremely cumbersome.}
the \defn{Duistermaat--Heckman measure} on $\lie t_\RR^*$
as a weak limit of Dirac measures:
\begin{equation}\label{eq:DH-def}
	DH(X,\calL) := \lim_{n\to\infty} \frac{1}{n^{\dim X}}
	\sum_{\lambda \in \frac{1}{n}T^*}
	\dim_\FF(n\lambda\text{ weight space in }\Gamma(X;\ \calL^{\tensor n}))
	\, \delta_\lambda
\end{equation}

If there are multiple tori at play, we sometimes write $DH^T(X, \calL)$
to include the torus in the notation.

\begin{lemma}\label{lem:multDiagram}
  Assuming $X$ is reduced,
  the sum in \eqref{eq:DH-def} (even before taking the limit) can be restricted to
  $\lambda \in \Phi(X,\calL)$, as the other terms vanish.
\end{lemma}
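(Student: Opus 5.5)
We must show: if $\sigma \in \Gamma(X;\calL^{\tensor n})$ is a nonzero section of $T$-weight $n\lambda$, then $\lambda \in \Phi(X,\calL)$. Equivalently, for $\lambda \notin \Phi(X,\calL)$ every weight-$n\lambda$ section vanishes identically. We argue by contradiction: separate $\lambda$ from the polytope, then test the section along a Bia\l ynicki-Birula limit.

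First, the separation. Since $\Phi(X,\calL)$ is a rational polytope and $\lambda \notin \Phi(X,\calL)$ is rational, pick a coweight $S\colon \Gm \to T$ whose pairing $\langle S,\cdot\rangle$ satisfies
\[ \langle S, \lambda\rangle < \min_{\mu \in \Phi(X,\calL)} \langle S,\mu\rangle . \]
Such an $S$ exists because rational functionals are dense, and the inequality is strict and open.

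Next, the limit. Because $X$ is reduced, the section $\sigma$ does not vanish identically, so it is nonzero at some point. We may take such a point $x$ in an irreducible component $X'$ of $X$ and in the open Bia\l ynicki-Birula cell of $S$ acting on $X'$. Set $p = \lim_{z\to 0} S(z)\cdot x$, which exists because $X$ is projective. Lemma~\ref{lem:sink}, applied to $X'$ (whose polytope lies in $\Phi(X,\calL)$), gives
\[ \wt(S\actson\calL_p) \geq \min \Phi_S(X,\calL) > \langle S,\lambda\rangle . \]
Even without Lemma~\ref{lem:sink}, every $S$-fixed point lies in a component of $X^S$ containing a $T$-fixed point, so by Lemma~\ref{lem:fixedPoints}(1) its $S$-weight is the pairing of $S$ with a vertex-type weight in $\Phi(X,\calL)$.

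Finally, the contradiction. Extend the orbit map $z\mapsto S(z)\cdot x$ to an $S$-equivariant map $\gamma\colon \PP^1\to X$, so $\gamma(0)=p$. Pull back: $\gamma^*\sigma$ is a nonzero section of $\gamma^*\calL^{\tensor n}$ with $S$-weight $n\langle S,\lambda\rangle$, and it is nonzero because $\sigma(x)\neq 0$. By Lemma~\ref{lem:eqvtLineBundles}, applied with the $0$ end at $p$ (if $\gamma$ is constant, the claim is immediate at the fixed point $x$), the weights occurring in $\Gamma(\PP^1,\gamma^*\calL^{\tensor n})$ are
\[ n\,\wt(S\actson\calL_p) + i\cdot \wt(S\actson T_0\PP^1), \qquad i\in[0,nd]. \]
Here $\wt(S\actson T_0\PP^1) > 0$, since the orbit flows into $p$ as $z\to 0$, so the standard orientation gives positive weight there. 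Hence every occurring weight is at least $n\,\wt(S\actson\calL_p) > n\langle S,\lambda\rangle$, so no nonzero section of weight $n\langle S,\lambda\rangle$ exists. This contradicts $\gamma^*\sigma \neq 0$.

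The main obstacle is the sign convention in the last step: checking that the weight on $T_0\PP^1$ is positive, and therefore that sections have weights at or above the weight at the limit point $p$. If the sign comes out reversed, we use $\lim_{z\to\infty}$ and $-S$ instead. In either form the argument shows that the $S$-weights of sections are bounded below by weights of fibers at fixed points, which lie in $\Phi(X,\calL)$.
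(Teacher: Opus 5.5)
Your proof is correct and is essentially the paper's argument. You separate $\lambda$ by a coweight, pull the section back along the orbit-closure map $\gamma\colon \PP^1\to X$, use Lemma~\ref{lem:eqvtLineBundles} to rule out weight $n\langle S,\lambda\rangle$, and use reducedness to pass from pointwise vanishing to $\sigma=0$. The paper runs this directly for arbitrary $x$ rather than by contradiction, and it avoids your sign worry (and the need for a general $x$ or Lemma~\ref{lem:sink}) because the section weights lie between the fiber weights at the two $S$-fixed endpoints $\gamma(0),\gamma(\infty)$, both of which exceed $n\langle S,\lambda\rangle$ by your Borel fixed-point remark.
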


\begin{proof}
  If $\lambda \notin \Phi(X,\calL)$, there exists a coweight
  $R\colon \Gm\to T$ such that
  $\langle R, \lambda\rangle < \langle R, p\rangle$ for all
  $p\in \Phi(X,\calL)$.  Let $\overline\lambda$ denote the projection
  of $\lambda$ under the map $\lie t^*_\RR \to \lie r^*_\RR$.
  
  Let $\sigma$ be a section of $\calL^{\tensor n}$ of $R$-weight
  $n\overline\lambda$, and $x\in X$. 
  Define $\gamma\colon \PP^1 \to X$, $z\mapsto R(z)\cdot x$ for
  $z\in \Gm$, as in the proof of Lemma \ref{lem:fixedPoints}.
  Then applying the last part of Lemma \ref{lem:eqvtLineBundles}
  to $\gamma^*(\calL^{\tensor n})$, we see that $n\overline\lambda$
  is not an $R$-weight in $\Gamma(\PP^1,\gamma^*\calL^{\tensor n})$.
  Hence $\sigma$ vanishes at $x$, and $x$ was arbitrary.
  Since $X$ is reduced, and $\sigma$ vanishes at the points of $X$,
  we have $\sigma=0$.
\end{proof}

\begin{proposition}\label{prop:DH}
  \begin{enumerate}
  \item If $X$ (or its normalization) is the toric variety associated
    with a polytope $P$, then $\Phi(X,\calL) = P$ and $DH(X,\calL)$ is
    Lebesgue measure thereupon.  \junk{ \item If $X$ is irreducible,
      then the support of $DH(X,\calL)$ is the moment polytope $\Phi(X,\calL)$.}
  \item If $X$ is reduced, the measure $DH(X,\calL)$ is supported
    on\footnote{Much more is true, though we won't prove it:
    	if $X$ is irreducible, not necessarily reduced, then the support
    	is {\em exactly} $\Phi(X,\calL)$.} $\Phi(X,\calL)$. 
  \item Say $h\colon S \to T$ is a homomorphism of tori,
    and $\wh h\colon\lie t^*_\RR \to \lie s^*_\RR$ the $\RR$-linear
    extension of the transpose map on weight lattices. 
    Then $\Phi_S(X,\calL) = \wh h(\Phi_T(X,\calL))$, and
    $DH^S(X,\calL) = \wh h_*(DH^T(Y,\calL))$ using the natural
    pushforward of measures.

    If $S=1$ this shows that the total measure of $DH(X,\calL)$ is the
    leading coefficient of $X$'s Hilbert polynomial, hence strictly positive.
    \junk{
    \item If $X$ is irreducible, and $T$ acts with generic stabilizer $S$,
      then $\dim_\RR \Phi(X,\calL) = \dim(T/S)$. More specifically,
      $\Phi(X,\calL)$ is contained in a translate of $S^\perp$ (as in
      Proposition \ref{prop:spanOfPhi}).
    \item If $T$ acts on $Y$ with a dense orbit, and the generic stabilizer has
      $k$ components, then $DH(Y,\calL)$ is $\frac 1 k$ times Lebesgue measure
      on the moment polytope $\Phi(Y,\calL)$ (even if it is not of full dimension).
    }
  \item If $X$ is reducible, so $X = \Union X_i$ and the $X_i$ have
    multiplicities $m_i$ and reductions $(X_i)_{red}$, then
    $DH(X,\calL) = \sum m_i DH((X_i)_{red},\calL)$ where the sum is over the
    top-dimensional (in particular, geometric) components of $X$.
  \item If $X \subset Y$ where $Y$ is projective, $X \rightsquigarrow \Union X_i$ is an embedded
    $T$-invariant degeneration, and $\calL$ is a
    $T$-equivariant ample line bundle on $Y$ (hence on $X$ and each
    $X_i$), then $DH(X,\calL) = DH(\Union X_i,\calL)$.

    If $X$ has a dense $T$-orbit, with trivial (or finite)
    generic stabilizer, then $T$ acts on each $X_i$ with finite
    generic stabilizer $\Gamma_i$ (making $X_i$'s normalization
    a toric variety for the action of $T/\Gamma_i$).
  \item If $X_1,X_2 \subseteq Y$ are subvarieties with
    $\Phi(X_1,\calL)$, $\Phi(X_2,\calL)$ disjoint, then $X_1,X_2$ are disjoint.
  \end{enumerate}
\end{proposition}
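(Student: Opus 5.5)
I will prove the six parts in order, with everything reduced to the Hilbert-function definition \eqref{eq:DH-def}. The weight multiplicities $\dim \Gamma(X;\calL^{\tensor n})_{n\lambda}$ are the only data involved. For $n\gg 0$ we may replace $\Gamma(X;\calL^{\tensor n})$ by the degree-$n$ piece of any $T$-equivariant homogeneous coordinate ring, since these differ only in finitely many degrees, which do not affect the limit.

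For (1), normality gives $\Gamma(X;\calL^{\tensor n}) = \bigoplus_{\lambda\in P\cap \frac1n T^*}\FF_{n\lambda}$ with each multiplicity one, by the monoid-algebra description recalled above. The sum in \eqref{eq:DH-def} is then a Riemann sum for Lebesgue measure on $P$, normalized by the lattice (with $\dim X=\dim P$). That $\Phi(X,\calL)=P$ follows because the $T$-fixed points are the vertex orbits, whose fiber weights are the vertices. If only the normalization is toric, I pass to the normalization: the pullback map on section rings is injective with finite-length cokernel in each degree that is $o(n^{\dim X})$, and the fixed points map surjectively. Part (2) is immediate from Lemma \ref{lem:multDiagram}, since the terms vanish off $\Phi(X,\calL)$ and a weak limit of measures supported on a closed set is supported there.

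For (3), the $S$-weight space of weight $\mu$ is the sum of the $T$-weight spaces over $\wh h^{-1}(\mu)$. Summing the multiplicities gives exactly the pushforward of the Dirac sums at each $n$, and pushforward commutes with weak limits because $\wh h$ is linear and all the measures are supported in a fixed compact set by (2). The fixed-point statement follows since $\wh h(\wt_T\calL_p)=\wt_S\calL_p$ and $X^T\subseteq X^S$. Points of $X^S\setminus X^T$ are handled by Lemma \ref{lem:sink}: the vertices of $\Phi_S$ are extremal $S$-weights, which are attained on $T$-fixed points of the corresponding sinks, using Proposition \ref{prop:faceCorrespondence}. When $S=1$, the total mass is $\lim \dim\Gamma(\calL^{\tensor n})/n^{\dim X}$, which is the leading Hilbert coefficient.

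For (4), I would take a primary-type filtration of $\calO_X$ by $T$-equivariant coherent subsheaves with subquotients $\calO_{Z}$ for $T$-invariant integral $Z$. Such a filtration exists equivariantly because associated primes of a $T$-module are $T$-stable, as $T$ is connected. Each top-dimensional $(X_i)_{red}$ occurs $m_i$ times, and lower-dimensional subquotients contribute $O(n^{\dim X-1})$ to each graded piece. After twisting by $\calL^{\tensor n}$ with $n\gg0$, higher cohomology vanishes, so $\Gamma$ is additive weight-space by weight-space.

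For (5), flatness of the embedded family inside $Y\times\AA^1$ lets me form the relative section module $\pi_*(\calL^{\tensor n}|_\calF)$. For $n\gg0$, uniformly in $n$, this is a free $T$-equivariant $\FF[z]$-module whose fibers are $\Gamma(\calF_t;\calL^{\tensor n})$. The $T$-weight multiplicities are therefore equal at $t=1$ and $t=0$ for every $n\gg 0$, which gives the DH equality. I expect the main obstacle to be uniformity in $n$, that is, vanishing of the higher cohomology on all fibers for a single range of $n$. I would get it from Serre vanishing over the projective family over $\AA^1$, using that $\calF$ is projective over a Noetherian base. For the second sentence: $DH(X)$ is a nonzero multiple of Lebesgue measure on a full-dimensional $\Phi(X)$ (by (1), which needs the generic stabilizer finite). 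Then by (4) each $DH((X_i)_{red})$ has total mass $>0$ by (3) and is supported in $\Phi(X_i)$ by (2), and $\dim X_i=\dim X=\dim T$. So $\Phi(X_i)$ is full-dimensional, and Proposition \ref{prop:spanOfPhi} shows the generic stabilizer of $X_i$ is finite. A component of dimension $\dim T$ with finite generic stabilizer has an open orbit, and its normalization is toric for $T/\Gamma_i$.

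For (6), I argue by contrapositive. If $p\in X_1\cap X_2$, then $\overline{T\cdot p}$ is projective and $T$-stable, so it contains a $T$-fixed point $q$ by Borel's theorem. Since $q$ lies in both $X_1$ and $X_2$, the weight $\wt(T\actson\calL_q)$ lies in both moment polytopes.
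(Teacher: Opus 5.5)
Your proposal is correct, but for most parts it takes a different route from the paper.

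\begin{itemize}
\item \textbf{Part (1), non-normal case.} You bound the cokernel of $\Gamma(X;\calL^{\tensor n})\to\Gamma(\widetilde X;\nu^*\calL^{\tensor n})$ by the sections of $\nu_*\calO_{\widetilde X}/\calO_X$, which is supported in dimension $<\dim X$. The paper instead squeezes $R'_n$ between $R_n$ and $gR_{n-\deg g}$ for a multihomogeneous conductor element $g$. This is the same idea, phrased with rings instead of sheaves.

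\item \textbf{Part (4).} You give a self-contained equivariant prime-filtration argument, where the paper simply cites \cite[Theorem 9.8]{baumann2021mirkovic}. Strictly, the subquotients are shifted modules $(R/\mathfrak p)(-d)$ twisted by characters, not $\calO_Z$ itself. Such shifts do not change $DH$, so this is harmless.

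\item \textbf{Part (5), first half.} You use relative Serre vanishing to make $\pi_*(\calL^{\tensor n}|_\calF)$ a free $T$-graded $\FF[z]$-module. This is elementary and bypasses $K$-theory. The paper instead feeds $[\calO_{Z_0}]=[\calO_{Z_1}]\in K_0^T(Y)$ from Proposition \ref{prop:flatness} through push--pull, staying inside the $K$-theoretic framework of \cref{sec:degen-and-class-formulas}.

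\item \textbf{Part (6).} Your Borel fixed-point argument on the $T$-stable projective set $X_1\cap X_2$ is simpler than the paper's section-ring argument via Lemma \ref{lem:multDiagram}.

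\item \textbf{Part (3).} You also prove the pushforward identity for $DH$, which the paper leaves implicit. Your appeal to (2) for uniform compact support needs $X$ reduced; finite generation of the section ring gives it in general. Your handling of $X^S\setminus X^T$ through sinks works but is roundabout. As in the paper, each connected component of $X^S$ is $T$-stable, projective, and has constant weight, so Borel's theorem puts a $T$-fixed point in it directly. That avoids Lemma \ref{lem:sink} (stated for irreducible $X$) and the unproven parts of Proposition \ref{prop:faceCorrespondence}.

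\item \textbf{Part (5), second half.} The step to ``$\Phi(X_i)$ is full-dimensional'' should be made explicit. It does not follow from $\dim X_i=\dim T$, which alone cannot force it. What does the work, exactly as in the paper, is that $m_i\,DH((X_i)_{red})\le DH(X)$ and $DH(X)$ is absolutely continuous. Hence a positive-mass $DH((X_i)_{red})$ cannot be supported on a proper affine subspace.
\end{itemize}
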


\begin{proof}
  \begin{enumerate}
  \item If $X$ is the toric variety, the construction ${\mathrm {Proj}}\
   \FF\left[(T^* \times \ZZ) \cap \overline{\RR_+\cdot (P \times \{1\})}\right]$
    of $X$ lets
    one see that the weights in $\Gamma(X;\ \calL^{\tensor n})$ are exactly those
    in $nP \cap T^*$, each with multiplicity $1$. Dividing the position by $n$,
    we get the points in $P \cap \frac{1}{n}T^*$, the number of which grows like
    $n^{\dim_\RR P} = n^{\dim X}$. This visibly limits to Lebesgue measure.

    If $\nu\colon Y\to X$ is the normalization, and $Y$ is the toric variety,
    then $X$'s section ring $R'$ is a subring of $Y$'s section ring $R$.
    By the $T$-equivariance, the conductor $f(R/R')$ is multihomogeneous.
    Pick a multihomogeneous element $g$ in the conductor,
    so $R_n \geq R'_n \geq g R_{n-\deg g}$. If we generalize the definition
    of $DH$ to $\NN\times T^*$-graded vector spaces,
    \[ DH(V) :=  \lim_{n\to\infty} \frac{1}{n^{\dim X}}
    \sum_{\lambda \in \frac{1}{n}T^*}
    \dim_\FF(n\lambda\text{ weight space in }V_n)    \, \delta_\lambda
    \]
    we get $DH(R) \geq DH(R') \geq DH(gR)$, and it is easy to see
    that $DH(gR) = DH(R)$. Hence $DH(X,\calL) = DH(Y,\calL)=$ Lebesgue measure
    on $P$.
  \item This follows easily from Lemma \ref{lem:multDiagram} and the definition.
  \item $\Phi_S(X,\calL)$ is the convex hull of $\{\wt(S\actson \calL_p), p\in X^S\}$.
    Two things are true about each connected component $F \subseteq X^S$:
    the function $p\mapsto \wt(S\actson \calL_p)$ is constant on $F$,
    and $T$ preserves $F$. By Borel's fixed point theorem, $F^T \neq \emptyset$.
    Hence $\Phi_S(X,\calL)$ is the convex hull of $\{\wt(S\actson \calL_p), p\in X^T\}$.
    Then use the fact that the linear projection of the convex hull of a point set 
    is the convex hull of the linear projections.
    \junk{
  \item Pick a point $p\in X^T$ (using Borel's theorem again) and let
    $\lambda := \wt(T\actson \calL_p)$, so $T$ acts trivially on
    $(\calL \tensor \FF_{-\lambda})_p$ where the second factor
    is the trivial (but not equivariantly trivial) line bundle.
    Let $R \leq T$ be the generic stabilizer, or equivalently, be the kernel
    of the action on $X$. Since $R$ acts trivially on $X$,
    it acts with the same weight on each fiber of $\calL \tensor \FF_{-\lambda}$,
    which (checking at $p$) is $0$. 
    Let $h$ be the projection $T \onto T/R$, and use (2) to obtain
    $$ \Phi_T(X,\calL)
    = \Phi_T(X,\calL\tensor \FF_{-\lambda}) + \lambda
    = \wh h\left(\Phi_{T/R}(X,\calL\tensor \FF_{-\lambda})\right) + \lambda $$
    hence
    $$ \dim_\RR \Phi_T(X,\calL) = \dim_\RR \Phi_{T/R}(X,\calL\tensor \FF_{-\lambda}) $$
    reducing the statement to the case of faithfully acting tori.

    \AK{Should cut this down using Lemma \ref{lem:fixedPoints}}
    It remains to show that if $T$ acts faithfully, then
    $\dim_\RR \Phi_T(X,\calL) = \dim T$. Obviously we have $\leq$ because
    $\Phi_T(X,\calL) \subseteq \lie t_\RR^*$. If the dimension is strictly less,
    then $\Phi_T(X,\calL)$ is contained in some rational affine hyperplane
    $H \leq \lie t_\RR^*$, perpendicular to a subtorus $S \into T$ of
    dimension $1$.
    \junk{Using the $\tensor \FF_\lambda$ trick above we can assume that $H \ni \vec 0$}
    So $S$ acts with the same weight on every line $\calL_p$ over a
    $T$-fixed point. Since the action is assumed faithful for $T$, hence
    faithful for $S$, there exists $x \in X$ a non-fixed point.

    Assume briefly that the curve $\Sigma := \overline{S\cdot x}$ is normal, in which
    case it is a $\PP^1$ (no higher genus curve possesses a faithful $S$-action).
    Since $\calL$ is ample on this $\PP^1$, it is $\calO(j)$ for some $j\neq 0$
    (in fact $j>0$).
    Let $\mu = \wt(S\actson T_0 \Sigma)$, which is not $\vec 0$ because
    $S$ acts faithfully on $\Sigma$,
    and perform the fundamental calculation
    $\wt(T\actson T_\infty \Sigma)-\wt(T\actson T_0 \Sigma) = j \mu \neq \vec 0$.
    This contradicts the statement above that $S$ acts with the same weight
    on each $\calL_p$. 

    If $\overline{S\cdot x}$ is not normal, then take $\Sigma$ instead to
    be its normalization $\Sigma \onto \overline{S\cdot x}$ and run the
    same argument.
    }
  \item This is not very difficult, but to save space we quote
    \cite[Theorem 9.8]{baumann2021mirkovic}.
  \item \label{item:Kproof}
    Using Proposition \ref{prop:flatness} we know
    that $[X] = [\Union X_i]$ in $K_0^T(Y)$. Regard $K_0^T(pt)$ as formal
    differences of $T$-representations, and let $\pi_W\colon W\to pt$
    denote the projection. Then for $Z_1 = X$ or $Z_0 = \Union X_i$ we have
    \begin{eqnarray*}
      [\Gamma(Z_s;\, \calL^{\tensor n})]
      &=& \left[ \sum_i (-1)^i H^i(Z_s;\, \calL|_{Z_s}^{\tensor n})\right]
          \qquad\text{for $n\gg 0$, by Serre vanishing} \\
      &=& (\pi_{Z_s})_* [\calL|_{Z_s}^{\tensor n}]
          \qquad\text{by definition of pushforward in $K$-theory} \\
      &=& (\pi_Y)_*\left( [\calO_{Z_i}]\, [\calL^{\tensor n}] \right)
          \qquad\text{by the push-pull formula} 
    \end{eqnarray*}
    as elements of $K_0^T(pt)$. Since $[\calO_{Z_0}] = [\calO_{Z_1}]$,
    we can chain the two sequences of equations together to get
    $[\Gamma(X;\, \calL^{\tensor n})] = [\Gamma(\Union X_i;\, \calL^{\tensor n})]$,
    or equivalently,
    $\Gamma(X;\, \calL^{\tensor n}) \iso \Gamma(\Union X_i;\, \calL^{\tensor n})$
    as $T$-representations. So the two Duistermaat--Heckman measures
    are computed from the same thing.

    For the second claim, note first using (1) and (2) that
    $DH(X,\calL)$ is absolutely continuous with respect to Lebesgue
    measure on $\lie t^*_\RR$, for the simple reason that it equals
    Lebesgue measure times the function that is $\frac 1 d$ on
    $\Phi(X,\calL)$ and $0$ elsewhere,
    where $d$ is the size of the generic stabilizer.

    Note second that if $T$ acts with positive-dimensional stabilizer
    $R$ on $X_i$, then $DH(X_i,\calL)$ is {\em not} absolutely
    continuous with respect to Lebesgue measure on $\lie t^*_\RR$,
    because by Proposition \ref{prop:spanOfPhi} it is supported on
    a translate of a proper linear subspace (and is not zero, by the positivity
    in (2)). Since each $DH(X_i,\calL)$ is a positive measure, and
    they add up to one that is absolutely continuous with respect to
    Lebesgue measure on $\lie t^*_\RR$, each summand individually is likewise
    absolutely continuous, precluding the positive-dimensional stabilizers.
  \item Let $R$ be the section ring for $Y$, and $I_1,I_2$ the ideals
    defining $X_1,X_2$. Then using Lemma \ref{lem:multDiagram} we see
    that $R/(I_1+I_2)$ is $0$ in all positive degrees, hence has
    empty $\Proj$.
    \qedhere
  \end{enumerate}
\end{proof}

The simplicity of the proof of (\ref{item:Kproof}) suggests that all
of these could be more naturally stated in $K$-theory, effectively
considering $n\gg 0$ but not taking the limit. This is true except for (3),
whose $K$-analogue then involves a bunch of inclusion-exclusion, and
becomes especially subtle when there are nonreduced components $X_i$.

The above proposition isn't quite enough to get us even
a dissection of the moment polytope from a degeneration.
One phenomenon that must be dealt with is seen in the degeneration
$\{([a,b,c],z)\colon b^2 = zac\} \subseteq \PP^2 \times \AA^1$, where
$t\cdot ([a,b,c],z) := ([a,tb,t^2c],z)$ for $t\in\Gm$. In this example,
a conic with a generically free action degenerates to a double line with a
global $\ZZ/2\ZZ$-stabilizer. Both moment polytopes are $[0,2]$ (w.r.t.
$\calL = \calO_{\PP^2}(1)$) and the resulting formula on
Duistermaat--Heckman measures is $L_{[0,2]} = 2\cdot\frac 1 2 L_{[0,2]}$,
where the $2$ comes from the scheminess and the $\frac 1 2$ from the stabilizer.
(This matching of multiplicity and stabilizer size is no accident; see Theorem \ref{thm:subdivision-general}.)

With that phenomenon in mind, picture (impossibly) a $\PP^1$ with faithful
$\Gm$-action degenerating to some union of {\em two} $\PP^1$s, {\em each}
carrying a $\Gm$ action with global stabilizer $\ZZ/2\ZZ$. Then writing $L_P$
for Lebesgue measure on $P$, using parts (1), (2), (4), (5) we would
get a formula $L_{[m,n]} = \frac 1 2 L_{[m,n]} + \frac 1 2 L_{[m,n]}$,
without the desired disjointness of supports. This doesn't in fact
happen, but it doesn't seem possible to exclude it using only the
proposition above.

We could at this point prove a limited dissection result based on an additional
proviso that no $X_i$ has different global stabilizer and multiplicity.
Instead, we'll exclude that possibility using a connectedness result,
which we'll need anyway in order to upgrade our dissections to subdivisions.

\subsubsection{Geometric invariant theory quotients}

Let $X = \Proj R$, where $T$ acts on $R$; for example, $R$ could be
a section ring $\Oplus_{n\in\NN} \Gamma(X;\, \calL^{\tensor n})$ for an
ample line bundle $\calL$.
To a point $\lambda \in S^*$ (or even in $\QQ\tensor S^*$), 
we associate a certain ``geometric invariant theory quotient''
\[ X//_\lambda\, T := \Proj \Oplus_{n\in\NN} (R_n)^{\text{$n\lambda$ $T$-weight space}} \]
\junk{
  carrying a $T/S$-action. (The definition doesn't depend on the larger
  torus $T$ containing $S$, but we will most often use it in the setting
  where there is a larger torus.) }
There are several things to be careful about:
\begin{itemize}
\item The choice of $\calL$ (or more generally choice of $R$) is
  traditionally suppressed in this notation, despite it being
  necessary to define the quotient.
\item While $X//_\lambda\, T$, being a $\mathrm{Proj}$, carries a rank $1$ sheaf
  we will denote $\calL // T$,
  that sheaf may not be a line bundle (although some tensor power of it
  will be). 
\item We will most often be dividing by a subgroup $S\leq T$.
  While the action of $T$ descends to an action of $T/S$ on
  $X//_\lambda\, S$, it will not descend to an action on $\calL//S$
  unless $\lambda = 0$.
\end{itemize}
As such it behooves us to extend the definition of moment polytope
to handle the case that $\calL$ is a rank $1$ sheaf such that
some $\calL^{\tensor m}, m>0$ is a line bundle. This is easy:
\[ \Phi(X,\calL) := \frac{1}{m} \Phi(X,\calL^{\tensor m}) \]
When $\calL$ is a line bundle,
it is trivial to check that this definition doesn't depend on $m$.
For the general case, if $\calL^{\tensor m_1}$ and $\calL^{\tensor m_2}$ are
line bundles, then $\calL^{\tensor \mathrm{lcm}(m_1,m_2)}$ will be too,
and the three will all define the same polytope. It is worth noting
that the vertices of $\Phi(X,\calL)$ may not be in the weight lattice $T^*$.
(Indeed, the recipe $\Proj \FF\left[(T^* \times \ZZ) \cap
  \overline{\RR_+\cdot (P \times \{1\})}\right]$ for projective toric varieties
works perfectly well for polytopes $P$ with rational vertices.)
We warn the reader that the definition of Duistermaat--Heckman measure
requires more serious reworking (try applying the definition as stated
to the case $R = \FF[x]$ where $T$ is trivial and $x$ has degree $2$;
the limit doesn't exist).

We need a couple of results from geometric invariant theory, which
we will quote from \cite{brion1990action,dolgachev1998variation}.
Following \cite{tolman1998examples},
define the \defn{x-ray} of the action $T\actson (X,\calL)$ as the set
of moment polytopes of the irreducible components of $X^S$, where $S \leq T$
varies over the (finitely many) point stabilizers.
If $S\leq T$, one can compute the x-ray for the $S$-action
by projecting each of the polytopes along $\lie t^*_\RR \onto \lie s^*_\RR$,
discarding those that become codimension $0$. 

The inclusion $ \Oplus_{n\in\NN} (R_n)^{\text{$n\lambda$ $T$-weight space}}
\into R$ induces a rational, $T$-invariant map
$X \dasharrow X//_\lambda\, T$, and we will need to know the extent to
which it is just ``divide by $T$''. Given the data $X,\calL,T,\lambda$,
call\footnote{This is a rephrasing of the Hilbert-Mumford criterion.}
a point $x\in X$ \defn{semistable} if $\lambda$ is in
$\Phi_T\left(\overline{T\cdot x},\calL\right)$,
and \defn{properly stable} if $\lambda$ lies in the interior of
$\Phi_T\left(\overline{T\cdot x},\calL\right)$.

\begin{proposition}\cite{brion1990action}\label{prop:brionprocesi}
  \begin{enumerate}
  \item The stable set $X^s$ is a $T$-invariant open set (possibly empty)
    with only finite stabilizers.
  \item The rational map $X \dasharrow X//_\lambda\, T$ is defined on
    the semistable set $X^{ss}$, giving a map $X^{ss}/T \onto X//_\lambda\, T$.
    Use the codimension $1$ pieces in the x-ray to divide
    $\Phi_T(X,\calL)$ into chambers. If $\lambda$ lies in the interior
    of a chamber, then the semistable points are all stable, 
    and the map $X^{ss}/T \to X//_\lambda\, T$ is bijective.
  \end{enumerate}
\end{proposition}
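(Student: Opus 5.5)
The statement is quoted from \cite{brion1990action}, and we would reprove it within the language of this section. The main tool is the weight-space description of sections. A point $x$ should be semistable exactly when some section $f\in R_n$ of $T$-weight $n\lambda$, for some $n>0$, does not vanish at $x$. To see this, restrict the section ring to the orbit closure $\overline{T\cdot x}$ and use \cref{lem:multDiagram} together with \cref{prop:DH}(1) applied to its normalization, which is toric. The restricted ring has a nonzero weight-$n\lambda$ piece for some $n$ if and only if $\lambda\in\Phi_T(\overline{T\cdot x},\calL)$. A nonzero section on the orbit closure of such a weight is nonzero on the open orbit, hence at $x$. Lifting it to a section on $X$ requires $n\gg0$, via Serre vanishing for the restriction map; this is where the ampleness of $\calL$ enters.

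Given this description, $X^{ss}$ is the union of the nonvanishing loci $\{f\neq 0\}$ over such weight-$n\lambda$ sections $f$, so it is open and $T$-invariant. The invariant sections define a $T$-invariant morphism $X^{ss}\to X//_\lambda T$. This is the usual $\Proj$ of an invariant subring, i.e. the quotient of the affine cone by $T\times\Gm$ with the $\Gm$ twisted by $\lambda$. Standard reductive GIT then gives surjectivity, with the points of the quotient corresponding to closed $T$-orbits in $X^{ss}$.

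For stability, suppose $T$ had a positive-dimensional stabilizer $S$ at $x$. Then \cref{prop:spanOfPhi}, applied to $\overline{T\cdot x}$, puts $\Phi(\overline{T\cdot x},\calL)$ in a translate of a proper subspace $S^\perp$, so it has empty interior and $x$ is not stable; hence stable points have finite stabilizers. For openness of $X^s$ we would use \cref{prop:faceCorrespondence}(2). For each face $F$ of the (finitely many) possible orbit polytopes with $\lambda\notin F$, the locus $\{y\colon \Phi(\overline{T\cdot y})\subseteq F\}$ is closed. Also, $\lambda$ lies in the interior of $\Phi(\overline{T\cdot y})$ exactly when this polytope is full-dimensional and $\lambda$ lies in no proper face of it. Full-dimensionality is an open condition, since it amounts to a finite stabilizer, so $X^s$ is a finite intersection of open sets.

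In the chamber case, suppose $x$ is semistable but not stable. Then $\lambda$ lies in a proper face $F$ of $\Phi(\overline{T\cdot x})$. By \cref{prop:faceCorrespondence}(1), $F$ is the moment polytope of a sink $Z\subseteq\overline{T\cdot x}$, which is pointwise fixed by a one-parameter subgroup. Hence $F$ is contained in a polytope of the x-ray of dimension less than $\dim T$, namely the polytope of a component of $X^S$ for some stabilizer $S$ of positive dimension. If $\dim F$ is smaller still, $\lambda$ nonetheless lies in some codimension-$1$ x-ray piece: we enlarge $F$ within the boundary of the orbit polytope, or equivalently perturb the one-parameter subgroup. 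Either way $\lambda$ lies on a wall, contradicting its position in a chamber interior, so semistable equals stable. Stable orbits are closed in $X^{ss}$, since a boundary orbit would have a strictly smaller polytope that still contains $\lambda$ in its interior, which is impossible. Because GIT separates distinct closed orbits, $X^{ss}/T\to X//_\lambda T$ is bijective.

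We expect the main obstacle to be the wall argument: showing that non-stable semistability forces $\lambda$ onto a \emph{codimension-$1$} piece of the x-ray, not merely into some lower-dimensional polytope. We would handle it first by reducing to a one-dimensional $S$ fixing the sink and projecting along $\lie t^*_\RR\onto\lie s^*_\RR$, as in the x-ray discussion above.
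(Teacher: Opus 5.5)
The paper gives no proof of this proposition; it quotes \cite{brion1990action}. Much of your reconstruction is sound: finite stabilizers via \cref{prop:spanOfPhi}, the quotient map and its surjectivity from reductive GIT, closedness of stable orbits in $X^{ss}$, and bijectivity once $X^{ss}=X^s$. The step you flag as the main obstacle, however, fails as written.

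You assert that a semistable, non-stable $x$ has $\lambda$ in a proper face $F$ of $\Phi(\overline{T\cdot x},\calL)$, and you then enlarge $F$ inside that orbit polytope to a facet. This works only when $\Phi(\overline{T\cdot x},\calL)$ is full-dimensional. If $\mathrm{Stab}_T(x)$ is positive-dimensional, $\lambda$ can lie in the relative interior of this lower-dimensional polytope. When $\dim \mathrm{Stab}_T(x)\ge 2$, the polytope has codimension at least $2$, so nothing inside $\overline{T\cdot x}$ produces a codimension-$1$ x-ray piece.

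Passing to a one-dimensional $S\le \mathrm{Stab}_T(x)$ and projecting does not fix this by itself, because the component of $X^S$ through $x$ may still have a kernel of dimension $2$. For example, let $T=\Gm^3$ act on $\PP^3$ with weights $0,e_1,e_2,e_3$, and take $x=[1:1:0:0]$ and $S=\{(1,t,t)\}$. The component of $X^S$ through $x$ is the line through the first two coordinate points, whose kernel is $\{t_1=1\}$.

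The missing idea is to leave $\overline{T\cdot x}$. First show that $x\in\overline{T\cdot y}$ for some $y$ with finite stabilizer. To do this, take general points $y_t\to x$ along a curve and form the flat limit of $\overline{T\cdot y_t}$ in $X$. The argument of \cref{prop:DH}(5) (constancy of equivariant Hilbert functions, then absolute continuity of the Duistermaat--Heckman measure) shows that every component of the limit has a dense orbit with finite stabilizer, and one component contains $x$. Then $\Phi(\overline{T\cdot x},\calL)$ is a proper face of the full-dimensional $\Phi(\overline{T\cdot y},\calL)$, so it lies in a facet. The orbit corresponding to that facet has a one-dimensional stabilizer $S$, and the component of $X^S$ containing it has a codimension-$1$ polytope containing $\lambda$.

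Your openness argument for $X^s$ also does not prove what it claims. You intersect the finite-stabilizer locus with the loci where $\Phi(\overline{T\cdot y},\calL)$ is not contained in a face $F$ with $\lambda\notin F$. Every one of these sets contains every semistable point with finite stabilizer. That includes points with $\lambda$ on the boundary of a full-dimensional orbit polytope, which are not stable, so the condition that $\lambda$ lie in no proper face is never imposed.

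Instead, write the non-stable locus as a finite union, over closed half-spaces $H$ with $\lambda\in\partial H$, of the sets $\{y : \Phi(\overline{T\cdot y},\calL)\subseteq H\}$. Each such set is closed for any convex $H$. Indeed, if $\calL^{\tensor m}$ is very ample and $\{s_i\}$ is a weight basis of $\Gamma(X;\calL^{\tensor m})$, then $m\,\Phi(\overline{T\cdot y},\calL)$ is the convex hull of the weights of those $s_i$ not vanishing at $y$. Note that \cref{prop:faceCorrespondence}(2) only concerns faces of $\Phi(X,\calL)$ and is not proved in the paper, so it is not the right citation here.

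The same weight-basis description gives your semistability criterion directly: monomials in the $s_i$ supply the required weight-$n\lambda$ sections. This works even at boundary points of non-normal orbit closures, where your normalization-and-conductor route does not immediately apply.
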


\begin{proposition}\label{prop:slicing}
  \begin{enumerate}
  \item If $X$ is a variety, then $X//_\lambda\, T$ is nonempty iff
    $\lambda \in \Phi_T(X,\calL)$, in which case $X//_\lambda\, T$ is a variety.
  \item $\Phi_T(X//_\lambda\, S, \calL//S)
    = \Phi_T(X,\calL) \cap (\lambda + \lie s^\perp)$
  \end{enumerate}
\end{proposition}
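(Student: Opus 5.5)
The plan is to reduce both parts to one statement about the section ring. Write $R=\Oplus_n R_n$ with $R_n=\Gamma(X;\,\calL^{\tensor n})$, multigraded by degree and $T$-weight, and set
\[ \Sigma(R):=\{(n,\mu)\colon n>0,\ (R_n)^{\mu\text{ weight space}}\neq 0\}\subseteq \NN\times T^*. \]
The key claim is that for $X$ a projective variety,
\[ \Phi_T(X,\calL)=\overline{\{\mu/n\colon (n,\mu)\in\Sigma(R)\}}, \]
and that the right-hand side is a rational polytope whose rational points are all of the form $\mu/n$ with $(n,\mu)\in\Sigma(R)$.

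For ``$\supseteq$'' of the key claim, I need every vertex $v$ of $\Phi$ to occur. Pick $p\in X^T$ with $\wt(T\actson\calL_p)=v$. By ampleness some section $\sigma$ of $\calL^{\tensor n}$ does not vanish at $p$. Decompose $\sigma$ into weight components. As in the proof of Proposition \ref{prop:faceCorrespondence}, only the weight-$nv$ component can be nonzero at $p$, so $(n,nv)\in\Sigma(R)$.

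For the other direction I use that $X$ is integral, so $R$ is a domain and $\Sigma(R)$ is closed under addition. It therefore contains $\sum_i k_i(n_i,n_iv_i)$ for all $k_i\in\NN$, and hence every rational point of the convex hull of the vertices appears as some $\mu/n$. Lemma \ref{lem:multDiagram} shows that no weight outside $\Phi$ occurs, which gives ``$\subseteq$''. Since $R$ is finitely generated, $\Sigma(R)$ is a finitely generated monoid, so its normalized closure is a rational polytope.

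For part (1), I apply this to the $\lambda$-slice $R^{(\lambda)}=\Oplus_n (R_n)^{n\lambda}$. For rational $\lambda$, first replace $R$ by a Veronese subring so that $n\lambda$ is integral; this changes neither $\Proj$ nor $\Phi$. The graded ring $R^{(\lambda)}$ is a subring of a domain, hence a domain. It is finitely generated, since it is the ring of invariants of the reductive group $T$ acting on the finitely generated algebra $R[\,t\,]$ twisted by weight $-\lambda$. Its $\Proj$ is therefore either empty or integral, i.e.\ a variety. It is nonempty iff some $R^{(\lambda)}_n$ with $n>0$ is nonzero, because nonzero elements of a domain are not nilpotent. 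By the key claim, this happens iff $\lambda\in\Phi_T(X,\calL)$.

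For part (2), the $T$-equivariant section ring of $(X//_\lambda S,\calL//S)$ agrees, after passing to a Veronese power that makes $\calL//S$ a line bundle, with
\[ R^{(\lambda,S)}=\Oplus_n\ \Oplus_{\mu|_S=n\lambda|_S}(R_n)^{\mu} \]
in all sufficiently large degrees. Applying the key claim to this $T$-graded domain gives $\Phi_T(X//_\lambda S,\calL//S)$ as the closure of $\{\mu/n\colon(n,\mu)\in\Sigma(R),\ \mu/n\in\lambda+\lie s^\perp\}$. This closure is contained in $\Phi\cap(\lambda+\lie s^\perp)$. Conversely, every rational point of $\Phi\cap(\lambda+\lie s^\perp)$ lies in the set by the key claim, and rational points are dense in this rational polytope, so equality holds.

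The main obstacle is the identification of the section ring of the quotient with $R^{(\lambda,S)}$ in large degree. The issue is that $\calL//S$ is only a rank one reflexive sheaf, so its global sections can exceed the invariants in low degree. I would handle this using $\Proj$ of a Veronese subring together with Serre's theorem that the two graded modules agree in degrees $\gg0$. Since $\Phi$ depends only on large degrees, this agreement is enough.
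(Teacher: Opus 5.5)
Your proof is correct, but it takes a genuinely different route from the paper's.

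The paper argues geometrically. For (1) it uses the GIT description of $X//_\lambda T$ as the image of the semistable locus (Proposition \ref{prop:brionprocesi}). Nonemptiness is then equivalent to $X^{ss}\neq\emptyset$, which by the paper's Hilbert--Mumford-style definition of semistability means $\lambda\in\Phi_T(X,\calL)$. Integrality is the subring-of-a-domain remark. For (2) the paper:
\begin{itemize}
\item takes a vertex $v$ of $\Phi_T(X,\calL)\cap(\lambda+\lie s^\perp)$ and a face $F$ of $\Phi_T(X,\calL)$ meeting $\lambda+\lie s^\perp$ only at $v$;
\item takes the subvariety $Y$ with $\Phi_T(Y,\calL)=F$ from Proposition \ref{prop:faceCorrespondence}, and applies (1) to get $Y//_\lambda S\neq\emptyset$;
\item uses Borel's fixed point theorem to find a $T$-fixed point there, which supplies $v$.
\end{itemize}
The reverse containment is only asserted to follow from the definitions.

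You instead reduce both parts to one algebraic statement about the section ring. Namely, $\Phi_T(X,\calL)$ is the closure of the normalized weights $\mu/n$, and every rational point is actually realized. This holds because vertices are realized at fixed points, weights add since $R$ is a domain, and Lemma \ref{lem:multDiagram} gives the opposite containment.

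What this buys:
\begin{itemize}
\item The argument is self-contained: no stability, no face correspondence, no Borel.
\item It establishes finite generation of the slice ring.
\item It genuinely proves the containment $\Phi_T(X//_\lambda S,\calL//S)\subseteq\Phi_T(X,\calL)\cap(\lambda+\lie s^\perp)$, which the paper leaves to the reader.
\end{itemize}
The cost is comparing $R^{(\lambda,S)}$ with the honest section ring of $\calL//S$, which the paper avoids by working with fixed points instead of sections. Your Veronese-plus-Serre treatment is fine. In fact you only need that some Veronese of $R^{(\lambda,S)}$ is generated in degree one. Then its degree-one piece globally generates an ample line bundle and injects into its sections, which is enough for both containments.

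One small correction concerns exhibiting $R^{(\lambda)}$ as an invariant ring. Giving $t$ weight $-\lambda$ alone yields $\bigoplus_{n,m}(R_n)^{m\lambda}t^m$, not $R^{(\lambda)}$. You should instead twist the $T$-action on each $R_n$ by $-n\lambda$. Equivalently, take $\Gm\times T$-invariants of $R[t]$, with $\Gm$ acting on $R_n$ by $z^n$ and on $t$ by $z^{-1}$.
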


\begin{proof}
  \begin{enumerate}
  \item The first part follows from Proposition \ref{prop:brionprocesi}(1)
    and the definition of semistable;
    $\lambda \notin \Phi_T(X,\calL)$ iff $X^{ss}=\emptyset$.
    The second derives from the simple fact that a subring (here the
    $T$-covariant subring) of a domain is a domain.    
\junk{
    If $\Proj R$ is reduced, then $\Spec R$ is reduced away from
    the origin.  In particular $R$ and its reduction $R/\sqrt{0}$ agree 
    in large enough degree. The same will be true of the $S$-covariant
    subring $R' := \Oplus_{n\in\NN} (R_n)^{\text{$n\lambda$ $S$-weight space}}$.
    So we may as well assume that $R$ has no nilpotents. Since $X$ is
    also assumed to be irreducible, this $R$ now is a domain. Then we
    use the trivial statement that a subring of a domain is a domain.

    If $\lambda \in P := \Phi(X,\calL)$, and $Y := \overline{S\cdot x}$ for
    a general point in the sense of Proposition \ref{prop:TorbitClosure},
    then $X//_\lambda S \supseteq Y//_\lambda S$, so it suffices to show
    the latter is nonempty. If $Y$ is normal, so its coordinate ring agrees with
    $\FF\left[(S^* \times \ZZ) \cap
      \overline{\RR_+\cdot (P \times \{1\})}\right]$ in large degrees,
    then the covariant subring is a polynomial ring in one variable,
    with nonempty $\Proj$.

    If $Y$ is not normal, we have to work slightly harder. Let
    $F \subseteq P$ be the smallest face containing $\lambda$, and
    apply recipe (3) from Proposition \ref{prop:faceCorrespondence} to
    shrink $Y$ to the corresponding attractive subvariety, $Z$.
    Now it suffices to show that $Z//_\lambda\, S$ is nonempty,
    i.e. we've reduced to the case $\lambda$ in the interior of $P$.
    At this point we pull the same conductor trick as in the
    proof of Proposition \ref{prop:DH}(1) to find that the covariant
    subring agrees in large degrees with a polynomial ring in one variable,
    so its $\Proj$ is a point.

    If $\lambda \notin P$, we can much more quickly use Lemma
    \ref{lem:multDiagram} to show that the $S$-covariant ring is just $\FF$,
    hence has empty $\Proj$.
    }
  \item For convenience, replace $\calL$ with $\calL^{\tensor m}$
    and $\lambda$ with $m\lambda$, in order to ensure that
    $\calL^{\tensor m}//S$ is a line bundle; this scales both sides
    by $m$, so doesn't change the claim to be proven.

    Let $v$ be a vertex of $\Phi_T(X,\calL) \cap (\lambda+\lie s^\perp)$;
    we want to show it lies in $\Phi_T(X//_\lambda\, S,\calL//S)$.
    Then there is a face $F$ of $\Phi_T(X,\calL)$ such that
    $F \cap (\lambda+\lie s^\perp) = \{v\}$.
    By the part of Proposition \ref{prop:faceCorrespondence} we proved,
    there is a subvariety $Y \subseteq X$ with $\Phi_T(Y,\calL) = F$.
    By (1), $Y//_\lambda S \neq \emptyset$, so by Borel's theorem
    it must contain a $T$-fixed point $y$, who provides the desired point
    in $\Phi_T(X//_\lambda\, S,\calL//S)$.

    For the converse, we need to show that
    $\Phi_T(X//_\lambda\, S,\calL//S)$ is contained in
    $\Phi_T(X, \calL)$ and $\lambda + \lie s^\perp$. These both follow
    easily from the definitions of $X//_\lambda\, S$ and moment polytope.
    \qedhere
  \end{enumerate}
\end{proof}

\subsection{Putting it all together}

\begin{proof}[Proof of Theorem \ref{thm:subdivision-general}]
  Let $F \subseteq Y \times \PP^1$ be the $T$-invariant embedded degeneration.
  We have four things to prove:
  \begin{itemize}
  \item the $X_i$ have dense $T$-orbits and the $\Phi(X_i,\calL)$
    cover $\Phi(X,\calL)$,
  \item the $\Phi(X_i,\calL)$ don't overlap in their interiors, 
  \item the intersection of any two is a face of each, and
  \item the map $Y \mapsto \Phi_T(Y)$ from $T$-orbit closures in $F_0$
    to faces of the subdivision is bijective.
  \end{itemize}

  {\em The covering.}
  The first is easy now, using parts of Proposition \ref{prop:DH}. Each $X_i$ has multiplicity $m_i$, and $d_i$ is the size of  $Stab_T(X_i)/Stab_T(X)$. 
  Each $X_i$ has a dense $T$-orbit by (5), hence contributes $m_i/d_i$ times
  Lebesgue measure on $\Phi(X_i, \calL)$ by a combination of (4), (3), and (1).
  Summing those measures, we get Lebesgue measure on $\Phi(X,\calL)$,
  by a combination of (4) and the other part of (5). That can only happen
  if the $\Phi(X_i,\calL)$ cover $\Phi(X,\calL)$.

  {\em The overlaps.}
  If $\lambda$ is in the interior of both $\Phi(X_i,\calL)$ and $\Phi(X_j,\calL)$,
  we can consider the family
  $F//_\lambda\, T \subseteq (Y//_\lambda\, T) \times \PP^1$ degenerating
  $X//_\lambda\, T$ to $\Union (X_i //_\lambda\, T)$. Since $X$ has a dense
  $T$-orbit and $\lambda \in \Phi(X,\calL)$, the general fiber of this family
  is one point, but the special fiber is at least two points
  (using Proposition \ref{prop:brionprocesi}(2)),
  contradicting Corollary \ref{cor:ZMT} to Zariski's Main Theorem.

  Now that we know the polytopes don't overlap, revisiting the
  sum-of-measures calculation shows that $m_i = d_i$ for each $X_i$.

  {\em The intersections.}
  Let $\lambda \in \Phi(X_i,\calL) \cap \Phi(X_j,\calL)$, not in
  the interior. We want to show that $X_i\cap X_j \neq \emptyset$
  already upstairs in $X$. If $X_i$ and $X_j$ do intersect, then they intersect
  along a $T$-invariant subset, a {\em union} $\Union Y_k$
  of toric subvarieties, which then correspond to full faces of each of
  $\Phi(X_i,\calL), \Phi(X_j,\calL)$.
  Hence $\Phi(X_i,\calL) \cap \Phi(X_j,\calL) \supseteq \Union \Phi(Y_k,\calL)$,
  but as nonoverlapping subpolytopes\footnote{This is a subtle point.
    The two Schubert divisors inside $GL_3/B$ are toric surfaces,
    which intersect along the union of two $\PP^1$s. This is only
    possible because their moment trapezoids have intersecting interiors.}
  of $\Phi(X,\calL)$ their intersection is convex: hence if they
  intersect in a union of faces, they intersect in just one of them.

  Now that we know the proof strategy -- upgrade the polytope intersections
  to subvariety intersections -- we consider the case $\dim T = 1$.
  In this case the polytope statement (about intersections of
  intervals with disjoint interiors) is trivial, so this may seem pointless,
  but we will use the variety statement at $\dim T = 1$ to get the
  general case.

  So, for now $\dim T = 1$. Thus $\Phi(X,\calL)$ is an interval, covered by
  (as just proven) subintervals $\Phi(X_i,\calL)$ with disjoint interiors;
  use this to number the $X_i$ in order (purely for ease of discussion). 
  Since $X$ is connected we know (again using Corollary \ref{cor:ZMT}) that
  $\Union X_i$ is connected.
  Using Proposition \ref{prop:DH}(6) we see $X_i \cap X_j = \emptyset$
  unless $|i-j|=1$, so for $\Union X_i$ to be connected, we need each
  $X_i \cap X_{i+1} \neq \emptyset$. That settles the geometric
  statement for $\dim T=1$, and we move on to general $T$.
  
  By \cite{GR-facet-to-facet},
  it is enough to check the case that
  $\Phi(X_i,\calL) \cap \Phi(X_j,\calL)$ is codimension $1$ in each.
  Pick a rational point $\tilde\lambda$ in the interior of the intersection,
  and a rational line $S^\perp + \tilde\lambda$ transverse to the intersection
  (and the rest of the x-ray), thereby picking a codimension $1$ subtorus
  $S\leq T$.

  Let $\lambda \in \lie s_\RR^*$ be the image of $\tilde\lambda$
  under the projection $\lie t_\RR^* \onto \lie s_\RR^*$.
  Let $E := F//_\lambda\, S$ denote the family degenerating $X//_\lambda\, S$
  to $\Union (X_i//_\lambda\, S)$. The previous analysis applies now
  that $\dim(T/S)=1$, showing that if two $X_i//_S\,\lambda$ have
  abutting moment polytopes, it is because the varieties themselves
  intersect. Hence the $X_i$ intersect (as the choice of $\lambda$
  allows us to use Proposition \ref{prop:brionprocesi}(2)).

  {\em The bijectivity.} We start with the surjectivity. Each face $A$ of the
  subdivision is contained in some facet $A'$, which is the moment
  polytope of some component $C$ of $F_0$. The normalization $\widetilde C$
  of $C$ is the toric variety for $A'$, hence contains a $T$-orbit
  closure $\widetilde Z \subseteq \widetilde C$ with moment polytope $A$.
  If $\nu\colon \widetilde C \to C$ is the normalization map,
  then $\nu(\widetilde Z) \subseteq C \subseteq F_0$ is a $T$-orbit closure
  with moment polytope $A$, as was desired.

  For the injectivity, we want to show that for $T$-orbit closures
  $Y_1 \neq Y_2$, the moment polytopes are different. Since
  $\dim_\FF Y_i = \dim_\RR \Phi_T(Y_i)$ we may assume the orbit
  closures have the same codimension. We prove the (not really) stronger
  statement that if the interiors of $\Phi_T(Y_1)$, $\Phi_T(Y_2)$ meet,
  it is because $Y_1 = Y_2$.
  We have already handled the case of codimension $0$ so we now assume
  the codimensions of each to be $>0$.

  Let $p$ be a rational point in the intersection of the interiors, and
  $L \ni p$ a rational line through $p$ intersecting those polytopes
  only in $p$. (Such an $L$ exists only because both interiors can be
  assumed to have codimension $>0$.) So $L = p + \lie s^\perp$ for a
  codimension $1$ subtorus $S\leq T$. To show $Y_1 = Y_2$, it suffices
  to show the $T/S$-fixed points $Y_1 //_p S, Y_2 //_p S$ are the same.
  The analysis now goes much the same as in the ``The intersections''
  argument above.
\end{proof}

If $F_0$ is reduced as in theorem \ref{thm:Hclass}(1), then it is
generically reduced, so each $m_i=1$ and hence each $d_i=1$.
In particular, the Richardson varieties $X_w^{wc}$ occurring in
theorem \ref{thm:Hclass}(1) have trivial generic $T$-stabilizer
(assuming that the ambient group $G$ was adjoint; otherwise they have
the same generic $T$-stabilizer as $G/B$ has).

Other Richardson varieties can have disconnected generic
$T$-stabilizer; the Richardson variety $X_s^{lsl} \subset SO(5)/B$
(here $s,l$ denote reflections in the short and long simple roots)
has $d=2$, being a closed $SO(4)$-orbit. Is there an embedded degeneration
inside $SO(5)/B$ of a general $T$-orbit closure to the union
$X^s \cup X_s^{lsl} \cup X_{lsl}$, with multiplicity $2$ on the
second component?

\section{Subdivisions of the permutahedron from Coxeter elements}\label{sec:subdiv-permutahedron}

\cref{thm:Hclass} gives an embedded degeneration of the permutahedral variety to a union of Richardson varieties.  \cref{thm:subdivision-general} tells us that this degeneration is in fact semitoric and gives a subdivision of the $W$-permutahedron into the moment polytopes of the Richardson varieties, which are \emph{Bruhat interval polytopes}. In this section, we discuss the combinatorics of this subdivision.

\renewcommand\BB{{(\lie t_\RR^*)_+}}

As in \cref{sec:degen-and-class-formulas}, we fix $G, B, B_-, T$ and
use $W$ to denote the Weyl group. The nodes of the Dynkin diagram of
$W$ are $I$, and $|I|=r$. As in \cref{sec:subdiv-from-degen},
we let $T^*$ be the weight lattice of $T$, with realification $\lie t_\RR^*$.
Assume briefly that $G$ is semisimple simply connected
The weight lattice $T^*$ is spanned by the fundamental weights
$\omega_1, \dots, \omega_r$, which are the dual basis to the simple
coroots. The positive real span of the fundamental weights is the
\defn{fundamental chamber} $\BB \subset \lie t_\RR^*$. The Weyl group
$W$ acts on $\lie t_\RR^*$, and acts freely and transitively on the
complement of the \defn{$W$-braid arrangement}, which consists of the
hyperplanes orthogonal to the coroots of $W$. In type $A$ it will (as always) be more combinatorially convenient to
take $G = GL_n$ than $G = SL_n$, at the cost that the fundamental weights are
not uniquely defined (we {\em choose} $\omega_i$ to be the high weight
$(1,\ldots,1,0,\ldots,0)$ of $\bigwedge^i \CC^n$) and don't span the weight lattice.

\begin{definition}\label{def:BIP-general-type}
	Let $\pt \in \BB$. For $u, v \in W$, the \defn{Bruhat interval polytope} is $\Pi_u^v(\pt):= \conv(z \cdot \pt: z \in [u,v])$. The \defn{$W$-permutahedron} is $\Pi_e^{w_0}(\pt)= \conv(w \cdot \pt: w \in W)$, for which we also use the notation $\Pi(\pt)$. 
\end{definition}

Bruhat interval polytopes for type $A$ were defined in \cite{KW15}, and the definition was expanded to arbitrary type (and to partial flag varieties) in \cite{TW15}\footnote{The work \cite{TW15} takes $\pt=\rho$; since the normal fan does not depend on the choice of $\pt$, this is not a substantial restriction. We also warn the reader that \cite{TW15} is not self-consistent, so their convention on how to label type $A$ Bruhat interval polytopes with Bruhat intervals changes between \cite[Definition 2.2]{TW15} and \cite[Definition 7.8]{TW15}. Our convention matches that of \cite[Definition 7.8]{TW15}.}.

\begin{lemma}\label{lem:BIP-moment-polytope}
	Suppose $\pt \in \BB$ is integral. Then  $\Pi_u^v(\pt)$ is the moment polytope $\Phi(X_u^v, \calL)$ of the Richardson variety $X_u^v$ for some choice of ample $T$-equivariant line bundle $\calL$ on $G/B$. 
\end{lemma}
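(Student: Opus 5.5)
Since $\pt$ is integral and lies in $\BB$, it is a dominant (indeed regular, being in the open chamber) weight. Take $\calL := \calL_{\pt}$, the $G$-equivariant, hence $T$-equivariant, line bundle $G\times_B \FF_{\pm\pt}$ on $G/B$. We choose the sign so that $\Gamma(G/B;\calL)$ is the irreducible representation $V_\pt^*$ (or $V_\pt$; the convention only affects an overall sign, which can be absorbed by also twisting by a character or replacing $\pt$ with $-w_0\pt$). By Borel--Weil, regularity of $\pt$ makes $\calL$ ample. If $G$ is not simply connected (or in the $GL_n$ setting), we pass to a cover, or note that $\pt$ is a character of $T$ extending to $B$. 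In either case the construction is unchanged.

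The plan is to compute the $T$-weights on the fibers of $\calL$ over the $T$-fixed points. By definition $\Phi(X_u^v,\calL)$ is the convex hull of $\{\wt(T\actson \calL_p) : p\in (X_u^v)^T\}$. The $T$-fixed points of $G/B$ are the points $zB/B$ for $z\in W$. A fixed point $zB/B$ lies in $X_u^v = X_u\cap X^v$ if and only if $u\le z\le v$: it lies in $X^v=\overline{BvB}/B$ iff $z\le v$, and in $X_u=\overline{B_-uB}/B$ iff $z\ge u$, by the standard Bruhat decompositions. It then remains to check that the fiber $\calL_{zB}$ is the line $\dot z\cdot \calL_{B}$. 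Since $T$ acts on $\calL_{B}$ by the character $\pt$ (with our sign choice), and $t\dot z = \dot z(\dot z^{-1}t\dot z)$, $T$ acts on $\calL_{zB}$ by the character $z\cdot\pt$. Hence the weights are exactly $\{z\cdot\pt : z\in[u,v]\}$, and their convex hull is $\Pi_u^v(\pt)$.

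The only real obstacle is the sign/convention bookkeeping. With $\calL = G\times_B\FF_\lambda$, the fiber weight at $B/B$ might come out to be $-\lambda$ or $w_0$-twisted relative to $\pt$, depending on whether the convention uses $B$ or $B_-$ and on the definition of the associated bundle. To handle this we pick the character defining $\calL$ to be whichever of $\pm\pt$ makes the fiber weight at the base point equal to $\pt$. We then verify ampleness for that choice: the weights at the $T$-fixed points should strictly increase along $T$-invariant curves from $zB$ to $zs_\alpha B$ when $zs_\alpha > z$, as in \cref{lem:eqvtLineBundles}, and this positivity holds exactly when $\pt$ is regular dominant for the correct sign. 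If the resulting sign forces an antidominant character, we instead use $\calL_{-w_0\pt}$ twisted so the base-point weight matches. Since the statement only asks for \emph{some} ample equivariant $\calL$, this flexibility suffices.
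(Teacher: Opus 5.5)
The paper states this lemma without proof, and your skeleton is the standard argument it takes for granted. The $T$-fixed points of $X_u^v$ are the $zB/B$ with $u\le z\le v$, and if a $G$-linearized $\calL$ has weight $\mu$ at $B/B$, then it has weight $z\cdot\mu$ at $zB/B$. Those steps are correct.

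The gap is in the one step with real content, the sign. You treat it as free bookkeeping, but it is not, and both of your escape routes fail. Twisting by a character $\FF_\chi$ only translates $\Phi$. Suppose instead that ampleness forced the base-point weight to be $-\lambda$ with $\lambda$ regular dominant. Then the fixed-point weights would be $-z\lambda = zw_0\cdot\lambda'$ with $\lambda':=-w_0\lambda$. So $\Phi(X_u^v,\calL)$ would be a translate of $\Pi_{vw_0}^{uw_0}(\lambda')$, which is in general the polytope of a different interval, and replacing $\pt$ by $-w_0\pt$ does not change this. Concretely, in type $A_2$, $\Pi_e^{s_1s_2}(\pt)$ is the trapezoid with vertices $\pt$, $\pt-a\alpha_1$, $\pt-b\alpha_2$, $\pt-(a+b)\alpha_1-b\alpha_2$, where $a=\langle\pt,\alpha_1^\vee\rangle$ and $b=\langle\pt,\alpha_2^\vee\rangle$. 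Its point reflection is not a translate of any polytope of this form. So ``pick whichever sign makes the base weight $\pt$, then check ampleness'' is the entire content of the lemma. Your fallback for the case where that check fails would not prove the statement.

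What closes the gap is that the sign is already fixed by the paper's convention in \cref{lem:eqvtLineBundles}. Apply that lemma to the Schubert curve $X^{s_i}\cong\PP^1$ through $B/B$ and $s_iB/B$. Its tangent line at $B/B$ is $\lie g_{-\alpha_i}\subset\lie g/\lie b$, of weight $-\alpha_i$. You get $s_i\mu=\mu-d_i\alpha_i$ with $d_i=\deg(\calL|_{X^{s_i}})$, that is, $d_i=\langle\mu,\alpha_i^\vee\rangle$. Hence the $G$-linearized bundle with base weight exactly $\pt$ has positive degree on every $X^{s_i}$, which on $G/B$ is equivalent to ampleness. No choice of sign and no fallback is needed, and the fixed-point weights are $z\cdot\pt$ for $z\in[u,v]$.

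This also shows your monotonicity claim is backwards. Along the $T$-curve from $zB/B$ to $zs_\alpha B/B$ with $zs_\alpha>z$, the weight drops by $\langle\pt,\alpha^\vee\rangle\, z\alpha$, a positive multiple of the positive root $z\alpha$. Under the opposite convention for fiber weights, ample bundles would have antidominant base weight and the statement would fail as written. That is why the convention has to be taken from the paper rather than chosen ad hoc.
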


We note that the normal fan of the Bruhat interval polytope $\Pi_u^v(\pt)$ does not depend on the choice of $\pt$ (see \cite[Lemma 7.2]{BDLSB}). In particular, the normal fan of the $W$-permutahedron is well-known to be the $W$-braid arrangement. The facets of $\Pi(\pt)$ are in bijection with the cosets of maximal parabolic subgroups of $W$.

Because the normal fan does not depend on $\pt$, \cite[Theorem 7.13]{TW15} implies that every face of $\Pi_u^v(\pt)$ is a Bruhat interval polytope $\Pi_p^q(\pt)$ for $[p, q] \subset [u,v]$. When $X_u^v$ is a toric variety, something stronger holds.

\begin{lemma}\label{lem:toric-BIP-facts}
	Suppose $X_u^v$ is a toric variety. Then 
	\begin{enumerate}
		\item for all $[p, q] \subset [u,v]$, $X_p^q$ is toric;
		\item $\Pi_p^q(\pt)$ is a face of $\Pi_u^v(\pt)$ if and only if $[p, q] \subset [u,v]$;
		\item $\dim \Pi_u^v(\pt) = \ell(v)-\ell(u)$;
		\item $\Pi_u^v(\pt)$ does not contain any other Bruhat interval polytopes of the same dimension.
	\end{enumerate}
\end{lemma}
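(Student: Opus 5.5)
The plan is to use three ingredients. First, the standard fact that $X_p^q$ is irreducible of dimension $\ell(q)-\ell(p)$. Second, \cref{prop:faceCorrespondence}, which relates faces of a moment polytope to attractive subvarieties; in a toric variety every $T$-orbit closure is attractive. Third, the fact (from \cite[Theorem 7.13]{TW15} together with the normal fan not depending on $\pt$) that every face of $\Pi_u^v(\pt)$ is of the form $\Pi_p^q(\pt)$ with $[p,q]\subset[u,v]$. We may fix $\pt$ integral, so that \cref{lem:BIP-moment-polytope} applies; since the normal fan does not depend on $\pt$, nothing is lost.

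For (3): since $X_u^v$ is toric, $T$ has a dense orbit on it. The generic stabilizer is the kernel of $T$ acting on $G/B$, which is the center $Z(G)$. By \cref{prop:spanOfPhi}, $\dim\Pi_u^v(\pt)=\dim T/Z(G)\cdot$ restricted to the orbit, that is, $\dim \Pi_u^v(\pt)$ equals the dimension of the open orbit. This is $\dim X_u^v=\ell(v)-\ell(u)$.

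For (1), I would argue by downward induction along covers, using the facts that Richardson varieties are normal and that $X_p^q\subset X_u^v$ is $T$-invariant and irreducible. A $T$-invariant irreducible closed subvariety of a toric variety is a $T$-orbit closure, because there are finitely many orbits and irreducibility gives a dense one. Normality of Richardson varieties then makes $X_p^q$ toric. This step is short, since normality is standard.

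For (2), the forward direction is as follows. If $\Pi_p^q$ is a face, its vertices $\{z\pt : z\in[p,q]\}$ must lie among the vertices $\{z\pt : z\in[u,v]\}$, since faces have vertices among the vertices. Because $\pt$ is regular, $z\mapsto z\pt$ is injective, so $[p,q]\subset[u,v]$. For the converse, $X_p^q$ is a $T$-orbit closure in the toric $X_u^v$ by (1), hence attractive. Then \cref{prop:faceCorrespondence} says $\Phi(X_p^q,\calL)=\Pi_p^q(\pt)$ is a face, where the moment polytope of $X_p^q$ is the convex hull over its fixed points, namely $[p,q]$.

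For (4), suppose $\Pi_p^q\subset\Pi_u^v$ with $\ell(q)-\ell(p)=\ell(v)-\ell(u)$. The vertex set of $\Pi_p^q$ is $\{z\pt\}_{z\in[p,q]}$, and these points lie in $\Pi_u^v\cap W\pt$. Since $W\pt$ consists of vertices of the permutahedron, it is in convex position, so this intersection is exactly $\{z\pt : z\in[u,v]\}$. Hence $[p,q]\subset[u,v]$. Equal lengths plus containment of intervals in a graded poset then force $p=u$ and $q=v$. Indeed, $u\le p\le q\le v$ and $\ell(q)-\ell(p)=\ell(v)-\ell(u)$ give $\ell(p)=\ell(u)$ and $\ell(q)=\ell(v)$.

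The main obstacle is the convex-position claim used in (2) and (4): I need that $\Pi_u^v\cap W\pt=[u,v]\pt$. I would handle it by noting that every point $w\pt$ is a vertex of $\Pi(\pt)$, so it cannot be a convex combination of the other $z\pt$.
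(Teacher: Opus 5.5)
Your proof is correct in substance, but it runs in the opposite order from the paper's. The paper proves (2) first, by counting. The $T$-orbit decomposition of $X_u^v$ refines its stratification into open Richardson varieties $(X_p^q)^\circ$, $[p,q]\subset[u,v]$, so there are at least as many orbits as subintervals. By \cite[Theorem 7.13]{TW15}, every face is some $\Pi_p^q$ with $[p,q]\subset[u,v]$, so there are at most as many faces as subintervals. Since orbits and faces are in bijection, all these counts agree, and (1) follows because each $(X_p^q)^\circ$ is forced to be a single orbit.

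You instead prove (1) directly, from finitely many orbits, irreducibility, and normality of Richardson varieties (which the paper also uses tacitly). You then get (2) from the toric orbit--face correspondence. This route never needs the \cite{TW15} input; you list it as an ingredient but never use it. Your convex-position argument also makes explicit the step ``$\Pi_p^q\subset\Pi_u^v$ implies $[p,q]\subset[u,v]$'', which the paper leaves implicit in (4).

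What the paper's counting buys in return is more structure. Every open Richardson in $X_u^v$ is a single $T$-orbit, and $[p,q]\mapsto\Pi_p^q$ is a bijection onto \emph{all} faces of $\Pi_u^v$, which is the form used in \cref{thm:subdivision-Coxeter}. Your route recovers the latter only after adding \cite{TW15} back in. Note also that the direction of \cref{prop:faceCorrespondence} you invoke (attractive implies face) is stated but not proved in the paper. It is the standard toric fact that the paper's own proof relies on as well.

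Two slips need repair.

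In (3), the generic stabilizer is the kernel of $T$ acting on $X_u^v$, not on $G/B$. It is usually much larger than $Z(G)$: for $X_e^{s_i}\cong\PP^1$ it is $\ker\alpha_i$. Taking $S$ to be that kernel, \cref{prop:spanOfPhi} gives $\dim\Pi_u^v=\dim T-\dim S=\dim X_u^v=\ell(v)-\ell(u)$, as you intended.

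In (4), you replace ``same dimension'' by $\ell(q)-\ell(p)=\ell(v)-\ell(u)$ before knowing that $X_p^q$ is toric. For non-toric Richardson varieties, $\dim\Pi_p^q<\ell(q)-\ell(p)$ does occur (e.g.\ $X_e^{w_0}$ in $\Fl_3$). Reorder the argument:
\begin{enumerate}
\item Convex position gives $[p,q]\subset[u,v]$ without using the dimension hypothesis.
\item Then (1) and (3) applied to $X_p^q$ give $\dim\Pi_p^q=\ell(q)-\ell(p)$.
\end{enumerate}
Alternatively, as in the paper, (2) makes $\Pi_p^q$ a face, and a face of full dimension is the whole polytope.
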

\begin{proof}
  (3) follows from \cref{lem:BIP-moment-polytope}, since toric
  varieties and their moment polytopes have the ``same'' dimension
  (one over $\FF$, one over $\RR$). (4)
  follows from (2), since any Bruhat interval polytope $\Pi_p^q(\pt)$
  contained in $\Pi_u^v(\pt)$ would satisfy $[p, q] \subset [u,v]$,
  and by (2) all such Bruhat interval polytopes are faces of
  $\Pi_u^v(\pt)$ and so have smaller dimension.
	
	We now prove (2). We may assume $\pt$ is integral, and omit it from the notation. Every Richardson variety $X_w^z$ decomposes into open Richardson varieties
	\[X_w^z = \bigsqcup_{[p,q] \subset [w,z]} ({X}_p^q)^\circ\]
	where $({X}_p^q)^\circ = B_-p B/B \cap B q B /B$.
	Each open Richardson variety is $T$-invariant, and so is a union of torus orbits. That is, the decomposition of $X_w^z$ into torus orbits refines the decomposition into open Richardson varieties. Additionally, $X_w^z$ is the closure of the open Richardson variety $({X}_w^z)^\circ$.
	
	Since $X_u^v$ is toric and $\Pi_u^v$ is its moment polytope, the (open) faces of $\Pi_u^v$ are in dimension-preserving bijection with the torus orbits of $X_u^v$. Since the torus orbit decomposition refines the open Richardson decomposition, $X_u^v$ has at least one torus orbit for each subinterval $[p, q] \subset [u,v]$. This means there are at least as many torus orbits as subintervals. On the other hand, each face of $\Pi_u^v$ is of the form $\Pi_p^q$ for some $[p,q] \subset [u,v]$. This means there are at most as many faces as subintervals. Since torus orbits and faces are in bijection, we conclude that there are the same number of faces, torus orbits, and subintervals, so each subinterval gives a face.
	
	For (1), we have just shown that for each $[p,q] \subset [u,v]$, the open Richardson $(X_p^q)^\circ$ is a single torus orbit. Since the closure of $(X_p^q)^\circ$ is $X_p^q$, $X_p^q$ is toric.
\end{proof}

A subdivision of $\Pi(\pt)$ is a \defn{finest Bruhat interval subdivision} if the subdivision consists of Bruhat interval polytopes and no refinement of it consists of Bruhat interval polytopes. The results of \cref{sec:degen-and-class-formulas,sec:subdiv-from-degen} readily give such subdivisions. Recall that $x * y$ denotes the Demazure product of $x$ and $y$.

\begin{theorem}\label{thm:subdivision-Coxeter}
  Let $\pt \in \BB$ and let $c \in W$ be a Coxeter element. For any
  $u$ such that $uc$ is length-additive, $X_u^{uc}$ is toric. The
  collection \begin{equation}\label{eq:BIP-in-subdiv} \{\Pi_u^{uc}(\pt): uc~\text{length-additive}\} \end{equation} is a
  finest Bruhat interval subdivision of the $W$-permutahedron
  $\Pi(\pt)$. The faces of this subdivision are the Bruhat interval
  polytopes $\Pi_y^z$ where $z \leq y * c$.
\end{theorem}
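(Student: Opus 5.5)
The plan is to feed the degeneration of \cref{thm:Hclass}(1) into \cref{thm:subdivision-general}, and then translate the output into the language of Bruhat intervals using \cref{lem:toric-BIP-facts} and \cref{lem:interval-poset-Demazure-prod}.

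First reduce to $\pt$ integral. The combinatorial type of each Bruhat interval polytope, and therefore whether a collection forms a subdivision, depends only on normal fans, and these do not depend on $\pt$ \cite[Lemma 7.2]{BDLSB}. Rescaling $\pt$ by a positive integer is harmless, and the subdivision property is preserved under perturbing $\pt$ within the open chamber $\BB$ because the relevant face lattices are constant there. Having reduced to integral $\pt$, choose $\calL$ on $G/B$ as in \cref{lem:BIP-moment-polytope}.

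Next apply \cref{thm:Hclass}(1) with $w=c^{-1}$, which is again a Coxeter element, so that $uw^{-1}=uc$. By \cref{prop:lusztig-permutahedral}, $X:=\calY_{c^{-1}}(t)$ is a permutahedral variety. It is irreducible and has finitely many $T$-orbits, since it is a $T$-orbit closure and is normal by \cref{thm: Lusztig facts}(1). The theorem gives a $T$-invariant embedded degeneration of $X$ inside $Y=G/B$ to the reduced union $\calX_{c^{-1}}=\bigcup X_u^{uc}$.

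Now apply \cref{thm:subdivision-general}. It says each component $X_u^{uc}$ has a dense $T$-orbit. Richardson varieties are normal, so each $X_u^{uc}$ is toric. Reducedness gives $m_i=1$, and hence trivial extra stabilizer. The theorem further says that the polytopes $\Phi(X_u^{uc},\calL)=\Pi_u^{uc}(\pt)$ form the maximal cells of a subdivision of $\Phi(X,\calL)=\Pi(\pt)$. The faces of this subdivision are the moment polytopes of the $T$-orbit closures in $\calX_{c^{-1}}$. By \cref{lem:toric-BIP-facts}(1),(2), these orbit closures are exactly the $X_y^z$ with $[y,z]\subset[u,uc]$ for some length-additive $uc$. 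Their polytopes are $\Pi_y^z(\pt)$, and distinct intervals give distinct faces by the bijectivity part of \cref{thm:subdivision-general}. So the face set is $\calP(c^{-1})$, which equals $\{[y,z]: z\le y*c\}$ by \cref{lem:interval-poset-Demazure-prod}.

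Finally, the subdivision is finest among Bruhat interval subdivisions. Suppose some refinement consisted of Bruhat interval polytopes. Then a maximal cell $\Pi_u^{uc}(\pt)$ would contain a full-dimensional Bruhat interval polytope other than itself. This contradicts \cref{lem:toric-BIP-facts}(4), since $X_u^{uc}$ is toric.

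The main obstacle is the first step: justifying the passage from integral $\pt$ to arbitrary $\pt\in\BB$. The claim that the subdivision property is invariant needs care, because intersections of cells must remain faces as $\pt$ varies. The approach I would take is this. Every cell $\Pi_y^z(\pt)$ has normal fan independent of $\pt$, and its vertices are the points $v\cdot\pt$ with $v$ ranging over a fixed set. So the incidence data, meaning which vertices lie on which faces, is fixed, and the subdivision condition amounts to linear equalities and strict inequalities in $\pt$. These equalities and inequalities hold on a dense set of rational $\pt$, hence on all of $\BB$ by continuity and connectedness, once one checks that no degeneration occurs inside the open chamber.
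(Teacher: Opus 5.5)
Your proposal follows the paper's proof step for step when $\pt$ is integral. You apply \cref{thm:Hclass}(1) with $w=c^{-1}$. You then use \cref{thm:subdivision-general}, together with normality of Richardson varieties, to get toricity and the subdivision. The faces come from \cref{lem:toric-BIP-facts}(2) and \cref{lem:interval-poset-Demazure-prod}, and finestness from \cref{lem:toric-BIP-facts}(4).

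The only divergence is the passage to arbitrary $\pt\in\BB$, and there your argument has a genuine gap. The paper settles this step by citing \cite[Theorem 7.6]{BDLSB}: a collection of Bruhat interval polytopes subdivides $\Pi(\pt)$ for one $\pt$ in the chamber iff it does so for all $\pt$.
\begin{itemize}
\item In your first step, constancy of each cell's normal fan says nothing by itself about how \emph{different} cells sit relative to one another.
\item In your last paragraph, disjointness of interiors and the face-to-face condition are not closed conditions. Strict inequalities holding on a dense set of $\pt$ need not hold at limit points. For instance, the dissection in \cref{fig:dissections} is a limit of genuine subdivisions in which a pentagonal cell flattens to a rectangle.
\end{itemize}
Your clause ``once one checks that no degeneration occurs inside the open chamber'' is exactly the missing content.

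To close it without the citation, note that you already have a subdivision for every rational $\pt$ (rescale to an integral weight). So it suffices to show the property is closed in $\BB$, and you should express it through conditions that are closed or independent of $\pt$.
\begin{itemize}
\item Covering, $\bigcup_u \Pi_u^{uc}(\pt)=\Pi(\pt)$, survives Hausdorff limits.
\item The identity $\sum_u \mathrm{vol}\,\Pi_u^{uc}(\pt)=\mathrm{vol}\,\Pi(\pt)$ is continuous in $\pt$. Together with covering, it forces pairwise disjoint interiors.
\item For face-to-face, use labels. Each facet of $\Pi_u^{uc}(\pt)$ is $\Pi_p^q(\pt)$ for a codimension-one subinterval $[p,q]$ that does not depend on $\pt$. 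The points $v\cdot\pt$ are distinct for $\pt$ in the open chamber, so two such facets coincide iff their label sets coincide, which does not depend on $\pt$.
\item Likewise, a facet lies in $\partial\Pi(\pt)$ iff its labels lie in a single coset of a maximal parabolic subgroup, again independent of $\pt$.
\end{itemize}
Hence the facet-to-facet matching seen at rational $\pt$ holds for every $\pt\in\BB$. The facet-to-facet criterion \cite{GR-facet-to-facet}, used as in the proof of \cref{thm:subdivision-general}, then upgrades the limiting dissection to a subdivision. With either this argument or the citation in place, the rest of your proof is fine.
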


\begin{proof}
  The last sentence of the theorem follows from the prior sentences:
  \cref{lem:toric-BIP-facts} implies that the faces of
  \[ \{\Pi_u^{uc}(\pt): uc~\text{length-additive}\} \]
  are exactly $\Pi_y^z$ where $[y,z] \subset [u,uc]$ for some $uc$
  length-additive. \cref{lem:interval-poset-Demazure-prod} for
  $w = c^{-1}$ shows that this is the same as the intervals $[y,z]$
  such that $z \leq y *c$.
	
	Now we turn to the rest of the theorem statement. We first assume $\pt$ is integral. By \cref{prop:lusztig-permutahedral}, the Lusztig variety $\calY_w$ is a permutahedral variety when $w=c^{-1}$. By \cref{thm:Hclass}, we have an embedded degeneration from $\calY_{c^{-1}}$ to the union
	\[\bigcup\{X_u^{uc}: uc~\text{length-additive}\}.\]
	 \cref{thm:subdivision-general} implies that the Richardsons $X_u^{uc}$ in the above union are toric. The same theorem together with \cref{lem:BIP-moment-polytope} implies that \eqref{eq:BIP-in-subdiv}
	is a subdivision of $\Pi(\pt)$. It is a finest subdivision because, by \cref{lem:toric-BIP-facts}, the $\Pi_u^{uc}(\pt)$ do not contain other full-dimensional Bruhat interval polytopes. Now, \cite[Theorem 7.6]{BDLSB} implies that since \eqref{eq:BIP-in-subdiv} is a subdivision of $\Pi(\pt)$ for one choice of $\pt$, it is a subdivision for all choices of $\pt \in \BB$.
\end{proof}

\section{Regularity in type $A$} \label{sec:regularity-type-A}

In this section, we show in type $A$ that the subdivisions of
\cref{thm:subdivision-Coxeter} are in fact regular subdivisions, by
providing an explicit height vector inducing the subdivisions. This
question is of particular interest because of the relationship between
regular Bruhat interval subdivisions of the type $A$ permutahedron and
the tropical positive flag variety \cite{B23,JLLO23} (see also
\cite{BEW24} and \cref{ssec:rel-to-matroid-polytopes}).

We remind the reader that the results of
\cite{sturmfels1991grobner,zhu2012degenerations} let one use a height
vector to define an embedded (and Gr\"obner) degeneration inside a
large projective space. We do not see how to use the height vector defined
below to produce our tighter (and non-Gr\"obner) embedded
degeneration, inside $G/B$.

We use the notation $[n]:=\{1, \dots, n\}$,
$\binom{[n]}{k}:=\{I \subset [n]: |I|=k\}$ and, for $I \subset [n]$,
$e_I:= \sum_{i \in I} e_i$ is the indicator vector of $I$ in $\RR^n$.
In particular $e_{[i]} = e_1 + \ldots + e_i$.

We work with $G=GL_n$ (rather than $SL_n$ or $PGL_n$, say).
We identify $\lie t_\RR^*$ with $\RR^n$, and choose the fundamental weight $\omega_i$
to be $e_{[i]}$, the highest weight of $\wedge^i \CC^n$. (Because $GL_n$ is not
semisimple, one could make a different choice by tensoring this with a power
of the determinant representation.)
Then $\BB$ is identified with
$\{v=(v_1, \dots, v_n) \in \calH: v_1 > v_2 > \dots > v_n \}$. The
Weyl group $W=S_n$ acts on $\calH$ by permuting coordinates:
$w \cdot (\pt_1, \dots, \pt_n) = (\pt_{w^{-1}(1)}, \dots,
\pt_{w^{-1}(n)})$. We note that $w \cdot e_{[i]} = e_{w[i]}$. For the
remainder of the section, we fix $\pt \in \BB$ and will ease notation
by setting $\Pi:= \Pi_e^{w_0}(\pt)$ and $\Pi_u^v := \Pi_u^v(\pt)$.

\junk{As is standard, we identify $\lie t_\RR^*$ with
  $\RR^n/\RR \cdot \mathbf{1}$. The fundamental weight $\omega_i$ is
  represented by $e_{[i]}$. It is useful to further identify
  $\lie t_\RR^*$ with a hyperplane $\calH$ in $\RR^n$ normal to
  $(1, \dots, 1)$, so $\BB$ is identified with
  $\{v=(v_1, \dots, v_n) \in \calH: v_1 > v_2 > \dots > v_n \}$. The
  Weyl group $W=S_n$ acts on $\calH$ by permuting coordinates:
  $w \cdot (\pt_1, \dots, \pt_n) = (\pt_{w^{-1}(1)}, \dots,
  \pt_{w^{-1}(n)})$. We note that $w \cdot e_{[i]} = e_{w[i]}$. For
  the remainder of the section, we fix $\pt \in \BB$ and will ease
  notation by setting $\Pi:= \Pi_e^{w_0}(\pt)$ and $\Pi_u^v := \Pi_u^v(\pt)$.  }

\begin{remark}
	To obtain the ``usual" permutahedron, that is, the convex hull of all permutation vectors $(v(1), \dots, v(n))$, one should take $\calH$ to be the hyperplane where coordinates sum to $\binom{n+1}{2}$ and take $\pt=(n, n-1, \dots, 1)$. Note that the permutation vector $(v(1), \dots, v(n))$ is \emph{not} equal to $v \cdot (n, n-1, \dots, 1)$, but is instead $ v^{-1} w_0 \cdot (n, \dots, 1)$.
\end{remark}

\subsection{Background}

We recall the basics of regular subdivisions. Here we restrict our attention to regular subdivisions of polytopes where no new vertices are introduced. See \cite{DLRS-triang-book} for additional detail.

\begin{definition}
  Let $P \subset \RR^d$ be a polytope with vertices $V$, and let
  $h:V \to \RR$ be a \defn{height vector}.
  For $v \in V$, $v^h:= (v, h(v))$ is the corresponding \defn{lifted vertex},
  which is a point in $\RR^d \times \RR$. The lifted polytope is
  \[P^h:= \conv\{v^h: v \in V\} \subset \RR^d \times \RR.\]
  A \defn{lower face} of $P^h$ is a face which minimizes a linear functional of the form $\langle (x, 1), - \rangle$.
	The \defn{regular subdivision of $P$ induced by $h$} is the collection of polytopes
	\[\{\conv\{V' \subset V\}: \conv\{v^h: v \in V'\} \text{ is a lower facet of }P^h\}.\]
\end{definition}

\begin{theorem}[{\cite[Theorem 2.3.20]{DLRS-triang-book}}]\label{thm:regular-subdiv-conditions}
	Suppose $\{Q_i\}_{i=1}^m$ is a subdivision of $P$ and $h$ is a height vector on the vertices of $P$. Then $h$ induces the subdivision $\{Q_i\}_{i=1}^m$ if the following two conditions hold.
	\begin{enumerate}
		\item For each $i$, the lifted polytope $Q_i^h$ is contained in a hyperplane $H_i$ (the coplanarity condition);
		\item[(2)] For all facets $F=Q_i \cap Q_j$ of the $\{Q_i\}_{i=1}^m$ which are not in the boundary of $P$, there is a vertex $v \in Q_i \setminus F$ such that $v^h$ lies above $H_j$ (the local folding condition).
	\end{enumerate}
\end{theorem}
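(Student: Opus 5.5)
The plan is to build a piecewise-affine function from the hyperplanes $H_i$, show it is convex using the local folding condition, and then identify it with the lower envelope of $P^h$. Throughout, we assume (as in the surrounding context) that every vertex of every $Q_i$ is a vertex of $P$, so $h$ is defined on all of them.

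\emph{Step 1: define $g$.} By the coplanarity condition (1), there is an affine function $a_i\colon \RR^d\to\RR$ whose graph is $H_i$, with $a_i(v)=h(v)$ for every vertex $v$ of $Q_i$. Set $g|_{Q_i}:=a_i$. This is well defined: since $\{Q_i\}$ is a subdivision, $Q_i\cap Q_j$ is a common face, hence the convex hull of common vertices. On those vertices $a_i=h=a_j$, so $a_i=a_j$ on the whole face by affinity. Thus $g\colon P\to\RR$ is continuous and piecewise affine, with $g(v)=h(v)$ at every vertex of $P$.

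\emph{Step 2: local convexity across interior facets.} Let $F=Q_i\cap Q_j$ be an interior facet. The functions $a_i$ and $a_j$ agree on the affine hull $A$ of $F$, so $a_i-a_j$ is an affine function vanishing on the hyperplane $A$. By condition (2), some vertex $v\in Q_i\setminus F$ satisfies $h(v)=a_i(v)>a_j(v)$. Hence $a_i-a_j>0$ on the open half-space containing $Q_i\setminus F$ and $<0$ on the other, which contains $Q_j\setminus F$. Therefore, near any relative-interior point of $F$, $g=\max(a_i,a_j)$, which is convex there. Moreover $a_i\ne a_j$.

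\emph{Step 3: local to global convexity (main obstacle).} Take a segment $[x,y]\subset P$ in general position, so that it meets no face of the subdivision of codimension $\ge 2$. It then crosses finitely many interior facets, each transversally at a relative-interior point. Restricted to the segment, $g$ is piecewise linear, and by Step 2 its slope increases at each crossing, so $g|_{[x,y]}$ is convex. Such generic segments are dense among all segments in $P$ (perturb the endpoints slightly into the interior, then use continuity of $g$). The convexity inequality $g(tx+(1-t)y)\le tg(x)+(1-t)g(y)$ therefore passes to all segments, and $g$ is convex on $P$. The same slope argument shows that the maximal domains of linearity of $g$ are exactly the $Q_i$, since adjacent pieces carry distinct affine functions.

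\emph{Step 4: identify the lower hull.} Let $f$ be the lower envelope of $P^h$, i.e. the largest convex function on $P$ with $f(v)\le h(v)$ at all vertices. Its graph is the union of the lower faces of $P^h$. Since $g$ is convex and $g(v)=h(v)$, we get $g\le f$. Conversely, any $x\in Q_i$ is a convex combination $\sum\lambda_v v$ of vertices of $Q_i$. Convexity of $f$ gives $f(x)\le\sum\lambda_v h(v)=a_i(x)=g(x)$. Hence $f=g$. The lower facets of $P^h$ are then the graphs of $g$ over its maximal linearity domains, which by Step 3 are the $Q_i$. So $h$ induces $\{Q_i\}$.
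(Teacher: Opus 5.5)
Your proof is correct. The paper does not prove this statement itself; it is quoted from De Loera--Rambau--Santos. Your argument is the standard proof of that result: glue the $a_i$ into a continuous piecewise-affine $g$, use folding to get $g=\max(a_i,a_j)$ near each interior wall, globalize convexity along generic segments, and identify $g$ with the lower envelope of $P^h$. So there is no competing route in the paper to compare against.

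A few things you use silently are worth stating explicitly.

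First, replace $\RR^d$ by the affine hull of $P$. In the paper's application, $\Pi\subset\RR^n$ is only $(n-1)$-dimensional. Once you do this, full-dimensionality of $Q_i$ forces $H_i$ to be non-vertical, so $a_i$ exists and ``above $H_j$'' has a meaning.

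Second, a vertex of $P$ lying in $Q_i$ is an extreme point of $Q_i$, hence a vertex of $Q_i$. This is what gives $g(v)=h(v)$ at every vertex of $P$. It also ensures the lower facet over $Q_i$ contains the lifts of exactly the vertices of $Q_i$.

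Third, your claim that the linearity domains of $g$ are exactly the $Q_i$ is cleanest argued as follows.
\begin{itemize}
\item Convexity gives $g\ge a_i$ on all of $P$, so $\{g=a_i\}$ is convex.
\item If it contained some $x\in P\setminus Q_i$, it would contain $\conv(Q_i\cup\{x\})$. That set includes points just beyond a generic relative-interior point of a facet of $Q_i$ whose hyperplane separates $x$ from $Q_i$.
\item That facet is not in $\partial P$, because $x\in P$ lies strictly beyond it. So it equals $Q_i\cap Q_j$ for some $j$.
\item On those points, $g=a_j>a_i$ by your Step 2, a contradiction.
\end{itemize}
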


Here ``lies above" means that $v^h$ can be obtained from a point in $H_j$ by increasing the last coordinate.

\subsection{The height function and statement of regularity}

We will prove regularity of the subdivision in \cref{thm:subdivision-Coxeter} by exhibiting an explicit height function $\hc$ on the vertices of $\Pi$, and then  use \cref{thm:regular-subdiv-conditions} to verify that $\hc$ induces the desired subdivision.

To define the height function $\hc$ on the vertices $W \cdot \pt$ of $\Pi$, we need the notion of rightmost subexpressions.

\begin{definition}\label{def:rmost-sub}
	Let $\bw=s_{i_1} \dots s_{i_r}$ be a word (not necessarily reduced) in the simple transpositions of $S_n$. A \defn{subexpression} for $v$ in $\bw$ is an expression for $v$ of the form $v=s_{i_1}^v \dots s_{i_r}^v$ where $s_{i_j}^v \in \{e, s_{i_j}\}$. The set of indices $j \in [r]$ where $s_{i_j}^v \neq e$ is the \defn{support} of the subexpression. The \defn{rightmost subexpression}\footnote{Also called the \defn{positive distinguished subexpression} for $v$ in the literature.} for $v$ is constructed using a greedy procedure, moving from right to left, as follows: set  $\vl{r+1}=v$. If $\vl{j+1}$ is already determined, then $\vl{j}$ is equal to either $\vl{j+1}$ or $\vl{j+1}s_{i_{j}}$, whichever is smaller in the Bruhat order. In the first case, $s_{i_j}^v=e$; in the second, $s_{i_j}^v=s_{i_j}$. 
\end{definition}

\begin{remark}
	The rightmost subexpression for $v$ in $\bw$ is indeed the ``rightmost" reduced subexpression for $v$; among all subexpressions with support of size $\ell(v)$, it is the subexpression whose support is colexicographically largest\footnote{A subset $\{a_1< a_2 < \dots < a_r\}$ is larger than $\{b_1 < \dots < b_r\}$ in colexicographic order if there is some $m\in [r]$ such that $a_p= b_p$ for $p >m$ and $a_m>b_m$.}.
\end{remark}

We denote by $S_n\pbolic^{i}:=\{w \in S_n: ws_j>w \text{ for } j \neq i\}$
the minimum length coset representatives in
the maximal quotient $S_n / (S_i \times S_{n-i})$ of $S_n$. For $w \in S_n$, denote by
$w\pbolic^{i} \in S_n\pbolic^{i}$ the minimum length coset
representative in $w(S_i \times S_{n-i})$. That is, $w\pbolic^{i}$ is the shortest
permutation such that $w\pbolic^{i}[i]=w[i]$. In words, $w\pbolic^{i}$
is obtained from $w$ by writing $w_1, \dots, w_i$ in increasing order,
followed by $w_{i+1}, \dots, w_n$ in increasing
order; such permutations are usually called \defn{Grassmannian permutations}.

\begin{definition}\label{def:height-from-c}
	Fix a word $\bc$ for the Coxeter element $c$ and let $\bc^N=s_{i_1} \cdots s_{i_r}$, where $N$ is large enough that $\bc^N$ contains a subexpression for $\wo$. We will weight each letter of $\bc^N$ according to which copy of $\bc$ it is in: for $j \in [r]$, 
	\[\wt(j):=\#\{k\in [j+1, r]: s_{i_k}=s_{i_j}\}.\]
	Given $w \in S_n$, we define 
	$\wt(w):= \sum \wt(j)$,
	where the sum is over the support of the rightmost subexpression for $w$ in $\bc^N$. Finally, we define
	\begin{equation}\label{eq:hc-def}
	\hc(w \cdot \pt):= \sum_{i=1}^{n-1} (\pt_{i} - \pt_{i+1}) \wt(w\pbolic^i).
	\end{equation}
\end{definition}

See \cref{ex:height} for an example.  \cref{def:height-from-c} also has a lattice path interpretation, discussed in \cref{prop:lattice-path-wt-def}. Because the different reduced expressions for $c$ differ only by commutation moves, the height $\hc(w \cdot \pt)$ only depends on $c$, rather than on the choice of reduced word $\bc$.

\begin{example}\label{ex:height}
	Let $\bc=s_1s_2s_3$, and $\bc^N=s_1s_2s_3s_1s_2s_3s_1s_2s_3$. The weights of the letters are $222111000$. Consider $w=3412$. According to the table below, $\hc( w \cdot \pt)= (\pt_1 - \pt_2) \cdot 1 + (\pt_2 - \pt_3) \cdot 2 + (\pt_3 - \pt_4) \cdot 0= \pt_1 + \pt_2 - 2 \pt_3. $
	
	\begin{center}
		\renewcommand{\arraystretch}{1.5}
		\begin{tabular}{|c|c|c|c|c|}
			\hline
			$i$ & $I=w[i]$ & $w\pbolic^{i}$ & rightmost subexp. for $w\pbolic^{i}$ in $\bc^N$ & $\wt(w\pbolic^{i})$ \\
			\hline
			$1$ & \{3\} & 3124 & $s_1s_2s_3s_1\boxed{s_2}s_3\boxed{s_1}s_2s_3$ & $1+0=1$ \\
			\hline
			$2$ & \{3,4\} & 3412 & $s_1s_2s_3s_1\boxed{s_2}\boxed{s_3}\boxed{s_1}\boxed{s_2}s_3$ & $1+1+0+0=2$ \\
			\hline
			$3$ & \{1,3,4\}   & 1342 & $s_1s_2s_3s_1s_2s_3s_1\boxed{s_2}\boxed{s_3}$ & $0+0=0$\\
			\hline
		\end{tabular}
	\end{center}
	
\end{example}

The main result of this section is the following.

\begin{theorem}\label{thm:regular-type-A}
  Let $\pt \in \BB$, let $c \in S_n$ be a Coxeter element, and let
  $\hc$ be the height function from \cref{def:height-from-c}. The collection
  \[ \{\Pi_u^{uc}(\pt): uc~\text{length-additive}\} \]
  is a finest regular Bruhat interval subdivision of the permutahedron
  $\Pi(\pt)$, induced by $\hc$.
\end{theorem}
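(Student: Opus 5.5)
By \cref{thm:subdivision-Coxeter} we already know that $\{\Pi_u^{uc}\}$ is a finest Bruhat interval subdivision of $\Pi$. So it remains to show that $\hc$ induces it, and the plan is to verify the two conditions of \cref{thm:regular-subdiv-conditions}.

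\textbf{Linearization.} The height is built from the coordinates $\pt_i-\pt_{i+1}$, so I first rewrite it. A vertex $w\cdot\pt$ determines the chain of subsets $w[1]\subset\cdots\subset w[n-1]$, and we have $w\cdot\pt=\sum_i(\pt_i-\pt_{i+1})e_{w[i]}+\pt_n e_{[n]}$. Therefore $\hc$ is the restriction to vertices of the function
\[\sum_i(\pt_i-\pt_{i+1})\,h_i,\qquad h_i(e_I)=\wt(\text{Grassmannian permutation with } [i]\mapsto I).\]
That is, $\hc$ is a positive combination of heights $h_i$ on the hypersimplices $\Delta(i,n)$, each pulled back along $w\mapsto w\pbolic^i$. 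This is the standard way Minkowski-sum (flag) structure interacts with regular subdivisions. I will use the lattice path interpretation of $\wt$ on Grassmannian permutations (\cref{prop:lattice-path-wt-def}) to write $h_i(e_I)$ as an explicit sum over the letters of the rightmost subexpression.

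\textbf{Coplanarity.} Fix $u$ with $uc$ length-additive, and fix a word $\bc$. The $r=n-1$ edge directions out of $u\cdot\pt$ inside $\Pi_u^{uc}$ are $u\cdot\pt-ut\cdot\pt$, where $t$ ranges over the reflections $t_j=s_{i_1}\cdots s_{i_{j-1}}s_{i_j}s_{i_{j-1}}\cdots s_{i_1}$ coming from the letters of $\bc$. Every vertex $z\cdot\pt$ with $z\in[u,uc]$ is $uc'\cdot\pt$ with $c'$ a subword of $\bc$ (the polytope is toric and combinatorially a cube-like cell of dimension $r$). I will define an affine function $H_u$ by prescribing its value at $u\cdot\pt$ and its increments along these $r$ independent directions, and then check $\hc(z\cdot\pt)=H_u(z\cdot\pt)$ for every $z\in[u,uc]$.

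To do the check I work one Grassmannian projection at a time. The rightmost subexpression of $(uc')\pbolic^i$ in $\bc^N$ should be obtained from that of $u\pbolic^i$ by inserting or shifting letters in the last copies of $\bc$. Because each letter of $\bc$ occurs once per copy, shifting a letter by one copy changes $\wt$ by exactly $1$. This makes each $h_i$ additive over the letters used in $c'$, which is exactly what coplanarity requires. I expect this step to be the main computational work. I would first try to prove it for $u$ Grassmannian-compatible using the lattice-path model, and then handle general $u$ by induction on $\ell(u)$ via left multiplication $s u$.

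\textbf{Local folding.} An interior facet is $F=\Pi_y^z$ with $z\lessdot y c$ or $y\lessdot$ (the two types in \cref{lem:ridge-in-one-or-two-facets}). It is shared by $Q_1=\Pi_y^{yc}$ and $Q_2=\Pi_{zc^{-1}}^{z}$. Choose the vertex of $Q_1$ not on $F$ that is adjacent to $F$, say $v=yc\cdot\pt$. I will show $\hc(v)>H_{zc^{-1}}(v)$. By coplanarity this difference equals a signed sum $\sum_i(\pt_i-\pt_{i+1})\delta_i$, where each $\delta_i$ measures the failure of additivity when one crosses from one copy of $\bc$ to the previous one. I expect every $\delta_i\ge0$, because the rightmost subexpression is forced to move a letter into an earlier, heavier copy, and I expect at least one $\delta_i>0$ (namely for $i$ the index whose hyperplane is crossed by the edge direction normal to $F$). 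Since $\pt\in\BB$, all coefficients $\pt_i-\pt_{i+1}$ are positive, so the difference is strictly positive. I expect this sign analysis, namely making precise which letter shifts copy, to be the main obstacle. My plan there is to reduce it to the rank-two situation of a single Grassmannian projection and compare lattice paths directly.
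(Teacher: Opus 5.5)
There is a genuine gap, and it sits in your coplanarity step. Your overall frame matches the paper: you use \cref{thm:subdivision-Coxeter} for the subdivision, verify coplanarity and local folding via \cref{thm:regular-subdiv-conditions}, and split $\hc$ along the hypersimplex summands. But the coplanarity argument rests on a false structural claim. It is not true that every $z\in[u,uc]$ has the form $uc'$ with $c'$ a subword of $\bc$. Nor is $\Pi_u^{uc}$ cube-like with exactly $r$ edges at $u\cdot\beta$.

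Two counterexamples show this:
\begin{itemize}
\item For $n=3$, $c=s_2s_1$, $u=s_1$, one has $[u,uc]=\{s_1,s_1s_2,s_2s_1,s_1s_2s_1\}$, whereas $\{uc'\}=\{s_1,s_1s_2,e,s_1s_2s_1\}$. So the vertex $s_2s_1\cdot\beta$ is missed.
\item For $n=4$, $c=s_2s_1s_3$, $u=s_1s_3=2143$, the interval $[2143,4231]$ has $1+4+4+1=10$ elements. So $u\cdot\beta$ has four incident edges in a $3$-dimensional cell; the extra atom is $3142=u\cdot(1\,4)$.
\end{itemize}
Hence even a proof that each $h_i$ is ``additive over the letters of $c'$'' would put only some lifted vertices on your hyperplane $H_u$. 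The hard part, controlling all of $[u,uc]$, is untouched, and the induction on $\ell(u)$ is unspecified.

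The missing idea is a local-to-global reduction valid for an arbitrary toric interval. The paper's \cref{prop:ratios-for-coplanarity} shows that the lift of $[p,q]$ is coplanar iff, on every rank-$2$ subinterval, the two parallel edges lift with equal inclines, $\mathrm{inc}(u,t)=\mathrm{inc}(v,t)$. It builds the functional using that the transposition graph of a maximal chain is a forest (this is where toricity enters). It then propagates the functional along flips between maximal chains. The lattice-path model enters through \cref{prop:wt-constant}. An $(a\,b)$-edge at $u$ inside $[w,wc]$ has incline $\mathrm{wt}(\sigma_i)$, where $\sigma_i$ is the content-$[a,b-1]$ sub-ribbon of $\nu^{r_i}_{w[i]/wc[i]}$. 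Here $i$ is any index in $[u^{-1}(a),u^{-1}(b))$, and all these $\sigma_i$ coincide. On a quadrilateral the two index ranges overlap, which gives equal inclines.

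Your local-folding step is stated only as expectations ($\delta_i\ge 0$, some $\delta_i>0$). That strict inequality is the entire content of the condition, and you give no mechanism for it. Here is what actually happens. For $u\lessdot w=(a\,b)u\le uc\lessdot wc$ the condition is $\mathrm{inc}(u,(a\,b))<\lambda^w_a-\lambda^w_b$. The left side is $\mathrm{wt}(\sigma)$ for the content-$[a,b-1]$ ribbon in $\nu^{r_i}_{u[i]/uc[i]}$. The right side is $\mathrm{wt}(\mu)$ for the corresponding ribbon in $\nu^{r_i}_{w[i]/wc[i]}$. The key point is that $\mu$ is the $(-1,1)$-translate of $\sigma$, so the difference is exactly $b-a>0$.

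Seeing this needs an index $i$ with both $u[i]\neq w[i]$ and $uc[i]\neq wc[i]$, and such an $i$ can fail to exist. Writing $x=u^{-1}(a)$, $y=u^{-1}(b)$, one such case is $c^{-1}(x)>x$, $c^{-1}(y)>y$ and $y=c^{-1}(x)$, and there is a mirror case. These cases need a separate comparison of the paths $L^{r_{y-1}}_{u[y-1]}$ and $L^{r_y}_{w[y]}$. Your heuristic about letters moving to ``an earlier, heavier copy'' identifies neither the translate structure nor this exceptional case, so local folding is currently unproved.
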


\begin{remark} 
	Recall that the \defn{hypersimplex} $\Delta_{i,n}$ is the polytope $\conv(W \cdot e_{[i]}) = \conv\{e_I : I \in \binom{[n]}{i}\}$. The permutahedron $\Pi$ is a Minkowski sum of dilated hypersimplices
	\[\Pi(\pt) = (\pt_1 - \pt_2) \Delta_{1,n}+ (\pt_2 - \pt_3) \Delta_{2,n}+ \dots +(\pt_{n-1} - \pt_n) \Delta_{n-1, n} + \pt_n \Delta_{n,n} \]
	and in particular each vertex decomposes uniquely as a sum of hypersimplex vertices
	\[w \cdot \pt =(\pt_1 - \pt_2) e_{w[1]}+ (\pt_2 - \pt_3) e_{w[2]}+ \dots +(\pt_{n-1} - \pt_n) e_{w[n-1]}+ \pt_n e_{w[n]}. \]
One can interpret \cref{def:height-from-c} as defining a height function on vertices of hypersimplices $\Delta_{i,n}$, where the height of $e_I=e_{w[i]}$ is $\wt(w\pbolic^{i})$. (And the height function on $\Delta_{n,n} = e_{[n]}$ is zero.) Then $\hc$ is the height function on the vertices of $\Pi$ induced by writing each vertex as a sum of hypersimplex vertices and then taking the sum of heights.
\end{remark}

\begin{remark}
  \cref{def:height-from-c} has an obvious analog for arbitrary Weyl
  groups. However, the analogous height function $\hc$ does not induce
  the subdivision
  \[ \{\Pi_u^{uc}(\pt): uc~\text{length-additive}\} \]
  in other types. For example, in type $D$, to obtain this subdivision
  one must adjust the height function on the type D hypersimplices
  $\conv(W \cdot \omega_i)$ corresponding to non-minuscule nodes of
  the Dynkin diagram.
\end{remark}

\subsection{Coplanarity}

As in the previous section, we have fixed $\pt \in \BB$. We now fix a Coxeter element $c \in S_n$ and in so doing fix the height vector $\hc$ as defined in \cref{def:height-from-c}. In this section, we show that each Bruhat interval polytope in $\{\Pi_{u}^{uc}: uc \text{ length-additive}\}$ lifts to a plane, verifying the coplanarity condition of \cref{thm:regular-subdiv-conditions}.

To ease notation, we write $\hc(v)$ for $\hc(v \cdot \pt)$.  If $\lin: \RR^{n+1} \to \RR$ is a linear functional, we write $\lin_{a}:=\lin(e_a)$ for the image of the standard basis vector $e_a$. The following proposition is the main result of this subsection.

\begin{proposition}\label{prop:good-bip-lifted-to-plane}
	Suppose $w\in S_n$ and $wc$ is length-additive. Then there is a linear functional $\lin: \RR^{n+1} \to \RR$ which is constant on $([w,wc] \cdot \pt)^{\hc}$ and has $\lin_{n+1}=1$.
\end{proposition}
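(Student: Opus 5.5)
Since $\hc$ is a sum over $i$ of $(\pt_i-\pt_{i+1})$ times a height on hypersimplex vertices, and $v\cdot\pt=\sum_i (\pt_i-\pt_{i+1})e_{v[i]}+\pt_n e_{[n]}$, I will reduce to the hypersimplex level. It suffices to show that for each $i\in[n-1]$ there is a linear functional $\mu^{(i)}$ on $\RR^n$ with
\[ \wt\big(v\pbolic^i\big) = \mu^{(i)}(e_{v[i]}) \quad\text{for all } v\in[w,wc]. \]
Given this, I set $\lin := -\sum_i (\pt_i-\pt_{i+1})\mu^{(i)} \oplus e^*_{n+1}$, suitably normalized. Then $\lin((v\cdot\pt)^{\hc}) = -\sum(\pt_i-\pt_{i+1})\mu^{(i)}(e_{v[i]}) + c_0 + \hc(v)$, where the constant $c_0$ comes from $\pt_n e_{[n]}$. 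This expression is constant on $[w,wc]$, and $\lin_{n+1}=1$. So the statement reduces to a separate ``coplanarity'' claim for each hypersimplex $\Delta_{i,n}$, restricted to the vertex set $\{e_{v[i]}: v\in[w,wc]\}$.

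For fixed $i$, I will describe the set $\{v[i] : v\in[w,wc]\}$. Since $c$ is Coxeter and $wc$ is length-additive, the words $\mathbf w\bc$ are reduced, and $v\in[w,wc]$ ranges over elements $w'c'$ with $w'\le w$ and $c'$ a subword of $\bc$ (and more). Projecting to $S_n/(S_i\times S_{n-i})$, the sets $v[i]$ should form the vertex set of a Bruhat interval in the Grassmannian, and because $c$ uses each letter once this interval is a ``ribbon'' or lattice path region. Concretely, I expect the elements $v\pbolic^i$ to lie in $[w\pbolic^i, (wc)\pbolic^i]$, and the sets $v[i]$ to differ from $w[i]$ by a chain of adjacent transpositions $a\leftrightarrow a+1$ along a single path. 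I will use the lattice path interpretation (the forthcoming \cref{prop:lattice-path-wt-def}) to write $\wt(u)$ for Grassmannian $u$ as a sum over the boxes of the partition of $u$ of a box-weight. Each box $(p,q)$ corresponds to the letter $s_{q-p+i}$ in a particular copy of $\bc$, and its weight is essentially $N-1$ minus the index of that copy. I expect this index to be an affine function of $p,q$ determined by the orientation of $c$ (whether $s_j$ precedes $s_{j+1}$ in $\bc$ or not).

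The key step is then to show that on the relevant set of partitions (those $\lambda$ with $\lambda_{w}\subseteq\lambda\subseteq\lambda_{wc}$ arising here), $\sum_{\text{box}\in\lambda}\wt(\text{box})$ is an affine function of the indicator vector $e_I$. Writing $I=\{a_1<\dots<a_i\}$ and $\lambda_k=a_{i+1-k}-(i+1-k)$, the weight is a sum over rows of partial sums of the box weights. The difference between the partitions in this family should consist only of boxes along a single diagonal strip (a rim hook) determined by $c$. On that strip, the box weights are forced to depend on the boxes only through which elements are added to or removed from $I$, which gives linearity in $e_I$. I expect this verification to be the main obstacle, because it requires tracking the rightmost subexpression in $\bc^N$ carefully. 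My first attempt will be induction on $\ell(w)$. In the base case $w=e$, the interval $[e,c]$ consists of the subwords of $\bc$, and $\wt$ is visibly additive over the letters used, so a functional can be written down directly. For the inductive step, replacing $w$ by $sw$ with $sw>w$ acts on $\lin$ by the reflection $s$ if the rightmost subexpressions shift uniformly by a copy of $\bc$, and I will check this shift property through the lattice path description.
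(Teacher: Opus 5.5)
Your reduction in the first paragraph does not hold, and it skips the real content of the proposition. A linear functional $\lambda$ on $\RR^n$ satisfies $\lambda(v\cdot\beta)=\sum_i(\beta_i-\beta_{i+1})\lambda(e_{v[i]})+\beta_n\lambda(e_{[n]})$, so it acts on every hypersimplex summand by the \emph{same} functional. With your choice $\lambda=-\sum_i(\beta_i-\beta_{i+1})\mu^{(i)}$ you get a double sum containing terms $\mu^{(i)}(e_{v[i']})$ with $i\neq i'$. Your per-level hypothesis does not control these, so your formula for $\lambda((v\cdot\beta)^{h^c})$ is wrong. Separate functionals $\mu^{(i)}$ only make the lifted points coplanar in $\prod_i\RR^n$, and that does not descend along the Minkowski-sum map. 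What you must show is that a \emph{single} $\mu$ makes $\wt(v^{<i>})-\mu(e_{v[i]})$ independent of $v\in[w,wc]$ for all $i$ at once.

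This compatibility across levels is subtle, and nothing in your plan addresses it. Take $c=s_1s_2s_3=2341$, $w=1423$, $wc=4231$. Then every $r_i=(0,0)$, and $\mathrm{rib}_c$ is the column of boxes of contents $1,2,3$ just left of the $y$-axis.
\begin{itemize}
\item The content-$2$ box of $\nu_{w[1]/wc[1]}=\nu_{\{1\}/\{4\}}$ has weight $1$.
\item The content-$2$ box of $\nu_{w[3]/wc[3]}=\nu_{\{1,2,4\}/\{2,3,4\}}$ has weight $0$.
\end{itemize}
So the level-$1$ functional obtained by giving $e_{k+1}-e_k$ the weight of the content-$k$ box, $\mu^{(1)}=(0,0,1,3)$, is incompatible with level $3$, which forces $\mu(e_3-e_2)=0$. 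A common functional, e.g.\ $\mu=(0,0,0,3)$, exists only because $e_3-e_2$ is not a realized difference at level $1$, where $v_1\in\{1,2,4\}$. So compatibility can only be checked against the edges actually present in $[w,wc]$.

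The paper does exactly this. For an edge $u<(a\,b)u$ of $[w,wc]$, \cref{prop:wt-constant} shows that the sub-ribbon of content $[a,b-1]$ is the same set of boxes at every level $i\in[u^{-1}_a,u^{-1}_b)$; the proof compares consecutive levels via the corners $r_i$. Hence the incline $\mathrm{inc}(u,(a\,b))$ is well defined. \cref{prop:ratios-for-coplanarity} then reduces everything to parallel edges of square faces, whose level ranges overlap.

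By contrast, the per-level affineness you call the main obstacle is immediate. By \cref{lem:w-wc-bound-ribbon} each $\nu^{r_i}_{w[i]/wc[i]}$ has distinct contents, and $e_J-e_I=\sum_{b\in\nu_{I/J}}(e_{\mathrm{cont}(b)+1}-e_{\mathrm{cont}(b)})$. Your proposed induction on $\ell(w)$ via a ``uniform shift by a copy of $\mathbf{c}$'' is also unsupported. Left multiplication by $s_j$ toggles at most one content-$j$ box per level, at a position depending on $v$ and $i$; it does not translate the shapes.
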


We will need the characterization of length 2 intervals in Bruhat order \cite{BGG73}.

\begin{lemma}\label{lem:bruhat-length-2-facts}
  Suppose $u \leq v$ and $\ell(v)-\ell(u) =2$. Then $[u,v]$ is a
  diamond and there is a transposition $t$ such that the open Bruhat
  interval $(u,v)$ is $\{tu, tv\}$. The Hasse diagram of the closed
  interval is as below.
		\[\begin{tikzcd}
		& {v} \\
		{tu} && {tv} \\
		& u
		\arrow[no head, from=1-2, to=2-3]
		\arrow[no head, from=2-1, to=1-2]
		\arrow[no head, from=2-3, to=3-2]
		\arrow[no head, from=3-2, to=2-1]
	\end{tikzcd}\]
\end{lemma}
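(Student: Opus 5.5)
This is the classical result of \cite{BGG73}, and in our setting it can simply be quoted. If one wants a self-contained argument, I would split it into two steps: first show that the open interval $(u,v)$ has exactly two elements, and then find the transposition $t$.

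\emph{Step 1 (two middle elements).} I would induct on $\ell(v)$, using the lifting property of Bruhat order. Choose a simple reflection $s$ with $sv<v$. There are two cases.
\begin{itemize}
\item If $su>u$, the lifting property gives $u\le sv$ and $su\le v$. Both $su$ and $sv$ then lie in $(u,v)$. Any other $x$ in the open interval satisfies $x\le v$ and $x\gtrdot u$. Comparing $sx$ with $x$ and lifting again should force $x\in\{su,sv\}$.
\item If $su<u$, left multiplication by $s$ gives a bijection between $(u,v)$ and $(su,sv)$. To check this I would use the standard fact that, when $s$ is a common left descent, $x\mapsto sx$ and its inverse preserve the relevant inequalities via the lifting property. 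The interval $[su,sv]$ has length $2$ with smaller top element, so induction applies. This also shows the middle elements are nonempty, since a chain of length $2$ always exists by the chain property of Bruhat order.
\end{itemize}

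\emph{Step 2 (the transposition).} In the first case of Step 1, $s$ itself serves as $t$: the middle elements are $su$ and $sv$. In the second case, induction gives a transposition $t'$ with $(su,sv)=\{t'su,\,t'sv\}$. Applying $s$, the middle elements of $[u,v]$ are $st'su$ and $st'sv$. Setting $t:=st's$, which is again a transposition, these become $tu$ and $tv$, as required. Drawing the Hasse diagram is then immediate, since both middle elements cover $u$ and are covered by $v$.

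The main obstacle is the exhaustiveness claim in the first case of Step 1: showing no third element $x$ can sit between $u$ and $v$. I would handle it by splitting on whether $sx<x$ or $sx>x$. If $sx<x$, lifting applied to $u\le x$ gives $u\le sx$; since $\ell(sx)=\ell(u)$, this forces $x=su$. If $sx>x$, lifting applied to $x\le v$ gives $x\le sv$; since $\ell(x)=\ell(sv)$, this forces $x=sv$.

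Alternatively, I could invoke Dyer's theorem that a length-$2$ interval lies in a coset of a dihedral reflection subgroup, where the statement can be checked by hand. Since this section only needs $W=S_n$, one could also verify it directly with the tableau criterion, but the lifting-property induction above is uniform in type and is the route I would take.
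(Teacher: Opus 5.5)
Quoting \cite{BGG73} is exactly what the paper does; it gives no proof. Your self-contained argument, however, has a genuine gap in Case~2.

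Your Case~1, including the exhaustiveness argument, is correct. In Case~2 ($su<u$, $sv<v$), left multiplication by $s$ is \emph{not} in general a bijection $(u,v)\to(su,sv)$. The Z-property only transports elements $x$ with $sx<x$, and $x=sv$ has $s\cdot sv=v>sv$. Whenever $u\le sv$, the element $sv$ lies in $(u,v)$ and is sent to $v\notin(su,sv)$, while $u\in(su,sv)$ is never hit.

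Concretely, in $S_3$ take $u=s_1$, $v=s_1s_2s_1$, $s=s_1$ (forced if you pick $s=s_1$). Then $(u,v)=\{s_1s_2,s_2s_1\}$, but $s\cdot(u,v)=\{s_2,s_1s_2s_1\}\neq(su,sv)=\{s_1,s_2\}$. Your Step~2 formula fails here too: induction gives $t'=s_2$, and $t=st's=s_1s_2s_1$ yields $tv=e$. The correct reflection is $t=s_2$.

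The count can be repaired by splitting on whether $u\le sv$. If $u\not\le sv$, your bijection and $t=st's$ are fine. If $u\le sv$, one checks that $(su,sv)=\{u,y\}$ with $sy>y$, and that $(u,v)=\{sv,sy\}$.

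The transposition, though, needs a new idea. The inductive $t'$ must satisfy $t'su=y$ and $t'sv=u$; the other labeling would force $t'=s$ and hence $t'sv=v\neq y$. So the only candidate is $t=t'$. Then $t'u=sv$, and $t'v=t'st'u$ equals $sy=st'su$ if and only if $t'st'=st's$.

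In $S_n$ this braid relation holds. The elements $s$ and $t'$ are transpositions that cannot commute, since commuting would give $y=t'st'sv=v$. So they share exactly one letter and satisfy $st's=t'st'$.

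This step is type-specific, so your claim that the lifting-property induction is ``uniform in type'' cannot be right. The transposition clause is false in $B_2$: the interval $[s_1,s_1s_2s_1]$ has middle elements $s_1s_2$ and $s_2s_1$. The only reflections sending $u$ into the middle are $s_2$ and $s_1s_2s_1$, and these send $v$ to $w_0$ and $e$ respectively. The same kind of failure occurs in any dihedral piece with $m\ge 4$, so your Dyer alternative also only works because reflection subgroups of $S_n$ are of type $A_1\times A_1$ or $A_2$. Only the statement that a length-$2$ interval has exactly two middle elements is uniform in type.
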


For $\hgt:S_n \cdot \pt \to \RR$ a height function, $v \in S_n$ and $t=(a~b)$ a transposition with $a<b$, we use the notation
\begin{equation}\label{eq:incline}
	\steep(v, t):=\frac{\hgt(t v)-\hgt(v)}{\pt_{v^{-1}(a)}-\pt_{v^{-1}(b)}}.
\end{equation}
This quantity measures the ``incline" of the line segment between $(v \cdot \pt)^\hgt$ and $(tv \cdot \pt)^\hgt$, which is parallel to 
\[e_a -e_b + \frac{\hgt(t v)-\hgt(v)}{\pt_{v^{-1}(a)}-\pt_{v^{-1}(b)}}  e_{n+1}.\]

Using inclines, we formulate a necessary and sufficient condition for \cref{prop:good-bip-lifted-to-plane} to hold.

\begin{proposition}\label{prop:ratios-for-coplanarity}
	Fix a height function $\hgt:S_n \to \RR$.
	\begin{itemize}
		\item 	 Let $v \in S_n$ and $t=(a~b)$ be a transposition with $a<b$. A linear functional $\lin: \RR^n \times \RR \to \RR$ with $\lin_{n+1} \neq 0$ takes the same value on $(v \cdot \pt)^\hgt$ and $(tv \cdot \pt)^\hgt$ if and only if
		\[\steep(v,t)= \frac{\lin_a-\lin_b}{\lin_{n+1}}.\]
		\item 	If $X_p^q$ is toric, then there exists a linear functional $\lambda: \RR^n \times \RR \to \RR$ with $\lin_{n+1} =1$ which is constant on $([p,q] \cdot \pt)^{\hgt}$ if and only if for all subintervals of rank 2
				\[\begin{tikzcd}
			& {tv} \\
			{tu} && v \\
			& u
			\arrow[no head, from=1-2, to=2-3]
			\arrow[no head, from=2-1, to=1-2]
			\arrow[no head, from=2-3, to=3-2]
			\arrow[no head, from=3-2, to=2-1]
		\end{tikzcd}\]
	 we have
		\begin{equation}\label{eq:ratios-for-lifts-coplanar}
			\steep(v,t) = \steep(u,t).
		\end{equation}
	\end{itemize}
\end{proposition}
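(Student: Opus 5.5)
For the first bullet I would just compute. Write $\pt' := v\cdot \pt$, so $(v\cdot\pt)_k = \pt_{v^{-1}(k)}$. Since $t=(a~b)$ acts by swapping coordinates $a$ and $b$, the vertex $tv\cdot \pt$ is obtained from $v\cdot\pt$ by exchanging its $a$th and $b$th entries. Hence
\[ (tv\cdot\pt)^\hgt - (v\cdot\pt)^\hgt = \bigl(\pt_{v^{-1}(b)}-\pt_{v^{-1}(a)}\bigr)(e_a-e_b) + \bigl(\hgt(tv)-\hgt(v)\bigr)e_{n+1}. \]
Applying $\lin$ and setting the result to zero gives $(\pt_{v^{-1}(b)}-\pt_{v^{-1}(a)})(\lin_a-\lin_b)+(\hgt(tv)-\hgt(v))\lin_{n+1}=0$. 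Because $\pt$ is strictly decreasing and $v^{-1}(a)\neq v^{-1}(b)$, the denominator $\pt_{v^{-1}(a)}-\pt_{v^{-1}(b)}$ is nonzero, so we may divide by it and by $\lin_{n+1}$ to get exactly $\steep(v,t)=(\lin_a-\lin_b)/\lin_{n+1}$. Every step is reversible, which gives the iff.

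For the second bullet I would work with the graph of the polytope. The forward direction is immediate from the first bullet: both pairs $(u,tu)$ and $(v,tv)$ are edges of the square face with the same transposition $t$, so both inclines equal $(\lin_a-\lin_b)/1$.

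For the converse, set $P=\Pi_p^q$. Since $X_p^q$ is toric, \cref{lem:toric-BIP-facts} tells us that faces of $P$ correspond to subintervals, and edges correspond to covering relations $x\lessdot tx$ in $[p,q]$. Each such edge is parallel to $e_a-e_b$, where $t=(a~b)$. I would then assign to each edge the number $\steep(x,t)$ (note that $\steep(tx,t)=\steep(x,t)$, by symmetry of the definition). The goal is a vector $\mu\in\RR^n$ with $\mu_a-\mu_b=\steep(x,t)$ for every edge; then $\lin=(\mu,1)$ works, because by the first bullet $\lin$ is then constant along each edge, and the graph of $P$ is connected. Equivalently, define a function $g$ on vertices by $g(x)=\hgt(x)-\langle\mu,x\cdot\pt\rangle$ and ask that it be constant.

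A cleaner way to phrase this is as follows. The hypothesis says that the lifted points are ``locally affine''. I want to show that the piecewise data define an affine function $f$ on $P$ with $f(x\cdot\pt)=\hgt(x)$. Set $f(x\cdot\pt)=\hgt(x)$ and try to extend linearly along edges. An affine function on $P$ is determined by its value at one vertex together with a linear map on the edge directions. The consistency condition for this is that, around every $2$-face, the increments sum to zero and the edges that are parallel on that face get the same slope.

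Here is the key reduction. Every $2$-face of $P$ corresponds to a rank-$2$ subinterval, and by \cref{lem:bruhat-length-2-facts} it is a quadrilateral whose opposite edges are parallel, labelled by the same $t$. The hypothesis \eqref{eq:ratios-for-lifts-coplanar} says exactly that each square lifts to a planar quadrilateral. Now it is a standard fact that if every $2$-face of a polytope lifts to a plane, then the whole lift is coplanar. The reason is that the cycle space of the polytope graph is generated by boundaries of $2$-faces, since the $2$-skeleton of a polytope is simply connected. Therefore the incline data define a consistent closed $1$-cochain, which integrates to an affine function $f$. Finally, I take $\mu$ to be the linear part of $f$ with its sign flipped, matching $\lin_{n+1}=1$.

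The main obstacle is this last step. I have to justify that edges of $P$ span its affine hull in a way compatible with the face structure, so that the slopes determine a well-defined linear functional on the direction space of $P$, and not merely something consistent on cycles. I would handle it by integrating $f$ over the graph (well defined by simple connectivity of the $2$-skeleton) and then checking that $f$ is affine. Affineness holds on each edge and each $2$-face by construction, and by induction on face dimension a function that is affine on every facet of a polytope and continuous there is affine on the whole polytope, since the facets share ridges that span the affine hull.
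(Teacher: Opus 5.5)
Your first bullet and the forward direction of the second bullet are fine. The gap is in the converse. The ``standard fact'' that a lift which is planar on every $2$-face of a polytope is planar globally is false in general, and neither of your justifications proves it.

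Simple connectivity of the $2$-skeleton is beside the point. $\hgt$ is already a function on vertices, so its edge differences are trivially exact on the graph. What you need is one $\mu\in\RR^n$ with $\mu_a-\mu_b=\steep(x,t)$ on every edge. That requirement is constrained by all linear relations among the edge directions $e_a-e_b$, not just by graph cycles: for instance parallel edges sharing no $2$-face, or relations among several roots at one vertex.

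Likewise, ``affine on each facet $\Rightarrow$ affine'' needs the extensions across adjacent facets to be mutually consistent. This is automatic at simple vertices but fails in general. Two counterexamples:
\begin{itemize}
\item On the octahedron, height $1$ at one vertex and $0$ elsewhere is planar on every triangular face but not affine.
\item On a $3$-dimensional zonotope with generic generators $z_1,\dots,z_4$, put height $\sum_{i\in S}s_i$ at the vertex $\sum_{i\in S}z_i$, with $s\in\RR^4$ outside the row space of $(z_1\,\cdots\,z_4)$. Every parallelogram face lifts to a plane, but the lift is not affine.
\end{itemize}
Nor can you restrict to simple polytopes. $X_{s_2}^{s_2s_1s_3s_2}$ in type $A_3$ is toric, since its Bruhat interval polytope is $3$-dimensional. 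Yet $s_2\cdot\pt$ has four neighbours: $s_1s_2,\ s_3s_2,\ s_2s_1,\ s_2s_3$. (Separately, only one pair of opposite edges of a $2$-face need be parallel, e.g.\ in $[e,s_1s_2]$; that point is harmless.)

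So you need an input specific to $[p,q]$, and this is where the paper's argument lives.
\begin{enumerate}
\item Fix a maximal chain $p=p_1\lessdot\cdots\lessdot p_{r+1}=q$ with $t_i=(a_i~b_i)=p_{i+1}p_i^{-1}$.
\item Toricity gives, by \cite[Proposition 4.12]{TW15}, that the pairs $\{a_i,b_i\}$ form a forest on $[n]$. So you can solve $\lin_{a_i}-\lin_{b_i}=\steep(p_i,t_i)$ with $\lin_{n+1}=1$, which makes $\lin$ constant on the lifted chain.
\item The first bullet plus the hypothesis let you replace the middle element of any length-$2$ segment of the chain while keeping $\lin$ constant.
\item Any two maximal chains of $[p,q]$ are related by such replacements, because the order complex of $(p,q)$ is a PL sphere \cite{BW82} and hence connected in codimension one. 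Every element lies on some maximal chain.
\end{enumerate}
Geometrically, a maximal chain is an edge path whose $\dim P$ directions form a basis of the direction space, so it pins $\lin$ down globally. The diamond flips then propagate it to every vertex, and this is what replaces your ``standard fact''.
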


\begin{remark}
	For $X_p^q$ toric, rank 2 sub-intervals of $[p,q]$ correspond exactly to dimension 2 faces of $\Pi_p^q$. The edge between vertices 
	$v \cdot \pt$ and $tv \cdot \pt$ is parallel to the root $\alpha_t$, as is the edge between $u \cdot \pt$ and $tu \cdot \pt$. When we lift these vertices, these edges lift to line segments parallel to 
	\[\alpha_t + \steep(v,t)  e_{n+1} \qquad \text{and}\qquad \alpha_t + \steep(u,t)  e_{n+1},\]
	respectively. 
	So Condition \eqref{eq:ratios-for-lifts-coplanar} in \cref{prop:ratios-for-coplanarity} is the same as saying ``for each quadrilateral face of $\Pi_p^q$, the parallel edges lift to parallel line segments."
\end{remark}

\begin{proof}[Proof of \cref{prop:ratios-for-coplanarity}]
	For the first item, recall that $v \cdot \pt = (\pt_{v^{-1}(1)}, \dots, \pt_{v^{-1}(n)})$ and $tv \cdot \pt$ is obtained from $v \cdot \pt$ by swapping the $a$th and $b$th coordinates. Consider the ray from the lifted point $(v \cdot \pt)^\hgt$ to $(t v \cdot \pt)^\hgt$. We have
	\[\langle \lin, (\pt_{v^{-1}(a)}-\pt_{v^{-1}(b)}) (e_a-e_b) + (\hgt(v) - \hgt({tv}))e_{n+1} \rangle= (\pt_{v^{-1}(a)}-\pt_{v^{-1}(b)}) (\lin_a - \lin_b)+(\hgt(v) - \hgt({tv})) \lin_{n+1}. \]
	The above equation is equal to zero if and only if $\steep(v,t)=({\lin_a-\lin_b})/{\lin_{n+1}}$.
	
	For the second item, if such a linear functional $\lin$ exists, the first item implies
	\[\steep(v,t)=\frac{\lin_a -\lin_b}{\lin_{n+1}}=\steep(u,t) \]
any time $v, tv, u, tv$ are all in the interval $[p,q]$. So \eqref{eq:ratios-for-lifts-coplanar} holds. So we just need to show that \eqref{eq:ratios-for-lifts-coplanar} implies the existence of $\lin$.
	
	Fix a maximal chain $\mcc=p=p_1 \lessdot p_2 \lessdot \cdots \lessdot p_{r+1} =q$, and let $t_i=(a_i ~ b_i) :=p_{i+1} p_i^{-1}$. In light of the first item, we would like to choose a vector $\lin \in (\RR^{n+1})^*$ with $\lin_{n+1} =1$ such that for all $i=1, \dots r$, 
	\begin{equation}\label{eq:ratio-along-chain}
			\lin_{a_i}-\lin_{b_i}
		= \steep(p_i, t_i).
	\end{equation}
	This is indeed possible. By \cite[Proposition 4.12]{TW15}, since $X_p^q$ is toric, the graph $G_\mcc$ on $[n]$ with edges $\{\{a_i, b_i\}: i=1, \dots, r\}$ is a forest. We will define $\lin$ one connected component of $G_\mcc$ at a time. First, set $\lin_{n+1}=1$. Next, choose a connected component of $G_\mcc$, choose a root $k$, and choose any value for $\lin_k$. Now, for any non-root vertex $k'$ of this component, the condition \eqref{eq:ratio-along-chain} uniquely determines the value of $\lin_{k'}$. One can see this by traversing the vertices in e.g. depth-first-search order; when one reaches $k'$, the value $\lin_m$ for the parent $m$ of $k'$ has already been determined, so \eqref{eq:ratio-along-chain} can be solved for $\lin_{k'}$. 
	
	By construction, $\lin$ is constant on the lift of $\mcc \cdot \pt$. 
	We now will show that in fact $\lin$ is constant on the lift of every maximal chain in $[p,q]$, and thus on $([p,q] \cdot \pt)^\hgt$.
	
	First, if $\ell(q)-\ell(p)=1$, there is nothing to prove. If $\ell(q)-\ell(p)=2$, then by \cref{lem:bruhat-length-2-facts}, we have 
			\[\begin{tikzcd}
		& {q}=tx \\
		{tp} && {tq}=x \\
		& p
		\arrow[no head, from=1-2, to=2-3]
		\arrow[no head, from=2-1, to=1-2]
		\arrow[no head, from=2-3, to=3-2]
		\arrow[no head, from=3-2, to=2-1]
	\end{tikzcd}\]
	Suppose $\mcc$ contains $tp$ (the other case is identical). We have
		\[\frac{\lin_a - \lin_b}{\lin_{n+1}} = \steep(p, t)=\steep(x,t)\]
		where the first equality follows from \eqref{eq:ratio-along-chain} and the second is assumption \eqref{eq:ratios-for-lifts-coplanar}. By the first item, this implies that $\lin$ takes the same value on $(x \cdot \pt)^\hgt$ as on $(q \cdot \pt)^\hgt$ and so takes the same value on the entire lifted interval, as desired.
	
	If $\ell(q)-\ell(p)>2$, the previous paragraph implies that if $\lin$ is constant on $(\mcc' \cdot \pt)^\hgt$ for a maximal chain $\mcc' \subset [p,q]$, and $\mcc'' \subset [p,q]$ is a maximal chain which differs from $\mcc'$ in precisely one element, then $\lin$ is also constant on $(\mcc'' \cdot \pt)^\hgt$. The order complex of $(p,q)$ is a PL-sphere of dimension at least 1 \cite{BW82}, and is thus connected in codimension 1. This means there is a sequence $\mcc=\mcc_1, \dots, \mcc_N$ containing all maximal chains of $[p,q]$ such that adjacent chains in the sequence differ by exactly one element. All elements of $[p,q]$ are in some maximal chain, so the result follows.
\end{proof}

In order to use \cref{prop:ratios-for-coplanarity} for $\Pi_w^{wc}$, we will translate the definition of the height function $\hc$ into lattice paths. We first need some notation relating lattice paths and reduced expressions.

Let $\grid_n$ denote the collection of boxes in the integer grid which lie between the lines $y=-x$ and $y=-x-n$. The points of $\grid_{n}$ that lie on the line $y=-x$ are the \defn{upper right corners} of $\grid_n$, and those that lie on $y=-x-n$ are the \defn{lower left corners}. The \defn{content} of a box in $\grid_n$ is the positive integer $i$ such that the line $y=-x-i$ passes through the interior of the box.
Content increases as one travels down or to the left in $\grid_n$, and is constant as one travels parallel to $y=-x$.

\begin{figure}
	\centering
	\includegraphics[width=0.5\linewidth]{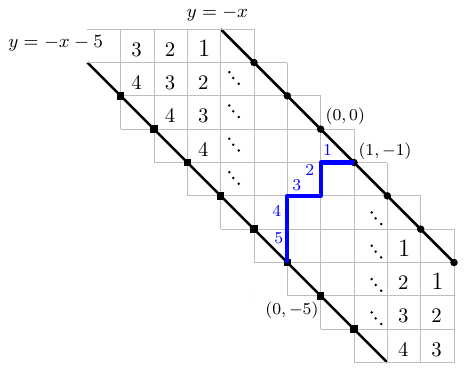}
	\caption{$\grid_5$, with upper right corners marked with dots, and lower left corners marked with diamonds. The numbers inside boxes indicate their content. In bold, the path $\path_{I}^{(1,-1)}$ for $I=\{2,4,5\}$.}
	\label{fig:grid-with-content}
\end{figure}

A \defn{lattice path} in $\grid_n$ is a path which starts at an upper right corner of $\grid_n$, takes unit length steps down or to the left, and ends at a lower left corner of $\grid_n$. We label the steps of the path with $1, 2, \dots, n$ so labels increase from beginning to end. If $I \subset [n]$ and $r$ is an upper right corner of $\grid_n$, we write $\path_I^r$ for the lattice path starting at $r$ with vertical steps $I$. We write $\path_I$ for $\path_I^{(0,0)}$. We note that if $i \in I$, step $i$ of $\path_I$ is immediately to the right of a box with content $i$; if $i \notin I$, step $i$ is immediately below a box with content $i-1$.

Recall that if $I=\{i_1 < i_2< \cdots < i_k\} \subset [n]$ and $J=\{j_1 < j_2< \cdots < j_k\} \subset [n]$, then $I \leq J$ in the \defn{Gale order} if $i_p \leq j_p$ for $p =1, \dots, k$.

\begin{definition}\label{def:skew-shape-btw-paths}
	If $I,J \subset[n]$ have size $k$ and $I\leq J$ in the Gale order, the skew shape $\skew_{I/J}^r$ consists of the boxes in $\grid_n$ that lie above $\path_{I}^r$ and below $\path_{J}^r$. 
	We also write $\skew_{J}^r$ for $\skew_{[k]/J}^r$.  
	We call $\path_I^r$ the \defn{bottom boundary} of $\skew_{I/J}^r$, and $\path_J^r$ the \defn{top boundary}.
	
	Let $c \in S_n$ be a Coxeter element. Let $I:= \{i\in [n]: i<c(i)\}$ be the excedance set of $c$ and define $J = (I \setminus \{1\}) \cup \{n\}$. The \defn{ribbon strip} for $c$ is $\rib_c:=\skew_{I/J}^{(0,0)}$.

\end{definition}

See \cref{fig:ribbon-ex} for an example of $\rib_c$.

\begin{figure}
	\centering
	\includegraphics[width=0.9\linewidth]{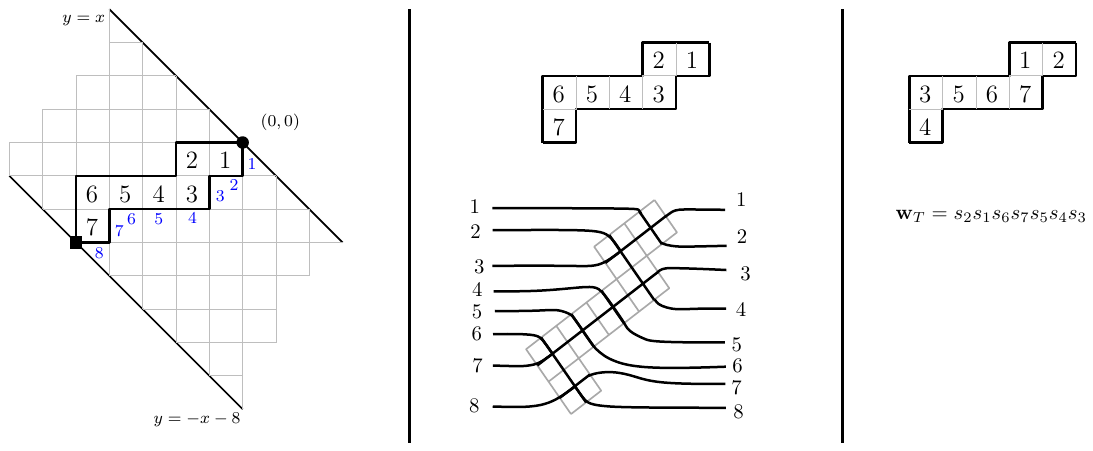}
	\caption{Left: The ribbon strip $\rib_c$ for $c=s_6 s_5 s_4 s_2 s_7 s_3 s_1= 31724586$ in $\grid_8$. The excedance set of $c$ is $I=\{1,3,7\}$, and $\path_I$ is the lower boundary of $\rib_c$. Boxes of $\rib_c$ are filled with their content. Center: The wiring diagram for $c$ obtained by placing a ``cross" in each box of $\rib_c$ and rotating $45^\circ$ counterclockwise. Right: A standard Young tableau $T$ of shape $\rib_c$, and the corresponding reduced expression $\bw_T$ for $c$ under the bijection of \cref{lem:SYT-red-expr-bij}. The content of the box containing $1$ gives the first letter of $\bw_T$; the content of the box containing $n-1=7$ gives the last letter.}
	\label{fig:ribbon-ex}
\end{figure}

Note that for $r$ fixed, the map $w \mapsto \path_{w[k]}^r$ gives a bijection between $S_n\pbolic^{k}$ and lattice paths in the rectangle $\skew_{[n-k+1, n]}^r$.

\begin{lemma}\label{lem:SYT-red-expr-bij}
	Let $w \in S_n\pbolic^{k}$ (resp, $w=c$). For a standard Young tableau of shape $\skew_{w[k]}^r$ (resp. $\rib_c$), let $j_i$ denote the content of the box where $i$ appears in $T$. Then the map 
	\[T \mapsto \bw_T:=s_{j_1} s_{j_2} \dots s_{j_\ell} \]
	is a bijection between standard Young tableaux $T$ of shape $\skew_{w[k]}^r$ (resp. $\rib_c$) and reduced expressions $\bw$ for $w$. 
\end{lemma}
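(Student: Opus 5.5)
The plan is to reduce both cases to the classical correspondence between standard Young tableaux of a skew shape and linear extensions of its box poset. We compare this with the Coxeter-theoretic description of reduced words via commutation classes.

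\emph{Grassmannian case.} For $w\in S_n\pbolic^{k}$ the shape $\skew_{w[k]}^r$ is a straight shape: a Young diagram sitting in the $k\times(n-k)$ rectangle, translated along $y=-x$. Translation does not change contents, since contents are constant along lines parallel to $y=-x$. First I will check that reading boxes in order of the tableau entries produces a word of the correct length whose product is $w$. Induct on $\ell(w)$ by removing the box containing the largest entry $\ell$. That box is a removable (outer) corner of the shape, of content $j$. Removing it corresponds to replacing the path $\path_{w[k]}^r$ by the path obtained by swapping an adjacent vertical/horizontal step pair at positions $j,j+1$, which is the path of $(ws_j)\pbolic^{k}=ws_j$. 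This holds because $w(j)\in w[k]$, $w(j+1)\notin w[k]$ with $w(j)>w(j+1)$, exactly when step $j$ is vertical and step $j+1$ horizontal around a corner box of content $j$. Thus $\ell(ws_j)<\ell(w)$ and $ws_j$ is again Grassmannian with shape the smaller diagram. Since the number of boxes equals $\ell(w)$ (inversions of a Grassmannian permutation correspond to boxes), the words are reduced. Injectivity is immediate, since distinct tableaux give distinct box orders and the descent structure recovers them inductively. For surjectivity, take any reduced word $s_{j_1}\cdots s_{j_\ell}$ for $w$. Its last letter is a right descent $j$ of $w$, and every right descent of a Grassmannian permutation is at position $k$ in the sense above, corresponding to a removable corner of content $j$. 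Place $\ell$ there and recurse.

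\emph{Coxeter case.} The shape $\rib_c$ is a ribbon of $n-1$ boxes, one of each content $1,\dots,n-1$, since the lower boundary $\path_I$ and upper boundary $\path_J$ differ by a border strip from the start to the end. A standard tableau of a ribbon is the same as a linear extension of the ribbon's zigzag poset. In this poset the boxes of contents $i$ and $i+1$ are comparable, with the orientation determined by whether the strip turns left or down there. The reduced words of a Coxeter element are exactly the linear extensions of the heap poset on $\{1,\dots,n-1\}$, in which $i$ and $i+1$ are related according to whether $s_i$ precedes $s_{i+1}$ in $c$. No other relations occur, since all reduced words are commutation-equivalent and each letter appears once. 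So it suffices to check that the ribbon's orientation at $i,i+1$ matches $c$. Concretely, I will verify that $s_i$ precedes $s_{i+1}$ in $c$ iff $i+1$ lies in the excedance set $I$ of $c$ (with the endpoint convention giving $J$). This is the step I expect to be the main obstacle: getting the exact correspondence between excedances, the ribbon's turns, and the order of $s_i,s_{i+1}$ in $c$, with the correct orientation of the up/right conventions.

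To handle it, I will compute $c(i+1)$ from a reduced word. Tracking the wire starting at position $i+1$ shows that if $s_{i+1}$ occurs before $s_i$, the wire is moved up, giving an excedance. Otherwise it moves down. I will then confirm the result against the example $c=31724586$ in \cref{fig:ribbon-ex}. Once the orientations match, the bijection is the composite of (tableaux) $\leftrightarrow$ (linear extensions) $\leftrightarrow$ (reduced words), and the content map realizes it.
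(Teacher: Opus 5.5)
The Grassmannian half of your argument has a genuine error. The Coxeter half is essentially right.

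\textbf{Grassmannian case.} In the paper's conventions $\nu^r_{w[k]}$ is the set of boxes between $L^r_{[k]}$ (down $k$ steps, then left) and $L^r_{w[k]}$. So it is a Young diagram rotated by $180^\circ$, anchored at the lower-right box of the $k\times(n-k)$ rectangle, and that box has content $k$. Standardness means entries increase to the right and downward. Hence the entry $\ell$ always sits in the anchor box, consistent with $s_k$ being the unique right descent. Deleting that box leaves neither a shape of the form $\nu^r_J$ nor a Grassmannian permutation.

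For example, take $w=2413$, $k=2$. The shape has a content-$1$ box above and a content-$3$ box to the left of the content-$2$ anchor. Then $\mathbf{w}_T\in\{s_1s_3s_2,\ s_3s_1s_2\}$, and $ws_2=2143$ has descents at $1$ and $3$.

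Your justification also conflates positions with values. Swapping steps $j,j+1$ of $L^r_{w[k]}$ changes which of the \emph{values} $j,j+1$ lie in $w[k]$, so it realizes $s_jw$, not $ws_j$. Your condition ``$w(j)\in w[k]$, $w(j+1)\notin w[k]$'' simply forces $j=k$. So both your inductive proof that $\mathbf{w}_T$ is a reduced word for $w$ and your surjectivity recursion break as stated.

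\textbf{The fix.} Peel off the box containing $1$ instead. Such a box has no neighbour above it or to its left. So it sits where $L^r_{w[k]}$ takes a horizontal step $j$ followed by a vertical step $j+1$, and it has content $j$. The condition $j\notin w[k]\ni j+1$ says exactly that $j+1$ precedes $j$ in $w$, i.e.\ $s_jw<w$. Then:
\begin{itemize}
\item $s_jw$ is Grassmannian, and its path is $L^r_{w[k]}$ with steps $j,j+1$ swapped;
\item its shape is the old one minus that box;
\item every left descent of $w$ arises this way.
\end{itemize}
Together with $|\nu^r_{w[k]}|=\ell(w)$, this gives reducedness, injectivity and surjectivity by induction on the \emph{first} letter.

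\textbf{Coxeter case.} Your biconditional ``$s_i$ precedes $s_{i+1}$ iff $i+1\in I$'' is correct. If $i+1\in I$, the content-$(i+1)$ box lies directly below the content-$i$ box, as in Cases 1 and 2 of the proof of \cref{prop:wt-constant}. However, your wire-tracking sketch gives the opposite orientation. With the convention fixed by $c=s_6s_5s_4s_2s_7s_3s_1=31724586$, following the entry that \emph{starts} in position $i+1$ computes $c^{-1}(i+1)$, not $c(i+1)$. You need to follow the wire that \emph{ends} in position $i+1$.

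\textbf{Comparison with the paper.} The paper uses no induction. Grassmannian permutations and Coxeter elements are fully commutative, so reduced words are exactly the numberings of the crossings of the unique wiring diagram that increase along each wire. Placing a cross in each box of the shape produces that diagram, and ``increasing along wires'' is exactly standardness. Your heap/ribbon argument is this same idea in poset language. The wiring-diagram version handles both cases uniformly, and indeed any region between two lattice paths, whereas you need two separate arguments.
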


See \cref{fig:ribbon-ex} for an illustration of \cref{lem:SYT-red-expr-bij}.

\begin{proof}
	The permutation $w$ is \defn{fully commutative}, i.e. all reduced expressions are related by commutation moves. That is, $w$ has a single wiring diagram, and all reduced expressions for $w$ are obtained by numbering the crossings of this wiring diagram so that traveling along a wire from left to right, the wire passes through an increasing sequence of crossings, then writing down the simple transpositions corresponding to the crossings in order from smallest numbered crossing to largest. 
	
	If we place a ``cross" in each box of the skew shape (and tilt the shape $45^\circ$ counterclockwise), we obtain a wiring diagram, with endpoints of wires labeled $1, \dots, n$ reading top to bottom (see \cref{fig:ribbon-ex}, center). The content of a box gives the simple transposition corresponding to the crossing in that box. Wires go either straight up or straight to the left, so no two wires cross more than once. The wire which has right endpoint on the step labeled $i$ has left endpoint on the step labeled $w(i)$, so this is a wiring diagram for $w$. 
	Traveling along a wire from left to right, we see that the wires in box $b$ go into the box immediately to the right and the box immediately below $b$. So the condition above that wires travel through an increasing sequence of crosses is the same as the condition that box $b$ has a lower number than the box to the right and the box below. Thus, the numberings mentioned in the paragraph above are in bijection with standard Young tableaux of shape $\skew_{w[k]}^r$ (resp. $\rib_c$).
\end{proof}

For a fixed Coxeter element $c \in S_n$, we define a sequence of upper right corners $r_1, \dots, r_{n-1}$ of $\grid_{n}$ by 
\begin{equation}\label{eq:corner-seq}
	r_1:=(0,0) \quad \text{and} \quad r_i:=\begin{cases}
		r_{i-1} & \text{if }c(i)>i\\
		r_{i-1} + (-1,1) & \text{if } c(i)<i.
	\end{cases}
\end{equation}
These corners are defined so that for any $I \subset[n]$ of cardinality $k$, the box in the lower right corner of $\skew^{r_k}_{I}$ is the unique box of $\rib_c$ with content $k$. See \cref{fig:weight-ex}, left, for an example.

We weight the boxes of $\grid_{n}$ above $\rib_c$ according to which translate of $\rib_{c}$ they lie in. 

\begin{definition}\label{def:weight}
	A box $b \in \grid_n$ which lies in $\rib_c^{(-i,i)}$ has \defn{weight} $\wt(b):=i$. The weight of a collection $C$ of boxes is $\wt(C):= \sum_{b \in C} \wt(b)$.
\end{definition}
See \cref{fig:weight-ex}, center and right, for an example.

\begin{figure}
	\centering
	\includegraphics[width=0.7\linewidth]{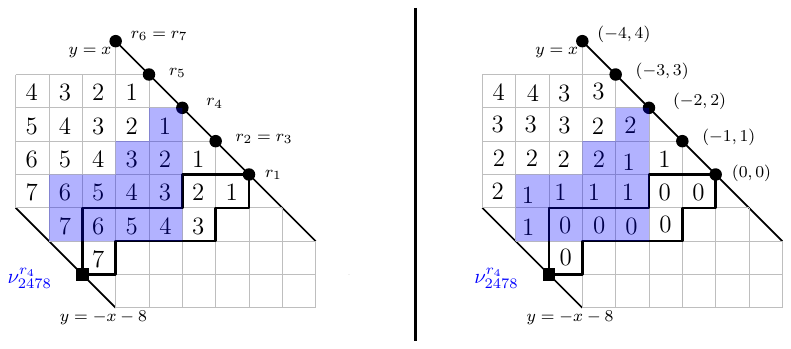}
	\caption{Left: The sequence of upper right corners $r_1, \dots, r_7$ for $c=31724586$. In black, the ribbon $\rib_c$. Shaded in blue, the skew shape $\skew^{r_4}_{2478}$. Boxes are filled with their content. The lower right corner box of $\skew^{r_4}_{2478}$ is the unique box of $\rib_c$ of content 4. By \cref{lem:SYT-red-expr-bij}, reading the content of boxes of $\skew^{r_4}_{2478}$ in the order given by a standard Young tableau gives a reduced expression for $v=24781256 \in S_8\pbolic^{4}$. Right: The boxes of $\grid_{n}$ filled with their weight. The weight of $\skew^{r_4}_{2478}$ is $10$, the sum of weights of boxes in $\skew^{r_4}_{2478}$.}
	\label{fig:weight-ex}
\end{figure}

We may rephrase the height function $\hc$ using weights of certain skew shapes.

\begin{proposition}\label{prop:lattice-path-wt-def}
	Let $w \in S_n$. Then 
	\[\hc(w)= \sum_{i=1}^{n-1} (\pt_i - \pt_{i+1}) \wt \left(\skew_{w[i]}^{r_i}\right).\] 
\end{proposition}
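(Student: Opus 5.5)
Comparing with \cref{def:height-from-c}, the statement reduces termwise: since the coefficients $(\pt_i-\pt_{i+1})$ match, it suffices to show, for each $i \in [n-1]$ and each $v := w\pbolic^{i} \in S_n\pbolic^{i}$ with $I := v[i] = w[i]$, that
\[ \wt(v) = \wt\left(\skew_{I}^{r_i}\right), \]
where the left side is the sum of letter-weights over the support of the rightmost subexpression for $v$ in $\bc^N$, and the right side is the box-weight of \cref{def:weight}. So the proof is a combinatorial identification of the rightmost subexpression of a Grassmannian permutation inside $\bc^N$ with the boxes of the skew shape $\skew_I^{r_i}$.

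First I would set up a dictionary between letters of $\bc^N$ and boxes of $\grid_n$. By \cref{lem:SYT-red-expr-bij}, a reduced word for $c$ is a linear extension of the ribbon $\rib_c$, box contents giving letters. Repeating, I claim $\bc^N$ corresponds to a linear extension of the union of translates $\rib_c^{(-k,k)}$, $k = 0,\dots,N-1$, stacked so that the copy of $\bc$ at position $N-k$ (counting from the left, copies $1,\dots,N$) is the translate $\rib_c^{(-k,k)}$. Thus the rightmost copy of $\bc$ is $\rib_c$ itself, and a letter in that copy has $\wt(j)=0$, matching box weight $0$. In general, the letter-weight $\wt(j)$, the number of later occurrences of $s_{i_j}$, equals the number of copies to the right, equal to $k$ for a box in $\rib_c^{(-k,k)}$. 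Checking that the translates tile the region of $\grid_n$ above $\rib_c$, with each content appearing once per translate, is routine from \cref{def:skew-shape-btw-paths}: $\rib_c$ has exactly one box of each content $1,\dots,n-1$, and consecutive translates abut along lattice paths.

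Next, the key step. I would show that the support of the rightmost subexpression for $v$ in $\bc^N$ is exactly the set of boxes of $\skew_I^{r_i}$ under this dictionary. Since the weights agree boxwise, this gives the identity. The choice of $r_i$ is designed for this: the lower-right box of $\skew_I^{r_i}$ is the unique box of $\rib_c$ of content $i$. That box is the rightmost occurrence of $s_i$ in $\bc^N$, and the greedy right-to-left procedure of \cref{def:rmost-sub} must first use it, since $v$ has unique right descent $s_i$. I would argue by induction on $\ell(v)$. Removing an outer corner box $b$ of $\skew_I^{r_i}$ corresponds to $v s_{j}$ with $j$ the content of $b$, and the resulting shape is again of the form $\skew_{I'}^{r}$ for the appropriate corner. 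I expect a cleaner route to be to characterize the rightmost subexpression as the colex-largest reduced support, as in the remark after \cref{def:rmost-sub}. Then I would show two things: the boxes of $\skew_I^{r_i}$ form an order ideal, read in some linear extension compatible with $\bc^N$, which spells a reduced word for $v$ by \cref{lem:SYT-red-expr-bij}; and any other reduced subword for $v$ has colex-smaller support, because its boxes are a skew shape of the same form shifted up-left, i.e. into copies further left.

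The main obstacle is this last maximality step: proving that every reduced subexpression of $v$ in $\bc^N$ corresponds to a translate-like placement of the shape $\skew_I$ in the grid, and that $\skew_I^{r_i}$ is the lowest one. I would handle it by tracking wires. A subexpression of $\bc^N$ for a fully commutative $v$ determines a wiring diagram embedded in the big grid, and the constraint that the first crossing at the right end is the content-$i$ box of $\rib_c$ pins the lower-right corner. Down-and-left monotonicity of the lattice paths then forces every box to lie weakly above its position in $\skew_I^{r_i}$. Summing over $i$ with coefficients $\pt_i-\pt_{i+1}$ finishes the proof.
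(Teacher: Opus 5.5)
Your reduction to one identity per $i$ and your dictionary between copies of $\bc$ in $\bc^N$ and the translates $\rib_c^{(-k,k)}$ are exactly the paper's route. So is your observation that the rightmost subexpression ends at the rightmost $s_i$, i.e.\ at the content-$i$ box of $\rib_c$, which is the corner of $\skew_I^{r_i}$ by the choice of $r_i$. The paper then concludes directly from \cref{lem:SYT-red-expr-bij}.

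The gap is in the key identification you flag as the main obstacle: two of your proposed justifications fail as stated.
\begin{enumerate}
\item \emph{The induction.} The only maximal box of $\skew_I^{r_i}$ is the content-$i$ corner, and $vs_i$ is generally not Grassmannian. For $v=3412$, $i=2$, you get $vs_2=3142$, with right descents $s_1,s_3$. So the shape left after removing the corner has two maximal boxes and is not of the form $\skew_{I'}^{r}$. The induction hypothesis would have to cover arbitrary order ideals of the placed shape.
\item \emph{The colex step.} Other reduced subwords are not up-left translates of the shape. Take $n=3$, $c=s_1s_2$, $\bc^2=s_1s_2s_1s_2$, $v=s_1s_2$. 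The reduced subword in positions $\{1,4\}$ uses the content-$1$ box of $\rib_c^{(-1,1)}$ and the content-$2$ box of $\rib_c$, which are not adjacent.
\end{enumerate}

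Your last idea, that each box lies weakly above its position, is what works. Here is a precise version. Put $\sigma:=\skew_I^{r_i}$. Since $v$ is fully commutative, its reduced words are the linear extensions of $\sigma$. So the letters of any reduced subexpression for $v$ in $\bc^N$ match bijectively with the boxes of $\sigma$, and the letter matched to $x$ sits in a box $q(x)$ on the diagonal of $x$. Show by downward induction on $\sigma$ that $q(x)$ lies in the same copy as $x$ or a higher one.
\begin{itemize}
\item For the corner this is automatic, since the corner is in $\rib_c$, the lowest copy.
\item Otherwise take $y\in\sigma$ directly right of or below $x$; it exists because $\sigma$ has a unique maximal box. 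Suppose $q(x)=x+(k,-k)$ with $k\ge 1$. Then $q(x)$ comes after $y$ in $\bc^N$. For $k=1$, $q(x)$ is directly below or directly right of $y$, and $\bc^N$ reads each box before those. For $k\ge 2$, $q(x)$ lies in a strictly lower copy than $y$, since a right or down step changes the copy by at most one.
\item Meanwhile $q(y)$ lies weakly above $y$ by induction, hence weakly before $y$. This contradicts the fact that the letter for $x$ precedes the letter for $y$.
\end{itemize}
Consequently the sorted support of any reduced subexpression is termwise at most the sorted positions of $\sigma$. So $\sigma$, which is itself a reduced subexpression by \cref{lem:SYT-red-expr-bij}, is colex-maximal. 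By the remark after \cref{def:rmost-sub}, $\sigma$ is then the rightmost subexpression, and the weight comparison finishes as you describe.
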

\begin{proof}
	Recall from \cref{def:height-from-c} the word-theoretic definition of $\hc$. We must compare the weight of the rightmost subexpression for $w\pbolic^{i}=:u$ in $\bc^N$ to the weight of the skew shape $\skew_{w[i]}^{r_i}$. In the first case, the weight is determined entirely by how many letters of the rightmost subexpression lie in each copy of $\bc$. In the second case, the weight is determined entirely by how many boxes of $\skew_{w[i]}^{r_i}$ lie in each translate of $\rib_c$. 
	
	Consider the region $\rib_c \cup \rib_c^{(-1,1)} \cup \cdots \cup \rib_c^{(-N+1, N-1)}$ in $\grid_n$. Similarly to the proof of \cref{lem:SYT-red-expr-bij}, if we place a ``cross" in each box of this region, we obtain a wiring diagram for $\bc^N$. Regardless of the choice of $\bc$, the crosses corresponding to the rightmost expression for $u$ lie exactly in the boxes of $\skew_{w[i]}^{r_i}$. Indeed, $u \in S_n\pbolic^{i}$, so its unique right descent is $s_i$. The last letter of the rightmost subexpression for $u$ is thus the rightmost $s_i$ in $\bc^N$. The corresponding cross lies in the unique box of $\rib_c$ with content $i$; by the choice of $r_i$, this is exactly the lower right corner of $\skew_{u[i]}^{r_i}$. By \cref{lem:SYT-red-expr-bij}, all reduced expressions for $u$ correspond to standard Young tableaux of shape $\skew_{u[i]}^{r_i}$ and in particular the crosses used lie exactly in the skew shape $\skew_{u[i]}^{r_i}=\skew_{w[i]}^{r_i}$. By the definition of weight, a cross lies in a box with weight $j$ if and only if the corresponding simple transposition in $\bc^N$ has weight $j$. This completes the proof.
\end{proof}

Now that we have an interpretation of $\hc$ in terms of skew shapes, the next step is to interpret \eqref{eq:ratios-for-lifts-coplanar} in terms of skew shapes. We start with a helpful lemma.

\begin{lemma}\label{lem:w-wc-bound-ribbon}
	Suppose $wc$ is length-additive. Fix $i \in [n-1]$. Let $j$ (resp. $k$) be the step of the top boundary of $\rib_c$ which lies above (resp. to the left) of the box in $\rib_c$ with content $i$. The skew shape $\skew_{w[i]/wc[i]}^{r_i}$ is a ribbon shape with content $\{w_j, w_j +1 , \dots, w_k-1\}$.
\end{lemma}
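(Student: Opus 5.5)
The plan is to reduce everything to a statement about the single Coxeter element $c$: how $c$ moves the set $[i]$. I would then transport this along $w$ using length-additivity.

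First I would identify $c[i]$. Read $\rib_c$ as the wiring diagram of $c$, as in the proof of \cref{lem:SYT-red-expr-bij}. The unique box of content $i$ is the unique crossing of the letter $s_i$ in a reduced word for $c$. The two wires through this box exit on the steps $j$ and $k$ of the top boundary: $j$ is the step directly above the box and $k$ the step directly to its left, so $j<k$. Every other crossing of $c$ uses a letter $s_m$ with $m\neq i$, and a letter with $m\ne i$ never moves a wire across the gap between positions $i$ and $i+1$. Hence exactly one wire passes from the top $i$ positions to the bottom ones, and one passes back, both at this box. This gives
\[ c[i] = ([i]\setminus\{j\})\cup\{k\}, \qquad j\le i<k \]
(or the analogous statement with the roles of the labels read off carefully from the labelling convention of steps). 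I would double-check this description against \cref{fig:ribbon-ex}. It then follows that $wc[i] = (w[i]\setminus\{w_j\})\cup\{w_k\}$.

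Next I would use length-additivity. Since $\ell(wc)=\ell(w)+\ell(c)$ and the letter $s_i$ in $c$ swaps the two wires labelled $j<k$, every prefix product must be length-increasing. In particular the transposition at the $s_i$ step must increase length, which should force $w_j<w_k$. (Equivalently, $w^{<i>}\le (wc)^{<i>}$ in Bruhat order, so $w[i]\le wc[i]$ in Gale order.) This is the step I expect to be the main obstacle. One has to track that, when $s_i$ is applied within $w\cdot c$, the values sitting in the two swapped positions are exactly $w_j$ and $w_k$. I would argue this by writing $c=a\,s_i\,b$, where $a$ and $b$ use no $s_i$. Right multiplication by $a$ then permutes positions within $[i]$ and within its complement, so the positions swapped by $s_i$ in $wa$ carry the values $w_j,w_k$. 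Length-additivity of $(wa)s_i$ then gives $w_j<w_k$.

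Finally I would compare the lattice paths. $\path^{r_i}_{w[i]}$ and $\path^{r_i}_{wc[i]}$ have the same vertical steps except that the vertical step $w_j$ becomes horizontal and the horizontal step $w_k$ becomes vertical. So $\path^{r_i}_{wc[i]}$ lies weakly above $\path^{r_i}_{w[i]}$, and they agree outside the steps $w_j,\dots,w_k$. Between these steps the upper path is the lower path shifted by $(-1,1)$. The region between them is therefore a connected border strip (a ribbon) with one box per content $w_j, w_j+1,\dots,w_k-1$. This uses that in $\grid_n$, step $m$ of a path sits beside a box of content $m$ or $m-1$, as noted before \cref{def:skew-shape-btw-paths}, and that translating by the corner $r_i$ does not change contents.
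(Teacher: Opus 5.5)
Your proposal is correct and follows the paper's proof: read $c[i]=([i]\setminus\{j\})\cup\{k\}$ off the wiring diagram of $\rib_c$, transport this to $wc[i]=(w[i]\setminus\{w_j\})\cup\{w_k\}$, deduce $w_j<w_k$, and recognize the region between the two lattice paths as a ribbon with contents $w_j,\dots,w_k-1$. The only difference is cosmetic: the paper gets $w_j<w_k$ in one line from $w<wc$ in Bruhat order (hence in Gale order on $i$-subsets, as in your parenthetical), while your primary argument uses that the prefix $(wa)s_i$ of $wc=wa\,s_i\,b$ must be length-increasing; both arguments are valid.
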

\begin{proof}
	The content does not depend on the corner $r_i$, so we will work with $\skew_{}:=\skew_{w[i]/wc[i]}$. 
	Recall from the proof of \cref{lem:SYT-red-expr-bij} that $\rib_c$ gives a wiring diagram for $c$. So if one considers step $a$ of the bottom boundary of $\rib_c$, the unique step of the top boundary which is in either the same row or same column is step $c_a$. Using this, one can determine that $c[i]=[i] \setminus \{j\} \cup \{k\}$. So $wc[i]=w[i] \setminus \{w_j\} \cup \{w_k\}$. Since $wc>w$, we must have $w_j < w_k$. This means exactly that $\skew_{}$, whose top boundary is $\path_{wc[i]}$ and whose bottom boundary is $\path_{w[i]}$, is a ribbon shape with content $\{w_j, w_j +1, \dots, w_k-1\}$.  
\end{proof}

\begin{lemma}\label{prop:wt-constant}
	Suppose $wc$ is length-additive. Suppose $w \leq u <  (a~b) u=:v \leq wc$ with $a<b$. Let $\sigma_i \subset \skew_{w[i]/wc[i]}^{r_i}$ be the sub-ribbon strip with content $a, a+1, \dots, b-1$ (note that this does not depend on $u$). Then
	\begin{itemize}
			\item $\skew_{u[i]/v[i]}^{r_i}$ is empty for $ i \notin [u_a^{-1}, u_b^{-1})$ and $\skew_{u[i]/v[i]}^{r_i}=\sigma_i$ for $i \in [u_a^{-1}, u_b^{-1})$.
		\item for all $i, j \in [u_a^{-1}, u_b^{-1})$, $\sigma_i = \sigma_j$.
	\end{itemize}
	In particular, $\wt(\skew_{u[i]/v[i]}^{r_i})$ is the same for all $i \in [u_a^{-1}, u_b^{-1})$.
\end{lemma}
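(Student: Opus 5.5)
The plan is to reduce everything to elementary set comparisons between $u[i]$ and $v[i]$, and then use \cref{lem:w-wc-bound-ribbon} to pin down where the resulting boxes sit inside a fixed ribbon.

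First, the emptiness claim. Since $v=(a~b)u$, we have $v[i]$ obtained from $u[i]$ by swapping the values $a$ and $b$. This changes $u[i]$ exactly when precisely one of $a,b$ lies in $u[i]$. Because $v>u$ and $a<b$, we have $u^{-1}(a)<u^{-1}(b)$. So $v[i]\neq u[i]$ iff $i\in[u^{-1}_a,u^{-1}_b)$, and in that case $v[i]=u[i]\setminus\{a\}\cup\{b\}$. When $v[i]=u[i]$ the skew shape is empty. Otherwise the top boundary $\path_{v[i]}^{r_i}$ differs from the bottom boundary $\path_{u[i]}^{r_i}$ only in that a vertical step moves from position $a$ to position $b$. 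Hence $\skew_{u[i]/v[i]}^{r_i}$ is a ribbon with content $\{a,\dots,b-1\}$, anchored between steps $a$ and $b$ of $\path_{u[i]}^{r_i}$.

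Second, identify this ribbon with $\sigma_i$. Bruhat monotonicity of the Grassmannian projections gives $w[i]\le u[i]\le v[i]\le wc[i]$ in Gale order, so the lattice paths are nested: $\path^{r_i}_{w[i]}$ lies weakly below $\path^{r_i}_{u[i]}$, which lies below $\path^{r_i}_{v[i]}$, which lies below $\path^{r_i}_{wc[i]}$. Therefore $\skew_{u[i]/v[i]}^{r_i}\subseteq\skew_{w[i]/wc[i]}^{r_i}$. By \cref{lem:w-wc-bound-ribbon} the latter is a ribbon with content $\{w_j,\dots,w_k-1\}$, in which each content value occurs in exactly one box. A sub-ribbon of it is therefore determined by its content set, so $\skew_{u[i]/v[i]}^{r_i}=\sigma_i$. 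This also shows $\{a,\dots,b-1\}\subseteq\{w_j,\dots,w_k-1\}$, so $\sigma_i$ is well defined and independent of $u$.

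Third, show $\sigma_i=\sigma_{i'}$ for $i,i'\in[u^{-1}_a,u^{-1}_b)$, meaning the same set of boxes in $\grid_n$. Since content is already matched, it suffices to show the boxes lie in the same translate, for instance that the box of content $a$ in $\sigma_i$ sits at the same position for each $i$. The approach is to compare consecutive $i$ and $i+1$, both in the range. Going from $u[i]$ to $u[i+1]$ adds the single element $u_{i+1}\notin\{a,b\}$. The corner shift $r_{i+1}-r_i$ is either $0$ or $(-1,1)$, according to \eqref{eq:corner-seq}. One then checks that the portion of $\path^{r_i}_{u[i]}$ between steps $a$ and $b$ coincides with that of $\path^{r_{i+1}}_{u[i+1]}$.

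This third step is the main obstacle. Inserting a vertical step at position $u_{i+1}$ shifts the portion of the path after that position by $(1,-1)$ relative to the portion before it. If $u_{i+1}<a$, this shift acts on the whole relevant segment and must be exactly cancelled by the corner change; if $u_{i+1}>b$, the segment should not move, which requires the corner to stay put. The case $a<u_{i+1}<b$ must be ruled out or handled separately. The plan is to use that $\sigma_i$ and $\sigma_{i+1}$ both lie in the ribbons $\skew_{w[i]/wc[i]}^{r_i}$ and $\skew_{w[i+1]/wc[i+1]}^{r_{i+1}}$. These are anchored through $\rib_c$, via the box of content $i$, respectively $i+1$, at their lower-right corners. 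Using the adjacency of those two boxes in $\rib_c$ together with $c(i+1)\gtrless i+1$ should force consistency.

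If this bookkeeping becomes unwieldy, the fallback is an argument through wiring diagrams. Each box of $\sigma_i$ is a crossing in the wiring diagram of $\bc^N$ used in the proof of \cref{prop:lattice-path-wt-def}. One would then argue that these crossings are exactly the crossings of the wires ending at $a$ and $b$ with the wires between them, which is visibly independent of $i$. The final claim, that the weight is constant in $i$, then follows immediately from the first two bullets.
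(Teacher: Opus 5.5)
Your first two steps match the paper's argument. $v[i]\neq u[i]$ exactly for $i\in[u_a^{-1},u_b^{-1})$, and there $\skew^{r_i}_{u[i]/v[i]}$ is a ribbon of content $\{a,\dots,b-1\}$. Gale monotonicity puts it inside the ribbon $\skew^{r_i}_{w[i]/wc[i]}$ of \cref{lem:w-wc-bound-ribbon}, which forces it to equal $\sigma_i$.

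The gap is in the third step. It is the only substantive part of the lemma, and you leave it at ``should force consistency.'' To compare $\path^{r_i}_{u[i]}$ with $\path^{r_{i+1}}_{u[i+1]}$ as you propose, you would need to prove two things: $u_{i+1}>b$ whenever $r_{i+1}=r_i$, and $u_{i+1}<a$ whenever $r_{i+1}=r_i+(-1,1)$. In particular, $a<u_{i+1}<b$ must be excluded, not ``handled separately'', because in that case $\sigma_i$ and $\sigma_{i+1}$ would have different shapes. For the same reason, matching content plus the position of the content-$a$ box is not enough on its own. Nothing in the proposal establishes these inequalities. \cref{lem:w-wc-bound-ribbon} does not bound $u_{i+1}$ directly; it controls $w_{i+1}$. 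The wiring-diagram fallback is only asserted, not argued.

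The missing idea is to drop $u$ and compare the bottom boundaries of the big ribbons instead. $\sigma_i$ is the content-$[a,b-1]$ part of $\skew^{r_i}_{w[i]/wc[i]}$. In a ribbon, the box of content $k$ is the box whose lower-right corner is the point reached after step $k$ of the bottom boundary. So it suffices to show that $\path^{r_i}_{w[i]}$ and $\path^{r_{i+1}}_{w[i+1]}$ reach the same points after steps $a,\dots,b-1$. There are two cases.

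\emph{Case $c(i+1)>i+1$, so $r_{i+1}=r_i$.} Step $i+1$ of the bottom boundary of $\mathrm{rib}_c$ is vertical, so the content-$(i+1)$ box lies directly below the content-$i$ box. Hence the top-boundary step to the left of the content-$i$ box is step $i+1$. \cref{lem:w-wc-bound-ribbon} then says $\skew^{r_i}_{w[i]/wc[i]}$ has content $\{w_j,\dots,w_{i+1}-1\}$. Since this ribbon contains $\sigma_i$, we get $w_{i+1}\ge b$. As $w[i+1]=w[i]\cup\{w_{i+1}\}$, the two paths agree through step $w_{i+1}-1\ge b-1$.

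\emph{Case $c(i+1)<i+1$, so $r_{i+1}=r_i+(-1,1)$.} The content-$(i+1)$ box lies directly left of the content-$i$ box, and step $i+1$ of the top boundary lies above it. \cref{lem:w-wc-bound-ribbon} at index $i+1$ gives content $\{w_{i+1},\dots,w_k-1\}$, hence $w_{i+1}\le a$. Inserting the vertical step $w_{i+1}$ offsets the path by $(1,-1)$ after that step, and this is exactly cancelled by the corner shift $(-1,1)$. So the paths agree from step $w_{i+1}$ on, in particular at steps $a,\dots,b-1$.

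This is how the paper proves $\sigma_i=\sigma_{i+1}$.
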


\begin{proof}
	If $i \notin [u^{-1}_a, u^{-1}_b)$, then $u[i]=v[i]$. The skew shape $\skew_{u[i]/v[i]}^{r_i}$ is empty as claimed.
	
	If $i \in [u^{-1}_a, u^{-1}_b)$, then $v[i]=u[i] \setminus \{a\} \cup \{b\}$. This means that $\skew_{u[i]/v[i]}^{r_i}$ is a ribbon strip of content $a, a+1, \dots, b-1$. On the other hand, $wc[i] \geq v[i] \geq u[i] \geq w[i]$ in the Gale order, so $\skew_{u[i]/v[i]}^{r_i}$ is contained in $\skew_{w[i]/wc[i]}^{r_i}$, which is a ribbon strip by \cref{lem:w-wc-bound-ribbon}. Thus, $\skew_{u[i]/v[i]}^{r_i}= \sigma_i$. 
	
	We next show that if $i, i+1\in [u^{-1}_a, u^{-1}_b)$, then $\sigma_i=\sigma_{i+1}$; this immediately implies $\sigma_i=\sigma_{j}$ for all $i, j  \in [u^{-1}_a, u^{-1}_b)$. In other words, we will show $\skew_{w[i]/wc[i]}^{r_i}$ and $\skew_{w[i+1]/wc[i+1]}^{r_{i+1}}$ contain the same boxes of content $a, \dots, b-1$. There are two cases. 
	
	\noindent \textbf{Case 1:} Suppose $r_{i+1}= r_i$, which by \eqref{eq:corner-seq} is equivalent to $c_{i+1}>i+1$. In this case, step $i+1$ of the bottom boundary of $\rib_c$ is vertical. This means that in $\rib_c$, the box of content $i+1$ has a box $b$ of content $i$ directly above it.  
	The step on the top boundary to the left of $b$ is step $i+1$; the step above $b$ is $j$ for some $j$. So by \cref{lem:w-wc-bound-ribbon}, $\skew_{w[i]/wc[i]}^{r_i}$ is a ribbon strip with content $\{w_j, w_j+1, \dots, w_{i+1}-1\}$.
	In particular, since $\skew_{w[i]/wc[i]}^{r_i}$ contains $\sigma_i$ and thus contains a box with content $b-1$, we have that $w_{i+1} \geq b$. Now, we compare the bottom boundaries of $\skew_{w[i]/wc[i]}^{r_i}$ and $\skew_{w[i+1]/wc[i+1]}^{r_{i+1}}=\skew_{w[i+1]/wc[i+1]}^{r_{i}}$. Since $w[i+1]=w[i] \cup \{w_{i+1}\}$, the paths $\path_{w[i+1]}^{r_i}$ and $\path_{w[i]}^{r_i}$ agree through step $w_{i+1}-1$. The box of content $k$ in a ribbon shape $\rib_{}$ is exactly the box of content $k$ in $\grid_n$ whose bottom right corner touches the bottom boundary of $\rib_{}$. So, if both $\skew_{w[i]/wc[i]}^{r_i}$ and $\skew_{w[i+1]/wc[i+1]}^{r_{i+1}}$ contain some box of content $k \leq w_{i+1}$, they in fact contain the same box of content $k$. Since $w_{i+1}\ge b$, this shows that $\sigma_i=\sigma_{i+1}$ as desired.
	
	\noindent \textbf{Case 2:} Suppose $r_{i+1}\neq r_i$. The argument here is similar. In this case, step $i+1$ of the bottom boundary of $\rib_c$ is horizontal and is the bottom edge of the box of content $i$. This means that in $\rib_c$, the box of content $i$ has a box $b$ of content $i+1$ directly to its left. The step of the top boundary above $b$ is step $i+1$, and the step to the left of $b$ is step $k$ for some $k$. Again appealing to \cref{lem:w-wc-bound-ribbon}, $\skew_{w[i+1]/wc[i+1]}^{r_{i+1}}$ has content $\{w_{i+1},w_{i+1}+1,\dots, w_k-1\}$. Since $\skew_{w[i+1]/wc[i+1]}^{r_{i+1}}$ contains $\sigma_{i+1}$ and thus a box with content $a$, we have $w_{i+1} \leq a$. Now, we again compare the bottom boundaries of $\skew_{w[i]/wc[i]}^{r_i}$ and $\skew_{w[i+1]/wc[i+1]}^{r_{i+1}}$. Recalling that $r_{i+1}=r_i + (-1,1)$ by \eqref{eq:corner-seq}, we see that $\path_{w[i]}^{r_i}$ and $\path_{w[i+1]}^{r_{i+1}}$ agree on steps $w_{i+1}+1, \dots, n$. So, if $\skew_{w[i]/wc[i]}^{r_i}$ and $\skew_{w[i+1]/wc[i+1]}^{r_{i+1}}$ each contain some box of content $k \ge w_{i+1}$, they in fact contain the same box of content $k$. Since $a \ge w_{i+1}$, this shows that $\sigma_i=\sigma_{i+1}$ as desired.    
\end{proof}

Now, we use \cref{prop:wt-constant} and \cref{prop:ratios-for-coplanarity}
to prove \cref{prop:good-bip-lifted-to-plane}.

\begin{proof}[Proof of \cref{prop:good-bip-lifted-to-plane}]
	Fix $wc$ length-additive, so $X_w^{wc}$ is toric. We will show that \eqref{eq:ratios-for-lifts-coplanar} holds for the height function $\hgt=\hc$. 
	
	Let $u < t u=x$ be elements of $[w,wc]$, where $t = (a~b)$ with $a<b$. As a first step, we give a skew-shape interpretation for 
	\[\steep(u, t) = \frac{\hc(x)-\hc(u)}{\pt_{u^{-1}(a)}-\pt_{u^{-1}(b)}}.\]
	
	As in \cref{prop:wt-constant}, define $\sigma_i\subset \skew_{w[i]/wc[i]}^{r_i}$ to be the sub-ribbon with content $\{a, a+1, \dots, b-1\}$; note that this does not depend on $u$.  Fix $j$ such that $u[j] \neq x[j]$. We have
	\begin{align*}\hc(x)-\hc(u )&=\sum_{i=1}^{n-1} (\pt_i - \pt_{i+1}) \wt(\skew_{x[i]}^{r_i})-\wt(\skew_{u[i]}^{r_i})= \sum_{i=1}^{n-1}(\pt_i - \pt_{i+1}) \wt(\skew_{u[i]/x[i]}^{r_i})\\
		&= \sum_{\substack{i \in [n-1]:\\u[i] \neq x[i]}} (\pt_i - \pt_{i+1}) \wt(\sigma_i) = \wt(\sigma_j) \sum_{i \in [u^{-1}_a, u^{-1}_b)}\pt_i - \pt_{i+1} = \wt(\sigma_j) (\pt_{u^{-1}(a)}-\pt_{u^{-1}(b)})\end{align*}
	where the first equality follows from \cref{prop:lattice-path-wt-def}, the second follows from the definition of weight, and the third is \cref{prop:wt-constant}. This implies $\steep(u,t)=\wt(\sigma_j)$. 
	
	Now, consider a rank 2 subinterval 
\[\begin{tikzcd}
	& tv \\
	tu && v \\
	& u
	\arrow["{t_2}"', no head, from=2-1, to=1-2]
	\arrow["t", no head, from=2-3, to=1-2]
	\arrow["t"', no head, from=3-2, to=2-1]
	\arrow["{t_1}", no head, from=3-2, to=2-3]
\end{tikzcd}\]
and suppose $v=t_1 u$ and $tv=t_2tu$ for transpositions $t_1, t_2$. We have that $\steep(u,t)=\wt(\sigma_j)$, for any $j$ such that $u[j] \neq tu[j]$, and $\steep(v,t)=\wt(\sigma_{j'})$, for any $j'$ such that $v[j'] \neq tv[j']$. We would like to show that $\steep(u,t)=\steep(v,t)$. It suffices to show that we may choose $j=j'$, that is, that $[u_a^{-1}, u_b^{-1}) \cap [v_a^{-1}, v_b^{-1})$ is nonempty. If $t_1=t_2$, then $t$ and $t_1$ commute, so $[u_a^{-1}, u_b^{-1}) = [v_a^{-1}, v_b^{-1})$. Otherwise, we have that $\{t_1, t_2\} =\{ (a~x),(b~x)\}$ for some $x$. Casework shows that $[u_a^{-1}, u_b^{-1}) \cap [v_a^{-1}, v_b^{-1})$ is nonempty.

\end{proof}

\subsection{Local folding and proof of \cref{thm:regular-type-A}}

Throughout this subsection, we set $\hgt=\hc$ and for $v \in S_n$, we use the notation $v^{\hgt}$ for $(v \cdot \pt, \hc(v \cdot \pt))$. For $w \in S_n$ such that $wc$ is length-additive, we use the notation $\lin^w$ for the linear functional from \cref{prop:good-bip-lifted-to-plane}.

\begin{proposition}\label{prop:good-bip-lifted-distinct-planes}
	Let $u\lessdot w \in S_n$ be permutations such that $uc$ and $wc$ are length-additive and $\Pi_u ^{uc} \cap \Pi_w^{wc}$ is a facet of both polytopes. Then $\langle \lin^w, u^{\hgt} \rangle > \langle \lin^w, w^{\hgt} \rangle$ 
and the local folding condition holds for $\Pi_u ^{uc} \cap \Pi_w^{wc}$.
\end{proposition}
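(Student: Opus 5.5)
The plan is to analyze the geometry of two adjacent cells $\Pi_u^{uc}$ and $\Pi_w^{wc}$ with $u \lessdot w$. Since $\lin^w$ is constant on $([w,wc]\cdot\pt)^{\hgt}$, the inequality $\langle \lin^w, u^{\hgt}\rangle > \langle \lin^w, w^{\hgt}\rangle$ reduces to a statement about the single edge from $u\cdot\pt$ to $w\cdot\pt$. Write $w = tu$ with $t=(a~b)$, $a<b$. By the first item of \cref{prop:ratios-for-coplanarity} (really its proof, which computes $\langle\lin,(\pt_{u^{-1}(a)}-\pt_{u^{-1}(b)})(e_a-e_b)+(\hgt(u)-\hgt(w))e_{n+1}\rangle$), the sign of $\langle \lin^w, u^{\hgt}-w^{\hgt}\rangle$ is the sign of
\[(\pt_{u^{-1}(a)}-\pt_{u^{-1}(b)})\bigl[(\lin^w_a-\lin^w_b)-\steep(u,t)\bigr].\]
The prefactor is positive because $u<tu$ forces $u^{-1}(a)<u^{-1}(b)$. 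So I need to show $\steep(u,t) < \lin^w_a-\lin^w_b$, i.e. the edge $[u,w]$ lifts more steeply than the corresponding direction inside the plane of $\Pi_w^{wc}$.

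To compute $\lin^w_a-\lin^w_b$, I will find an edge of $\Pi_w^{wc}$ parallel to $\alpha_t$, so that $\lin^w_a-\lin^w_b=\steep(y,t)$ for some $y,ty\in[w,wc]$. The shared facet is $\Pi_w^{uc}$, using $[u,uc]\cap[w,wc]=[w,uc]$ and \cref{lem:toric-BIP-facts}, so $uc\lessdot wc$. The natural candidate is then the pair $uc\lessdot wc$: since $wc = tu\,c$, we have $wc = t\cdot uc$, and $[w,wc]$ contains the edge $uc \to t\,uc$. Then $\lin^w_a-\lin^w_b=\steep(uc,t)$. The problem becomes the purely combinatorial inequality $\steep(u,t)<\steep(uc,t)$.

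Both inclines are computed as in the proof of \cref{prop:good-bip-lifted-to-plane}. We have $\steep(u,t)=\wt(\skew^{r_j}_{u[j]/w[j]})$ for any $j\in[u^{-1}_a,u^{-1}_b)$, and $\steep(uc,t)=\wt(\skew^{r_{j'}}_{uc[j']/wc[j']})$ for $j'\in[(uc)^{-1}_a,(uc)^{-1}_b)$. Both skew shapes are ribbons of content $a,\dots,b-1$. The ribbon $\skew_{uc[j']/wc[j']}$ lies inside $\skew_{w[j']/wc[j']}^{r_{j'}}$, which hugs the top translate region of $\rib_c$, while the first ribbon lies near $\path_{u[j]}$, lower down. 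The key step is a Gale-order comparison showing that each content-$k$ box of the second ribbon sits in a strictly higher translate $\rib_c^{(-i,i)}$ than the content-$k$ box of the first, or at least weakly higher everywhere and strictly higher somewhere. Weight is constant along translates and increases upward, so this gives the strict inequality. I would try to realize this via $uc\ge u$ with $uc$ length-additive: by \cref{lem:SYT-red-expr-bij} and \cref{prop:lattice-path-wt-def}, passing from $u$ to $uc$ adds exactly one copy of each letter of $\bc$ in the rightmost-subexpression picture, which shifts the relevant ribbon up by one translate. That suggests the difference is exactly $b-a$.

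Local folding then follows. We take the vertex $u\in\Pi_u^{uc}\setminus\Pi_w^{wc}$, noting $u\notin[w,wc]$. Since $\lin^w_{n+1}=1$, the inequality $\langle\lin^w,u^{\hgt}\rangle>$ the constant value on $H_w$ says that $u^{\hgt}$ lies above $H_w$, which is condition (2) of \cref{thm:regular-subdiv-conditions} for this facet. The step I expect to be the main obstacle is the translate comparison. It requires choosing $j$ and $j'$ compatibly, possibly with $j\neq j'$, and handling the two cases for the corners $r_{j}$ versus $r_{j'}$ exactly as in Cases 1 and 2 of \cref{prop:wt-constant}. I would first try to prove the cleaner claim that the second ribbon is the $(-1,1)$-translate of the first, and fall back to a box-by-box weak inequality if that fails.
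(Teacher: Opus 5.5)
Your reduction matches the paper's proof step for step. The claim becomes $\steep(u,t)<\lin^w_a-\lin^w_b$. The shared facet is $\Pi_w^{uc}$, so $uc\lessdot wc=t\,uc$ and $\lin^w_a-\lin^w_b=\steep(uc,t)$. Both inclines are weights of the content-$[a,b-1]$ ribbons $\sigma=\skew^{r_j}_{u[j]/w[j]}$ and $\mu=\skew^{r_{j'}}_{uc[j']/wc[j']}$, and your final local-folding deduction is fine.

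The gap is that the one step carrying all the content, $\wt(\sigma)<\wt(\mu)$, is only motivated, not proved. The rightmost-subexpression heuristic does not establish it. The height $\hc$ only sees Grassmannian projections, and $(uc)\pbolic^{j}$ is not $u\pbolic^{j}$ with a copy of $\bc$ appended: by \cref{lem:w-wc-bound-ribbon}, $\skew^{r_j}_{u[j]/uc[j]}$ is a single ribbon whose content range depends on $j$. So the heuristic says nothing about where $\sigma$ and $\mu$ sit relative to each other, especially when no common index $j=j'$ exists.

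The missing idea is that $\sigma$ and $\mu$ are pinned to the same lattice path from opposite sides. Let $x=u^{-1}(a)$ and $y=u^{-1}(b)$. Then $u[i]\neq w[i]$ iff $i\in[x,y-1]$, and $uc[i]\neq wc[i]$ iff $i\in[c^{-1}(x),c^{-1}(y)-1]$.

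\emph{Common index.} Suppose some $i$ lies in both ranges, and take $j=j'=i$. Then $\sigma$ has top boundary $\path^{r_i}_{w[i]}$. By \cref{prop:wt-constant} applied to $w\le uc<wc$, $\mu$ is the content-$[a,b-1]$ part of the ribbon $\skew^{r_i}_{w[i]/wc[i]}$, whose bottom boundary is that same path. A content-$k$ box of a ribbon has its top-left corner on the top boundary and its bottom-right corner on the bottom boundary, both at the point after step $k$. Hence $\mu=\sigma+(-1,1)$ and $\wt(\mu)=\wt(\sigma)+(b-a)$.

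\emph{Disjoint ranges.} Then $c(y)=x$ or $c(x)=y$. This holds because a Coxeter element of $S_n$ is an $n$-cycle that climbs from $1$ to $n$ and then descends; for example, $c^{-1}(x)<c^{-1}(y)\le x<y$ forces $c^{-1}(y)=x$.
\begin{itemize}
\item If $c(y)=x<y$, then $r_y=r_{y-1}+(-1,1)$ and $w[y]=u[y]=u[y-1]\cup\{b\}$. So $\path^{r_y}_{w[y]}$ is the $(-1,1)$-translate of $\path^{r_{y-1}}_{u[y-1]}$ through step $b-1$. Thus $\sigma$ (taken at $j=y-1$) lies directly below $\path^{r_y}_{w[y]}$ and $\mu$ (taken at $j'=y$) directly above it, and again $\mu=\sigma+(-1,1)$.
\item If $c(x)=y>x$, then $r_x=r_{x-1}$, and the paths $\path^{r_x}_{w[x]}$ and $\path^{r_{x-1}}_{w[x-1]}$ agree through step $b-1$. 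The same conclusion follows with $j=x$, $j'=x-1$.
\end{itemize}
This case analysis, not a count of letters of $\bc$, is what establishes your predicted difference $b-a$.
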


\begin{proof}
	Facets of $\Pi_u ^{uc}$ and $\Pi_w^{wc}$ correspond to subintervals of rank one smaller. So we may assume $u \lessdot w= (a~b) u \le uc \lessdot wc= (a~b) uc$ where $a<b$. We will check that
	\[\langle \lin^w, u^{\hgt}- w^{\hgt}\rangle = (\lin^w_a - \lin^w_b) (\pt_{u^{-1}(a)}- \pt_{u^{-1}(b)}) + \hgt(u)- \hgt(w) >0, \]
	or equivalently that
	\begin{equation}\label{eq:folding-ineq}
		\steep(u, (a~b))=\frac{\hgt(w)- \hgt(u)}{\pt_{u^{-1}(a)}- \pt_{u^{-1}(b)}} <\lin^w_a - \lin^w_b.
	\end{equation}
	
	Since $w \in [u, uc]$, the proof of \cref{prop:good-bip-lifted-to-plane} implies the left hand side of \eqref{eq:folding-ineq} is equal to $\wt(\sigma)$, where $\sigma$ is the ribbon strip of content $[a,b-1]$ contained in $\skew_{u[i]/uc[i]}^{r_i}$ for any $i$ such that $u[i] \neq w[i]$.
	
	On the other hand, $uc \in [w,wc]$ and so by the first item of \cref{prop:ratios-for-coplanarity}, $\steep(uc, (a~b)) = \lin^w_a - \lin^w_b$. 
	Again, the proof of \cref{prop:good-bip-lifted-to-plane} implies that the right hand side is equal to $\wt(\mu)$, where $\mu$ is the ribbon strip of content $[a,b-1]$ contained in $\skew_{w[i]/wc[i]}^{r_i}$ for any $i$ such that $uc[i] \neq wc[i]$.
	
	So it suffices to show that $\wt(\sigma) < \wt(\mu)$. Suppose first that there exists some $i$ such that $u[i] \neq w[i]$ and $uc[i] \neq wc[i]$. Note $w[i]= u[i] \setminus \{a\} \cup \{b\}$, so $\skew_{u[i]/w[i]}^{r_i}$ is a ribbon of content $[a,b-1]$ whose lower boundary is $\path_{u[i]}^{r_i}$. Since ribbons are determined by their lower boundaries and their content, this implies that $\skew_{u[i]/w[i]}^{r_i}$ is equal to $\sigma$. Since $\mu$ consists of the boxes of content $[a, b-1]$ immediately above $\path_{w[i]}^{r_i}$, $\mu$ is the translation of $\sigma$ by $(-1,1)$. This implies that $\wt(\mu)= \wt(\sigma)+ (b-a) >\wt(\sigma)$, since weight increases by one if you move in direction $(-1,1)$.
	
	Now suppose there is no $i \in [n-1]$ such that $u[i] \neq w[i]$ and $uc[i] \neq wc[i]$. Let $x:=u^{-1}_a$ and $y:=u^{-1}_b$, so that $u[i] \neq w[i]$ exactly when $i \in [x, y-1]$ and $uc[i] \neq wc[i]$ exactly when $i \in [c^{-1}_x, c^{-1}_y-1]$. Note that both of these intervals are nonempty, as $u \le w$ implies $x<y$ and $uc \le wc$ implies $c^{-1}_x<c^{-1}_y$. We are assuming that $[x,y-1] \cap [c^{-1}_x, c^{-1}_y -1]$ is empty. This can happen only if $c^{-1}_x<x$, $c^{-1}_y<y$ and $x=c^{-1}_y$, or if $c^{-1}_x>x$, $c^{-1}_y>y$ and $y=c^{-1}_x$. The two cases are similar, so we assume we are in the second case and $[c^{-1}_x, c^{-1}_y -1]= [y, c^{-1}_y-1]$. We will compare the path $\path_{u[y-1]}^{r_{y-1}}$, which is the bottom boundary of $\sigma$, and the path $\path_{w[y]}^{r_{y}}$, the bottom boundary of $\mu$. Since $c_y=x <y$, $y$ is not an excedance of $c$ and $r_{y}= r_{y-1}+(-1,1)$. We also have that $w[y]=u[y]= u[y-1] \cup \{b\}$. So steps $1, \dots, b-1$ of $\path_{w[y]}^{r_y}$ are obtained from those of $\path_{u[y-1]}^{r_{y-1}}$ by translation by $(-1,1)$. Step $b$ of $\path_{w[y]}^{r_y}$ is a step down, while step $b$ of $\path_{u[y-1]}^{r_{y-1}}$ is a step left. So the paths $\path_{u[y-1]}^{r_{y-1}}$ and $\path_{w[y]}^{r_y}$ meet after step $b$ (and then agree). This means that the boxes above $\path_{u[y-1]}^{r_{y-1}}$ and below $\path_{w[y]}^{r_y}$ form a ribbon strip of content $[1, b-1]$. In particular, this ribbon strip contains $\sigma$ and so $\sigma$ is immediately below $\path_{w[y]}^{r_y}$. This again implies that $\mu$ is obtained from $\sigma$ by translation by $(-1,1)$, and so $\wt(\mu)= \wt(\sigma) + (b-a)>\wt(\sigma)$.
	
	This completes the proof that $\langle \lin^w, u^{\hgt} \rangle > \langle \lin^w, w^{\hgt} \rangle$. This inequality implies that $u^{\hgt}$ lies strictly above the affine span of $(\Pi_{w}^{wc})^{\hgt}$, which is cut out by the equation $\langle \lin^w, x-w^h \rangle=0$ and the equation $\langle (1, \dots, 1, 0), x \rangle = \sum_i \pt_i$. This is the local folding condition, so we are done.
\end{proof}

\begin{proof}[Proof of \cref{thm:regular-type-A}]
	\cref{prop:good-bip-lifted-to-plane} shows the coplanarity condition of \cref{thm:regular-subdiv-conditions} holds for 
	\[\{\Pi_u^{uc}: uc \text{ length-additive}\}\]
	when lifted with the height vector $\hc$. \cref{prop:good-bip-lifted-distinct-planes} shows the local folding condition holds.
\end{proof}

\subsection{Relation to matroidal subdivisions, Dressians, tropical Grassmannians}\label{ssec:rel-to-matroid-polytopes} 

We give additional context for the reader familiar with matroid theory. See \cite{BEW24} for additional details. 

For a Bruhat interval $[u,v]$, let $M_{u,v}^k:=\{z[k]: z \in [u,v]\}$. By \cite{KW15}, the set $M_{u,v}^k$ is a \textbf{positroid}, meaning that it is a matroid which can be realized by a $k \times n$ matrix with nonnegative maximal minors. The matroid polytope of $M_{u,v}^k$ is 
\[P_{u,v}^k=\conv\{e_{z[k]} : z \in [u,v]\} \subset \RR^n\]
and is also called a \textbf{positroid polytope}. (Recall that $e_I$ denotes the indicator vector of the subset $I \subset [n]$ in $\RR^n$.) The tuple of matroids $M_{u,v}:=(M_{u,v}^1, \dots, M_{u,v}^{n-1})$ form a flag matroid, whose matroid polytope is the Minkowski sum
\[P_{u,v}=P_{u,v}^1 + \dots + P_{u,v}^{n-1}.\]
The flag matroids $M_{u,v}$ are exactly the \textbf{flag positroids}, meaning they are the only flag matroids which can be realized by an $n \times n$ matrix with nonnegative \emph{flag minors}.

Fix $\pt=(n, n-1, \dots, 2,1)$. One can check that the Bruhat interval polytope $\Pi_u^v(\pt)$ is equal to $P_{u,v}$, the flag positroid polytope. The permutahedron $\Pi_e^{w_0}(\pt)$ is the flag matroid polytope of the \defn{uniform flag matroid}, meaning the constitutent matroids $M_{e, w_0}^k$ are all uniform (and the corresponding matroid polytopes are the hypersimplices). Thus, a regular Bruhat interval subdivision of $\Pi_e^{w_0}(\pt)$ is a regular flag-positroidal subdivision of the uniform flag matroid polytope. This is the ``flag" generalization of regular positroidal subdivisions of hypersimplices. 

Regular positroidal subdivisions of the hypersimplex $\Delta_{i,n}$ were studied in \cite{SW21,LPW,AHLS}. The height vectors $h$ giving rise to such subdivisions form a polyhedral fan, the \emph{positive Dressian} \cite{LPW}, which coincides with the \emph{positive tropical Grassmannian} \cite{SW21,AHLS}. 

An analogous story was developed by \cite{JLLO23,B23} for regular flag-positroidal subdivisions (i.e. Bruhat interval subdivisions) of $\Pi_e^{w_0}(\pt)$. The height vectors $h$ giving rise to such subdivisions form a polyhedral fan called the \emph{positive flag Dressian}, introduced by Joswig--Loho--Luber--Olarte in \cite{JLLO23}. Combining \cite{JLLO23} with work of Boretsky \cite{B23}, one obtains that the positive flag Dressian is equal to the \emph{positive tropical flag variety}. That is, the points of the positive flag Dressian may be obtained by tropicalizing points of the positive flag variety. This was subsequently generalized by Boretsky--Eur--Williams to the ``partial flag" setting \cite{BEW24}.

There is a bijective correspondence between cones of the positive flag Dressian\footnote{Technically, one can endow the positive flag Dressian with a number of different fan structures. Here, we choose the secondary fan structure.} and regular Bruhat interval subdivisions of $\Pi_e^{w_0}(\pt)$. Containment of cones corresponds to coarsening of subdivisions.  We note that, while \cite{B23} gives a parametrization of the positive flag Dressian, very little is known about its secondary fan structure---such as the number of rays or maximal cones---or about which Bruhat interval polytopes form regular Bruhat interval subdivisions. \cref{thm:regular-type-A} gives partial insight into these questions. Each subdivision in \cref{thm:regular-type-A} corresponds to a maximal cone of the positive flag Dressian, and we have a complete description of which Bruhat interval polytopes appear in the subdivision. From \cref{thm:regular-type-A}, we obtain the following (likely quite poor) bound for the number of maximal cones.

\begin{corollary}
	The positive flag Dressian $FlDr^+(n)$, endowed with the secondary fan structure, has at least $2^{n-2}$ maximal cones.
\end{corollary}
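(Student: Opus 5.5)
The plan is to count Coxeter elements and then show they give pairwise distinct regular subdivisions. By \cref{thm:regular-type-A}, every Coxeter element $c \in S_n$ gives a finest regular Bruhat interval subdivision of $\Pi_e^{w_0}(\pt)$, with $\pt=(n,\dots,1)$. As explained in \cref{ssec:rel-to-matroid-polytopes}, regular Bruhat interval subdivisions correspond bijectively to cones of $FlDr^+(n)$ with its secondary fan structure, and coarsening corresponds to containment of cones. So a finest regular Bruhat interval subdivision corresponds to a cone that is contained in no larger cone, i.e.\ a maximal cone. (A cone containing it would give a coarser subdivision, whereas a finer regular Bruhat subdivision cannot exist by finest-ness.) It therefore suffices to produce at least $2^{n-2}$ pairwise distinct subdivisions of the form $\{\Pi_u^{uc}\}$.

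Coxeter elements of $S_n$ are determined by their reduced words up to commutation. Equivalently, they are determined by orienting the path Dynkin diagram $1-2-\cdots-(n-1)$, i.e.\ by choosing for each adjacent pair $s_i,s_{i+1}$ which comes first. There are $n-2$ edges, so there are $2^{n-2}$ distinct Coxeter elements. Equivalently, $c$ is determined by its excedance set $I$, which must contain $1$ and omit $n$; this again gives $2^{n-2}$ choices, matching the ribbon description $\rib_c$.

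The main step is injectivity: distinct $c$ give distinct subdivisions. I would argue via the faces adjacent to the vertex $e\cdot\pt$. Among the cells $\Pi_u^{uc}$ containing the vertex $e\cdot\pt$, the cell $\Pi_e^{c}$ appears, since $ec$ is length-additive. By \cref{thm:subdivision-Coxeter}, the faces of the subdivision are exactly the $\Pi_y^z$ with $z\le y*c$. Taking $y=e$, the faces containing $e\cdot\pt$ whose bottom is $e$ are the $\Pi_e^z$ with $z\le c$. So the set of such faces recovers the lower Bruhat interval $[e,c]$, and in particular its maximal element $c$. Since a polytope determines its vertex set, the subdivision determines the set $\{z : \Pi_e^z \text{ is a face}\}$, which is $[e,c]$.

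The one obstacle is that faces are just polytopes, so I must make sure the label $[e,z]$ is recoverable from the polytope $\Pi_e^z$. Here I use \cref{lem:toric-BIP-facts}: vertices of $\Pi_y^z$ are exactly $[y,z]\cdot\pt$, and $\pt$ is regular. Hence the interval, and so $c$, is determined by the subdivision. This gives $2^{n-2}$ distinct maximal cones, proving the corollary.
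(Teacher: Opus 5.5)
Your argument is correct and is essentially the paper's implicit proof. Each of the $2^{n-2}$ Coxeter elements gives a finest regular Bruhat interval subdivision by \cref{thm:regular-type-A}, hence a maximal cone. Your injectivity step recovers $[e,c]$, and hence $c$, from the faces of the form $\Pi_e^z$, using that $\Pi_y^z$ has vertex set $[y,z]\cdot\pt$; this correctly supplies a detail the paper leaves unstated.

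One small slip: your parenthetical has the direction reversed. A cone strictly containing yours would correspond to a strictly \emph{finer} subdivision, not a coarser one, and finest-ness is exactly what rules that out.
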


\subsection{Numerology of cells}

In this subsection, we discuss the number of cells in the subdivision of \cref{thm:regular-type-A}, which varies depending on the choice of $c$.

As detailed in \cref{ssec:rel-to-matroid-polytopes} above, regular Bruhat interval subdivisions of $\Pi$ are the ``flag" analogue of regular positroidal subdivisions of the hypersimplex $\Delta_{k,n}$. In the latter context, all finest subdivisions have the same $f$-vector, and in particular have $\binom{n-2}{k}$ maximal cells. In \cite{BEW24}, it was shown for $n=4$ that, in contrast, finest regular Bruhat interval subdivisions of $\Pi$ need not have the same number of maximal cells. Here we show this continues for arbitrary $n$.

We use $\le_L$ to denote the left weak order on $S_n$. Recall that $u \le_L v$ if and only if $v$ has a reduced expression which has a reduced expression for $u$ as a suffix. Equivalently, $u \le_L v$ if and only if $v$ can be written as $v=xu$ where $xu$ is length-additive. Left weak order can also be rephrased in terms of right inversions.

\begin{definition}
	For $u \in S_n$, let $T_R(u):= \{(a,b): a<b, u_a > u_b\}$ be the set of \emph{right inversions} of $u$. We define $\succcurlyeq_u$ to be the poset on $[n]$ defined by $a \succcurlyeq_u b$ if $(a,b) \in T_R(u)$ or if $a=b$.
\end{definition}

It is straightforward to verify that $\succcurlyeq_u$ is indeed a partial ordering on $[n]$. It follows from \cite[Proposition 3.1.3]{BB05} that for $u \in S_n$
\[[u, \wo]_L= \{v \in S_n: v(a)>v(b) \text{ for all }a \neq b \text{ with }a \succcurlyeq_u b \}\]
where the set on the right is exactly the set of linear extensions of $\succcurlyeq_u$.

\begin{lemma} \label{lem:cells-bijection-weak-order-interval} Let $c \in S_n$ be a Coxeter element.
	Then 
	\begin{align*}
		\{v: c \le_L v\} &\to \{\Pi_w^{wc}: wc \text{ length-additive}\}\\
		v &\mapsto \Pi_{vc^{-1}}^v
	\end{align*}
	is a bijection. That is, the maximal cells of the $\hc$-subdivision of $\Pi$ are in bijection with the left weak order interval $[c, \wo]_{L}$ and the linear extensions of $\succcurlyeq_c.$
\end{lemma}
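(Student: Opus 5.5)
The map $w \mapsto wc$ sets up the bijection directly. Write $v := wc$, so that $w = vc^{-1}$ and $\Pi_w^{wc} = \Pi_{vc^{-1}}^v$. Then $wc$ is length-additive exactly when $\ell(v) = \ell(vc^{-1}) + \ell(c)$. Equivalently, $v = xc$ with $x = vc^{-1}$ and $xc$ length-additive. By the characterization of left weak order recalled just before the lemma, this holds exactly when $c \le_L v$.

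So $w \mapsto wc$ is a bijection from $\{w : wc \text{ length-additive}\}$ onto $[c,\wo]_L$. Its inverse is $v \mapsto vc^{-1}$.

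It remains to check that distinct $w$ give distinct polytopes $\Pi_w^{wc}$, so that the index set is in bijection with the set of cells. By \cref{thm:subdivision-Coxeter}, each $X_w^{wc}$ is toric. Hence \cref{lem:toric-BIP-facts}(2) applies: the vertices of $\Pi_w^{wc}(\pt)$ are exactly $z \cdot \pt$ for $z \in [w,wc]$. This is because the faces are the $\Pi_p^q$ with $[p,q] \subset [w,wc]$, and the vertices are the rank-$0$ subintervals.

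Since $\pt$ is regular, $z \mapsto z\cdot\pt$ is injective. So the polytope determines the interval $[w,wc]$, and therefore determines $w$ as its minimum element. (Alternatively, one can appeal to the fact that the cells of a subdivision are distinct full-dimensional polytopes, together with the face-poset isomorphism of \cref{thm:subdivision-general}.)

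Finally, the maximal cells of the $\hc$-subdivision are exactly these polytopes, by \cref{thm:regular-type-A}. The identification of $[c,\wo]_L$ with the linear extensions of $\succcurlyeq_c$ is the quoted consequence of \cite[Proposition 3.1.3]{BB05}.

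I expect no real obstacle. The only point needing care is the injectivity on polytopes, which is where the toric structure (\cref{lem:toric-BIP-facts}) is used. The direction convention of left weak order, $v = xu$ with the suffix being $u$, must also match $v = (vc^{-1})c$, and it does.
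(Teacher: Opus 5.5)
Your proof is correct and matches the paper's argument. The paper gives no separate proof and treats the lemma as immediate from the characterization $u \le_L v \iff v = xu$ with $xu$ length-additive, so that $w \mapsto wc$ is the bijection, and from \cite[Proposition 3.1.3]{BB05} for the linear extensions. Your explicit injectivity check on polytopes is a detail the paper omits, and it does not actually need toricity: every $z\cdot\pt$ is a vertex of $\Pi$, hence of any Bruhat interval polytope containing it, so the vertex set of $\Pi_w^{wc}(\pt)$ is $[w,wc]\cdot\pt$ directly.
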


\begin{corollary}
	Let $c \in S_n$ be a Coxeter element. The number of maximal cells in the $\hc$-subdivision of $\Pi$ is the number of linear extensions of $\succcurlyeq_c$.
\end{corollary}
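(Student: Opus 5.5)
The statement to prove is \cref{lem:cells-bijection-weak-order-interval}: $v\mapsto \Pi_{vc^{-1}}^v$ is a bijection from $\{v: c\le_L v\}$ onto the maximal cells. The natural plan is to reparametrize the maximal cells by their top element $v=wc$ instead of their bottom element $w$.

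First, $wc$ length-additive means exactly that $v:=wc$ satisfies $c\le_L v$. Indeed, by the characterization of left weak order recalled just above, $c\le_L v$ iff $v=xc$ with $xc$ length-additive. Taking $x=w$ gives one direction. Conversely, if $c\le_L v$, set $w:=vc^{-1}$; then $v=wc$ is length-additive. So $w\mapsto wc$ is a bijection between $\{w: wc \text{ length-additive}\}$ and $[c,\wo]_L$, with inverse $v\mapsto vc^{-1}$. Composing with $w\mapsto \Pi_w^{wc}$ gives the stated map $v\mapsto \Pi_{vc^{-1}}^v$.

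Second, the map $w\mapsto \Pi_w^{wc}$ must be injective, i.e. distinct $w$ give distinct polytopes. By \cref{lem:toric-BIP-facts}(2), applicable since each $X_w^{wc}$ is toric by \cref{thm:subdivision-Coxeter}, the faces of $\Pi_w^{wc}$ are exactly the $\Pi_p^q$ with $[p,q]\subset[w,wc]$. In particular, the vertex set of $\Pi_w^{wc}$ is exactly $[w,wc]\cdot\pt$. For $\pt\in\BB$ regular, $z\mapsto z\cdot\pt$ is injective, so the polytope determines the interval $[w,wc]$ and hence $w$ as its minimum. Alternatively, \cref{thm:subdivision-general} already says orbit closures correspond bijectively to faces, so distinct Richardson components give distinct cells.

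Finally, the statement about linear extensions follows by citing the description $[u,\wo]_L=\{v: v(a)>v(b) \text{ for } a\succcurlyeq_u b,\ a\neq b\}$ with $u=c$, which identifies $[c,\wo]_L$ with the linear extensions of $\succcurlyeq_c$. No step here is a real obstacle. The only point needing care is the injectivity of the parametrization by polytopes, and that is handled by the vertex-set argument using regularity of $\pt$.
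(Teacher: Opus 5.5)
Your proposal is correct and follows the paper's route. The paper deduces the corollary from \cref{lem:cells-bijection-weak-order-interval}: the maximal cells $\Pi_w^{wc}$ of the $\hc$-subdivision (per \cref{thm:regular-type-A}) are reparametrized by their top element $v=wc\in[c,\wo]_L$, and $[c,\wo]_L$ is identified with the linear extensions of $\succcurlyeq_c$ via \cite[Proposition 3.1.3]{BB05}; your vertex-set argument that distinct $w$ give distinct cells supplies a detail the paper leaves implicit.
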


\begin{figure}[h]
	\centering
	\includegraphics[width=0.9\textwidth]{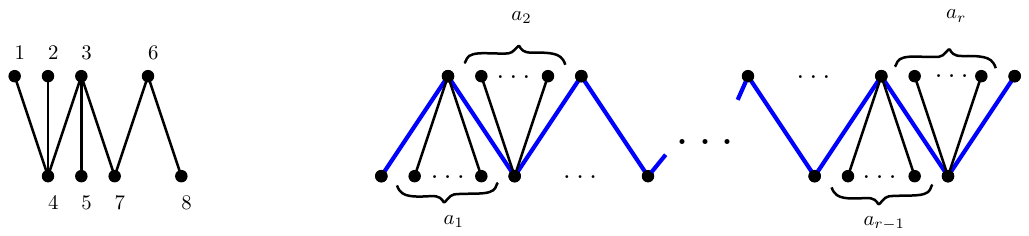}
	\caption{On the left, the Hasse diagram of $\succcurlyeq_c$ when $c=s_5s_7s_6s_1s_2s_4s_3=23614857$. On the right, the general shape of the Hasse diagram of $\succcurlyeq_c$, which is a zig-zag (in blue) with claws attached to the peaks and valleys. The numbers $a_1, \dots, a_r$ are arbitrary nonnegative integers.}
	\label{fig:Hasse-ex}
\end{figure}

See \cref{fig:Hasse-ex} for an example of the poset $([n],\succcurlyeq_c)$, as well as their general ``shape". Note that the posets $([n],\succcurlyeq_c)$ are height 2 tree posets on $[n]$. As a result, the number of linear extensions may be computed using a polynomial-time algorithm \cite{Atk90}.

\begin{example}The maximum number of cells in an $\hc$-subdivision of $\Pi$ is achieved when $c=s_1s_2 \dots s_{n-1}= 234\cdots n 1$ or its inverse. In this case, $([n],\succcurlyeq_c)$ is a \emph{claw}, and has $(n-1)!$ linear extensions. This subdivision was first studied by \cite{HHMP19}, who showed that each maximal cell is combinatorially a cube.
	
	The number of cells likely achieves its minimum when $c=\prod_{i \text{ even}} s_i \prod_{i \text{ odd}} s_i$ or its inverse. In this case, $([n],\succcurlyeq_c)$ is a \emph{zig-zag poset}. Linear extensions are in bijection with ``up/down" permutations and are counted by Euler numbers (see \cref{tab:Euler-vs-factorial}).
\end{example}

\begin{table}[h]
	\centering
	\begin{tabular}{c|c|c|c|c|c|c|c|}
		$n$ & 3 & 4 & 5 & 6 & 7 & 8 & 9\\
		\hline
		$(n-1)!$ & 2 & 6 & 24 & 120 & 720 & 5040 & 40320\\
		\hline
		Euler number & 2 & 5 & 16 & 61 & 272 & 1385 & 7936 \\
		\hline
	\end{tabular}
	\caption{Ranging over all Coxeter elements $c$ in $S_n$, the largest possible number of maximal cells in an $\hc$-subdivision of $\bip{e, \wo}$ is $(n-1)!$. The smallest possible number of maximal cells is the $n$th Euler number. The two numbers are contrasted above.}
	\label{tab:Euler-vs-factorial}
\end{table}

\section{Regularity in types $B/C$}\label{sec:regularity-BC}

Here we use the technique of \emph{folding} \cite{Stembridge} to deduce the regularity of the subdivision in \cref{thm:subdivision-Coxeter} for types $B$ and $C$ from regularity in type $A$. Note that Bruhat interval polytopes for types $B$ and $C$ are the same (though the integral Bruhat interval polytopes of each type are different). So throughout the section, we will deal with type $C$.

We fix an isomorphism of the type $C_n$ realified weight lattice
$\mathfrak{t}_{\RR}^*$ with $\RR^n$ so that the fundamental weights
are $ \varpi_i=e_1 + \cdots +e_i$ for $i=1, \dots, n$, where $e_i$ is
the standard basis vector. The type $C_n$ Weyl group $W$ is generated
by $s^C_1,\ldots, s^C_{n-1}, s^C_n$, which act on $\RR^n$ by
$s_i^C \cdot \pt = (\pt_1, \dots, \pt_{i+1}, \pt_i, \dots \pt_n)$ for
$i=1, \dots, n-1$ and $s_n^C \cdot \pt=(\pt_1, \dots, -\pt_n)$. Let
$\iota: W \to S_{2n}$ be the map defined on generators by
$\iota(s_i^{C}) = s_is_{2n-i}$ for $1 \leq i \leq n-1$ and
$\iota(s^C_n) = s_{n}$. For brevity, we write $\bar{w}$ for
$\iota(w)$. We note that $\iota$ takes reduced words to reduced words,
and so is Bruhat-order preserving.

For the remainder of the section, fix $c \in W$ a Coxeter element and $\pt =(\pt_1, \dots, \pt_n) \in \BB$. We define $\bar{\pt}:=(\pt_1 \dots, \pt_n, -\pt_n, \dots, -\pt_1) \in \RR^{2n}$. 

We first use $\iota$ to define a height function $\hc$ on the vertices of $\Pi(\pt)= \Pi$.

 \begin{definition}\label{def:type-C-height}
 	Let $c \in W$ be a Coxeter element, and note $\bar{c} \in S_{2n}$ is also a Coxeter element. Let $\hd: (S_{2n} \cdot \bar{\pt}) \to \RR$ denote the height function of \cref{def:height-from-c}. We define $\hc:(W \cdot \pt) \to \RR $ by 
 	\[\hc(w \cdot \pt) := \hd(\bar{w} \cdot \bar{\pt}).\]
 \end{definition}
 
 This height function does indeed induce the desired subdivision of $\Pi$.
 
 \begin{theorem}\label{thm:regular-type-C}
   Let $\pt \in \BB$, let $c \in W$ be a Coxeter element, and let
   $\hc$ be the height function from \cref{def:type-C-height}. The collection
   \[ \{\Pi_u^{uc}(\pt): uc~\text{length-additive}\} \]
   is a finest regular Bruhat interval subdivision of the type $C$
   permutahedron $\Pi(\pt)$, induced by $\hc$.
 \end{theorem}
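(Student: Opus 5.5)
We plan to deduce \cref{thm:regular-type-C} from \cref{thm:regular-type-A} by folding. The subdivision itself, and the fact that it is finest, come from \cref{thm:subdivision-Coxeter}. So only regularity needs proof, and we will verify the two conditions of \cref{thm:regular-subdiv-conditions} for $\hc$.

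First, we set up the embedding. The linear map $\pt \mapsto \bar\pt$ embeds $\RR^n$ into the subspace $V=\{x\in\RR^{2n}: x_i=-x_{2n+1-i}\}$, and $\overline{w\cdot\pt} = \bar w\cdot\bar\pt$ for $w\in W$ (check on generators). Also $\bar\pt$ is dominant for $S_{2n}$, and $\bar c$ is a Coxeter element of $S_{2n}$. Since $\iota$ preserves reduced words, $uc$ length-additive implies $\bar u\bar c$ length-additive, and $\iota$ maps $[u,uc]$ into $[\bar u,\bar u\bar c]$. We also need that the image of $W$ under $\iota$ is the fixed subgroup $S_{2n}^\theta$ of the diagram involution, and that Bruhat order on $W$ is the restriction of Bruhat order on $S_{2n}$ (standard folding facts, \cite{Stembridge}). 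From these, $\iota([u,uc]) = [\bar u,\bar u\bar c]\cap \iota(W)$.

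Coplanarity is then immediate. By \cref{prop:good-bip-lifted-to-plane}, there is a functional $\lin^{\bar u}$ on $\RR^{2n+1}$ with last coordinate $1$ that is constant on the lift of $[\bar u,\bar u\bar c]\cdot\bar\pt$ under $\hd$. Pulling it back along $(x,h)\mapsto(\bar x,h)$ gives a functional on $\RR^{n+1}$, still with last coordinate $1$, that is constant on the $\hc$-lift of $[u,uc]\cdot\pt$, because $\hc(w\cdot\pt)=\hd(\bar w\cdot\bar\pt)$.

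For local folding, take an interior facet $F=\Pi_u^{uc}\cap\Pi_w^{wc}$ in type $C$. Our approach is to show that it suffices to find a vertex of $\Pi_u^{uc}$ off $F$ whose lift lies strictly above the hyperplane pulled back from $\lin^{\bar w}$. More directly, we expect the single inequality $\langle \lin^{\bar w}, \bar u^{\hd}\rangle > \langle \lin^{\bar w}, \bar w^{\hd}\rangle$ to suffice. To obtain it, we use that the type $A$ subdivision is regular, induced by $\hd$. That means the lifted points of $S_{2n}\cdot\bar\pt$ lie weakly above every lower facet plane, with equality only on that cell. Since $\bar u\cdot\bar\pt$ is not in $\Pi_{\bar w}^{\bar w\bar c}(\bar\pt)$, the point $\bar u^{\hd}$ lies strictly above the affine span of the lift of $\Pi_{\bar w}^{\bar w\bar c}$. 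Here we use $\bar u\notin[\bar w,\bar w\bar c]$, which holds because $u\notin[w,wc]$ and Bruhat order restricts. Pulling back then gives the strict inequality in type $C$. This argument bypasses the facet structure entirely: every vertex of $\Pi_u^{uc}$ not in $\Pi_w^{wc}$ works.

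The main obstacle is confirming that the pullbacks behave exactly as needed. Concretely, the hyperplane $H_w$ in type $C$ must be the pullback of the type $A$ plane. That is automatic, since the pullback is a nonvertical affine function agreeing on the vertices, which span a full-dimensional cell. We must also confirm the Bruhat-restriction compatibility $\iota([w,wc])=[\bar w,\bar w\bar c]\cap\iota(W)$; if this fails, we would instead argue via the $\theta$-invariance of $\hd$. That invariance should follow from the symmetry of the ribbon strip for $\bar c$, and averaging gives the same conclusion.
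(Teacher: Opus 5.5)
Your proposal is correct and follows essentially the same route as the paper. Coplanarity comes from pulling back the type $A$ functional along $x \mapsto \bar{x}$, which is the paper's folded-functional lemma, and local folding comes from the strictness of the type $A$ regular subdivision induced by $h^{\bar{c}}$. The one difference is in local folding: the paper orders the adjacent cells as $w \lessdot u$ and uses the vertex $w\cdot\pt$, so it only needs $\bar{w} < \bar{u}$, which follows from $\iota$ preserving reduced words. Your argument for an arbitrary vertex off the facet instead relies on the true but less elementary fact that Bruhat order on $W$ is the restriction of Bruhat order on $S_{2n}$.
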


 We have the following useful lemma. For a linear functional $\lambda: \RR^{2n+1} \to \RR$, let $\lambda': \RR^{n+1} \to \RR$ be the linear functional with $\lambda'_{n+1}=\lambda_{2n+1}$ and $\lambda'_i= \lambda_i-\lambda_{2n-i+1}$. To ease notation, set $w^h:=\hc(w \cdot \pt)$ and $\bar{w}^{\bar{\hgt}}:=\hd(\bar{w} \cdot \bar{\pt})$.
 
 \begin{lemma} \label{lem:folded-linear-functional}
 	Let $\lambda: \RR^{2n+1} \to \RR$ be a linear functional and $w \in W$. Then $\langle \lambda, \bar{w}^{\bar{\hgt}} \rangle = \langle \lambda', w^h\rangle$.
 \end{lemma}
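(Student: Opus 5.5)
The plan is to expand both sides directly in coordinates; this is a linear-algebra identity, and the only real content is the description of $\bar w\cdot\bar\pt$. Here $\bar w^{\bar\hgt}$ denotes the lifted point $(\bar w\cdot\bar\pt,\ \hd(\bar w\cdot\bar\pt))\in\RR^{2n+1}$, and $w^h$ denotes $(w\cdot\pt,\ \hc(w\cdot\pt))\in\RR^{n+1}$. By \cref{def:type-C-height}, the last coordinates agree: $\hc(w\cdot\pt)=\hd(\bar w\cdot\bar\pt)$. Since $\lambda'_{n+1}=\lambda_{2n+1}$, the height contributions to the two pairings coincide. It therefore suffices to prove
\[\sum_{j=1}^{2n}\lambda_j(\bar w\cdot\bar\pt)_j=\sum_{i=1}^n\lambda'_i(w\cdot\pt)_i .\]

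The key step is to show that for every $w\in W$,
\[\bar w\cdot\bar\pt=\overline{w\cdot\pt},\]
where for $x\in\RR^n$ we write $\bar x=(x_1,\dots,x_n,-x_n,\dots,-x_1)$. In other words, the embedding $\RR^n\to\RR^{2n}$, $x\mapsto\bar x$, is $\iota$-equivariant. I would prove this on generators and then induct on the length of $w$, using that $\iota$ is a group homomorphism.

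For $i<n$, the generator acts by $\iota(s^C_i)=s_is_{2n-i}$. The factor $s_i$ swaps coordinates $i,i+1$, and $s_{2n-i}$ swaps coordinates $2n-i,2n-i+1$. In $\bar x$ these positions hold $-x_{i+1},-x_i$, so the result is $\overline{s^C_i x}$. For $s^C_n$, we have $\iota(s^C_n)=s_n$, which swaps coordinates $n,n+1$, i.e.\ $x_n$ and $-x_n$. This gives $\overline{(x_1,\dots,-x_n)}=\overline{s^C_n x}$. One should check that the $S_{2n}$ action convention $w\cdot\pt=(\pt_{w^{-1}(1)},\dots)$ is compatible with the type $C$ convention, but for simple reflections both conventions are just coordinate swaps, so the check is harmless.

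Given this, write $x=w\cdot\pt$. Then
\[\sum_j\lambda_j\bar x_j=\sum_{i=1}^n\lambda_i x_i-\sum_{i=1}^n\lambda_{2n-i+1}x_i=\sum_i\lambda'_i x_i,\]
which completes the proof. I expect the only possible obstacle to be the bookkeeping of the indices $2n-i$ versus $2n-i+1$ in the equivariance check; beyond that, everything is a direct computation.
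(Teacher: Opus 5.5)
Your proposal is correct and matches the paper's proof. The paper likewise writes $\bar w\cdot\bar\pt=(x_1,\dots,x_n,-x_n,\dots,-x_1)$ for $w\cdot\pt=(x_1,\dots,x_n)$, pairs coordinatewise with $\lambda$, and matches the last coordinates using the definition $\hc(w\cdot\pt)=\hd(\bar w\cdot\bar\pt)$. The only difference is that you verify the equivariance $\bar w\cdot\bar\pt=\overline{w\cdot\pt}$ on generators, which the paper takes for granted, and your index check there is right.
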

 \begin{proof} Say $w \cdot \pt=(x_1, \dots, x_n)$. Then $\bar{w} \cdot \bar{\pt} = (x_1, \dots, x_n, -x_n, \dots, -x_1)$. We have 
 	
 	\[\langle \lambda, \bar{w}^{\bar{\hgt}} \rangle = x_1 (\lambda_1 - \lambda_{2n}) + \cdots + x_n (\lambda_n - \lambda_{n+1}) + \hd(\bar{w} \cdot \bar{\pt}) \lambda_{2n+1}= \langle \lambda', w^h \rangle\]
 	since by definition, $\hc(w \cdot \pt) = \hd(\bar{w} \cdot \bar{\pt}).$
 \end{proof}
 
 \begin{proof}[Proof of \cref{thm:regular-type-C}]
 	We again verify the conditions of \cref{thm:regular-subdiv-conditions}.
 	
 	\noindent {\em The coplanarity condition.} 	Fix $u$ where $uc$ is length-additive. The product $\bar{u} \bar{c}$ is also length-additive, since $\iota$ takes reduced words to reduced words. Thus, by \cref{prop:good-bip-lifted-to-plane}, there is a linear functional $\lambda^u$ which is constant on $\{w^{\bar{h}}: w \in [\bar{u}, \bar{uc}]\}$. If $v \in [u,uc]$, then $\bar{v} \in [\bar{u}, \bar{uc}]$. So by \cref{lem:folded-linear-functional}, the linear functional $(\lambda^u)'$ is constant on $\{v^h: v \in [u,uc]\}$.
 	
 	\noindent {\em The local folding condition.} Suppose $\Pi_u^{uc} \cap \Pi_w^{wc}$ is a facet of both polytopes. Without loss of generality, $w \lessdot u \leq wc \lessdot uc$ and $\Pi_u^{uc} \cap \Pi_w^{wc} = \Pi_u^{wc}$. So $w \cdot \pt$ is not a vertex of both polytopes. Now, $\Pi_{\bar{u}}^{\bar{uc}}(\bar{\pt})$ and $\Pi_{\bar{w}}^{\bar{wc}}(\bar{\pt})$ are maximal cells of a regular subdivision of the type $A$ permutahedron. The point $\bar{w} \cdot \bar{\pt}$ is not a vertex of, and consequently is not contained in, $\Pi_{\bar{u}}^{\bar{uc}}(\bar{\pt})$. So the linear functional $\lambda^u$ which is minimized on the lifted polytope  $(\Pi_{\bar{u}}^{\overline{uc}}(\bar{\pt}))^{\bar{h}}$ does not take its minimum value on $\bar{w}^{\bar{h}}$. So we have
 	\[\langle \lambda^u, \bar{u}^{\bar{h}} \rangle < \langle  \lambda^u, \bar{w}^{\bar{h}} \rangle\]
 	which by \cref{lem:folded-linear-functional} implies 
 		\[\langle (\lambda^u)', {u}^{{h}} \rangle < \langle  (\lambda^u)', {w}^{{h}} \rangle.\]
 		That is, a vertex of $(\Pi_w^{wc})^h$ lies above the plane containing $(\Pi_u^{uc})^h$, as desired.
 \end{proof}

\section{On Kac-Moody analogues}

The moment polyhedron of a Kac-Moody flag ``variety'' (really, an ind-scheme)
has received a fair amount of study e.g.
in \cite{Yun,HHH,BaumannKamnitzerTingley}.

\begin{conjecture}
  Let $G$ be a Kac-Moody group of finite rank, and $c$ a Coxeter element.
  Then the moment polyhedron $\Phi_T(G/B)$ has a locally finite subdivision
  into Bruhat interval polytopes 
  \[\{\Phi_T(X_w^{wc}): wc\text{ length-additive}\}.\]
\end{conjecture}

We draw (a finite part of) the only new $2$-dimensional example,
where $G = \widehat{SL_2}$, whose Weyl group is
$\langle r,s \colon r^2 = s^2 = 1 \rangle$. 
The vertices are at $\{(n,n^2)\colon n\in \ZZ\}$, where the coordinates
correspond to the $T^1 \leq SL_2$ and loop rotation circle actions.

\centerline{
\begin{tikzpicture}[scale=1.2]
  
  \coordinate (P0) at (-4, 4);
  \coordinate (P1) at (-3, 2.25);
  \coordinate (P2) at (-2, 1.00);
  \coordinate (P3) at (-1, 0.25);
  \coordinate (P4) at (0,  0.00);
  \coordinate (P5) at (1,  0.25);
  \coordinate (P6) at (2,  1.00);
  \coordinate (P7) at (3,  2.25);
  \coordinate (P8) at (4, 4);
  
  \draw[thick, blue, fill=blue!5] (P0) -- (P1) -- (P2) -- (P3) -- (P4) -- (P5) -- (P6) -- (P7) -- (P8);
  
  \filldraw (P0) circle (1.5pt) node[above left]  {$rsrs$};
  \filldraw (P1) circle (1.5pt) node[above left]  {$rsr$};
  \filldraw (P2) circle (1.5pt) node[left]        {$rs$};
  \filldraw (P3) circle (1.5pt) node[below left]  {$r$};
  \filldraw (P4) circle (1.5pt) node[below]       {$e$};
  \filldraw (P5) circle (1.5pt) node[below right] {$s$};
  \filldraw (P6) circle (1.5pt) node[right]       {$sr$};
  \filldraw (P7) circle (1.5pt) node[above right] {$srs$};
  \filldraw (P8) circle (1.5pt) node[above right] {$srsr$};
  
  \draw[thick, blue] (P5) -- (P2) -- (P7) -- (P0);

  \draw[->, thick, blue] (P0) -- (-4.5,5.125);
  \draw[->, thick, blue] (P8) -- (4.5,5.125);

  \draw[-] (-4,0) -- (4,0) ; 
  \draw[-] (0,0) -- (0,5) ; 
\end{tikzpicture}
}

There are several difficulties in trying to extend our results to the
infinite-dimensional case. The first is that there is no ``general enough''
point $p$ in the thin flag variety such that $\Phi_T(\overline{T\cdot p})$ is
the entire moment polyhedron $\Phi_T(G/B)$. This is easy to see; as the
thin flag variety is ind-projective, any $p$ lies inside a projective variety,
so $\overline{T\cdot p}$ is projective and has finitely many $T$-fixed points.
Perhaps this could be addressed by working with the thick flag variety.

More plausible is the existence of a degeneration
$\Omega_c \to \Union_w \left( X_w \times X^{wc} \right)$, even though
neither side is finite-dimensional or finite-codimensional.  What they
both do enjoy is that the projection to the first factor has
finite-codimensional image and finite-dimensional fibers, like the
spaces in \cite{BFN1}.
 
\bibliographystyle{alpha}
\bibliography{refs.bib}

\appendix
\end{document}